\newtheorem{introtheorem}{Theorem}
\newtheorem{theorem}{Theorem}[subsection]
\newtheorem{lemma}[theorem]{Lemma}
\newtheorem{proposition}[theorem]{Proposition}
\newtheorem{corollary}[theorem]{Corollary}
\newtheorem{conjecture}[theorem]{Conjecture}
\theoremstyle{definition}
\newtheorem{definition}[theorem]{Definition}
\newtheorem{remark}[theorem]{Remark}
\newcommand{\op}[1]{\operatorname{#1}}
\newcommand{\leftexp}[2]{{\vphantom{#2}}^{#1}{#2}}
\newcommand{\modmod}[1]{/ \! \! / \!_{#1}}
\newcommand{\dcoh}[1]{\operatorname{D}(\operatorname{coh }#1)}
\newcommand{\dbcoh}[1]{\operatorname{D}^{\operatorname{b}}(\operatorname{coh }#1)}
\newcommand{\dqcoh}[1]{\operatorname{D}(\operatorname{Qcoh }#1)}
\newcommand{\weezer}{\leftexp{=}{\kern-0.23em\operatorname{W}}^{\kern-0.21em =}}
\newcommand{\sidenote}[1]{}
\def\Z{\op{\mathbb{Z}}}
\def\C{\op{\mathbb{C}}}
\def\R{\op{\mathbb{R}}}
\def\P{\op{\mathbb{P}}}
\def\tand{\text{ and } }
\title[VGIT and derived categories]{Variation of Geometric Invariant Theory quotients and derived categories}
\author[Ballard]{Matthew Ballard}
\address{
  \begin{tabular}{l}
   Matthew Ballard  \\ 
   \hspace{.1in} University of South Carolina, Columbia, SC, USA \\
   \hspace{.1in} Email: {\bf ballard@math.wisc.edu} \\
  \end{tabular}
}
\author[Favero]{David Favero}
\address{
  \begin{tabular}{l}
   David Favero \\
   \hspace{.1in} University of Alberta, Edmonton, AB, Canada \\
   \hspace{.1in} Email: {\bf favero@gmail.com} \\
  \end{tabular}
}
\author[Katzarkov]{Ludmil Katzarkov}
\address{
  \begin{tabular}{l}
   Ludmil Katzarkov \\
   \hspace{.1in} Universit\"at Wien, Fakult\"at f\"ur Mathematik,  Wien, \"Osterreich \\
   \hspace{.1in} Email: {\bf lkatzark@math.uci.edu} \\
  \end{tabular}
}
\numberwithin{equation}{section}
\begin{document}
\renewcommand{\labelenumi}{\emph{\alph{enumi})}}

\begin{abstract}
 We study the relationship between derived categories of factorizations on gauged Landau-Ginzburg models related by variations of the linearization in Geometric Invariant Theory. Under assumptions on the variation, we show the derived categories are comparable by semi-orthogonal decompositions and describe the complementary components.  We also verify a question posed by Kawamata: we show that $D$-equivalence and $K$-equivalence coincide for such variations. The results are applied to obtain a simple inductive description of derived categories of coherent sheaves on projective toric Deligne-Mumford stacks. This recovers Kawamata's theorem that all projective toric Deligne-Mumford stacks have full exceptional collections. Using similar methods, we prove that the Hassett moduli spaces of stable symmetrically-weighted rational curves also possess full exceptional collections. As a final application, we show how our results recover Orlov's $\sigma$-model/Landau-Ginzburg model correspondence.
\end{abstract}

\maketitle

\section{Introduction} \label{section: Introduction}

Geometric Invariant Theory (GIT) and birational geometry are closely tied. The lack of a canonical choice of linearization of the action, which may once have been viewed as a bug in the theory, has now been firmly established as a marvelous feature for constructing new birational models of a GIT quotient. As studied by M. Brion-C. Procesi \cite{BP}, M. Thaddeus \cite{Tha96}, I. Dolgachev-Y. Hu \cite{DH98}, and others, changing the linearization of the action leads to birational transformations between the different GIT quotients (VGIT). Conversely, any birational map between smooth and projective varieties can be obtained through such GIT variations \cite{Wlo00,HK99}.

In a different direction, the close relationship between birational geometry and derived categories of coherent sheaves has also enjoyed significant attention.  Indeed, work of A. Bondal \cite{BO95}, D. Orlov \cite{Orl92}, T. Bridgeland \cite{Bridgeland-flops}, A. Kuznetsov \cite{Kuz09b}, Y. Kawamata \cite{KawD-K}, and others, indicates that birational varieties should have derived categories of coherent sheaves related by semi-orthogonal decompositions.

Combining these themes motivates the study of derived categories of varieties related through VGIT. Our paper is focused on this. What we find is pleasantly systematic and well-structured. The methods and results provide a new perspective on the relationship between birational geometry and derived categories.

Let us give the main result on derived categories of sheaves, as it is the simplest to state. Let $X$ be a smooth, projective variety equipped with the action of a reductive linear algebraic group, $G$. Assume we have two $G$-equivariant ample line bundles, $\mathcal L_-$ and $\mathcal L_+$, satisfying the following conditions:
\begin{itemize}
 \item For $t \in [-1,1]$, let $\mathcal L_t = \mathcal L_-^{\frac{1-t}{2}} \otimes \mathcal L_+^{\frac{1+t}{2}}$. The semi-stable locus, $X^{\op{ss}}(\mathcal L_t)$, is constant for $-1 \leq t < 0$ and for $0 < t \leq 1$. Let $X^{\op{ss}}(-) := X^{\op{ss}}(\mathcal L_t)$ for $-1 \leq t < 0$, $X^{\op{ss}}(0) := X^{\op{ss}}(\mathcal L_0)$, and $X^{\op{ss}}(+) := X^{\op{ss}}(\mathcal L_t)$ for $0 < t \leq 1$.
 \item The set, $X^{\op{ss}}(0) \setminus \left(X^{\op{ss}}(-) \cup X^{\op{ss}}(+)\right)$, is connected.
 \item For any point, $x \in X^{\op{ss}}(0) \setminus \left(X^{\op{ss}}(-) \cup X^{\op{ss}}(+)\right)$, the stabilizer, $G_x$, is isomorphic to $\mathbb{G}_m$. 
\end{itemize}
Work of Thaddeus \cite{Tha96} and Dolgachev-Hu \cite{DH98} then tells us that there is a one-parameter subgroup, $\lambda: \mathbb{G}_m \to G$, a connected component, $Z_{\lambda}^0$, of the fixed locus of $\lambda$ in $X^{\op{ss}}(0)$, and disjoint decompositions,
\begin{align*}
 X^{\op{ss}}(0) & = X^{\op{ss}}(+) \sqcup S_{\lambda} \\
 X^{\op{ss}}(0) & = X^{\op{ss}}(-) \sqcup S_{-\lambda}.
\end{align*}
Here, $S_{\lambda}$ is the $G$-orbit of all points in $X$ that flow to $Z_{\lambda}^0$ as $\alpha \to 0$ in $\mathbb{G}_m$ and $S_{-\lambda}$ is the $G$-orbit of all points in $X$ that flow to $Z_{\lambda}^0$ as $\alpha \to \infty$ in $\mathbb{G}_m$. 

Let $C(\lambda)$ denote the centralizer of $\lambda$ and let $G_{\lambda}$ be the quotient, $C(\lambda)/\lambda$. Let $X \modmod{} + := [X^{\op{ss}}(+)/G]$ and $X \modmod{} - := [X^{\op{ss}}(-)/G]$ be the global quotient stacks of the $(+)$ and $(-)$ semi-stable loci by $G$. Finally, let $\mu$ be the weight of $\lambda$ on the anti-canonical bundle of $X$ along $Z_{\lambda}^0$.

Assume, for simplicity, that there is a splitting, $C(\lambda) \cong \lambda \times G_{\lambda}$, and let $X^{\lambda} \modmod{0} G_{\lambda}$ be the GIT quotient stack, $[(X^{\lambda})^{\op{ss}}(\mathcal L_0)/G_{\lambda}]$, of the fixed locus of $\lambda$, $X^{\lambda}$, by $G_{\lambda}$ using the equivariant line bundle, $\mathcal L_0$.

\begin{introtheorem} \label{theorem: intro}
 Fix $d \in \Z$.
 \begin{enumerate}
  \item If $\mu > 0$, then there are fully-faithful functors,
  \begin{displaymath}
   \Phi^+_d: \dbcoh{X \modmod{} -} \to \dbcoh{X \modmod{} +},
  \end{displaymath}
  and, for $d \leq j \leq \mu + d -1$,
  \begin{displaymath}
   \Upsilon_j^+:  \dbcoh{X^{\lambda} \modmod{0} G_{\lambda}} \to \dbcoh{X \modmod{} +},
  \end{displaymath}
  and a semi-orthogonal decomposition,
  \begin{displaymath}
   \dbcoh{X \modmod{} +} = \langle \Upsilon^+_{d}, \ldots, \Upsilon^+_{\mu + d -1}, \Phi^+_d \rangle.
  \end{displaymath}
  \item If $\mu = 0$, then there is an exact equivalence,
  \begin{displaymath}
   \Phi^+_d: \dbcoh{X \modmod{} -} \to \dbcoh{X \modmod{} +}.
  \end{displaymath}
  \item If $\mu < 0$, then there are fully-faithful functors,
  \begin{displaymath}
   \Phi^-_d: \dbcoh{X \modmod{} +} \to \dbcoh{X \modmod{} -},
  \end{displaymath}
  and, for $\mu + d+1 \leq j \leq d$,
  \begin{displaymath}
   \Upsilon_j^-:  \dbcoh{X^{\lambda} \modmod{0} G_{\lambda}} \to \dbcoh{X \modmod{} -},
  \end{displaymath}
  and a semi-orthogonal decomposition,
  \begin{displaymath}
   \dbcoh{X \modmod{} -} = \langle \Upsilon^-_{\mu + d+1}, \ldots, \Upsilon^-_{d}, \Phi^-_d \rangle.
  \end{displaymath}
 \end{enumerate}
\end{introtheorem}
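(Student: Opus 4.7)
The strategy is to realize $\dbcoh{X\modmod{} +}$ and $\dbcoh{X \modmod{} -}$ as equivalent ``grade restriction window'' subcategories of a common equivariant derived category on $X^{\op{ss}}(0)$, in the spirit of Segal, Herbst--Hori--Page, Halpern-Leistner, and Shipman. Restriction along the open inclusions $X^{\op{ss}}(\pm) \hookrightarrow X^{\op{ss}}(0)$ is essentially surjective but has a nontrivial kernel supported on the respective unstable stratum $S_{\pm \lambda}$; the window is cut out by a condition on the $\lambda$-weights of the complex at $Z_{\lambda}^0$ that kills this kernel while preserving essential surjectivity.

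First I would analyze the local structure near the strata $S_{\pm\lambda}$. Since $Z_{\lambda}^0$ is fixed by $\lambda$ and the generic stabilizer is $\mathbb{G}_m$ (identified with $\lambda$), the conormal bundle $N^{\vee}_{S_{\pm\lambda}/X}|_{Z_{\lambda}^0}$ decomposes into $\lambda$-weight pieces that are strictly positive for $S_{\lambda}$ and strictly negative for $S_{-\lambda}$. Letting $\eta^{+}$ and $\eta^{-}$ denote the sums of absolute values of the $\lambda$-weights of these two conormal bundles, and noting that $T_{Z_{\lambda}^0}$ carries $\lambda$-weight zero, one has $\eta^+ - \eta^- = \mu$ from the exact sequence $0 \to T_{Z_{\lambda}^0} \to T_X|_{Z_{\lambda}^0} \to N_{Z_{\lambda}^0/X} \to 0$ applied to the anticanonical bundle.

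Define the window $\mathcal{W}_d^{+} \subset \dbcoh{[X^{\op{ss}}(0)/G]}$ to be the full subcategory of complexes whose cohomology at $Z_{\lambda}^0$ carries $\lambda$-weights in $[d, d+\eta^+ - 1]$, and similarly $\mathcal{W}_d^{-}$ with window of length $\eta^-$. The key technical step is to show that the restriction functors $\mathcal{W}_d^{\pm} \to \dbcoh{X \modmod{} \pm}$ are exact equivalences. This proceeds by combining Luna's étale slice theorem near $Z_{\lambda}^0$ with Koszul-type resolutions of the closed substacks $[S_{\pm\lambda}/G]$ against their conormal bundles: essential surjectivity follows by iteratively replacing components outside the window by such Koszul resolutions, and fully-faithfulness reduces to the vanishing of weight pieces of $\operatorname{Ext}$-groups between objects pulled back from $Z_{\lambda}^0 \modmod{0} G_{\lambda}$, which holds precisely when the two weight indices lie within $\eta^{\pm}$ of each other.

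With both window equivalences in hand, the theorem is assembled by comparison. When $\mu > 0$, $\mathcal{W}_d^{-} \subset \mathcal{W}_d^{+}$ and the orthogonal complement is generated by the objects pushed forward from $Z_{\lambda}^0$ of pure $\lambda$-weight $j$ for $d \le j \le \mu + d - 1$, each of which yields a fully-faithful functor $\Upsilon_j^+$ out of $\dbcoh{X^{\lambda} \modmod{0} G_{\lambda}}$; the embedding $\Phi_d^+$ is then obtained by composing the inverse of the $(+)$ window equivalence with the $(-)$ window equivalence. The case $\mu = 0$ gives the two windows coincide, producing a direct equivalence, and $\mu < 0$ is symmetric under exchange of $\lambda$ and $-\lambda$. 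The principal obstacle is the window equivalence itself: one must control an essentially infinite decomposition of the kernel of restriction, indexed by all $\lambda$-weights, and show that the windowed subcategory picks out a unique representative of each object in the quotient. The hypothesis that $X^{\op{ss}}(0) \setminus (X^{\op{ss}}(+) \cup X^{\op{ss}}(-))$ is connected guarantees that a single $\lambda$ governs the wall-crossing, while the assumption $G_x \cong \mathbb{G}_m$ makes the weight decomposition along normal directions sharp, which is exactly what makes the window estimate tight enough to yield an equivalence rather than merely a semi-orthogonal embedding.
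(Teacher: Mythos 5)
Your proposal follows essentially the same route as the paper: grade-restriction windows in $\dbcoh{[X^{\op{ss}}(0)/G]}$ cut out by $\lambda$-weights along $Z_{\lambda}^0$, restriction to the two semistable loci shown to be an equivalence on windows of lengths $-t(\mathfrak{K}^{\pm})$ (your $\eta^{\pm}$), the identity $\eta^+-\eta^-=\mu$ via the anticanonical weight, and the complementary pieces generated by pushforwards of pure-weight objects from the fixed locus. The only technical divergence is that you invoke Luna slices for the local model, whereas the paper works with formal completion along $S_{\lambda}$ together with the Bia\l{}ynicki-Birula fibration and Thomason's equivalence $\op{Qcoh}[G\overset{P(\lambda)}{\times}Z_{\lambda}/G]\cong\op{Qcoh}[Z_{\lambda}/P(\lambda)]$, reducing the key vanishing to Serre's computation of local cohomology; this is a difference of implementation, not of strategy.
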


In the statement above, for a semi-orthogonal decomposition, we use the functor to also denote its image. Theorem \ref{theorem: intro} is a case of Theorem \ref{theorem: VGIT and derived categories} from Section \ref{subsection: comparing derived categories}. Theorem \ref{theorem: intro} can be used the verify  a question posed by Kawamata \cite{KawD-K} in this setting; we prove that that $D$-domination and $K$-domination coincide for such variations, see Corollary \ref{corollary: D=K for elementary wall crossing} for the precise statement.

As another application, we provide a more streamlined proof of the following result of Kawamata \cite{Kaw06,Kaw12}.

\begin{introtheorem} \label{theorem: intro toric}
 Let $X$ be a smooth projective toric variety. The derived category, $\dbcoh{X}$, possesses a full exceptional collection.
\end{introtheorem}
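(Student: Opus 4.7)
The plan is to express $X$ as a GIT quotient of an affine space by a torus via Cox's construction and then to build up $\dbcoh{X}$ from a trivial base chamber by a finite sequence of elementary wall crossings, applying Theorem~\ref{theorem: intro} at each wall and inducting on the Picard rank $r$ of $X$.

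First, by Cox's construction, write $X = [(\mathbb{A}^n \setminus Z_+)/T]$ with $T \cong (\mathbb{G}_m)^r$, where $r = \op{rk}\op{Pic}(X)$ and $Z_+$ is the unstable locus of a linearization $\theta_+$ lying in the appropriate chamber of the secondary fan. Choose a path in the space of $T$-linearizations from $\theta_+$ to a chamber $\theta_-$ giving either an empty or otherwise trivial quotient, for example a chamber adjacent to the irrelevant cone. Perturb the path so that it crosses walls transversally, one at a time, through codimension-one faces where the newly (de)stabilized stratum has stabilizer a single $\mathbb{G}_m$ and is connected; this is possible because stabilizers for a torus acting linearly on affine space are subtori, and one-dimensional subtori are picked out precisely by codimension-one walls of the secondary fan. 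Thus at each wall, the hypotheses of Theorem~\ref{theorem: intro} apply.

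Next, apply Theorem~\ref{theorem: intro} at each wall. Depending on the sign of $\mu$, one obtains either an equivalence or a semi-orthogonal decomposition of the $(+)$-quotient in terms of the $(-)$-quotient together with $|\mu|$ copies of $\dbcoh{(\mathbb{A}^n)^\lambda \modmod{0} G_\lambda}$. Here $(\mathbb{A}^n)^\lambda$ is the fixed locus of the one-parameter subgroup $\lambda$ determined by the wall, a coordinate linear subspace of $\mathbb{A}^n$, and $G_\lambda = C(\lambda)/\lambda$ is a subtorus of $T$ of rank $r-1$. Consequently $(\mathbb{A}^n)^\lambda \modmod{0} G_\lambda$ is itself a smooth projective toric Deligne--Mumford stack, but of strictly smaller Picard rank. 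By the inductive hypothesis on $r$ (with the trivial base case $r = 0$ giving a point), each such $\dbcoh{(\mathbb{A}^n)^\lambda \modmod{0} G_\lambda}$ carries a full exceptional collection, as does the derived category of the quotient at the starting chamber $\theta_-$. Gluing these collections along the semi-orthogonal decompositions at each successive wall yields a full exceptional collection on $\dbcoh{X}$.

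The main obstacle is arranging the path so that every crossing is genuinely elementary in the sense of Theorem~\ref{theorem: intro}: one must perturb $\mathcal L_-$ and $\mathcal L_+$ generically to avoid simultaneous crossings and to force stabilizers of rank exactly one. A secondary technical point is that the natural inductive statement must be made for smooth projective toric Deligne--Mumford stacks, not merely for varieties, since the fixed-locus quotients $(\mathbb{A}^n)^\lambda \modmod{0} G_\lambda$ need not be schemes even when $X$ is; Theorem~\ref{theorem: intro toric} as stated then falls out as the special case where the stack happens to be a scheme.
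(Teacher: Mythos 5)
Your overall strategy --- Cox presentation, a path of linearizations ending in the empty chamber, wall-by-wall application of Theorem~\ref{theorem: intro}, and induction applied to the wall contributions --- is the same as the paper's proof of Theorem~\ref{theorem: full exc coll on toric var}. The choice of induction parameter (rank of the torus rather than the paper's dimension of the affine space), the genericity perturbation avoiding codimension-two cones, and the observation that the statement must be proved for projective toric DM stacks rather than varieties are all consistent with the paper or harmless variants of it.

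However, there is one genuine gap: you never control the \emph{direction} of the semi-orthogonal decomposition at each crossing. Theorem~\ref{theorem: intro} decomposes the quotient on the side of the wall where $\mu>0$, and by Lemma~\ref{lemma: formula for mu} that is the side containing the anticanonical class $\omega_X^{-1}$ (Remark~\ref{remark: which side is the anti-canonical on}). A generic path from the chamber of $X$ to the empty chamber can cross a wall in the wrong direction; at such a wall you only learn that $\dbcoh{X \modmod{\chi_{i-1}} G}$ is an admissible subcategory of $\dbcoh{X \modmod{\chi_{i}} G}$, and an admissible subcategory of a category with a full exceptional collection need not itself have one, so the induction breaks. (For instance, crossing from the chamber of a Hirzebruch surface toward that of the weighted projective stack it contracts to goes the ``wrong'' way.) The paper's remedy is to take $\gamma$ to be a straight line emanating from $\omega_X^{-1}$ and passing through the interior of the chamber of $X$: every wall met after entering that chamber then separates the current chamber from $\omega_X^{-1}$, so $\mu_i \leq 0$ in the relevant normalization and every decomposition peels pieces off the category you are tracking. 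This also requires the strong convexity of the secondary fan (which follows from projectivity, Proposition 14.1.3 of \cite{CLS}) to guarantee that such a ray eventually exits into the empty chamber. Without some such monotonicity argument anchored at $\omega_X^{-1}$, your ``gluing'' step is not justified.
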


 In \cite{Kap93}, Kapranov presented $\overline{M}_{0,n}$ as an iterated blow up of $\P^{n-3}$ along strict transforms of linear spaces. So, $\overline{M}_{0,n}$'s possession of a full exceptional collection was known by \cite{Orl92}. Yu. Manin and M. Smirnov used Keel's presentation \cite{Keel} to produce some exceptional collections on $\overline{M}_{0,n}$ in \cite{MaS}. We generalize this work by extending the methods employed in the proof of Theorem \ref{theorem: intro toric} to establish the following result concerning B. Hassett's moduli spaces of stable symmetrically-weighted rational curves, $\overline{M}_{0,n \cdot \epsilon}$ \cite{Has03}.

\begin{introtheorem} \label{theorem: intro moduli}
 The derived category, $\dbcoh{\overline{M}_{0,n \cdot \epsilon}}$, posseses a full exceptional collection.
\end{introtheorem}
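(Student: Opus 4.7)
\emph{Proof plan.} The plan is to exhibit the family of Hassett spaces $\{\overline{M}_{0,n\cdot\epsilon}\}_{\epsilon \in (2/n,1]}$ as a chain of birational models arising from a single GIT variation, and then iteratively apply Theorem~\ref{theorem: intro}. For a symmetric weight $\epsilon$, the stability condition depends on $\epsilon$ only through the integer part of $1/\epsilon$, so the parameter space is divided into chambers separated by walls at $\epsilon = 1/k$ for $k = 2, \ldots, n-2$; crossing the wall at $1/k$ corresponds to allowing $k$-fold collisions of markings, and effects a contraction of the corresponding boundary divisor. The first step is to produce a GIT presentation (for example along the lines of Simpson's or Fulton--MacPherson's constructions, or by an iterated blow-up argument \`a la Kapranov) in which each such wall appears as an elementary VGIT. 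In the smallest chamber one lands on $\overline{M}_{0,n\cdot\epsilon} \cong \mathbb{P}^{n-3}$, which admits Beilinson's exceptional collection and serves as the base of the induction.

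\smallskip

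Second, I would check the hypotheses of Theorem~\ref{theorem: intro} at each wall. The strictly semistable stratum at $\epsilon = 1/k$ splits into $\binom{n}{k}$ connected components indexed by the $k$-subset of markings involved in the collision; on each such component the destabilizing one-parameter subgroup $\lambda$ is the obvious $\mathbb{G}_m$ which collapses the chosen cluster, and the stabilizer of a balanced configuration is exactly $\mathbb{G}_m$. Working componentwise (via the more general form of the theorem from the body of the paper), the associated fixed-locus quotient $X^{\lambda} \modmod{0} G_{\lambda}$ is a product of two smaller Hassett spaces, of the form $\overline{M}_{0,(k+1)\cdot\epsilon'} \times \overline{M}_{0,(n-k+1)\cdot\epsilon''}$, each gaining one extra marking to record the node at which the two clusters are joined. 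By the K\"unneth principle, such a product carries a full exceptional collection provided both factors do.

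\smallskip

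The proof is then concluded by strong induction on $n$. Assuming the result for all Hassett spaces with fewer than $n$ markings, the fixed-locus quotient at each wall admits a full exceptional collection. Starting from $\dbcoh{\mathbb{P}^{n-3}}$ and iteratively applying Theorem~\ref{theorem: intro} while crossing walls in the direction of increasing $\epsilon$, we splice the existing collection with $\mu$ copies of the exceptional collection on the fixed-locus product---where $\mu$ is the weight of $\lambda$ on the anti-canonical bundle along the wall---to obtain a full exceptional collection on the next Hassett space in the chain. Iterating, we reach $\overline{M}_{0,n\cdot\epsilon}$ for every $\epsilon$, and in particular $\overline{M}_{0,n}$ itself. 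The main obstacle will be producing a GIT presentation in which every Hassett wall is realized as an elementary VGIT satisfying the hypotheses of Theorem~\ref{theorem: intro}, together with the precise identification of the fixed-locus components as products of smaller Hassett spaces; the computation of $\mu$ reduces to an intersection calculation of the anti-canonical class against the contracted boundary divisor and is routine by comparison.
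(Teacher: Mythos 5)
Your plan hinges on producing a single master GIT presentation in which the reduction maps between adjacent symmetric Hassett chambers $\epsilon=1/k$ all appear as elementary wall crossings, and you correctly flag this as the main obstacle --- but it is in fact the entire difficulty, and the paper does not solve it, it avoids it. In the Kiem--Moon presentation used in Section \ref{section: moduli} (Theorem \ref{theorem: moduli of stable curves as GIT quotient}), the various $\overline{M}_{0,n\cdot\epsilon_j}$ arise as quotients of \emph{different} iterated blow-ups $F_l$ of $(\P^1)^n$, not as different chambers for one space; no VGIT connecting two distinct Hassett spaces is ever performed. Instead, the proof of Theorem \ref{thm: exceptional collection stable curves} fixes one $F_j$, starts in the chamber whose quotient is $\overline{M}_{0,n\cdot\epsilon}$, and chooses a path that ``slides into the abyss,'' i.e.\ ends in the empty chamber, crossing only walls $H_{j,I}$ with $|I|\leq|I^c|$ so that every semi-orthogonal decomposition points the right way (Lemma \ref{lemma: abyss gives SOD}). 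Because the terminal quotient is empty, the whole category is assembled from wall contributions and no inductive base case or induction on $n$ is needed.

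Two of your concrete assertions are also off. First, the wall contribution in Theorem \ref{theorem: intro} is $\dbcoh{X^{\lambda}\modmod{0}G_{\lambda}}$, the quotient of the $\lambda$-fixed locus; for $\op{PGL}_2$ one has $G_{\lambda}=C(\lambda)/\lambda=1$, and $Z_{\lambda}^0$ is the fiber of $\psi_{j,0}:F_j\to(\P^1)^n$ over a maximally degenerate configuration (Corollary \ref{corollary: fixed locus of lambda FM}) --- an iterated blow-up of projective space along strict transforms of linear subspaces, handled by Orlov's blow-up formula. It is not the boundary divisor $\overline{M}_{0,(k+1)\cdot\epsilon'}\times\overline{M}_{0,(n-k+1)\cdot\epsilon''}$ of the Hassett space; you are conflating the exceptional locus of the birational map between quotients with the fixed-locus quotient that actually appears in the semi-orthogonal decomposition. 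Second, the smallest \emph{symmetric} chamber is not $\P^{n-3}$ (that is the Kapranov/asymmetric picture); it is the symmetric GIT quotient $(\P^1)^n\modmod{}\op{PGL}_2$, which the paper must treat separately in Section \ref{subsection: moduli of weighted points} via an $\op{SL}_2$ versus $\op{PGL}_2$ argument (Proposition \ref{proposition: exc coll SL2} and Theorem \ref{theorem: exc coll on moduli of weighted points}), precisely because the relevant wall contributions there are $[\op{pt}/(\Z/(2))]$ and one must track the action of the center. So while your overall instinct --- degenerate the linearization and splice in wall contributions --- is the right one, the argument as proposed has genuine gaps in exactly the places where the paper's construction does real work.
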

This result is a particular case of a more general result, Lemma \ref{lemma: abyss gives SOD}, concerning various moduli spaces of weighted pointed rational curves.  

As a final application, we show how to recover, using VGIT, the following result of Orlov \cite{Orl09} relating the derived categories of projective complete intersections and singularity categories of affine cones.
Let $f_1,\ldots,f_c$ be a homogeneous regular sequence in $k[x_1,\ldots,x_n]$ of degrees, $d_1,\ldots,d_c$. Let $Y$ be the corresponding projective complete intersection and let
\begin{displaymath}
 S = k[x_1,\ldots,x_n]/(f_1,\ldots,f_c).
\end{displaymath}
Recall that the category, $\op{D}_{\op{sg}}(S,\Z)$, is the $\Z$-graded singularity category of $S$, \cite{Buc86,Orl09}.

\begin{introtheorem} \label{theorem: intro Orlov}
 Fix $d \in \Z$. Let $a = n - \sum d_i$
 \begin{enumerate}
  \item If $a > 0$, then there are fully-faithful functors,
  \begin{displaymath}
   \Phi^+_d: \op{D}_{\op{sg}}(S,\Z) \to \dbcoh{Y},
  \end{displaymath}
  and a semi-orthogonal decomposition,
  \begin{displaymath}
   \dbcoh{Y} = \langle \mathcal O_Y(d), \ldots, \mathcal O_Y(a + d -1), \Phi^+_d \rangle.
  \end{displaymath}
  \item If $a = 0$, then there is an exact equivalence,
  \begin{displaymath}
   \Phi^+_d: \op{D}_{\op{sg}}(S,\Z) \to \dbcoh{Y}.
  \end{displaymath}
  \item If $a < 0$, then there are fully-faithful functors,
  \begin{displaymath}
   \Phi^-_d: \dbcoh{Y} \to \op{D}_{\op{sg}}(S,\Z),
  \end{displaymath}
  and a semi-orthogonal decomposition,
  \begin{displaymath}
   \op{D}_{\op{sg}}(S,\Z) = \langle k(-d), \ldots, k(a-d+1), \Phi^-_d \rangle.
  \end{displaymath}
 \end{enumerate}
\end{introtheorem}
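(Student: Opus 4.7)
The plan is to realize Theorem~\ref{theorem: intro Orlov} as an instance of the factorization analogue of Theorem~\ref{theorem: intro} applied to a gauged Landau--Ginzburg model built from the regular sequence $f_1,\ldots,f_c$. Set $X = \mathbb{A}^{n+c}$ with coordinates $(x_1,\ldots,x_n,p_1,\ldots,p_c)$, let $G = \mathbb{G}_m$ act with weights $(1,\ldots,1,-d_1,\ldots,-d_c)$, and take the potential $W = \sum_{j=1}^{c} p_j f_j(x)$, which has weight zero. The two linearizations $\mathcal L_+$ and $\mathcal L_-$ are given by the characters $+1$ and $-1$ of $\mathbb{G}_m$.

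A direct computation of $\mathbb{G}_m$-invariant sections shows that $X^{\op{ss}}(+) = (\mathbb{A}^n \setminus 0) \times \mathbb{A}^c$ and $X^{\op{ss}}(-) = \mathbb{A}^n \times (\mathbb{A}^c \setminus 0)$, so the strictly semistable locus at the wall collapses to the origin with stabilizer $\mathbb{G}_m$. The distinguished one-parameter subgroup is therefore $\lambda = \op{id}_{\mathbb{G}_m}$ with $Z^0_\lambda = \{0\}$, and $G_\lambda = C(\lambda)/\lambda$ is trivial; hence $X^{\lambda} \modmod{0} G_{\lambda} = \op{Spec} k$ and $\dbcoh{\op{Spec} k}$ is generated by a single exceptional object. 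The weight of $\lambda$ on $-K_X$ at the origin is the sum of the weights of the action, giving $\mu = n - \sum_j d_j = a$.

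Next I would identify the two factorization categories with the targets in the statement. The $(+)$-quotient is the total space $\op{Tot}\bigl(\bigoplus_j \mathcal{O}_{\mathbb{P}^{n-1}}(-d_j)\bigr)$, on which $W$ restricts to the tautological pairing with a section of $\bigoplus_j \mathcal{O}_{\mathbb{P}^{n-1}}(d_j)$ cutting out $Y$; by the theorem of Isik--Shipman, the absolute derived category of factorizations is equivalent to $\dbcoh{Y}$. The $(-)$-quotient is $[\mathbb{A}^n \times (\mathbb{A}^c \setminus 0)/\mathbb{G}_m]$, and its factorization category with potential $W$ is equivalent to the $\Z$-graded singularity category $\op{D}_{\op{sg}}(S,\Z)$ via Orlov's theorem comparing graded matrix factorizations to singularity categories. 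Applying the factorization version of Theorem~\ref{theorem: intro} to this data then yields a semiorthogonal decomposition whose $\Upsilon$-part contains $|\mu| = |a|$ copies of $\dbcoh{\op{Spec} k}$, matching the number of exceptional objects claimed.

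The main obstacle is tracking the image of each $\Upsilon_j$ through the two equivalences above. The generator of $\dbcoh{X^{\lambda} \modmod{0} G_{\lambda}}$ twisted by the $\mathbb{G}_m$-character $j$ should correspond to $\mathcal{O}_Y(j)$ on the $(+)$-side and to $k(-j)$ on the $(-)$-side. This identification will follow from tracing the Isik--Shipman and Orlov equivalences on the subcategory supported at the origin of the wall, using that the character $j$ of $\mathbb{G}_m$ on the fiber at $0$ translates into the appropriate twist in each geometric model. Once these images are identified, the three cases $a > 0$, $a = 0$, and $a < 0$ fall out of the three cases of Theorem~\ref{theorem: intro} directly.
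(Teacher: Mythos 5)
Your overall strategy --- realizing the statement as the elementary wall crossing on $\mathbb{A}^{n+c}$ between the deleted loci $\{x=0\}$ and $\{p=0\}$, with $\mu = n-\sum d_i = a$ and the wall contributing $|a|$ exceptional objects --- is exactly the one the paper uses in Section \ref{section: Orlov}. However, your model is missing the second $\mathbb{G}_m$ (the ``R-symmetry''), and this is not a cosmetic omission: with $G=\mathbb{G}_m$ only, two of your key identifications fail. First, Theorem \ref{theorem: Isik} produces an equivalence $\dbcoh{Y}\cong\dcoh{[\op{V}(\mathcal E)/(G\times\mathbb{G}_m)],w}$ only in the presence of the extra $\mathbb{G}_m$ scaling the fibers of $\op{V}(\mathcal E)$; for the quotient $[((\mathbb{A}^n\setminus 0)\times\mathbb{A}^c)/\mathbb{G}_m]$ by the single projective $\mathbb{G}_m$, the factorization category of $W$ is a $2$-periodic category (a folding of $\dbcoh{Y}$ in which $[2]\cong\op{id}$), not $\dbcoh{Y}$ itself. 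Second, with $\lambda=\op{id}_{\mathbb{G}_m}$ one has $C(\lambda)=\lambda$ and $w_{\lambda}=0$, so the wall contribution $\dcoh{[Z_{\lambda}^0/C(\lambda)],w_{\lambda}}_j$ is the category of $2$-periodic complexes of vector spaces; this is not equivalent to $\dbcoh{\op{Spec}k}$, so the exceptional objects $\mathcal O_Y(j)$ and $k(-j)$ cannot be extracted this way.

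The paper's fix is to enlarge the group to $\mathbb{G}_m^2$, the second factor acting trivially on the $x_i$ and with weight $1$ on the $u_j$ (your $p_j$), so that $w=\sum u_jf_j$ is semi-invariant of weight $(0,1)$. Then $\lambda(\alpha)=(\alpha,1)$ still pairs to zero with the character of $w$, $G_{\lambda}\cong\mathbb{G}_m$, the wall category is $\dcoh{[\op{pt}/\mathbb{G}_m],0}\cong\dbcoh{\op{pt}}$ by Corollary \ref{corollary: Isik}, and Theorem \ref{theorem: Isik} applies verbatim. A second, smaller divergence: rather than invoking a separate graded matrix-factorization versus singularity-category comparison on the $(-)$-side, the paper applies the Isik equivalence once on all of $X$ to get $\op{D}^{\op{b}}(\op{mod }R,\Z)\cong\dbcoh{[X/\mathbb{G}_m^2],w}$, identifies torsion modules and perfect complexes with factorizations supported on $Z(x)$ and $Z(u)$ respectively, and obtains both equivalences of Corollary \ref{corollary: Iski theorem for affine CI} as Verdier quotients. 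Your route would need an independent justification of the $(-)$-side identification for complete intersections with $c>1$, which is not merely a quotable form of Orlov's hypersurface result.
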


The ideas underlying this paper, in particular that of windows, see Section \ref{section: main result}, have appeared previously in the literature. While they can rightly be traced back to the fundamental paper of J.-P. Serre \cite{SerreFAC}, their first true appearance as a tool in the study of derived categories is in the work of Kawamata. In \cite{KawFF}, Kawamata treats the case of $\mathbb{G}_m$ actions; he constructs the fully-faithful functors, $\Phi^+_d$ and $\Phi^-_d$, from Theorem \ref{theorem: intro} and proves equivalences when $\mu = 0$. Kawamata also explicitly views this situation as a case of VGIT. In \cite{Kaw05}, Kawamata extends the methods to treat birational maps that are \'etale-locally VGIT for toric varieties and extends his earlier results to this setting. 

Shortly after \cite{KawFF}, and independently, M. Van den Bergh also studied $\mathbb{G}_m$ actions on affine space \cite{VdB} via windows, giving the fully-faithful functors, and criterion for equivalences. And, as this paper makes manifest, windows and VGIT are an essential underlying framework of Orlov's paper \cite{Orl09} even if not explicitly mentioned.

The physicists, M. Herbst, K. Hori, and D. Page, studied Abelian gauged linear $\sigma$-models in \cite{HHP}, where they rediscovered windows and used it to explain Orlov's theorem. Through work of E. Witten \cite{Witten}, the phases of gauged linear $\sigma$-model are exactly the different chambers of the GIT fan for the action. So VGIT is an implicit piece of \cite{HHP}. Building on the ideas of \cite{HHP}, E. Segal re-proved the Calabi-Yau hypersurface case of Orlov's theorem using VGIT, LG-models, and windows. Segal's development and presentation of the ideas left an indelible mark on the authors of this paper. Subsequently, Shipman extended Segal's methods to handle the Calabi-Yau complete intersection case of Orlov's theorem \cite{Shipman}. Independently, Herbst and J. Walcher extended Orlov's theorem to Calabi-Yau complete intersections in toric varieties \cite{HW} and used it to study auto-equivalences. Along this vein, W. 
Donovan constructed exotic derived equivalences via Grassmannian twists \cite{Donovan}. Donovan's work represents the first application of these ideas outside the Abelian realm. 

Two additional papers on related material appeared contemporaneously to this paper. Both are independent. The first is due to D. Halpern-Leistner \cite{HL12} and has significant overlap with this paper. Halpern-Leistner proves the existence of the fully-faithful functors, $\Phi^+_d$ and $\Phi^-_d$, and equivalences for $\mu = 0$ in Theorem \ref{theorem: intro} when $G$ is not necessarily Abelian. He also gives a definition of windows in the non-Abelian setting. The authors and Halpern-Leistner interacted during a conference at the University of Miami where the first and second authors presented preliminary results of this paper. Halpern-Leistner informed the authors of his work and later provided a preprint version of \cite{HL12} while the first version of this paper was in preparation. The second paper is due to Donovan and Segal \cite{DS} and builds on \cite{Donovan}. Again studying Grassmannian twists, Donovan and Segal use a different definition of window built from M. Kapranov's exceptional 
collection \cite{Kap88}. Neither 
of the 
concurrent works explicitly handles LG-models or focuses on the $\mu \not= 0$ case in VGIT; both ideas are essential to the applications of this paper.

Let us finish the introduction with a brief outline of the structure of the paper. In Section \ref{section: background}, we recall some facts and results on GIT, derived categories, and factorization categories. As our main geometric tool, we focus on stratifications of GIT quotients described by G. Kempf \cite{Kempf}, W. Hesselink \cite{Hess}, F. Kirwan \cite{Kir}, and L. Ness \cite{Ness}. In Section \ref{section: main result}, we prove that elementary wall crossings for these stratifications yield semi-orthogonal decompositions relating the two different LG-models. This result is then placed back in the context of GIT in Section \ref{sec: VGIT} and is used to prove $D$-equivalence and $K$-equivalence coincide for elementary wall crossings. The applications to exceptional collections appear in Section \ref{section: toric} for toric varieties and Section \ref{section: moduli} for moduli spaces of rational curves. In Section \ref{section: Orlov}, we show how to recover Orlov's theorem. 

\vspace{2.5mm}
\noindent \textbf{Acknowledgments:}
The authors have benefited immensely from conversations and correspondence with Yujiro Kawamata, Manfred Herbst, Colin Diemer, Gabriel Kerr, Alastair Craw, Sukhendu Mehrotra, Andrei C\u{a}ld\u{a}raru, Paolo Stellari, Ed Segal, Michael Thaddeus, Dmitri Orlov, Alexei Bondal, Kentaro Hori, Will Donovan, R. Paul Horja, Dragos Deliu, M. Umut Isik, Pawel Sosna, Emanuele Macr\`{i}, Alexander Kuznetsov, Daniel Halpern-Leistner, and Maxim Kontsevich and would like to thank them all  for their time, patience, and insight. 

The first named author was funded by NSF DMS 0636606 RTG, NSF DMS 0838210 RTG, and NSF DMS 0854977 FRG. The second and third named authors were funded by NSF DMS 0854977 FRG, NSF DMS 0600800, NSF DMS 0652633 FRG, NSF DMS 0854977, NSF DMS 0901330, FWF P 24572 N25, by FWF P20778 and by an ERC Grant.
\vspace{2.5mm}

\section{Background} \label{section: background}

In this section, we collect some of the ideas and results necessary for the arguments of Section~\ref{section: main result}. For the whole of the paper, $k$ will denote an algebraically-closed field of characteristic zero. The term, variety, means a separated, reduced scheme of finite-type over $k$. All points of a variety are $k$-points.

\subsection{HKKN stratifications}

In this section, we recall the basics of Mumford's Geometric Invariant Theory (GIT). After stating the standard definitions, we leave the basic results of the theory to \cite{MFK} and focus the majority of our attention on stratifications of varieties with group actions. These stratifications are essential to Theorem~\ref{theorem: elementary wall crossing}.

Let $G$ be a reductive linear algebraic group over $k$ and let $m: G \times G \to G$ be the group multiplication. Let $X$ be a smooth, quasi-projective variety and assume we have an action,
\begin{displaymath}
 \sigma: G \times X \to X.
\end{displaymath}
Let $\pi: G \times X \to X$ be the projection onto $X$.

First, let us recall some basic terminology. Let $Y \subset X$ be a subset. The \textbf{stabilizer} of $Y$ in $G$ is denoted by $G_Y$. The \textbf{orbit} of $Y$ in $X$ is denoted by $G \cdot Y$.

\begin{definition} \label{definition: equiv sheaf}
 A \textbf{quasi-coherent $G$-equivariant sheaf} on $X$, $\mathcal F$, is a quasi-coherent sheaf on $X$ together with an isomorphism, $\theta: \pi^* \mathcal F \to \sigma^* \mathcal F$, satisfying, 
\begin{displaymath}
 \left((\op{Id}_G \times \sigma) \circ (\tau \times \op{Id}_X \right))^*\theta \circ \left(\op{Id}_G \times \pi\right)^*\theta = \left(m \times \op{Id}_X\right)^*\theta,
\end{displaymath}
on $G \times G \times X$ where $\tau: G \times G \times X \to G \times G \times X$ switches the two factors of $G$ and
\begin{displaymath}
 s^{*} \theta = \op{Id}_{\mathcal F}
\end{displaymath}
 where $s: X \to G \times X$ is induced by the inclusion of the identity, $e$, in $G$. The isomorphism, $\theta$, is called the \textbf{equivariant structure}. 
 
 If $\mathcal F$ is coherent, respectively locally-free, as an $\mathcal O_X$-module, we say that $\mathcal F$ is a \textbf{coherent $G$-equivariant sheaf}, respectively \textbf{locally-free $G$-equivariant sheaf}. If $\mathcal F$ is locally-free and finite-rank, we say $\mathcal F$ is a \textbf{$G$-equivariant vector bundle}. If $\mathcal F$ is a locally-free and rank-one, we say that $\mathcal F$ is a \textbf{$G$-equivariant line bundle}, or a $G$-\textbf{line bundle}.
\end{definition}

\begin{remark}
 One can also think of $G$-equivariant sheaves as sheaves on the quotient stack, $[X/G]$.  Indeed, the category of quasi-coherent sheaves on $[X/G]$, $\op{Qcoh}([X/G])$, is equivalent to the category of $G$-equivariant quasi-coherent sheaves on $X$, $\op{Qcoh}(X,G)$, see, for example, \cite{Vis}, and similarly for coherent and locally-free sheaves. The stack notation will be prevalent in this paper. 
\end{remark}

\begin{definition}
 Let $V$ be a finite-dimensional vector space. Let $\chi: G \to \op{GL}(V)$ be a morphism of algebraic groups and $\mathcal E$ be a quasi-coherent $G$-equivariant sheaf on $X$.  We can create a new equivariant sheaf,
 \begin{displaymath}
  \mathcal E(\chi) := \pi^*V \otimes_{\mathcal O_X} \mathcal E.
 \end{displaymath}
 Here $\pi: X \to \op{Spec }k$ is the structure map and we view $V$ as a free $G$-equivariant sheaf on $\op{Spec }k$. We will employ this construction mainly when $\op{dim}V = 1$, in which case $\mathcal E(\chi)$ will be called the \textbf{twist of $\mathcal E$ by $\chi$}. 
\end{definition} 

For a global section, $f$, of a locally-free sheaf, $\mathcal V$, we let $X_f$ be the open subvariety of $X$ where $f$ is nonvanishing.

\begin{definition} \label{definition: semi-stable,stable,unstable loci}
 Let $\mathcal L$ be a $G$-line bundle. We consider three subsets of $X$.
 \begin{align*}
 X^{\op{ss}}(\mathcal L) & := \{x \in X \ | \ \exists f \in \op{H}^0(X,\mathcal L^n)^G \text{ with } n > 0, f(x) \not = 0, \tand X_f \text{ affine} \} \\
 X^{\op{s}}(\mathcal L) & := \{x \in X^{\op{ss}}(\mathcal L) \ | \ G \cdot x \text{ is closed in } X^{\op{ss}}(\mathcal L) \text{ and }G_x \text{ is finite}\} \\
 X^{\op{us}}(\mathcal L) & := X \setminus X^{\op{ss}}(\mathcal L).
\end{align*}
 The subsets, $X^{\op{ss}}(\mathcal L), X^{\op{s}}(\mathcal L), X^{\op{us}}(\mathcal L)$, are called the \textbf{semi-stable, stable}, and \textbf{unstable locus}, respectively, of $X$. Each subset is naturally a subvariety of $X$.
\end{definition}

\begin{definition} \label{definition: GIT quotient}
 Let $\mathcal L$ be a $G$-line bundle. The \textbf{GIT quotient} of $X$ by $G$ with respect to $\mathcal L$ is the quotient stack, $[X^{\op{ss}}(\mathcal L)/G]$. We denote the GIT quotient by $X \modmod{\mathcal L} G$.
\end{definition}

\begin{remark}
 The reader should note that this is {\em not} the definition of a GIT quotient given in \cite{MFK}. However, there is a close relation between the two notions. To distinguish between the two, we call the GIT quotient of \cite{MFK}, \textbf{Mumford's GIT quotient}. For the precise definition, we refer the reader to \cite{MFK}.
\end{remark}\begin{definition}
 Let $V$ be a finite-dimensional vector space. Let $\chi: G \to \op{GL}(V)$ be a morphism of algebraic groups and $\mathcal E$ be a quasi-coherent $G$-equivariant sheaf on $X$.  We can create a new equivariant sheaf,
 \begin{displaymath}
  \mathcal E(\chi) := \pi^*V \otimes_{\mathcal O_X} \mathcal E.
 \end{displaymath}
 Here $\pi: X \to \op{Spec }k$ is the structure map and we view $V$ as a free $G$-equivariant sheaf on $\op{Spec }k$. We will employ this construction mainly when $\op{dim}V = 1$, in which case $\mathcal E(\chi)$ will be called the \textbf{twist of $\mathcal E$ by $\chi$}. 
\end{definition} 

\begin{proposition} \label{proposition: when is the GIT quotient is a scheme}
 If the stabilizer of any point in $X^{\op{ss}}(\mathcal L)$ is finite, or, equivalently, if $X^{\op{s}}(\mathcal L)= X^{\op{ss}}(\mathcal L)$, then the GIT quotient, $X \modmod{\mathcal L} G$, is a Deligne-Mumford stack. If the stabilizer of any point is trivial, then $X \modmod{\mathcal L} G$ is a scheme, isomorphic to Mumford's GIT quotient. 
\end{proposition}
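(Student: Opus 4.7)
The plan is to proceed in three stages, corresponding to the three assertions of the proposition.

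First, I would establish the equivalence ``stabilizers finite throughout $X^{\op{ss}}(\mathcal L)$'' $\iff$ ``$X^{\op{s}}(\mathcal L) = X^{\op{ss}}(\mathcal L)$''. One direction is immediate from Definition~\ref{definition: semi-stable,stable,unstable loci}, since membership in $X^{\op{s}}(\mathcal L)$ requires a finite stabilizer. For the converse, suppose $x \in X^{\op{ss}}(\mathcal L)$ has finite stabilizer but its orbit fails to be closed in $X^{\op{ss}}(\mathcal L)$. Using the standard consequence of Hilbert--Mumford (every orbit closure in $X^{\op{ss}}(\mathcal L)$ contains a unique closed orbit, and any point in that closed orbit has stabilizer of dimension at least that of nearby orbits), there would be a point $y \in \overline{G \cdot x} \setminus G \cdot x$ lying in $X^{\op{ss}}(\mathcal L)$ with $\dim G \cdot y < \dim G \cdot x = \dim G$, forcing $\dim G_y > 0$, a contradiction.

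Second, I would verify that $[X^{\op{ss}}(\mathcal L)/G]$ is Deligne--Mumford whenever all stabilizers on $X^{\op{ss}}(\mathcal L)$ are finite. Since we work in characteristic zero, every finite group scheme is \'etale, so the inertia stack is unramified, which is equivalent to the Deligne--Mumford property. To produce an \'etale atlas by a scheme, I would invoke Luna's \'etale slice theorem: for each $x \in X^{\op{ss}}(\mathcal L)$, the finite reductive group $G_x$ admits a $G_x$-invariant locally-closed affine slice $S_x \subset X^{\op{ss}}(\mathcal L)$ such that $G \times^{G_x} S_x \to X^{\op{ss}}(\mathcal L)$ is strongly \'etale. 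Taking the disjoint union over a suitable covering yields an \'etale surjection from a scheme, completing the DM verification.

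Third, under the hypothesis of trivial stabilizers, the same Luna slice produces honest \'etale covers $S_x \to [X^{\op{ss}}(\mathcal L)/G]$ with no automorphisms, so the stack is an algebraic space. Representability by a scheme, and the identification with Mumford's GIT quotient, I would obtain as follows: cover $X^{\op{ss}}(\mathcal L)$ by the distinguished affines $X_f$ arising from invariant sections $f \in H^0(X, \mathcal L^n)^G$ as in Definition~\ref{definition: semi-stable,stable,unstable loci}; each quotient $X_f / G = \op{Spec}\bigl(\mathcal O_X(X_f)^G\bigr)$ is an affine scheme, the glueing data match those used in Mumford's construction, and the glued object represents the functor $[X^{\op{ss}}(\mathcal L)/G]$ because the trivial stabilizer hypothesis forces the morphism from the stack to this scheme to be an isomorphism on $T$-points for every test scheme $T$.

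I expect the main obstacle to be technical rather than conceptual: carefully invoking Luna's slice theorem in the reductive (but non-connected, since $G_x$ may be any finite group) setting, and matching the scheme obtained by glueing the $\op{Spec}\bigl(\mathcal O_X(X_f)^G\bigr)$ with Mumford's explicit $\op{Proj}$ construction of the GIT quotient, so that the assertion ``isomorphic to Mumford's GIT quotient'' is literal and not merely ``canonically the same coarse space.'' Once the slice theorem is in hand, each step reduces to standard facts from \cite{MFK} and the stacks literature.
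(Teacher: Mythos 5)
Your proposal is correct, but it follows a noticeably different route from the paper's. The paper's proof is almost entirely citation-based: the Deligne--Mumford property is deduced from Theorem 4.21 of \cite{DM} together with Corollary 2.2 of \cite{Edidin} (finite, hence in characteristic zero reduced, stabilizers give unramified inertia); the same corollary of \cite{Edidin} gives that the quotient stack is an algebraic space when stabilizers are trivial; and the identification with Mumford's GIT quotient is obtained from Alper's good moduli space machinery --- Theorem 13.6 of \cite{Alper} exhibits Mumford's quotient as a good moduli space of $X \modmod{\mathcal L} G$, and Theorem 6.6 of \cite{Alper} forces the good moduli space map of an algebraic space to be an isomorphism. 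You instead argue by hand: Luna's \'etale slice theorem to manufacture an \'etale atlas and, in the free case, to see that each $X_f \to \op{Spec}\bigl(\mathcal O_X(X_f)^G\bigr)$ is a $G$-torsor, followed by gluing the affine invariant-ring quotients to recover Mumford's $\op{Proj}$ construction directly. Your approach is more self-contained and makes the isomorphism with Mumford's quotient explicit rather than mediated through good moduli spaces, at the cost of carrying the technical weight of Luna's theorem (applicability is fine here: the $X_f$ are $G$-invariant affines in which all orbits are closed once $X^{\op{s}}(\mathcal L)=X^{\op{ss}}(\mathcal L)$). You also supply a proof of the parenthetical equivalence between ``all stabilizers finite'' and ``stable equals semi-stable,'' which the paper asserts without argument; your orbit-dimension argument for the nontrivial direction is the standard one and is sound.
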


\begin{proof}
 Recall that $k$ is algebraically-closed of characteristic zero. One can apply Theorem 4.21 of \cite{DM} to verify the first part of the proposition, see also Corollary 2.2 of \cite{Edidin}. Corollary 2.2 of \cite{Edidin} also shows that $X \modmod{\mathcal L} G$ is an algebraic space under the assumption that all stabilizers are trivial. Theorem 13.6 of \cite{Alper} shows that Mumford's GIT quotient, $Y$, for the $G$-line bundle, $\mathcal L$, is a good moduli space of $X \modmod{\mathcal L} G$ in general. Then, Theorem 6.6 of \cite{Alper} says that $Y$ must be isomorphic to $X \modmod{\mathcal L} G$.
\end{proof}

We recall a standard, but essential, definition.

\begin{definition} \label{definition: 1-ps}
 The multiplicative group over $k$, $\op{Spec}k[t,t^{-1}]$, is denoted by $\mathbb{G}_m$. An injective group homomorphism, $\lambda: \mathbb{G}_m \to G$, is called a \textbf{one-parameter subgroup} of $G$. We shall often use $\lambda$ to also denote its image as a subgroup of $G$. We will denote the inverted one-parameter subgroup, $\lambda(\alpha^{-1})$, by $-\lambda$. If $G$ acts on $X$, we denote the fixed locus of the induced $\mathbb{G}_m$ action on $X$ by $X^{\lambda}$.
\end{definition}

Associated to a one-parameter subgroup, we have some other subgroups of $G$.

\begin{definition} \label{definition: P(l) and L(l)}
 Let $\lambda: \mathbb{G}_m \to G$ be a one-parameter subgroup. We set
 \begin{displaymath}
  P(\lambda) := \{ g \in G \mid \lim_{\alpha \to 0} \lambda(\alpha) g \lambda(\alpha)^{-1} \text{ exists}\}.
 \end{displaymath}
 We also set
 \begin{displaymath}
 U(\lambda) := \{ g \in G \mid \lim_{\alpha \to 0} \lambda(\alpha) g \lambda(\alpha)^{-1} = e\}
\end{displaymath}
 and let $C(\lambda)$ be the centralizer in $G$ of $\lambda$.
\end{definition}

\begin{remark}
 A bit of clarification on the notation is perhaps in order. Whenever we have a $\mathbb{G}_m$ action, $\sigma$, on a separated scheme, $X$, and we write an expression such as
 \begin{displaymath}
  \lim_{\alpha \to 0} \sigma(\alpha,x) = x^*,
 \end{displaymath}
 we mean that there exists an extension of the morphism,
 \begin{align*}
  \mathbb{G}_m & \to X \\
  \alpha & \mapsto \sigma(\alpha,x),
 \end{align*}
 to a morphism, $\mathbb{A}^1 \to X$, that sends $0$ to $x^*$. Since we have assumed $X$ to be separated, $x^*$ is unique if it exists. 
\end{remark}

\begin{lemma} \label{lemma: rho: P to C}
 The function,
 \begin{align*}
  \rho: P(\lambda) & \to C(\lambda) \\
  p & \mapsto \lim_{\alpha \to 0} \lambda(\alpha) p \lambda(\alpha)^{-1},
 \end{align*}
 is a homomorphism.
\end{lemma}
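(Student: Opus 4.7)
The plan is to unpack the limit notation carefully and then reduce everything to the standard fact that limits of products of morphisms from $\mathbb{A}^1 \setminus \{0\}$ to a variety, when they exist, behave well under multiplication.

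First, I would clarify the setup. For $p \in P(\lambda)$, the morphism $f_p: \mathbb{G}_m \to G$ defined by $f_p(\alpha) = \lambda(\alpha) p \lambda(\alpha)^{-1}$ extends to a morphism $\bar f_p: \mathbb{A}^1 \to G$, and $\rho(p) = \bar f_p(0)$. Since $G$ is separated, this extension is unique. The argument has two parts: (i) verify that $\rho(p)$ actually lies in $C(\lambda)$, and (ii) verify that $\rho(pq) = \rho(p)\rho(q)$.

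For part (i), fix $\beta \in \mathbb{G}_m$ and consider two morphisms $\mathbb{A}^1 \to G$. The first is $\alpha \mapsto \lambda(\beta)\bar f_p(\alpha)\lambda(\beta)^{-1}$, obtained by post-composing $\bar f_p$ with conjugation by $\lambda(\beta)$; it sends $0$ to $\lambda(\beta)\rho(p)\lambda(\beta)^{-1}$. The second is $\bar f_p \circ \mu_\beta$, where $\mu_\beta: \mathbb{A}^1 \to \mathbb{A}^1$ is multiplication by $\beta$; it sends $0$ to $\bar f_p(0) = \rho(p)$. On $\mathbb{G}_m$ these two morphisms agree, because $\lambda$ is a homomorphism and therefore
\begin{displaymath}
\lambda(\beta)\lambda(\alpha)p\lambda(\alpha)^{-1}\lambda(\beta)^{-1} = \lambda(\beta\alpha)p\lambda(\beta\alpha)^{-1}.
\end{displaymath}
By separatedness of $G$, the two extensions coincide on all of $\mathbb{A}^1$, so evaluating at $0$ yields $\lambda(\beta)\rho(p)\lambda(\beta)^{-1} = \rho(p)$. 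Since $\beta$ was arbitrary, $\rho(p) \in C(\lambda)$.

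For part (ii), let $p, q \in P(\lambda)$, so $\bar f_p$ and $\bar f_q$ are defined on $\mathbb{A}^1$. Then the morphism $\mathbb{A}^1 \to G$ given by $\alpha \mapsto \bar f_p(\alpha)\bar f_q(\alpha)$ agrees on $\mathbb{G}_m$ with $\alpha \mapsto \lambda(\alpha) pq \lambda(\alpha)^{-1}$, which shows $pq \in P(\lambda)$ and that the extension $\bar f_{pq}$ equals the pointwise product. Evaluating at $0$ gives $\rho(pq) = \rho(p)\rho(q)$.

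I expect the only nontrivial step to be part (i): one has to resist the temptation to write "$\lim_{\alpha \to 0} \lambda(\beta\alpha) = \lim_{\alpha \to 0} \lambda(\alpha)$" as a naive reindexing and instead articulate it as the equality of two morphisms $\mathbb{A}^1 \to G$, which is where separatedness of $G$ (equivalently, uniqueness of the limit) enters. Everything else is bookkeeping.
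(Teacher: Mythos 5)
Your proof is correct, and the paper itself offers no argument beyond declaring the lemma straightforward, so your write-up supplies precisely the details the authors leave implicit. In particular, your handling of the reindexing $\alpha \mapsto \beta\alpha$ as an equality of two morphisms $\mathbb{A}^1 \to G$ agreeing on the dense open $\mathbb{G}_m$, with separatedness forcing the extensions to coincide at $0$, is exactly the right way to make the limit manipulations rigorous.
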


\begin{proof}
 This is straightforward.
\end{proof}

\begin{lemma} \label{lemma: C part and U part of P}
 There is a short exact sequence of groups,
 \begin{displaymath}
  0 \to U(\lambda) \to P(\lambda) \overset{\rho}{\to} C(\lambda) \to 0.
 \end{displaymath}
 The group, $U(\lambda)$, is the unipotent radical of $P(\lambda)$ and $C(\lambda)$ is reductive.
\end{lemma}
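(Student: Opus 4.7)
The plan is to embed $G$ into some $\operatorname{GL}(V)$ via a faithful finite-dimensional representation and analyze the block structure induced by the weights of $\lambda$. First, I would choose such a representation $G \hookrightarrow \operatorname{GL}(V)$ and decompose $V = \bigoplus_{i \in \mathbb{Z}} V_i$ into weight spaces for the composition $\mathbb{G}_m \xrightarrow{\lambda} G \to \operatorname{GL}(V)$. Writing an element $g \in G$ as a block matrix $(g_{ij})$ with $g_{ij} \in \operatorname{Hom}(V_j, V_i)$, a direct computation gives $\lambda(\alpha) g \lambda(\alpha)^{-1} = (\alpha^{i-j} g_{ij})$. Hence the limit as $\alpha \to 0$ exists if and only if $g_{ij} = 0$ for all $i < j$ (i.e., $g$ is block upper triangular with respect to the weight filtration), and in that case the limit equals the block diagonal matrix $(g_{ii})$.

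From this description the three structural claims about $\rho$ all fall out. The limit $\rho(g) = (g_{ii})$ is block diagonal, so it commutes with $\lambda(\alpha)$ for every $\alpha$; thus $\rho$ lands in $C(\lambda)$. The kernel of $\rho$ consists of block upper triangular elements whose diagonal blocks are the identity, which, by the criterion above applied to $\rho(g) = e$, is precisely $U(\lambda)$. For surjectivity, any $c \in C(\lambda)$ satisfies $\lambda(\alpha) c \lambda(\alpha)^{-1} = c$ for all $\alpha$, so $c \in P(\lambda)$ and $\rho(c) = c$; this simultaneously exhibits a set-theoretic splitting of the sequence.

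It remains to identify the three groups abstractly. Any element of $U(\lambda)$ has the form $\operatorname{Id} + N$ where $N$ is strictly block upper triangular with respect to the weight filtration, hence nilpotent, so $U(\lambda)$ is unipotent. That $C(\lambda)$ is reductive is a standard fact about centralizers of tori in reductive groups, and I would simply cite it (it can be proved, for instance, by observing that $C(\lambda)$ contains a maximal torus of $G$ and is generated by $T$ together with the root subgroups for roots vanishing on $\lambda$). Finally, $U(\lambda)$ is normal in $P(\lambda)$ as the kernel of a homomorphism, is unipotent, and has reductive quotient $C(\lambda)$; any normal unipotent subgroup of a linear algebraic group with reductive quotient is the full unipotent radical, completing the identification.

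The main technical input here is the diagonalizability of $\lambda(\mathbb{G}_m)$, which underlies the weight decomposition and all the block matrix manipulations; the only non-routine external fact invoked is the reductivity of the centralizer of a subtorus, and the rest is bookkeeping on block matrices.
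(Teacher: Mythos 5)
Your argument is correct and is essentially the standard block-matrix analysis that the paper simply delegates to the proof of Proposition 2.6 of \cite{MFK}, so there is nothing substantive to object to. The only point left implicit is that identifying $U(\lambda)$ with the \emph{full} unipotent radical requires $U(\lambda)$ to be connected; this is automatic here because unipotent groups in characteristic zero are connected (or, directly, because conjugation by $\lambda(\alpha)$ gives an $\mathbb{A}^1$-family contracting any element of $U(\lambda)$ to the identity).
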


\begin{proof}
 This is contained in the proof of Proposition 2.6 of \cite{MFK}.
\end{proof}

\begin{definition} \label{definition: weight}
 Let $V$ be a finite-dimensional vector space over $k$ and assume we have a representation of $\mathbb{G}_m$ on $V$. Decompose 
 \begin{displaymath}
  V = \bigoplus_{\chi \in \Z} V_{\chi}
 \end{displaymath}
 where $\chi: \mathbb{G}_m \to \mathbb{G}_m$ is the corresponding character of $\mathbb{G}_m$ and $V_{\chi}$ is the subspace of $V$ where $\mathbb{G}_m$ acts by $\chi$. For a vector, $v \in V_{\chi}$, we say the \textbf{weight} of $v$ is $\chi$. The set of $\chi$ such that $V_{\chi}$ is nonzero is called the set of weights of $V$.
\end{definition}

The definition of weights is geometric - we think of $V$ as a scheme not as a free sheaf on a point. Dual to the action of $\mathbb{G}_m$ on $V$ is the co-action, $\Delta: \op{Sym} V^{\vee} \to \op{Sym} V^{\vee}[t,t^{-1}]$.

\begin{definition} \label{definition: degree}
 Let $R$ be a commutative $k$-algebra. Let $\Delta: R \to R[t,t^{-1}]$ be a co-action of $\mathbb{G}_m$ on $R$. Let $M$ be an $R$-module equipped with a compatible co-action, $\Delta_M: M \to M[t,t^{-1}]$. We say that $m \in M$ is \textbf{homogeneous of degree} $l$ if $\Delta_M(m) = m \otimes t^l$. We let $M_l$ be the subspace of homogeneous elements of degree $l$.
\end{definition}

\begin{remark}
 In the case of $R = \op{Sym} V^{\vee}$, the weights of the action on $V$ are the {\em negatives} of the degrees of homogeneous elements.
\end{remark}

We next recall Mumford's numerical function, following the definition in \cite{MFK}, up to a sign.

\begin{definition} \label{definiton: weights of vector bundle and Mumford stability function}
 Let $\mathcal E$ be a locally-free quasi-coherent $G$-equivariant sheaf, $\lambda$ be a one-parameter subgroup, and $x \in X^{\lambda}$. The set of \textbf{$\lambda$-weights} of $\mathcal E$ at $x$ is the set of weights of the $\mathbb G_m$-action on the fiber of the geometric vector bundle associated to $\mathcal E$ at $x$, $\op{V}(\mathcal E)_x = \underline{\op{Spec}}(\op{Sym}\mathcal E)_x$. Denote this set by $\mu(\mathcal E,\lambda,x)$. We call the function, $\mu$,  \textbf{Mumford's numerical function}.  
\end{definition}

\begin{remark}
 One way to compute $\mu$ without forming the associated geometric vector bundle, is 
 \begin{displaymath}
  \mu(\mathcal E, \lambda, x) = - \text{degrees of $\lambda$ on $\op{H}^0(X,\mathcal E_x)$}.
 \end{displaymath}
 
 There are a few conventions which affect the sign of $\mu$ at play here. We follow \cite{MFK} with the exception of deleting the negative sign in Definition 2.2 of \cite{MFK}. Deleting this sign now seems standard, see e.g. \cite{DH98}, since it allows one to interpret $\mu$ as distance.
\end{remark}

The next lemma shows that the weights remain constant on connected components of the fixed loci.

\begin{lemma} \label{lemma: weights constant on connected components of fixed locus}
 Let $\lambda: \mathbb{G}_m \to G$ be a one-parameter subgroup, $x \in X^{\lambda}$ be a fixed point of $\lambda$, and $\mathcal E$ be an equivariant vector bundle. If $x$ and $x'$ lie in the same connected component of $X^{\lambda}$, then 
 \begin{displaymath}
  \mu(\mathcal E, \lambda , x) = \mu(\mathcal E, \lambda, x').
 \end{displaymath}
\end{lemma}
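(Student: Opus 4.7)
The plan is to reduce to the restriction of $\mathcal{E}$ to the connected component $Z \subseteq X^{\lambda}$ containing both $x$ and $x'$, and then use the fact that an equivariant vector bundle for a trivial action is an internal direct sum of its weight subsheaves, whose ranks are locally constant.

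First, I would observe that, since every point of $Z$ is fixed by $\lambda$, the restricted action $\lambda \times Z \to Z$ is the projection; consequently the restriction of the equivariant structure $\theta$ on $\mathcal{E}$ to $\mathbb{G}_m \times Z$ gives $\mathcal{E}|_Z$ the structure of a $\mathbb{G}_m$-equivariant locally free sheaf, where $\mathbb{G}_m$ acts trivially on $Z$. Such an equivariant structure is the same data as a co-action $\mathcal{E}|_Z \to \mathcal{E}|_Z \otimes_k k[t,t^{-1}]$ by $\mathcal{O}_Z$-module maps, and so $\mathcal{E}|_Z$ decomposes canonically as a direct sum
\[
\mathcal{E}|_Z \;=\; \bigoplus_{\chi \in \Z} \mathcal{E}_\chi
\]
where $\mathcal{E}_\chi$ is the sub-$\mathcal{O}_Z$-module of homogeneous local sections of degree $\chi$ in the sense of Definition~\ref{definition: degree}. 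Each $\mathcal{E}_\chi$ is a direct summand of a locally free sheaf, hence itself locally free of some rank $r_\chi$ that is a locally constant function on $Z$.

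Next, I would pass to fibers at any $y \in Z$: forming the geometric vector bundle commutes with taking the weight decomposition, so the fiber $\op{V}(\mathcal{E})_y = \underline{\op{Spec}}(\op{Sym}\,\mathcal{E})_y$ inherits the $\mathbb{G}_m$-representation
\[
\op{V}(\mathcal{E})_y \;\cong\; \bigoplus_{\chi \in \Z} \op{V}(\mathcal{E}_{-\chi})_y,
\]
with the sign swap coming from the remark after Definition~\ref{definition: degree}. Consequently the $\lambda$-weight set $\mu(\mathcal{E},\lambda,y)$ is precisely $\{\chi \mid r_{-\chi}(y) \neq 0\}$, which depends only on the locally constant functions $r_\chi$.

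Finally, since $Z$ is connected and each $r_\chi$ is locally constant, each $r_\chi$ is constant on $Z$; therefore the set of weights $\mu(\mathcal{E},\lambda,y)$ is the same for all $y \in Z$, and in particular $\mu(\mathcal{E},\lambda,x) = \mu(\mathcal{E},\lambda,x')$. The only step requiring any real care is the first one, namely producing the weight decomposition of $\mathcal{E}|_Z$ from the equivariant structure under a trivial action; everything else is a routine consequence.
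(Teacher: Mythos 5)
Your proof is correct and follows essentially the same route as the paper's: both restrict $\mathcal E$ to the fixed locus, use that $\lambda$ acts trivially there, and deduce that the set of fiberwise weights is locally constant, hence constant on a connected component. Your global decomposition into weight eigensheaves of locally constant rank is just a more canonical packaging of the paper's argument, which instead trivializes the geometric vector bundle near a point and reads off the weights from a single linear $\mathbb{G}_m$-representation on the fiber.
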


\begin{proof}
 We may forget about the $G$ action and only remember the induced $\mathbb G_m$-action on $X$. First, we note that the computation of $\lambda$-weights factors through restriction to the fixed locus of $\lambda$. Let $X^{\lambda}$ denote this fixed locus. It carries a trivial $\mathbb G_m$-action and $\mathcal E|_{X^{\lambda}}$ is an equivariant vector bundle with respect to this action. 
 
 Consider any point, $x \in X^{\lambda}$ and trivialize the geometric vector bundle, $\op{V}(\mathcal E|_{X^{\lambda}})$, in a neighborhood, $U$, of $x$ so that $\op{V}(\mathcal E|_{X^{\lambda}})|_U \cong U \times \mathbb{A}^n$.   Since $\mathbb G_m$ acts trivially on $Y$, there is a linear $\mathbb{G}_m$ action on $\mathbb{A}^n$ so that the projection onto $\op{V}(\mathcal E|_{X^{\lambda}})|_U \to \mathbb{A}^n$ is $\mathbb{G}_m$ equivariant.  The weights on any fiber in $U$ are therefore determined by the weights of this action on $\mathbb{A}^n$.  Hence, the weights are locally constant on $Y$, and therefore, by definition, $\mu$ is locally constant.
\end{proof} 

It is useful to explicitly state the following properties of $\mu$.

\begin{lemma} \label{lemma: weights under the orbit}
 Let $G$ act on $X$ and $Y$ and assume $f: X \to Y$ is a $G$-equivariant morphism. Let $\lambda: \mathbb{G}_m \to G$ be a one-parameter subgroup, $x \in X, y \in Y$ be a fixed points of $\lambda$, and $\mathcal E$ and $\mathcal E'$ be $G$-equivariant vector bundles on $Y$. Then,
 \begin{enumerate}
  \item $\mu(\mathcal E, g^{-1}\lambda g , y) = \mu(\mathcal E, \lambda, \sigma(g,y))$ for all $g \in G$.
  \item $\mu(f^*\mathcal E, \lambda, x) = \mu(\mathcal E, \lambda, f(x))$. 
  \item $\mu(\mathcal E \otimes \mathcal E', \lambda, y) = \mu(\mathcal E, \lambda, y) + \mu(\mathcal E', \lambda, y)$, where the later expression describes the pointwise sum of subsets of $\Z$.
 \end{enumerate}
\end{lemma}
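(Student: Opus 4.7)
The plan is to verify each of the three assertions directly from the definition of $\mu$ as the set of weights of the induced $\mathbb{G}_m$-action on the geometric fiber $\op{V}(\mathcal E)_y = \underline{\op{Spec}}(\op{Sym}\mathcal E)_y$. None of these is deep; the content is entirely bookkeeping with the equivariant structure.

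For (1), I would unpack the equivariant structure $\theta$ of $\mathcal E$. Restricting $\theta: \pi^*\mathcal E \to \sigma^*\mathcal E$ to the slice $\{g\} \times X \subset G \times X$ yields an isomorphism $\tilde{\theta}_g: \mathcal E \to \sigma(g,-)^*\mathcal E$ of sheaves on $X$, and hence, passing to geometric vector bundles and to the fiber over $y$, a linear isomorphism $\op{V}(\mathcal E)_y \to \op{V}(\mathcal E)_{\sigma(g,y)}$. The cocycle identity in Definition \ref{definition: equiv sheaf}, applied to the two group elements $g$ and $\lambda(\alpha)$ (and its conjugate), forces this linear isomorphism to intertwine the $\mathbb{G}_m$-action at $y$ through $\alpha \mapsto g^{-1}\lambda(\alpha)g$ with the $\mathbb{G}_m$-action at $\sigma(g,y)$ through $\lambda$. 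Since weights are preserved by equivariant linear isomorphisms, the two weight sets agree.

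For (2), the key observation is that forming the geometric vector bundle and restricting to a fiber both commute with the $G$-equivariant pullback $f^*$. Explicitly, $\op{V}(f^*\mathcal E) \cong X \times_Y \op{V}(\mathcal E)$ as $G$-equivariant schemes over $X$, so the fiber at $x \in X^\lambda$ is canonically identified, as a $\mathbb{G}_m$-representation through $\lambda$, with $\op{V}(\mathcal E)_{f(x)}$. The weight sets therefore coincide.

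For (3), I would restrict to the fixed locus $X^\lambda$ (where the weight decomposition exists) and use that, as $\mathbb{G}_m$-representations through $\lambda$, the fiber of $\mathcal E \otimes \mathcal E'$ at $y$ is the tensor product $\mathcal E_y \otimes \mathcal E'_y$. Decomposing each factor into weight spaces and tensoring shows that the set of weights of the tensor product is the pointwise sum of the two weight sets; taking $\op{V}$ merely replaces weights by their negatives and so preserves this additivity.

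None of the three pieces presents a genuine obstacle; the only care required is in step (1), where one must not confuse the action of $g$ on points with the linear action of $\lambda$ on fibers and must invoke the cocycle relation rather than just the existence of $\theta$. I would write (1) carefully and then dispatch (2) and (3) in a line apiece.
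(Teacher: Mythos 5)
Your proposal is correct and matches the paper, whose entire proof is the remark that these facts ``are clear from the definitions''; you are simply writing out the routine verification (the cocycle identity for (1), compatibility of $\op{V}(-)$ and fibers with pullback for (2), and additivity of weights under tensor product for (3)). No further comment is needed.
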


\begin{proof}
 These are clear from the definitions.
\end{proof}

To explicitly determine the semi-stable, or equivalently, the unstable locus from the definition is difficult in general. However, under some mild assumptions, Mumford gave an alternative characterization of the unstable locus.

\begin{theorem} \label{theorem: Hilbert-Mumford numerical stability}
 Let $\mathcal L$ be a $G$-line bundle. Assume $X$ is proper over $k$ and $\mathcal L$ is ample.  For a point, $x \in X$, $x \in X^{\op{us}}(\mathcal L)$ if and only if there exists a one-parameter subgroup, $\lambda: \mathbb{G}_m \to G$, such that $\mu(\mathcal L,\lambda, x^*) > 0$ where $x^* := \lim_{\alpha \to 0} \sigma(\lambda(\alpha),x)$.  
\end{theorem}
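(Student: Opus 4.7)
My plan is to treat the two directions separately. For the \emph{if} direction, suppose $\mu(\mathcal L, \lambda, x^*) > 0$. Because $X$ is proper, the map $\alpha \mapsto \sigma(\lambda(\alpha), x)$ from $\mathbb G_m$ extends uniquely to a $\mathbb G_m$-equivariant morphism $h \colon \mathbb A^1 \to X$ with $h(0) = x^*$. For any invariant section $f \in \op{H}^0(X, \mathcal L^n)^G$, the pullback $h^*f$ is a $\mathbb G_m$-invariant section of the $\mathbb G_m$-equivariant line bundle $h^*\mathcal L^n$ on $\mathbb A^1$; its fiber at $0$ is $\mathcal L^n_{x^*}$, and by the sign convention in Definition~\ref{definiton: weights of vector bundle and Mumford stability function} the $\mathbb G_m$-weight on this fiber equals $-n\mu(\mathcal L, \lambda, x^*) < 0$. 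Any $\mathbb G_m$-equivariant line bundle on $\mathbb A^1$ is classified by this weight $w$, and an equivariant section of such a bundle has the shape $c\,t^w$ in the natural trivialization; regularity at the origin forces $c = 0$ whenever $w < 0$. Hence $h^*f \equiv 0$, and in particular $f(x) = 0$. Since this holds for every invariant section of every positive power of $\mathcal L$, the point $x$ must lie in $X^{\op{us}}(\mathcal L)$.

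For the \emph{only if} direction, I would first reduce to the case of projective space. Ampleness of $\mathcal L$ produces, for some power $\mathcal L^N$, a $G$-equivariant closed embedding $\iota \colon X \hookrightarrow \mathbb P(V)$ with $\iota^*\mathcal O(1) \cong \mathcal L^N$, where $V$ is a finite-dimensional $G$-representation. Both the notion of instability and the function $\mu$ are compatible with this pullback, the latter by Lemma~\ref{lemma: weights under the orbit}(b), so after replacing $\mathcal L$ by $\mathcal L^N$ it suffices to prove the theorem for $(\mathbb P(V), \mathcal O(1))$. A point $x = [v]$ of $\mathbb P(V)$ is unstable precisely when $0 \in \overline{G \cdot v}$ in $V$. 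Granting the existence of a one-parameter subgroup $\lambda$ with $\lim_{\alpha \to 0}\lambda(\alpha) v = 0$, which I discuss next, one decomposes $v = \sum_i v_i$ into $\lambda$-weight components: the vanishing of the limit forces $v_i = 0$ for every $i \leq 0$, so the minimum nonzero weight $m$ is strictly positive, $x^* = [v_m]$, and since $\mathcal O(-1)_{[v_m]}$ is spanned by $v_m$ of $\lambda$-weight $m$, a direct computation yields $\mu(\mathcal O(1), \lambda, x^*) = m > 0$.

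The main obstacle is therefore the classical existence statement: if $0 \in \overline{G \cdot v}$, then some one-parameter subgroup of $G$ drives $v$ to $0$. This is the substantive content of the Hilbert--Mumford criterion and is not softened by the passage to $\mathbb P(V)$. Rather than reproducing the argument, I would cite it from \cite[Ch.~2]{MFK}: one applies the valuative criterion to obtain a morphism $\op{Spec}\, k[[t]] \to \overline{G \cdot v}$ sending the closed point to $0$ and the generic point into the orbit, lifts it to a $k((t))$-point of $G$, and uses a Cartan-type decomposition of $G(k((t)))$ available for a reductive $G$ in characteristic zero to reduce to the case of a maximal torus, from which the required one-parameter subgroup is extracted from the valuations of the torus coordinates.
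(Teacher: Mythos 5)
The paper's entire proof is a citation to Theorem 2.1 of \cite{MFK}, and your argument ultimately rests on the same source: you expand the easy direction (invariant sections must vanish along a destabilizing orbit closure, via the weight on the fiber at $x^*$) and the standard reduction to $\bigl(\mathbb P(V),\mathcal O(1)\bigr)$, but you correctly isolate the one genuinely hard ingredient --- the existence of a one-parameter subgroup driving $v$ to $0$ when $0\in\overline{G\cdot v}$, proved via the Iwahori/Cartan decomposition of $G(k((t)))$ --- and import it from \cite{MFK}, exactly as the paper does. This is essentially the same approach, just with more of the routine surrounding material written out; the only point to watch is the sign bookkeeping between the weight on the geometric fiber and on section values, which you flag but which is easy to get backwards given the paper's own convention of negating Mumford's $\mu$.
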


\begin{proof}
 This is the Hilbert-Mumford numerical criterion, Theorem 2.1 of \cite{MFK}.
\end{proof}

We also will need a version for affine space.

\begin{proposition} \label{proposition: affine HM numerical stability}
 Let $X = \mathbb{A}^n$ and let $\mathcal L$ be a $G$-equivariant line bundle. For a point, $x \in X$, $x \in X^{\op{us}}(\mathcal L)$ if and only if there exists a one-parameter subgroup, $\lambda: \mathbb{G}_m \to G$, such that $\lim_{\alpha \to 0} \sigma(\lambda(\alpha),x)=:x^*$ exists and $\mu(\mathcal L,\lambda, x^*) > 0$. 
\end{proposition}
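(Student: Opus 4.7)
The plan is to reduce the statement to a Hilbert--Mumford principle for reductive actions on affine varieties, applied to the total space of $\mathcal L$.

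Because $\operatorname{Pic}(\mathbb A^n)=0$, the underlying line bundle of $\mathcal L$ is trivial, and the $G$-equivariant structure is encoded by a single character $\chi\colon G\to\mathbb G_m$: invariant sections of $\mathcal L^{\otimes m}$ are precisely the polynomials $f\in k[x_1,\dots,x_n]$ satisfying $f(g\cdot x)=\chi(g)^m f(x)$, and the affineness clause in Definition \ref{definition: semi-stable,stable,unstable loci} is automatic on $\mathbb A^n$. A direct computation on the geometric total space $V(\mathcal L)=X\times\mathbb A^1$ equipped with the $G$-action $g\cdot(x,v)=(gx,\chi(g)^{-1}v)$ yields $\mu(\mathcal L,\lambda,x^*)=-\operatorname{wt}(\chi\circ\lambda)$ for any $x^*\in X^\lambda$. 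For the easy direction, suppose $x^*:=\lim_{\alpha\to 0}\lambda(\alpha)\cdot x$ exists in $X$ and $\mu:=\mu(\mathcal L,\lambda,x^*)>0$; if some $f\in H^0(X,\mathcal L^{\otimes m})^G$ with $m>0$ satisfied $f(x)\neq 0$, then $f(\lambda(\alpha)\cdot x)=\chi(\lambda(\alpha))^m f(x)=\alpha^{-m\mu}f(x)$ would blow up as $\alpha\to 0$, contradicting convergence to the finite value $f(x^*)$. Hence $x$ is unstable.

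For the converse, set $\widehat X:=X\times\mathbb A^1$ with the same twisted action $g\cdot(x,v)=(gx,\chi(g)^{-1}v)$. A polynomial $F(x,v)=\sum_{m\ge 0}v^m f_m(x)$ on $\widehat X$ is $G$-invariant if and only if each $f_m$ is an invariant section of $\mathcal L^{\otimes m}$. Choose $y\in X$ in the unique closed $G$-orbit lying inside $\overline{G\cdot x}$; since $G$-invariants are constant on orbit closures, $f_0(y)=f_0(x)$ for every $f_0\in k[x_1,\dots,x_n]^G$. If $x$ is $\mathcal L$-unstable, then $f_m(x)=0$ for all $m>0$ and all invariant $f_m$, whence $F(x,1)=f_0(y)=F(y,0)$ for every $G$-invariant polynomial $F$ on $\widehat X$. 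Mumford's separation theorem for reductive group actions on affine varieties then forces $\overline{G\cdot(x,1)}\cap\overline{G\cdot(y,0)}\neq\emptyset$, and since $G\cdot(y,0)$ lies in the closed zero section of $V(\mathcal L)$ and is itself closed, the affine Hilbert--Mumford principle (due to Kempf, Hesselink, and Birkes in various guises) supplies a one-parameter subgroup $\eta$ of $G$ and an element $g\in G$ with $\lim_{\alpha\to 0}\eta(\alpha)\cdot g\cdot(x,1)\in G\cdot(y,0)$. Reading off the second coordinate forces $\chi(\eta(\alpha))^{-1}\chi(g)^{-1}\to 0$, hence $\operatorname{wt}(\chi\circ\eta)<0$; reading off the first gives a limit $\lim_{\alpha\to 0}\eta(\alpha)\cdot(gx)\in X$ with $\eta$-weight on $\mathcal L$ equal to $-\operatorname{wt}(\chi\circ\eta)>0$. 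Setting $\lambda:=g^{-1}\eta g$ and applying Lemma \ref{lemma: weights under the orbit}(a) transfers the conclusion from $gx$ back to $x$, producing the required one-parameter subgroup.

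The main obstacle is the affine Hilbert--Mumford principle invoked above, namely that whenever the closure of a reductive orbit on an affine variety meets a closed orbit, the specialization can be realized by a one-parameter subgroup after a suitable conjugation. Given that result, the remainder of the proof is a bookkeeping translation between vanishing of semi-invariant polynomials at $x$ and specialization of the $G$-orbit of $(x,1)$ onto the zero section of $V(\mathcal L)$.
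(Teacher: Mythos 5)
Your argument is correct, but it is worth noting that the paper does not actually prove this proposition at all: its ``proof'' is a one-line citation to Proposition 2.5 of King's paper \cite{King}, and what you have written is essentially a reconstruction of King's argument. The route you take --- trivialize $\mathcal L$ via a character $\chi$, observe that the affineness clause in Definition \ref{definition: semi-stable,stable,unstable loci} is vacuous on $\mathbb{A}^n$, pass to the total space $X\times\mathbb{A}^1$ with the $\chi$-twisted action so that instability of $x$ becomes the statement that $\overline{G\cdot(x,1)}$ meets the zero section, and then invoke separation of closed orbits by invariants together with the Kempf--Birkes destabilizing one-parameter-subgroup theorem --- is exactly the standard proof, and every step checks out: the reduction of invariant polynomials on $X\times\mathbb{A}^1$ to semi-invariants $f_m$, the use of the unique closed orbit in $\overline{G\cdot x}$, the closedness of $G\cdot(y,0)$, and the conjugation step handled by Lemma \ref{lemma: weights under the orbit}(a) are all sound. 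Two small caveats: first, you correctly identify that the one genuinely nontrivial external input is the affine Hilbert--Mumford specialization theorem, which you cite rather than prove --- this is acceptable since it is classical, but it means your proof is not more self-contained than the paper's citation, only more explicit about what is being used; second, the sign bookkeeping relating $\mu(\mathcal L,\lambda,x^*)$, the weight of $\chi\circ\lambda$, and the transformation law of invariant sections of $\mathcal L^{\otimes m}$ depends delicately on the paper's conventions (Definition \ref{definiton: weights of vector bundle and Mumford stability function} uses $\operatorname{Sym}\mathcal E$ rather than $\operatorname{Sym}\mathcal E^{\vee}$, and a sign has been dropped from Mumford's original definition), so the two displayed identities in your first paragraph should be checked against one another once the conventions are fixed; the logical structure of the argument is unaffected either way.
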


\begin{proof}
 This is Proposition 2.5 of \cite{King}.
\end{proof}

Next, we wish to define a type of stratification of a variety with a group action. Before giving the definition itself, we set some initial terminology.

\begin{definition} \label{definition: contracting loci}
 Let $\lambda: \mathbb{G}_m \to G$ be a one-parameter subgroup of $G$. We shall denote a connected component of $X^{\lambda}$ by $Z_{\lambda}^0$. Associated to $Z_{\lambda}^0$, we have two other subvarieties,
 \begin{align*}
  Z_{\lambda} & := \{ x \in X \mid \lim_{\alpha \to 0} \sigma(\lambda(\alpha),x) \in Z_{\lambda}^0 \}.
 \end{align*}
 We call $Z_{\lambda}$ the \textbf{contracting variety} associated to $Z^0_{\lambda}$.
 
 We will also close these varieties up under the action of $G$. We set
 \begin{align*}
  S_{\lambda}^0 & := G \cdot Z_{\lambda}^0 \\
  S_{\lambda} & := G \cdot Z_{\lambda}.
 \end{align*}
\end{definition}

\begin{definition}
 We say that a $\mathbb{G}_m$ equivariant morphism,
 \[
 f: X \to Y
 \]
 is an $\mathbb{A}$\textbf{-fibration} if for any point $y \in Y$ there is a Zariski neighborhood, $U$, and a linear action of $\mathbb{G}_m$ action on $\mathbb{A}^n$, for some $n \geq 0$, so that
 \begin{displaymath}
  f|_{f^{-1}(U)}: f^{-1}(U) \to U
 \end{displaymath}
 is $\mathbb{G}_m$ equivariantly isomorphic to the projection,
 \begin{displaymath}
  U \times \mathbb{A}^n \to U.
 \end{displaymath}
\end{definition}

\begin{proposition} \label{proposition: BB}
 Let $X$ be a smooth quasi-projective variety equipped with a $G$ action, $\sigma$. Let $\lambda: \mathbb{G}_m \to G$ be a one-parameter subgroup of $G$. Then, $X^{\lambda}$ is a smooth closed subvariety of $X$ and $Z_{\lambda}$ is a smooth locally-closed subvariety of $X$. The function,
\begin{align*}
 \pi: Z_{\lambda} & \to Z_{\lambda}^0 \\
 x & \mapsto \lim_{\alpha \to 0} \sigma(\lambda(\alpha),x),
\end{align*}
 is an $\mathbb{A}$-fibration.
\end{proposition}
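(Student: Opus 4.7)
This statement is essentially the classical Białynicki-Birula decomposition theorem applied to the $\mathbb{G}_m$-action induced by $\lambda$, so the plan is to reduce to the standard local model, prove the statements in an equivariant affine chart, and then patch. Since only the $\mathbb{G}_m$-action matters for the statement, we may ignore $G$ and work with $\mathbb{G}_m$ acting on $X$ via $\lambda$.

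First, I would show $X^{\lambda}$ is smooth and closed. Closedness is immediate: $X^{\lambda}$ is the equalizer of the action map $\sigma: \mathbb{G}_m \times X \to X$ and the projection $\pi$ restricted to a generating subset, and, because $X$ is separated, this equalizer is closed. For smoothness, I would use Sumihiro's equivariant completion/affinization theorem to cover a neighborhood of any fixed point $x \in X^{\lambda}$ by a $\mathbb{G}_m$-invariant affine open $V \subset X$, equivariantly embed $V$ in a representation $W$ of $\mathbb{G}_m$, and then use the $\mathbb{G}_m$-equivariant Luna-type étale slice: there is a $\mathbb{G}_m$-equivariant étale map from a neighborhood of $x$ in $V$ to the tangent space $T_xX$ carrying its natural linear $\mathbb{G}_m$-action, sending $x$ to $0$. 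Under this map $X^{\lambda}$ pulls back to the fixed locus of the linear action on $T_xX$, which is the zero-weight subspace $T_x^0 = (T_xX)^{\lambda}$, clearly smooth. Étaleness of the slice transfers smoothness to $X^{\lambda}$.

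Next, for $Z_{\lambda}$ and the $\mathbb{A}$-fibration structure, I would again work in the étale-local linear model. Decompose $T_xX = T_x^+ \oplus T_x^0 \oplus T_x^-$ by positive, zero, and negative $\lambda$-weights. Under the slice map, the contracting variety $Z_{\lambda}$ corresponds to $T_x^+ \oplus T_x^0$: indeed, a point of $T_xX$ has a limit as $\alpha \to 0$ precisely when its negative-weight components vanish, and the limit then projects onto the zero-weight part. Hence, étale-locally, $Z_{\lambda}$ is isomorphic to the smooth variety $T_x^+ \oplus T_x^0$, the inclusion $Z_{\lambda} \hookrightarrow X$ is locally closed, and the limit map $\pi$ is identified with the linear projection $T_x^+ \oplus T_x^0 \to T_x^0$, which is certainly an $\mathbb{A}$-fibration. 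This gives all three statements étale-locally near each $y \in Z_{\lambda}^0$.

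The main obstacle is passing from an étale-local trivialization to a Zariski-local $\mathbb{A}$-fibration as demanded by the definition. To handle this, I would observe that the restriction of $TX$ to $Z_{\lambda}^0$ splits canonically as an equivariant direct sum $T^+ \oplus T^0 \oplus T^-$ of weight subbundles, with $T^0 = TZ_{\lambda}^0$, and that the construction above globalizes to a canonical closed immersion $Z_{\lambda} \hookrightarrow \op{V}(T^{+,\vee})$ over $Z_{\lambda}^0$. By the étale-local analysis this immersion is an isomorphism after passing to an étale cover of $Z_{\lambda}^0$, hence it is an isomorphism, exhibiting $\pi: Z_{\lambda} \to Z_{\lambda}^0$ as the total space of an equivariant vector bundle. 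Finally, on any affine Zariski open $U \subset Z_{\lambda}^0$ over which $T^+$ trivializes equivariantly (such opens cover $Z_{\lambda}^0$ because $Z_{\lambda}^0$ admits an equivariant affine cover on which the $\mathbb{G}_m$-equivariant bundle $T^+$ decomposes into weight line bundles, each of which trivializes Zariski-locally), the restriction $\pi^{-1}(U) \to U$ is $\mathbb{G}_m$-equivariantly isomorphic to the projection $U \times \mathbb{A}^n \to U$ with a linear $\mathbb{G}_m$-action, giving the required $\mathbb{A}$-fibration structure.
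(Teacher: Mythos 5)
The paper does not prove this statement at all: it is quoted directly from the literature, namely Theorem 4.1 of Bia\l{}ynicki-Birula's paper \cite{BB} (together with its extensions to the quasi-projective case and to Zariski-local triviality of the fibration). Your proposal therefore goes well beyond what the paper does — you are sketching a proof of the Bia\l{}ynicki-Birula decomposition itself. The sketch follows the standard modern line: reduce to a $\mathbb{G}_m$-action, use Sumihiro (which applies since a smooth variety is normal) to get an invariant affine cover, pass to the linear model on $T_xX$, and identify $Z_{\lambda}$ with the total space of the positive-weight subbundle $T^+$ of $TX|_{Z_{\lambda}^0}$, which trivializes equivariantly on a Zariski cover because a $\mathbb{G}_m$-equivariant bundle over a trivially-acted base splits into weight subbundles.

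One step deserves more care than you give it: transferring the attracting set along a $\mathbb{G}_m$-equivariant \emph{étale} map $f: U \to T_xX$. Fixed loci pull back along such maps because they are equalizers, but for the attractor the implication you need — that if $\lim_{\alpha \to 0}\sigma(\lambda(\alpha), f(u))$ exists then $\lim_{\alpha \to 0}\sigma(\lambda(\alpha),u)$ already exists in $U$ — is not automatic, since limits do not lift along non-proper maps. This is exactly the content of the étale-descent property of attractors (in modern language, that $U^{+} \to (T_xX)^{+} \times_{(T_xX)^{0}} U^{0}$ is an isomorphism, due to Drinfeld and to Jelisiejew--Sienkiewicz), or it is circumvented in Bia\l{}ynicki-Birula's original argument by equivariantly compactifying and linearizing the action on a projective space, where the limit maps are computed explicitly. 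If you either invoke that descent statement or run the argument through an equivariant projective embedding as in \cite{BB}, the rest of your plan — smoothness of $X^{\lambda}$, the canonical closed immersion $Z_{\lambda} \hookrightarrow \op{V}(T^{+,\vee})$ being an isomorphism, and the Zariski-local linear trivialization of $\pi$ — is sound and recovers precisely the cited theorem.
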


\begin{proof}
 This is part of the Bia\l{}ynicki-Birula decomposition of $X$, see Theorem 4.1 in \cite{BB}.
\end{proof}

\begin{proposition} \label{proposition: P+L+BB}
 Let $X$ be a smooth quasi-projective variety equipped with a $G$ action. Let $\lambda: \mathbb{G}_m \to G$ be a one-parameter subgroup of $G$. There is a natural action of $P(\lambda)$ on $Z_{\lambda}$ and a natural action of $C(\lambda)$ on $Z_{\lambda}^0$. The morphism, $\pi$, of Proposition~\ref{proposition: BB} is equivariant with respect to these actions and the homomorphism, $\rho$, from Lemma \ref{lemma: rho: P to C}: for $x \in Z_{\lambda}$,
 \begin{equation} \label{equation: P+L+BB} 
  \pi(\sigma(p,x)) = \sigma(\rho(p),\pi(x)).
 \end{equation}
\end{proposition}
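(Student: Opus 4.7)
The plan is to reduce everything to a single algebraic identity in $G$ combined with continuity of the action $\sigma$. Given $p \in P(\lambda)$ and $x \in Z_{\lambda}$, I would start with the group-law identity $\lambda(\alpha) \cdot p = (\lambda(\alpha) p \lambda(\alpha)^{-1}) \cdot \lambda(\alpha)$ and translate it to an identity of $X$-valued morphisms:
\[
\sigma(\lambda(\alpha), \sigma(p, x)) \;=\; \sigma\!\left(\lambda(\alpha)\, p\, \lambda(\alpha)^{-1},\; \sigma(\lambda(\alpha), x)\right).
\]
Letting $\alpha \to 0$, the first argument on the right tends to $\rho(p) \in C(\lambda)$ by the definition of $P(\lambda)$ and Lemma \ref{lemma: rho: P to C}, while the second argument tends to $\pi(x) \in Z_{\lambda}^0$ by the definition of $Z_{\lambda}$. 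Continuity of $\sigma$ then yields
\[
\lim_{\alpha \to 0} \sigma(\lambda(\alpha), \sigma(p, x)) \;=\; \sigma(\rho(p), \pi(x)).
\]
This single computation will supply both the equivariance formula \eqref{equation: P+L+BB} and, once the right-hand side is known to land in $Z_{\lambda}^0$, the fact that $\sigma(p, x) \in Z_{\lambda}$, i.e.\ that $P(\lambda)$ preserves $Z_{\lambda}$.

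The remaining point is to confirm that $\sigma(\rho(p), \pi(x)) \in Z_{\lambda}^0$, which also handles the claim that $C(\lambda)$ acts on $Z_{\lambda}^0$: any $c \in C(\lambda)$ commutes with $\lambda(\alpha)$, so $\sigma(c, \cdot)$ preserves $X^{\lambda}$ scheme-theoretically. I would then argue that $c \cdot Z_{\lambda}^0$ and $Z_{\lambda}^0$ lie in the same connected component of $X^{\lambda}$ via the standard continuity argument: the orbit map $C(\lambda) \to X^{\lambda}$, $c \mapsto \sigma(c, z_0)$ for a chosen $z_0 \in Z_{\lambda}^0$, has connected source (since $C(\lambda)$ is connected for connected reductive $G$, as a Levi subgroup of $P(\lambda)$) and contains $z_0$ in its image, so its entire image is contained in the component $Z_{\lambda}^0$. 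Specializing the main calculation to $p = c \in C(\lambda)$ and $x \in Z_{\lambda}^0$ (so that $\rho(c) = c$ and $\pi(x) = x$) then gives $\pi(\sigma(c, x)) = \sigma(c, x)$, confirming that $\sigma(c, x) \in X^{\lambda}$ lies in the right component.

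The main obstacle I anticipate is precisely the connected-component bookkeeping for the $C(\lambda)$-action, since a priori an element of $C(\lambda)$ could permute components of $X^{\lambda}$; this is the only step where connectedness of $G$ (implicit in the GIT setup of this paper) actually enters. The rest of the proof is purely formal: once the associativity identity is set up and continuity is invoked, equivariance of $\pi$ with respect to $\rho$ and closure of $Z_{\lambda}$ under $P(\lambda)$ drop out simultaneously, and the two group actions are simply the restrictions of the $G$-action on $X$.
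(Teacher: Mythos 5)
Your argument is correct and is precisely the computation the paper leaves implicit: its entire proof of this proposition is the one-line assertion that the actions and Equation~\eqref{equation: P+L+BB} are immediate consequences of the definitions. The connected-component bookkeeping you flag is indeed the only point of substance, resting on the standard fact that the centralizer of a torus in a connected reductive group is connected, and the paper glosses over it entirely.
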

\begin{proof}
 The natural actions and Equation~\ref{equation: P+L+BB} are immediate consequences of the definitions. 
\end{proof}

Let $G \overset{P(\lambda)}{\times} Z_{\lambda}$ denote the quotient stack, $[(G \times Z_{\lambda})/P(\lambda)]$, where $p \in P(\lambda)$ acts as 
\begin{displaymath}
 (p,(g,z)) \mapsto (gp^{-1},pz).
\end{displaymath}
In general, $G \overset{P(\lambda)}{\times} Z_{\lambda}$ is an algebraic space, \cite{Tho87}, and there is a natural evaluation morphism,
\begin{align*}
 \tau_{\lambda}: G \overset{P(\lambda)}{\times} Z_{\lambda} & \to S_{\lambda} \\
 (g,z) & \mapsto \sigma(g,z).
\end{align*}

\begin{definition} \label{definition: HKKN strat}
 Assume $X$ is a smooth variety with a $G$ action. A \textbf{HKKN stratification}, $\mathfrak{K}$, of $X$ is a nested sequence of open subvarieties,
 \begin{displaymath}
  X = X_0^{\mathfrak{K}} \supset X_1^{\mathfrak{K}} \supset \cdots \supset X_n^{\mathfrak{K}},
 \end{displaymath}
 one parameter subgroups, $\lambda_j: \mathbb{G}_m \to G$, and choices of connected components, $Z_{\lambda_j}^0$, of the fixed locus of $\lambda_j$ on $X_{j-1}^{\mathfrak{K}}$, for $1 \leq j \leq n$, such that 
 \begin{itemize}
  \item $X_{j}^{\mathfrak{K}} = X_{j-1}^{\mathfrak{K}} \setminus S_{\lambda_{j}}$.
  \item For each $j$, the morphism,
  \begin{displaymath}
   \tau_{\lambda_j}: G \overset{P(\lambda_j)}{\times} Z_{\lambda_j} \to S_{\lambda_j},
  \end{displaymath}
   is an isomorphism.
  \item For each $j$, $S_{\lambda_j}$ is a a closed subvariety in $X_{j-1}^{\mathfrak{K}}$.
 \end{itemize}
 An \textbf{elementary HKKN stratification} is a HKKN stratification with $n=1$. We denote one as
 \begin{displaymath}
  X = X_{\lambda} \sqcup S_{\lambda}.
 \end{displaymath}
\end{definition}

The main source of HKKN stratifications is GIT.  

\begin{theorem} \label{theorem: GIT HKKN strat}
 Let $X$ be a smooth proper variety equipped with a $G$ action and let $\mathcal L$ be an ample $G$-line bundle. There is an HKKN stratification,
 \begin{displaymath}
  X = X_0^{\mathfrak{K}} \supset X_1^{\mathfrak{K}} \supset \cdots \supset X_n^{\mathfrak{K}} = X^{\op{ss}}(\mathcal L).
 \end{displaymath}
\end{theorem}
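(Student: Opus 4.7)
The plan is to build the stratification inductively using Kempf's theorem on optimal destabilizing one-parameter subgroups. Fix a Weyl-invariant inner product on the lattice of one-parameter subgroups of a maximal torus of $G$, and extend it by $G$-conjugation to a length function $\|\lambda\|$ on all one-parameter subgroups. Theorem~\ref{theorem: Hilbert-Mumford numerical stability} identifies $X^{\op{us}}(\mathcal{L})$ as the set of points admitting \emph{some} destabilizing $\lambda$, meaning $\mu(\mathcal{L},\lambda,x^*) > 0$. Kempf's theorem then produces, for each unstable $x$, a well-defined optimal value
\[
M(x) := \sup_{\lambda}\, \frac{\mu(\mathcal{L},\lambda,x^*)}{\|\lambda\|} > 0,
\]
together with an essentially unique primitive one-parameter subgroup $\lambda_x$ attaining this supremum, whose $P(\lambda_x)$-conjugates exhaust the set of optimal destabilizers of $x$.

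Next I would invoke the finiteness statements of Hesselink, Ness and Kirwan: since $X$ is proper and $\mathcal{L}$ ample, $M$ takes only finitely many distinct positive values $m_1 > m_2 > \cdots > m_n$, each realized by only finitely many $G$-conjugacy classes of one-parameter subgroups. Pick representatives $\lambda_1,\ldots,\lambda_n$, and for each $j$ select the connected component $Z^{0}_{\lambda_j}$ of the fixed locus $X_{j-1}^{\mathfrak{K},\lambda_j}$ into which the $M = m_j$ points flow. Define $X_0^{\mathfrak{K}} := X$ and inductively $X_j^{\mathfrak{K}} := X_{j-1}^{\mathfrak{K}} \setminus S_{\lambda_j}$. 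By construction the final open stratum $X_n^{\mathfrak{K}}$ consists of all points that do not destabilize, which by Theorem~\ref{theorem: Hilbert-Mumford numerical stability} is precisely $X^{\op{ss}}(\mathcal{L})$.

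The three conditions of Definition~\ref{definition: HKKN strat} then need verification in order. Closedness of $S_{\lambda_j}$ in $X_{j-1}^{\mathfrak{K}}$ follows from upper semi-continuity of Kempf's functional $M$: a limit of points with optimal value $m_j$ either remains at value $m_j$ or jumps to a strictly larger $m_i$ with $i < j$, but all such points have already been removed in previous steps. That $\tau_{\lambda_j}\colon G \overset{P(\lambda_j)}{\times} Z_{\lambda_j} \to S_{\lambda_j}$ is an isomorphism follows from Kempf's uniqueness: if $\sigma(g, z) \in Z_{\lambda_j}$ for some $z \in Z_{\lambda_j}$, then $g \lambda_j g^{-1}$ is also an optimal destabilizer at $\sigma(g,z)$, so $g \in P(\lambda_j)$ by the parabolic uniqueness, which gives injectivity on the quotient; surjectivity is immediate, and smoothness of both sides follows from Proposition~\ref{proposition: BB}. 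The main obstacle is the topological behavior of Kempf's functional, specifically the upper semi-continuity of $M$ and the identification of each level set $\{M = m_j\}$ with the $G$-saturation of a single $Z_{\lambda_j}$; these are essentially contained in the works of Hesselink, Kempf and Ness, but careful bookkeeping is required because the one-parameter subgroup realizing $M(x)$ depends on a choice of maximal torus, and it is Kempf's parabolic uniqueness that renders the final geometric identification canonical.
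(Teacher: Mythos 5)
Your proposal is correct and follows exactly the route the paper itself takes: the paper proves this theorem by citing Kirwan and Ness (building on Kempf and Hesselink, with \cite{DH98} as exposition), and what you have written is a faithful outline of that very construction — the normalized Kempf function $M$, finiteness of its values, upper semi-continuity for closedness, and parabolic uniqueness for the isomorphism $\tau_{\lambda_j}$. The only caveat, which you already flag, is that the strata must be indexed by conjugacy classes of optimal destabilizing data together with connected components of the relevant fixed loci, not merely by the values $m_j$.
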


\begin{proof}
 This is due independently to F. Kirwan, \cite{Kir}, and L. Ness, \cite{Ness}. They built upon work of G. Kempf, \cite{Kempf}, and W. Hesselink, \cite{Hess}. Section 1 of \cite{DH98} provides a compact, but thorough, exposition of the construction of the stratification and its properties.
\end{proof}

Another class of examples of HKKN stratifications comes from GIT with $X$ an affine space.

\begin{corollary} \label{cor: affine GIT HKKN strat}
 Let $X = \mathbb{A}^m$ equipped with a linear $G$ action and let $\mathcal L$ be a $G$-line bundle. There is an HKKN stratification,
 \begin{displaymath}
  X = X_0^{\mathfrak{K}} \supset X_1^{\mathfrak{K}} \supset \cdots \supset X_n^{\mathfrak{K}} = X^{\op{ss}}(\mathcal L).
 \end{displaymath}
\end{corollary}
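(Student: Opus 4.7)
The plan is to construct the stratification directly by mimicking the Kempf-Hesselink procedure in the affine setting, with Proposition~\ref{proposition: affine HM numerical stability} replacing Theorem~\ref{theorem: Hilbert-Mumford numerical stability} at each stage. Fix a Weyl-invariant norm $\|\cdot\|$ on cocharacters, extended $G$-equivariantly from a maximal torus $T \subset G$. For each unstable point $x \in X^{\op{us}}(\mathcal L)$, define
\[
M(x) := \sup \left\{ \frac{\mu(\mathcal L, \lambda, x^*)}{\|\lambda\|} \;:\; \lambda \text{ a 1-PS with } x^* := \lim_{\alpha \to 0}\sigma(\lambda(\alpha), x) \text{ existing in } X\right\}.
\]
By Proposition~\ref{proposition: affine HM numerical stability}, $M(x) > 0$. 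I would invoke Kempf's theorem from \cite{Kempf}, which is stated for linear $G$-actions on affine space, to show that this supremum is attained and that the collection of maximizing primitive 1-PSs forms a single $P(\lambda)$-conjugacy class; call its members the \emph{adapted} 1-PSs at $x$.

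Next, I would use finiteness of the $T$-weight set of $V = \mathbb{A}^m$ to show that only finitely many $G$-conjugacy classes $[\lambda]$ arise as adapted 1-PSs across $X^{\op{us}}(\mathcal L)$ and that $M$ takes only finitely many values. Enumerate the conjugacy classes $[\lambda_1], \ldots, [\lambda_n]$ in order of strictly decreasing $M$-value on the corresponding stratum. For each $j$, pick a representative $\lambda_j$, let $Z_{\lambda_j}^0$ be a connected component of $X^{\lambda_j}$ on which $\lambda_j$ is adapted (so the ratio $\mu(\mathcal L, \lambda_j, \cdot)/\|\lambda_j\|$ equals the stratum's $M$-value), let $Z_{\lambda_j}$ be the associated contracting subvariety from Definition~\ref{definition: contracting loci}, and put $S_{\lambda_j} := G \cdot Z_{\lambda_j}$.

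Setting $X_j^{\mathfrak{K}} := X \setminus \bigcup_{i \leq j} S_{\lambda_i}$, I would verify the three axioms of Definition~\ref{definition: HKKN strat}. That $S_{\lambda_j}$ is closed in $X_{j-1}^{\mathfrak{K}}$ follows from upper semi-continuity of $M$ combined with the strictly decreasing $M$-ordering on the $\lambda_i$. That the evaluation map $\tau_{\lambda_j}: G \overset{P(\lambda_j)}{\times} Z_{\lambda_j} \to S_{\lambda_j}$ is an isomorphism is formal from Kempf's uniqueness statement: the $P(\lambda_j)$-conjugacy class of the adapted 1-PS at each point of $S_{\lambda_j}$ is $[\lambda_j]$, which rigidifies the $G$-orbit structure on $S_{\lambda_j}$ in the required way. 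Finally, the recursion terminates at $X_n^{\mathfrak{K}} = X^{\op{ss}}(\mathcal L)$, because any point of $X_n^{\mathfrak{K}}$ has no destabilizing 1-PS and hence is semi-stable by Proposition~\ref{proposition: affine HM numerical stability}.

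The main obstacle is the attained-maximum and uniqueness statement for the adapted 1-PS, which in the projective case rests on compactness arguments using that limits always exist. In our affine setting one must restrict the supremum defining $M(x)$ to 1-PSs whose limit at $x$ exists. Fortunately, Kempf's paper \cite{Kempf} already works in exactly this affine linear context, so the transfer is direct. Once this is in hand, the remaining steps track the exposition in Section~1 of \cite{DH98} essentially verbatim, with every appeal to Theorem~\ref{theorem: Hilbert-Mumford numerical stability} replaced by an appeal to Proposition~\ref{proposition: affine HM numerical stability}.
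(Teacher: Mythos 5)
Your proof is a genuinely different route from the one the paper takes. The paper's argument is a two-line reduction: compactify $\mathbb{A}^m$ to $\mathbb{P}^m$, embed $G$ into $\op{PGL}_{m+1}$ via $A \mapsto \left(\begin{smallmatrix} 1 & 0 \\ 0 & A \end{smallmatrix}\right)$, and apply the projective result, Theorem~\ref{theorem: GIT HKKN strat}, citing \cite{Halic} for the details (one must check that the stratification of $\mathbb{P}^m$ restricts to one of $\mathbb{A}^m$ and that the semi-stable loci match, which is exactly what \cite{Halic} verifies). You instead rebuild the Kempf--Hesselink--Kirwan--Ness stratification from scratch in the affine setting, with Proposition~\ref{proposition: affine HM numerical stability} standing in for the projective Hilbert--Mumford criterion. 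That route is viable and is essentially how the affine stratification is constructed elsewhere in the literature; it buys a self-contained, intrinsic description of the strata (no auxiliary hyperplane at infinity), whereas the paper's reduction buys brevity and lets all the hard analysis be inherited from the projective case.

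Two points in your sketch are asserted rather than proved, and they are precisely the points the compactification is designed to avoid. First, Kempf's optimality theorem in \cite{Kempf} is formulated for driving a point into a closed $G$-invariant subset meeting its orbit closure, not for maximizing $\mu(\mathcal L,\lambda,x^*)/\|\lambda\|$ over one-parameter subgroups whose limit happens to exist; for a character-twisted linearization on $\mathbb{A}^m$ the translation goes through the auxiliary point $(x,1)\in V\oplus k_{\chi}$ and the closed subset $V\times\{0\}$, as in King's formulation of semi-stability, and this intermediate step should be made explicit before you can speak of ``the'' adapted class at $x$. Second, the isomorphism $\tau_{\lambda_j}: G \overset{P(\lambda_j)}{\times} Z_{\lambda_j} \to S_{\lambda_j}$ is not formal from uniqueness of the adapted class alone; it is the substantive retraction statement of \cite{Kir} and \cite{Ness} (see Section~1 of \cite{DH98}), though in the linear affine case it is made easier by the fact that $Z_{\lambda_j}$ is cut out by linear equations. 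With those two steps filled in, your argument closes.
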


\begin{proof}
 One compactifies $\mathbb{A}^m$ to $\mathbb{P}^m$, embeds $G$ into $\op{PGL}_{m+1}$ via the inclusion,
 \begin{align*}
 \op{GL}_m(k) & \to \op{GL}_{m+1}(k) \\
 A & \mapsto \begin{pmatrix} 1 & 0 \\ 0 & A \end{pmatrix},
\end{align*}
 and applies Theorem~\ref{theorem: GIT HKKN strat}. This is carried out explicitly in \cite{Halic}.
\end{proof}

\subsection{Derived categories of coherent sheaves}

We first remind the reader about some basic facts involving triangulated categories, including admissible subcategories and semi-orthogonal decompositions. Standard references for the material are \cite{Bon,BK}.

We begin by recalling a pair of basic definitions.

\begin{definition} \label{definition: thick subcategory}
 Let $\mathcal T$ be a triangulated category. We say a full subcategory, $\mathcal S$, is a \textbf{thick subcategory} if $\mathcal S$ is a triangulated subcategory and $\mathcal S$ is closed under taking summands. 

 We say a full subcategory, $\mathcal S$, \textbf{generates} $\mathcal T$ if the smallest thick subcategory of $\mathcal T$ containing all objects of $\mathcal S$ is $\mathcal T$ itself. We say a set of subcategories, $\mathcal S_i, i \in I$, generates $\mathcal T$ if the union of the $\mathcal S_i$ generates $\mathcal T$.
\end{definition}

Let $\mathcal T$ be a triangulated category and $\mathcal S$ a full subcategory. Recall that the left orthogonal, $\leftexp{\perp}{\mathcal S}$, is the full subcategory $\mathcal T$ consisting of all objects, $T \in \mathcal T$, with $\op{Hom}_{\mathcal T}(T,S) = 0$ for any $S \in \mathcal S$. The right orthogonal, $\mathcal S^{\perp}$, is defined similarly.

\begin{definition}\label{def:SO}
 A \textbf{semi-orthogonal decomposition} of a triangulated category, $\mathcal T$, is a sequence of full triangulated subcategories, $\mathcal A_1, \dots ,\mathcal A_m$, in $\mathcal T$ such that $\mathcal A_i \subset \mathcal A_j^{\perp}$ for $i<j$ and, for every object $T \in \mathcal T$, there exists a diagram:
 \begin{center}
 \begin{tikzpicture}[description/.style={fill=white,inner sep=2pt}]
 \matrix (m) [matrix of math nodes, row sep=1em, column sep=1.5em, text height=1.5ex, text depth=0.25ex]
 {  0 & & T_{m-1} & \cdots & T_2 & & T_1 & & T   \\
   & & & & & & & &  \\
   & A_m & & & & A_2 & & A_1 & \\ };
 \path[->,font=\scriptsize]
  (m-1-1) edge (m-1-3) 
  (m-1-3) edge (m-1-4)
  (m-1-4) edge (m-1-5)
  (m-1-5) edge (m-1-7)
  (m-1-7) edge (m-1-9)

  (m-1-9) edge (m-3-8)
  (m-1-7) edge (m-3-6)
  (m-1-3) edge (m-3-2)

  (m-3-8) edge node[sloped] {$ | $} (m-1-7)
  (m-3-6) edge node[sloped] {$ | $} (m-1-5) 
  (m-3-2) edge node[sloped] {$ | $} (m-1-1)
 ;
 \end{tikzpicture}
 \end{center}
 where all triangles are distinguished and $A_k \in \mathcal A_k$. We shall denote a semi-orthogonal decomposition by $\langle \mathcal A_1, \ldots, \mathcal A_m \rangle$. If the subcategories in a semi-orthogonal decomposition are the essential images of fully-faithful functors, $\Upsilon_i: \mathcal A_i \to \mathcal T$, we shall also denote a semi-orthogonal decomposition by
 \begin{displaymath}
  \langle \Upsilon_1, \ldots, \Upsilon_m \rangle.
 \end{displaymath}

 Let $E_1,\ldots,E_n$ be objects of $\mathcal T$. We say that $E_1,\ldots,E_n$ is an \textbf{exceptional collection} if 
 \begin{displaymath}
  \op{Hom}_{\mathcal T}(E_i,E_i[l]) = \begin{cases} k & \text{if } l = 0 \\ 0 & \text{otherwise} \end{cases}
 \end{displaymath}
 for all $i$ and
 \begin{displaymath}
  \op{Hom}_{\mathcal T}(E_j,E_i[l]) = 0
 \end{displaymath}
 for all $j > i$ and all $l$. We say that $E_1,\ldots,E_n$ is \textbf{full} if $E_1,\ldots,E_n$ generates $\mathcal T$. 
\end{definition}

Closely related to the notion of a semi-orthogonal decomposition is the notion of a left/right admissible subcategory of a triangulated category. 

\begin{definition}
 Let $f: \mathcal A \to \mathcal T$ be the inclusion of a full triangulated subcategory of $\mathcal T$. The subcategory, $\mathcal A$, is called \textbf{right admissible} if the inclusion functor, $f$, has a right adjoint and \textbf{left admissible} if it has a left adjoint. A full triangulated subcategory is called \textbf{admissible} if it is both right and left admissible.
\end{definition}

\begin{proposition} \label{proposition: characterizations of SODs}
 Let $\mathcal T$ be a triangulated category and let $\mathcal S$ be a thick subcategory. The following are equivalent:
\begin{enumerate}
 \item The subcategories, $\mathcal S$ and $\mathcal S^{\perp}$, generate $\mathcal T$.
 \item The category, $\mathcal T$, admits a semi-orthogonal decomposition $\langle \mathcal S^{\perp}, \mathcal S \rangle$.
 \item The inclusion, $\mathcal S \to \mathcal T$, is right admissible. 
 \item The inclusion, $\mathcal S^{\perp} \to \mathcal T$, is left admissible.
\end{enumerate}
\end{proposition}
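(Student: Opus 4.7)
The plan is to establish the cycle $(2) \Leftrightarrow (3)$, $(2) \Leftrightarrow (4)$, and $(1) \Leftrightarrow (2)$, with the last equivalence being the main point of interest. The implications $(2) \Rightarrow (1)$ and $(2) \Rightarrow (3), (4)$ are essentially formal, while $(1) \Rightarrow (2)$ is the content.

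For $(2) \Rightarrow (3)$: suppose every $T \in \mathcal T$ fits into a distinguished triangle $S_T \to T \to S'_T \to S_T[1]$ with $S_T \in \mathcal S$ and $S'_T \in \mathcal S^\perp$. I would define $i^R(T) := S_T$ on objects, then for $S_0 \in \mathcal S$ apply $\op{Hom}_{\mathcal T}(S_0, -)$ to the triangle; since $\op{Hom}(S_0, S'_T[k]) = 0$ for all $k$, one gets $\op{Hom}_{\mathcal T}(S_0, T) \cong \op{Hom}_{\mathcal S}(S_0, i^R T)$ functorially in $T$, which both builds the adjoint on morphisms and verifies the universal property. The converse $(3) \Rightarrow (2)$ is the dual: given a right adjoint $i^R$, the counit fits into a triangle $i(i^R T) \to T \to C \to i(i^R T)[1]$; applying $\op{Hom}(S_0, -)$ and using that the counit induces an isomorphism on the outer terms shows $C \in \mathcal S^\perp$. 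The argument for $(2) \Leftrightarrow (4)$ is completely parallel, using left adjoints and rotating the triangle.

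For $(2) \Rightarrow (1)$: the defining triangle for each $T$ exhibits $T$ as an extension of objects from $\mathcal S$ and $\mathcal S^\perp$, so these two subcategories generate.

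The main obstacle is $(1) \Rightarrow (2)$. I would let $\mathcal T' \subset \mathcal T$ be the full subcategory of objects $T$ admitting a distinguished triangle $S \to T \to S' \to S[1]$ with $S \in \mathcal S$, $S' \in \mathcal S^\perp$. Clearly $\mathcal S \subset \mathcal T'$ (take $S'=0$) and $\mathcal S^\perp \subset \mathcal T'$ (take $S=0$), and $\mathcal T'$ is stable under shifts and direct summands. The key step is closure under cones: given a morphism $f: T_1 \to T_2$ between objects of $\mathcal T'$ with decomposition triangles $S_i \to T_i \to S_i' \to S_i[1]$, I would use that $\op{Hom}(S_1, S_2') = 0$ together with the long exact sequence from $S_2 \to T_2 \to S_2'$ to lift the composition $S_1 \to T_1 \to T_2$ to a morphism $S_1 \to S_2$. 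Then the octahedral axiom, applied to the composition $S_1 \to T_1 \to T_2$ (with cones $S_1', T_2, C_f$ respectively), produces a distinguished triangle expressing $\op{cone}(f)$ as an extension of $\op{cone}(S_1 \to S_2) \in \mathcal S$ by an object that is in turn an extension of $S_1'$ and $S_2'$ (both in $\mathcal S^\perp$), and one more octahedral application rearranges this into a triangle of the required form. Hence $\mathcal T'$ is a thick triangulated subcategory containing $\mathcal S \cup \mathcal S^\perp$; by hypothesis $(1)$ it equals all of $\mathcal T$, giving $(2)$.

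The main technical care required is keeping track of the orientation of the semi-orthogonal decomposition (so that the $\mathcal S$-component appears on the left of the triangle and the $\mathcal S^\perp$-component on the right, matching Definition~\ref{def:SO}) and handling the two successive octahedral manipulations cleanly; everything else reduces to chasing Hom-vanishing through long exact sequences.
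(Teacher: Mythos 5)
The paper gives no argument here at all: its ``proof'' is the citation to Bondal's Lemma~3.1. So you are supplying the proof, and most of what you write is the standard argument and is correct: $(2)\Leftrightarrow(3)$, $(2)\Rightarrow(4)$, $(2)\Rightarrow(1)$, and the cone-closure step of $(1)\Rightarrow(2)$ (lifting $S_1\to T_1\to T_2$ to $S_1\to S_2$ via $\op{Hom}(S_1,S_2')=0$ and applying the octahedron) are all fine.

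The genuine gap is exactly where you say ``clearly'': the claim that $\mathcal T'$ is stable under direct summands. Since the paper defines generation via the smallest \emph{thick} subcategory, you need $\mathcal T'$ to be thick, and this is the only non-formal point. Concretely, if $T=T_a\oplus T_b\in\mathcal T'$ with triangle $S\to T\to S'$, the idempotent $e=i_ap_a$ of $T$ lifts uniquely (by semiorthogonality) to an idempotent $\epsilon$ of $S$, and $T_a$ acquires a decomposition triangle precisely when $\epsilon$ splits --- which idempotents in a triangulated category need not do. The step genuinely fails in general: let $\mathcal T\subset \op{D}^{\op{b}}(k\times k)$ be the full triangulated subcategory of pairs $(V,W)$ with $\chi(V)\equiv\chi(W)\pmod 2$ and $\mathcal S=\{(V,0):\chi(V)\text{ even}\}$. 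Then $\mathcal S$ is thick, $\mathcal S^{\perp}=\{(0,W):\chi(W)\text{ even}\}$, and the two generate $\mathcal T$ thickly (any $X$ is a summand of $X\oplus X\cong (V\oplus V,0)\oplus(0,W\oplus W)$), yet $(k,k)$ admits no triangle $S\to(k,k)\to S'$ of the required type, since the second component would force $S'\cong(0,k)\notin\mathcal S^{\perp}$. What your cone argument actually shows is that $\mathcal T'$ equals the \emph{triangulated} hull of $\mathcal S\cup\mathcal S^{\perp}$; so either read $(1)$ with triangulated generation, or assume $\mathcal T$ idempotent complete (true in the paper's applications) and then finish by splitting $\epsilon$ inside $\mathcal S$ (using thickness) and observing that $\op{cone}(S_a\to T_a)$ is a summand of $S'$, hence lies in $\mathcal S^{\perp}$.

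A secondary problem: $(4)\Rightarrow(2)$ is not the literal dual of $(3)\Rightarrow(2)$. The cone of the unit $T\to L(T)$ lands in ${}^{\perp}(\mathcal S^{\perp})$, which contains $\mathcal S$ but need not equal it, so the dual argument produces the decomposition $\langle\mathcal S^{\perp},{}^{\perp}(\mathcal S^{\perp})\rangle$ rather than $\langle\mathcal S^{\perp},\mathcal S\rangle$. This is not reparable by more care: for $R=k[x]/(x^2)$ and $\mathcal S=\op{Perf}(R)\times 0$ inside $\op{D}^{\op{b}}(\op{mod}R)\times\mathcal T_2$, one has $\mathcal S^{\perp}=0\times\mathcal T_2$, which is left admissible via the projection, while $(1)$, $(2)$, $(3)$ all fail. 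So $(4)$ as written is strictly weaker than the other conditions; the honest statement either replaces $(4)$ by ``$\mathcal S^{\perp}$ is left admissible and ${}^{\perp}(\mathcal S^{\perp})=\mathcal S$'' or drops $(4)$ from the cycle and records $(2)\Rightarrow(4)$ as a one-way consequence.
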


\begin{proof}
 This is Lemma 3.1 of \cite{Bon}.
\end{proof}

\begin{remark}
 What is called a semi-orthogonal decomposition here is sometimes called a weak semi-orthogonal decomposition, \cite{Orl09}. A strong semi-orthogonal decomposition is a semi-orthogonal decomposition where all subcategories are admissible. 
\end{remark}

Next, we turn to computing morphisms in the derived category of $[X/G]$. First, we recall an important definition. 

\begin{definition}
 A linear algebraic group, $G$, is called \textbf{linearly reductive} if the functor of $G$-invariants for linear representations of $G$, i.e.
 \begin{align*}
  \bullet^G: \op{Qcoh}([\op{pt}/G]) & \to \op{Qcoh}(\op{pt}) \\
  V & \mapsto V^G,
 \end{align*}
 is exact.
\end{definition}

\begin{proposition} \label{proposition: reductive = linearly reductive}
 Over a field of characteristic zero, which is any field appearing in this paper, the following are equivalent:
 \begin{enumerate}
  \item $G$ is reductive.
  \item $G$ is linearly reductive.
 \end{enumerate}
\end{proposition}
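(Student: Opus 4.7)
The plan is to prove the two implications separately, with most of the work being in the direction reductive $\Rightarrow$ linearly reductive.

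For the easier direction, linearly reductive $\Rightarrow$ reductive, I would argue contrapositively: if $G$ has a nontrivial unipotent radical $R_u(G)$, then I can find a representation exhibiting the failure of exactness of $(-)^G$. Since $R_u(G)$ is a nontrivial connected unipotent group, it contains a copy of $\mathbb{G}_a$, and the standard representation of $\mathbb{G}_a$ on $k^2$ via $t \mapsto \bigl(\begin{smallmatrix} 1 & t \\ 0 & 1 \end{smallmatrix}\bigr)$ fits in a non-split short exact sequence $0 \to k \to k^2 \to k \to 0$ of $\mathbb{G}_a$-representations in which the middle term has $\mathbb{G}_a$-invariants equal to the first $k$. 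Inducing from $R_u(G)$ up to $G$ produces a non-split short exact sequence of $G$-representations for which $(-)^G$ fails to be exact (or one can work directly with $\mathbb{G}_a$-invariants, since $(-)^G$ factors through $(-)^{R_u(G)}$). This shows $G$ is not linearly reductive.

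For the harder direction, reductive $\Rightarrow$ linearly reductive in characteristic zero, I would first reduce exactness of $(-)^G$ to the statement that every finite-dimensional $G$-representation $V$ is completely reducible. Indeed, $(-)^G$ is always left exact, so its exactness on an arbitrary short exact sequence $0 \to V' \to V \to V'' \to 0$ amounts to producing a $G$-equivariant splitting, and by writing any $G$-module as a filtered colimit of its finite-dimensional subrepresentations one reduces to the finite-dimensional case. Complete reducibility of finite-dimensional representations of a reductive group in characteristic zero is classical. I would deduce it as follows: decompose $G$ as (essentially) a central torus times a semisimple factor; tori are linearly reductive by the weight decomposition, and the semisimple case reduces at the level of representations of the Lie algebra $\mathfrak{g}$ to Weyl's theorem on complete reducibility. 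The cleanest algebraic proof of Weyl's theorem uses the Casimir element of a faithful representation of $\mathfrak{g}$ to produce splittings in Lie algebra cohomology $\operatorname{H}^1(\mathfrak{g},\operatorname{Hom}(V'',V'))$; since $G$ is assumed connected in the structure theory, one then upgrades the Lie-algebra splitting to a $G$-equivariant splitting using that $G$ is generated by its one-parameter subgroups in characteristic zero. (For disconnected $G$, one averages over the finite component group, which is harmless in characteristic zero.)

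The main obstacle is the second direction: producing a canonical $G$-equivariant splitting of an arbitrary short exact sequence. Over $k = \mathbb{C}$ there is a very clean alternative via the unitarian trick of Weyl: a reductive complex algebraic group $G$ contains a Zariski-dense maximal compact subgroup $K$, and averaging any linear splitting over $K$ with respect to Haar measure produces a $K$-equivariant splitting, which by Zariski density is automatically $G$-equivariant. For a general algebraically closed field of characteristic zero, I would then invoke the Lefschetz principle, or simply work Lie-theoretically as sketched above, to reduce to the complex case. Once complete reducibility is in hand, exactness of $(-)^G$ on arbitrary (not necessarily finite-dimensional) representations follows by the filtered colimit argument, completing the proof.
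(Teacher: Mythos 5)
The paper does not actually prove this proposition; it simply cites Appendix A of \cite{MFK}, so any self-contained argument is by definition a different route. Your sketch is the standard classical one and is essentially correct: the reduction of exactness of $(-)^G$ to complete reducibility of finite-dimensional representations via filtered colimits is fine (rational representations are unions of finite-dimensional subrepresentations and $(-)^G$ commutes with filtered colimits), and the hard direction via the central-torus-times-semisimple decomposition, Weyl's theorem through the Casimir element, and the passage from $\mathfrak{g}$-splittings to $G$-splittings (plus averaging over the component group) is the textbook proof; the unitarian trick over $\C$ with Lefschetz descent is an equally valid alternative.

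The one soft spot is the converse direction. Writing down the non-split $\mathbb{G}_a$-extension $0 \to k \to k^2 \to k \to 0$ only gives you a sequence of $R_u(G)$-modules (and even getting it as an $R_u(G)$-module rather than a $\mathbb{G}_a$-module needs a word), and your parenthetical remark that $(-)^G$ factors through $(-)^{R_u(G)}$ does not by itself produce a short exact sequence of \emph{$G$}-modules on which $(-)^G$ fails to be exact. The induction route can be made to work, but it requires knowing that $\op{ind}_{R_u(G)}^G$ is exact (which uses that $G/R_u(G)$ is affine) together with Frobenius reciprocity to identify $(\op{ind}\,W)^G$ with $W^{R_u(G)}$ --- nontrivial inputs you do not justify. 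A cleaner and fully standard fix: if $G$ is linearly reductive, pick a faithful finite-dimensional representation $V$ and decompose it into irreducibles; by Kolchin's fixed-point theorem the normal unipotent subgroup $R_u(G)$ has nonzero invariants in each irreducible summand, and normality makes those invariants $G$-stable, hence all of the summand; so $R_u(G)$ acts trivially on $V$ and is trivial by faithfulness. With that replacement your argument is complete.
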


\begin{proof}
 See Appendix A of \cite{MFK}.
\end{proof}

As $G$ is reductive, we can simplify the computation of morphism spaces in the derived category of $G$-equivariant sheaves with the following lemma.

\begin{lemma} \label{lemma: taking G invariants of Ext is G-Ext}
 Assume that $G$ is reductive and let $\mathcal E$ and $\mathcal F$ be two $G$-equivariant quasi-coherent sheaves on $X$. Then, there are natural isomorphisms,
\begin{displaymath}
 \op{Ext}^i_{[X/G]}(\mathcal E, \mathcal F) \cong \op{Ext}^i_X(\mathcal E, \mathcal F)^G.
\end{displaymath}
 Moreover, if $\mathcal E$ and $\mathcal F$ are complexes of $G$-equivariant quasi-coherent sheaves on $X$, then,
\begin{displaymath}
 \op{Hom}_{[X/G]}(\mathcal E, \mathcal F[i]) \cong \op{Hom}_X(\mathcal E, \mathcal F[i])^G, 
\end{displaymath}
 where the morphism spaces above are taken in the derived category. 
\end{lemma}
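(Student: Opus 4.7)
The approach is to express the equivariant Hom functor as a composition
\[
\op{Hom}_{[X/G]}(\mathcal E, -) = (-)^G \circ \op{Hom}_X(\mathcal E, -)
\]
and collapse the resulting Grothendieck spectral sequence, with the exactness of $G$-invariants doing the decisive work.

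First I would establish the degree-zero statement by unwinding Definition \ref{definition: equiv sheaf}: a morphism in $\op{Qcoh}([X/G])$ is an $\mathcal O_X$-linear morphism that commutes with the equivariant structures $\theta$. The natural conjugation action of $G$ on $\op{Hom}_X(\mathcal E, \mathcal F)$ built from $\theta_{\mathcal E}$ and $\theta_{\mathcal F}$ makes it a $G$-module whose fixed subspace is exactly $\op{Hom}_{[X/G]}(\mathcal E, \mathcal F)$. This gives the factorization displayed above through $G\text{-Mod}_k$. Reductiveness enters through Proposition \ref{proposition: reductive = linearly reductive}, which tells us that the second piece $(-)^G$ is exact.

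Second, I would pass to derived functors. Because $\op{Qcoh}([X/G])$ is a Grothendieck abelian category, it has enough injectives (and $K$-injective resolutions for unbounded complexes, for the second half of the statement), so each of the three functors in the composition is right-derivable. By exactness of $(-)^G$, every $G$-module is $(-)^G$-acyclic, so the Grothendieck spectral sequence
\[
E_2^{p,q} = R^p(-)^G\bigl(R^q\op{Hom}_X(\mathcal E, \mathcal F)\bigr) \Rightarrow \op{Ext}^{p+q}_{[X/G]}(\mathcal E, \mathcal F)
\]
is concentrated on the row $p=0$ and degenerates, producing the desired isomorphism. For the complex version, replace $\mathcal F$ by a $K$-injective resolution in $\op{Qcoh}([X/G])$ and repeat verbatim, noting that the total Hom complex carries a natural $G$-action by functoriality.

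The only nontrivial check — and the main obstacle — is identifying $R^q$ of the first piece, computed on $\op{Qcoh}([X/G])$-injective resolutions, with $\op{Ext}^q_X(\mathcal E, \mathcal F)$ equipped with its natural $G$-action. Equivalently, one must show that the forgetful functor $\op{Qcoh}([X/G]) \to \op{Qcoh}(X)$ sends an injective resolution of $\mathcal F$ to an $\op{Hom}_X(\mathcal E, -)$-acyclic resolution. I would handle this by comparing, via a $G$-equivariant Cartan--Eilenberg double complex, an injective resolution of $\mathcal F$ in $\op{Qcoh}([X/G])$ with one in $\op{Qcoh}(X)$; the comparison maps are $G$-equivariant, and exactness of $(-)^G$ forces the two sides to agree $G$-equivariantly after taking cohomology. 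This is the pivot on which the whole argument turns, and it is exactly where reductiveness of $G$ is used.
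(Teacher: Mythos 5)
Your proof is correct and follows essentially the same route as the paper: factor $\op{Hom}_{[X/G]}(\bullet,\bullet)$ as $(\bullet)^G \circ \op{Hom}_X(\bullet,\bullet)$, form the spectral sequence of the composition, and collapse it using the exactness of $G$-invariants for (linearly) reductive $G$. The only difference is that you explicitly flag and address the acyclicity hypothesis needed to identify the inner derived functor with ordinary $\op{Ext}_X$ carrying its $G$-action, a point the paper's proof leaves implicit.
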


\begin{proof}
 The first statement is a special case of the second. There is an isomorphism of functors, \[
\op{Hom}_{[X/G]}(\bullet,\bullet) = \op{Hom}_X(\bullet,\bullet)^G.
\]
 So, in general, we have a spectral sequence whose $E_2$-page is the composition of the components of the derived functors from $\op{Hom}_X$ and the functor of $G$-invariants. However, if we assume that $G$ is reductive, the functor of $G$-invariants is exact. Therefore, the spectral sequence degenerates at the $E_2$-page to yield the isomorphism.
\end{proof}

We will often assume, in addition, that $X$ admits a $G$-invariant open affine covering, $\mathfrak U$. We can compute cohomology on $[X/G]$ using a \v{C}ech complex associated to such a cover. 

If $G$ is an algebraic torus, then a result of Sumihiro states such a cover exists. 

\begin{theorem} \label{theorem: Sumihiro}
 If $G$ is an algebraic torus acting on a normal scheme, $X$, then $X$ admits a $G$-invariant open affine covering. 
\end{theorem}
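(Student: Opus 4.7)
The plan is to follow Sumihiro's original argument, where the essential reduction is to show that every point $x \in X$ admits a $G$-invariant open affine neighborhood; the sought-after cover is then just the collection of all such neighborhoods as $x$ ranges over $X$. Fix $x \in X$ and choose any (not necessarily $G$-invariant) affine open $U$ containing $x$. The complement $D := X \setminus U$ is closed. Because $X$ is normal, after shrinking appropriately around $x$ we may take $D$ to be pure of codimension one, and after replacing $D$ by a positive multiple we may assume it is the support of an effective Cartier divisor, corresponding to a line bundle $\mathcal{L}_0 = \mathcal{O}(D)$ with a canonical section $s_0$ whose non-vanishing locus is $U$.

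The next step is to upgrade $\mathcal{L}_0$ to a $G$-linearized line bundle. Here the two hypotheses enter decisively. Normality was already used to realize $D$ as a Cartier divisor; the hypothesis that $G$ is a torus is what allows the Picard class $[\mathcal{L}_0]$ to be made $G$-invariant after passing to a sufficiently divisible power. Indeed, the action of $G$ on $\op{Pic}$ factors through the character lattice $\hat{G}$, which is finitely generated, so the $G$-orbit of $[\mathcal{L}_0]$ spans only a finite-dimensional, torsion-controllable piece of $\op{Pic}$; replacing $\mathcal{L}_0$ by $\mathcal{L}_0^{\otimes N}$ for suitable $N$ makes the class $G$-invariant. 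Then Mumford's linearization theorem supplies an actual $G$-linearization on $\mathcal{L} := \mathcal{L}_0^{\otimes N}$, using that $G$ is connected and that, for a torus, obstructions in group cohomology with values in $\mathbb{G}_m$ are well-controlled.

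Finally, use the weight decomposition. The power $s_0^{\otimes N}$ is a global section of $\mathcal{L}$ that does not vanish at $x$ and whose non-vanishing locus $X_{s_0^{\otimes N}} = U$ is affine. Since $G$ is a torus, $H^0(X,\mathcal{L})$ decomposes as a direct sum of $G$-weight subspaces; writing $s_0^{\otimes N} = \sum_\chi s_\chi$ in this decomposition, at least one summand $s_\chi$ is non-zero at $x$. The non-vanishing locus $X_{s_\chi}$ is then $G$-stable (its complement is $G$-invariant because $s_\chi$ is a $G$-weight vector) and is contained in $U$, hence affine as a principal open inside the affine $U$; thus $X_{s_\chi}$ is the desired $G$-invariant affine open containing $x$.

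The main obstacle is the middle step: promoting mere invariance of the Picard class to a genuine linearization. This is precisely the point where the torus hypothesis is indispensable, both for the finite-generation argument that forces $[\mathcal{L}_0]$ to become invariant after a finite power, and for the vanishing that allows Mumford's linearization theorem to apply. Once the linearized line bundle is in hand, the weight-space trick in the last step is essentially formal.
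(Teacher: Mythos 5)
First, note that the paper does not actually prove this statement: its ``proof'' is the single line ``This is Corollary 2 of \cite{Sumihiro},'' so your sketch has to be measured against Sumihiro's original argument rather than anything in the text. Your outline does contain several of the right ingredients --- linearizing a power of a line bundle using connectedness of $G$ and the vanishing of $\operatorname{Pic}$ of a torus, and the eigen-decomposition of the space of sections --- but it has two genuine gaps, both at exactly the points where the real proof has to work hardest.

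The first gap is the assertion that some positive multiple of $D = X\setminus U$ is Cartier. Normality only makes $D$ a Weil divisor (pure of codimension one because $U$ is affine and dense on the components through $x$); on a normal variety that is not $\mathbb{Q}$-factorial --- e.g.\ the affine cone over $\mathbb{P}^1\times\mathbb{P}^1$, where a ruling divisor has no Cartier multiple --- there is simply no line bundle $\mathcal{O}(ND)$. Sumihiro's proof instead works with the reflexive sheaves $\mathcal{O}_X(nD)$ and finite-dimensional $G$-stable subspaces of their sections, and uses these linear systems to produce first a $G$-stable \emph{quasi-projective} neighborhood of $x$ and then a $G$-linearized ample line bundle, i.e.\ an equivariant locally closed immersion $X \hookrightarrow \mathbb{P}(V)$ into a $G$-representation; this quasi-projectivity step is the technical heart of the theorem and is missing from your sketch. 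The second gap is the endgame: from $s_0^{\otimes N} = \sum_\chi s_\chi$ you cannot conclude $X_{s_\chi} \subseteq X_{s_0^{\otimes N}} = U$. The fibers of $\mathcal{L}$ are one-dimensional, so at a point of $D$ the individual weight components $s_\chi$ may well be nonzero even though their sum vanishes; hence $X_{s_\chi}$ need not lie in $U$, and its affineness is not established (it is a $T$-stable open neighborhood of $x$, but nothing more). The correct conclusion, once one has the equivariant embedding into $\mathbb{P}(V)$, is to take the homogeneous ideal of the $T$-stable boundary $\overline{X}\setminus X$ in the closure $\overline{X}$, which is spanned by $T$-eigenvectors, choose a positive-degree eigenvector $f$ with $f(x)\neq 0$, and observe that $\overline{X}_f$ is affine, $T$-stable, contains $x$, and is contained in $X$.
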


\begin{proof}
 This is Corollary 2 of \cite{Sumihiro}.
\end{proof}

\begin{proposition} \label{proposition: generators for equivariant derived category}
 Let $X$ be a smooth variety equipped with a $G$ action. If $G$ is reductive, then any object of $\dbcoh{[X/G]}$ is quasi-isomorphic to a bounded complex of $G$-equivariant vector bundles.
\end{proposition}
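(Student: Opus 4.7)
The plan is to proceed in three steps. First, represent any object of $\dbcoh{[X/G]}$ by a genuine bounded complex of $G$-equivariant coherent sheaves, which is the definition of the bounded equivariant derived category. Second, establish the equivariant resolution property: every $G$-equivariant coherent sheaf on $X$ admits an epimorphism from a $G$-equivariant vector bundle. Third, use the smoothness of $X$ to truncate an a priori unbounded equivariant locally free resolution into a bounded complex of $G$-equivariant vector bundles.

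For the resolution property, I would reduce to the case of a single equivariant coherent sheaf $\mathcal F$. Choose a $G$-equivariant ample line bundle $\mathcal L$ on $X$ (after possibly replacing $X$ by a $G$-invariant affine or quasi-projective chart, using Theorem \ref{theorem: Sumihiro} in the torus case for existence of such covers). Equivariant Serre generation gives that $\mathcal F \otimes \mathcal L^{\otimes n}$ is globally generated for $n \gg 0$. The global sections $V_\infty := \op{H}^0(X, \mathcal F \otimes \mathcal L^{\otimes n})$ form a (possibly infinite-dimensional) $G$-representation, and since $G$ is reductive, hence linearly reductive by Proposition \ref{proposition: reductive = linearly reductive}, one can select a finite-dimensional $G$-subrepresentation $V \subset V_\infty$ whose evaluation $V \otimes \mathcal O_X \to \mathcal F \otimes \mathcal L^{\otimes n}$ is still surjective. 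Twisting back yields the equivariant surjection $V \otimes \mathcal L^{\otimes -n} \to \mathcal F$ from a $G$-equivariant vector bundle.

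Iterating this gives a (possibly unbounded) left resolution $\cdots \to \mathcal E_1 \to \mathcal E_0 \to \mathcal F \to 0$ by $G$-equivariant vector bundles. Since $X$ is smooth of dimension $d$, the $d$-th syzygy $\mathcal K := \op{ker}(\mathcal E_{d-1} \to \mathcal E_{d-2})$ is locally free as an $\mathcal O_X$-module, because locally on $X$ every coherent sheaf has projective dimension at most $d$. Local freeness is a Zariski-local property that does not interact with the equivariant structure, so $\mathcal K$ carries the structure of a $G$-equivariant vector bundle, and the truncation
\begin{equation*}
 0 \to \mathcal K \to \mathcal E_{d-1} \to \cdots \to \mathcal E_0 \to 0
\end{equation*}
is a bounded equivariant locally free resolution of $\mathcal F$. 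For a general bounded complex of coherent equivariant sheaves, one builds termwise resolutions in a Cartan--Eilenberg fashion and truncates at the uniform length $d$ to obtain a quasi-isomorphic bounded complex of equivariant vector bundles.

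The main obstacle is the equivariant resolution property itself; establishing it requires either a global equivariant ample line bundle (as in the quasi-projective or affine situations that concern this paper, where the argument above applies directly) or, in full generality, Thomason's theorem on equivariant resolutions for normal Noetherian schemes with a reductive group action. Once that hypothesis is in place, the smoothness-based truncation is entirely formal.
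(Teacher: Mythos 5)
Your proof is correct and follows essentially the same route as the paper: the key input is the equivariant resolution property (which the paper obtains directly from Theorem 2.18 of \cite{Tho87}), followed by truncating the resulting equivariant locally free resolution after $\dim X$ steps, where smoothness forces the syzygy to be locally free. The only minor quibble is that extracting a finite-dimensional subrepresentation of $\op{H}^0(X,\mathcal F \otimes \mathcal L^{\otimes n})$ that still generates uses local finiteness of rational $G$-representations together with Noetherianity, not linear reductivity.
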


\begin{proof}
 By Theorem 2.18 of \cite{Tho87}, for any coherent $G$-equivariant sheaf, $\mathcal F$, on $X$, there exists a surjection, 
\begin{displaymath}
 \mathcal E \twoheadrightarrow \mathcal F,
\end{displaymath}
 from a $G$-equivariant vector bundle, $\mathcal E$.

 It suffices to show that any coherent $G$-equivariant sheaf on $X$ lies in the thick category generated by locally-free $G$-equivariant sheaves. Let $\mathcal C$ be a coherent $G$-equivariant sheaf on $X$ and construct an exact sequence using Theorem 2.18 of \cite{Tho87},
\begin{displaymath}
 \mathcal E_{-n} \to \mathcal E_{-n+1} \to \cdots \to \mathcal E_{-1} \to \mathcal E_0 \to \mathcal C \to 0,
\end{displaymath}
 with $\mathcal E_{-i}$ a $G$-equivariant vector bundle. If $n > \op{dim }X$, then the kernel, $\mathcal K$, of the map $\mathcal E_{-n} \to \mathcal E_{-n+1}$ is locally-free and $G$-equivariant.
\end{proof}

\subsection{Factorization categories}

As usual, $X$ is a smooth and quasi-projective variety with the action of a linear algebraic group, $G$, over an algebraically closed field of characteristic zero, $k$. Let $\mathcal L$ be a $G$-equivariant line bundle on $X$ and let $w$ be a $G$-invariant section of $\mathcal L$. In this section, we recall the appropriate analog of the bounded derived category of coherent sheaves for a quadruple, $(X,G,\mathcal L,w)$. Most of the results presented are due to L. Positselski \cite{Pos1,Pos2}.

\begin{definition}
 A {\bf gauged Landau-Ginzburg model}, or \textbf{gauged LG-model}, is the quadruple, $(X,G,\mathcal L,w)$, with $X$, $G$, $\mathcal L$, and $w$ as above. We shall commonly denote a gauged LG-model by the pair $([X/G],w)$. 
\end{definition}

A case of importance in this paper: $w$ is a regular function on $X$ such that there exists a character, $\chi: G \to \mathbb G_m$, with the property that $w(\sigma(g,x)) = \chi(g) w(x)$ for all $g \in G$ and $x \in X$.  In this case, we say that $w$ is \textbf{$G$-semi-invariant}. 

In this section, to unclutter the notation, given a quasi-coherent $G$-equivariant sheaf, $\mathcal E$, we denote $\mathcal E \otimes \mathcal L^n$ by $\mathcal E(n)$. Given a morphism, $f: \mathcal E \to \mathcal F$, we denote $f \otimes \op{Id}_{\mathcal L^n}$ by $f(n)$. Following Positselski, \cite{Pos1,Pos2}, one gives the following definition.

\begin{definition}
 A \textbf{factorization} of a gauged LG-model, $([X/G],w)$, consists of a pair of quasi-coherent $G$-equivariant sheaves, $\mathcal E^{-1}$ and $\mathcal E^0$, and a pair of $G$-equivariant $\mathcal O_X$-module homomorphisms,
\begin{align*}
 \phi^{-1}_{\mathcal E} &: \mathcal E^{0}(-1) \to \mathcal E^{-1} \\
 \phi^0_{\mathcal E} &: \mathcal E^{-1} \to \mathcal E^0
\end{align*}
 such that the compositions, $\phi^0_{\mathcal E} \circ \phi^{-1}_{\mathcal E} : \mathcal E^0(-1) \to \mathcal E^0$ and  $\phi^{-1}_{\mathcal E}(1) \circ \phi_{\mathcal E}^0: \mathcal E^{-1} \to \mathcal E^{-1}(1)$, are isomorphic to multiplication by $w$.  We shall often simply denote the factorization $(\mathcal E^{-1}, \mathcal E^0, \phi_{\mathcal E}^{-1}, \phi_{\mathcal E}^0)$ by $\mathcal E$. The quasi-coherent $G$-equivariant sheaves, $\mathcal E^0$ and $\mathcal E^{-1}$, are called the \textbf{components of the factorization}, $\mathcal E$. We also set
 \begin{displaymath}
  \mathcal E^i := \begin{cases} \mathcal E^0(j) & \text{ if } i=2j \\ \mathcal E^{-1}(j) & \text{ if } i = 2j-1. \end{cases}
 \end{displaymath}

 A \textbf{morphism of factorizations}, $g: \mathcal E \to \mathcal F$, is a pair of morphisms of quasi-coherent $G$-equivariant sheaves,
 \begin{align*}
  g^{-1} & : \mathcal E^{-1} \to \mathcal F^{-1} \\
  g^0 & : \mathcal E^0 \to \mathcal F^0,
 \end{align*}
 making the diagram,
 \begin{center}
 \begin{tikzpicture}[description/.style={fill=white,inner sep=2pt}]
  \matrix (m) [matrix of math nodes, row sep=3em, column sep=3em, text height=1.5ex, text depth=0.25ex]
  {  \mathcal E^{0}(-1) & \mathcal E^{-1} & \mathcal E^{0} \\
   \mathcal F^{0}(-1) & \mathcal F^{-1} & \mathcal F^{0} \\ };
  \path[->,font=\scriptsize]
  (m-1-1) edge node[above]{$\phi^{-1}_{\mathcal E}$} (m-1-2) 
  (m-1-1) edge node[left]{$g^{0}(-1)$} (m-2-1)
  (m-2-1) edge node[above]{$\phi^{-1}_{\mathcal F}$} (m-2-2)
  (m-1-2) edge node[above]{$\phi^{0}_{\mathcal E}$} (m-1-3) 
  (m-1-2) edge node[left]{$g^{-1}$} (m-2-2)
  (m-2-2) edge node[above]{$\phi^{-1}_{\mathcal F}$} (m-2-3) 
  (m-1-3) edge node[left]{$g^{0}$} (m-2-3)
  ;
 \end{tikzpicture} 
 \end{center}
 commute.
 
 We let $\op{Qcoh}([X/G],w)$ be the Abelian category of factorizations. Similarly, we denote by $\op{Inj}([X/G],w)$ be the subcategory of $\op{Qcoh}([X/G],w)$ consisting of factorizations whose components are injective quasi-coherent $G$-equivariant sheaves, and we denote by $\op{Proj}([X/G],w)$ be the subcategory of $\op{Qcoh}([X/G],w)$ consisting of factorizations whose components are projective quasi-coherent $G$-equivariant sheaves. We also let $\op{coh}([X/G],w)$ be the Abelian subcategory of factorizations with coherent components.
 
 There is an obvious notion of a chain homotopy between morphisms in $\op{Qcoh}([X/G],w)$. We let $K(\op{Qcoh}[X/G],w)$ be the corresponding homotopy category.
\end{definition}

\begin{remark}
 Factorizations are a generalization of matrix factorizations introduced by D. Eisenbud, \cite{EisMF}.
\end{remark}

There is a natural notion of translation, $[1]$. The functor, $[1]$, sends the factorization, $\mathcal E$, to the factorization, $\mathcal E[1] := (\mathcal E^0,\mathcal E^{-1}(1), -\phi^0_{\mathcal E}, -\phi^{-1}_{\mathcal E}(1))$ and $[n]$ is the $n$-fold iterated composition of $[1]$.
 
For any morphism, $g : \mathcal E \to \mathcal F$, there is a natural cone construction. We write, $C(g)$, for the resulting factorization. It is defined as 
\begin{displaymath}
 C(g):= \left( \mathcal E^{0} \oplus \mathcal F^{-1}, \mathcal E^{-1}(1) \oplus \mathcal F^0, \begin{pmatrix} -\phi_{\mathcal E}^0 & 0 \\ g^{-1} & \phi_{\mathcal F}^{-1} \end{pmatrix}, \begin{pmatrix} -\phi_{\mathcal E}^{-1}(1) & 0 \\ g^0 & \phi_{\mathcal F}^0 \end{pmatrix} \right).
\end{displaymath}
It is a standard exercise to show that the autoequivalence, $[1]$, and the cone construction induce the structure of a triangulated category on the homotopy category, $K(\op{Qcoh}[X/G],w)$.

We wish to derive $\op{Qcoh}([X/G],w)$, however, we lack a notion of quasi-isomorphism because our ``complexes'' lack cohomology. For the usual derived categories of sheaves, one can view localization by the class of quasi-isomorphisms as the Verdier quotient by acyclic objects. In \cite{Pos1}, Positselski defined the correct substitute in $\op{Qcoh}([X/G],w)$  for acyclic complexes.  

\begin{definition}
 Let 
 \begin{equation} \label{equation: chain complex of factorizations}
  \mathcal E_t \overset{g_{t+1}}{\to} \mathcal E_{t+1} \overset{g_{t+2}}{\to} \cdots \overset{g_{0}}{\to} \mathcal E_0
 \end{equation}
 be a complex of factorizations, i.e. a sequence of morphisms in $\op{Qcoh}([X/G],w)$ satisfying $g_{i+1} \circ g_i = 0$ for all $t \leq i \leq -1$. We define a sequence of new factorizations inductively. We set
 \begin{displaymath}
  T_1 := C(g_{0}).
 \end{displaymath}
 There is natural morphism of factorizations,
 \begin{displaymath}
  \tilde{g}_i: \mathcal E_{i}[-1-i] \overset{g_{i+1}[-1-i]}{\to} \mathcal E_{i+1}[-1-i] \to T_{i+1}.
 \end{displaymath}
 We then set
 \begin{displaymath}
  T_i := C(\tilde{g}_i).
 \end{displaymath}
 Finally, the \textbf{totalization} of the complex in Equation \ref{equation: chain complex of factorizations} is defined to be the factorization, $T_{t+1}$.
\end{definition}

The following definition gives the correct analog of derived category of quasi-coherent sheaves for factorizations of LG-models. These definitions are due to Positselski, \cite{Pos1,Pos2}:
\begin{definition}
 A factorization, $\mathcal A$, is called \textbf{acyclic} if it lies in the smallest thick subcategory of $K( \op{Qcoh}[X/G],w)$ containing the totalizations of all exact complexes from $\op{Qcoh}([X/G],w)$. We let $\op{Acycl}([X/G],w)$ denote the thick subcategory of $K( \op{Qcoh}[X/G],w)$ consisting of acyclic factorizations.  The \textbf{derived category of quasi-coherent factorizations} of the LG-model, $([X/G],w)$, is the Verdier quotient,
\begin{displaymath}
 \dqcoh{[X/G],w} := K( \op{Qcoh}[X/G],w)/\op{Acycl}([X/G],w).
\end{displaymath}
 Similarly, we let $\op{acycl}([X/G],w)$ be the think subcategory of $K(\op{coh}[X/G],w)$ consisting of acyclic coherent factorizations. The \textbf{derived category of coherent factorizations}, or, simply, the \textbf{derived category}, of the LG-model $([X/G],w)$ is the Verdier quotient,
\begin{displaymath}
 \dcoh{[X/G],w} := K( \op{coh}[X/G],w)/\op{acycl}([X/G],w).
\end{displaymath}

 A morphism in $\op{Qcoh}([X/G],w)$ which becomes an isomorphism in $\dqcoh{[X/G],w}$ will be called a \textbf{quasi-isomorphism}, in analogy with derived category of sheaves. Similarly, two factorizations which are isomorphic in $\dqcoh{[X/G],w}$ are called \textbf{quasi-isomorphic}.
\end{definition}

There is a description of these Verdier localizations as homotopy categories of chain complexes of appropriate exact categories, similar to derived categories of Abelian categories. To provide such a description, one must construct resolutions of factorizations. The result below allows one to construct a resolution of factorization from resolutions of its components. It is a special case of a more general result from \cite{BDFIK-res}. 

\begin{lemma} \label{lemma: strictification}
 Let $\mathcal F$ be a factorization of an affine gauged LG model $([X/G],w)$ with $G$ reductive. Choose finite locally-free resolutions of its components,
\begin{center}
\begin{tikzpicture}[description/.style={fill=white,inner sep=2pt}]
\matrix (m) [matrix of math nodes, row sep=1em, column sep=1.5em, text height=1.5ex, text depth=0.25ex]
{  \cdots & \mathcal P_{-1}^0 & \mathcal P_0^0 & \mathcal F^0 & 0 \\
   \cdots & \mathcal P_{-1}^{-1} & \mathcal P_0^{-1} & \mathcal F^{-1} & 0 \\ };
\path[->,font=\scriptsize]
 (m-1-1) edge (m-1-2) 
 (m-1-2) edge (m-1-3)
 (m-1-3) edge node[above] {$\gamma_0$} (m-1-4)
 (m-1-4) edge (m-1-5)

 (m-2-1) edge (m-2-2) 
 (m-2-2) edge (m-2-3)
 (m-2-3) edge node[above] {$\gamma_{-1}$} (m-2-4)
 (m-2-4) edge (m-2-5)
;
\end{tikzpicture} 
\end{center}
 and form the following locally-free sheaves by combining even and odd parts of the resolutions:
\begin{align*}
 \mathcal Q^0 & = \bigoplus_{i = 2l} \mathcal P^0_i (l) \oplus \bigoplus_{i=2l+1} \mathcal P^{-1}_i(l) \\
 \mathcal Q^{-1} & = \bigoplus_{i = 2l+1} \mathcal P^0_i (l+1) \oplus \bigoplus_{i=2l} \mathcal P^{-1}_i(l). 
\end{align*}
 There exists a factorization, $\mathcal Q$, with components $\mathcal Q^0$ and $\mathcal Q^{-1}$, and a quasi-isomorphism, $\gamma: \mathcal Q \to \mathcal F$ in $\op{Qcoh}([X/G],w)$, whose components are the maps $\gamma_i$.
\end{lemma}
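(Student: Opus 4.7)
The plan is to build $\mathcal Q$ by folding together the two resolutions via the factorization maps $\phi^0, \phi^{-1}$, their lifts, and a tower of higher chain homotopies correcting the factorization equation at each level. Since $X$ is affine and $G$ is reductive, a locally free $G$-equivariant sheaf on $X$ is projective in $\op{Qcoh}([X/G])$, so each $\mathcal P_i^0$ and $\mathcal P_i^{-1}$ is projective and the resolutions $\mathcal P_\bullet^0 \to \mathcal F^0$, $\mathcal P_\bullet^{-1} \to \mathcal F^{-1}$ can be used for lifting.

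First, I would lift $\phi^0: \mathcal F^{-1} \to \mathcal F^0$ and $\phi^{-1}: \mathcal F^0(-1) \to \mathcal F^{-1}$ to chain maps $\Phi^0_\bullet: \mathcal P_\bullet^{-1} \to \mathcal P_\bullet^0$ and $\Phi^{-1}_\bullet: \mathcal P_\bullet^0(-1) \to \mathcal P_\bullet^{-1}$, using projectivity to solve the lifting problems degree by degree. Since $\phi^0 \circ \phi^{-1} = w \cdot \op{Id}$ and $\phi^{-1}(1) \circ \phi^0 = w \cdot \op{Id}$ on $\mathcal F^\bullet$, the compositions $\Phi^0 \circ \Phi^{-1}$ and $\Phi^{-1}(1) \circ \Phi^0$ are chain-homotopic to multiplication by $w$ on $\mathcal P^0_\bullet(-1)$ and $\mathcal P^{-1}_\bullet$ respectively; choose such homotopies $H^0, H^{-1}$. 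The compatibility of $H^0$ and $H^{-1}$ with the differentials fails by maps that themselves lift zero and are thus null-homotopic, giving a second tier of corrections. Iterating, one builds a descending family of maps $\mathcal P_i^\epsilon \to \mathcal P_{i+k}^{\epsilon'}(l)$ for each $k \geq 0$ — the process terminates after finitely many steps because the resolutions have finite length.

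Next, I would assemble the total factorization $\mathcal Q$. The components $\mathcal Q^0$ and $\mathcal Q^{-1}$ are the prescribed direct sums, and the differentials $\phi^0_{\mathcal Q}: \mathcal Q^{-1} \to \mathcal Q^0$ and $\phi^{-1}_{\mathcal Q}: \mathcal Q^{0}(-1) \to \mathcal Q^{-1}$ are defined block-by-block using the resolution differentials $d^0, d^{-1}$, the lifted maps $\Phi^0, \Phi^{-1}$ (with appropriate twists), the chosen homotopies $H^0, H^{-1}$, and the inductively constructed higher homotopies. The factorization identities $\phi^0_{\mathcal Q} \circ \phi^{-1}_{\mathcal Q} = w \cdot \op{Id}_{\mathcal Q^0(-1)}$ and $\phi^{-1}_{\mathcal Q}(1) \circ \phi^0_{\mathcal Q} = w \cdot \op{Id}_{\mathcal Q^{-1}}$ then hold strictly: each potential obstruction on the total complex is precisely what the next level of homotopy was chosen to kill. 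The morphism $\gamma: \mathcal Q \to \mathcal F$ is defined on $\mathcal Q^0$ (resp. $\mathcal Q^{-1}$) by the augmentation $\gamma_0: \mathcal P_0^0 \to \mathcal F^0$ (resp. $\gamma_{-1}: \mathcal P_0^{-1} \to \mathcal F^{-1}$) on the $l = 0, i = 0$ summand and by zero on all other summands; the compatibility of $\gamma_0, \gamma_{-1}$ with $\phi^0, \phi^{-1}$ (up to $d^\bullet$-boundaries absorbed by higher summands) shows this is a morphism of factorizations.

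Finally, I would check that $\gamma$ is a quasi-isomorphism. The cone $C(\gamma)$ has components built from the truncated resolutions $\mathcal P_\bullet^0$ and $\mathcal P_\bullet^{-1}$ (with the same twisting and folding pattern) and fits into a finite filtration whose associated graded pieces are totalizations of the exact augmented complexes $\mathcal P_\bullet^\epsilon \to \mathcal F^\epsilon \to 0$, regarded as acyclic complexes of factorizations by giving each $\mathcal P_i^\epsilon$ the zero factorization structure and combining via the higher homotopies. Since the totalization of an exact sequence in $\op{Qcoh}([X/G], w)$ is acyclic by definition, $C(\gamma) \in \op{Acycl}([X/G], w)$, so $\gamma$ is an isomorphism in $\dqcoh{[X/G], w}$. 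The main obstacle is the inductive construction of the higher homotopies and the verification that the assembled differentials on $\mathcal Q$ satisfy the factorization equations on the nose; once this bookkeeping is set up (e.g., by the homological-perturbation/twisting-cochain formalism used in \cite{BDFIK-res}), the acyclicity of the cone is essentially automatic.
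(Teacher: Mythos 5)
The paper itself does not prove this lemma; it simply cites \cite[Theorem 3.9]{BDFIK-res}. Your construction strategy is the right one and is essentially the one carried out there: use that $X$ affine and $G$ reductive make locally-free $G$-equivariant sheaves projective objects (since $\op{Hom}_{[X/G]}(-,-) = \op{Hom}_X(-,-)^G$ and invariants is exact), lift $\phi^0,\phi^{-1}$ to the resolutions, correct the failure of the factorization identity by a terminating tower of higher homotopies, and fold everything into $\mathcal Q$. Your identification of $\gamma$ as the augmentation on the top summands and zero elsewhere is also correct, and the squares in fact commute on the nose once the degree-zero lifts are chosen compatibly with the augmentations.

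There is, however, a concrete error in your acyclicity argument. A single sheaf $\mathcal P_i^{\epsilon}$ cannot be given ``the zero factorization structure'': a factorization with one component zero (or with both structure maps zero) forces $w\cdot\op{Id}=0$, so the augmented complexes $\mathcal P^{\epsilon}_{\bullet}\to\mathcal F^{\epsilon}\to 0$ are not complexes of factorizations and their totalizations are not defined in $\op{Qcoh}([X/G],w)$; moreover the resolution differentials do not strictly commute with the lifted maps $\Phi^0,\Phi^{-1}$, which is precisely why the higher homotopies are needed, so $C(\gamma)$ is not literally such a totalization. The repair is to use the genuine factorizations already present in your construction. The diagonal lifts make $\mathcal G_i:=(\mathcal P^{-1}_i,\mathcal P^0_i,\Phi^0_i,\Phi^{-1}_i)$ (suitably twisted) into factorizations of $w$, and filtering $\mathcal Q$ by homological degree of the resolutions exhibits $\mathcal Q$ as an iterated extension of the $\mathcal G_i$. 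One then argues by induction on the length of the resolutions: setting $\mathcal K^{\epsilon}=\ker(\gamma_{\epsilon})$, the lifts preserve the kernels (because $\gamma_0\circ\Phi^0_0=\phi^0\circ\gamma_{-1}$ and dually), so $\mathcal K=(\mathcal K^{-1},\mathcal K^0,\Phi^0_0|,\Phi^{-1}_0|)$ is a factorization, $0\to\mathcal K\to\mathcal G_0\to\mathcal F\to 0$ is a short exact sequence of factorizations whose totalization is acyclic by definition, and $\mathcal P^{\epsilon}_{\leq -1}\to\mathcal K^{\epsilon}$ is a shorter resolution to which the inductive hypothesis applies; the twist by $\mathcal L$ appearing in the shift $[1]$ of the cone construction is exactly what produces the twists $(l)$ in the statement. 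With that replacement your argument is complete.
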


\begin{proof}
 This is part of \cite[Theorem 3.9]{BDFIK-res}.
\end{proof}

\begin{proposition} \label{prop: injective fact}
 Let $([X/G],w)$ be a gauged LG model. The composition,
\begin{displaymath}
 K(\op{Inj}[X/G],w) \hookrightarrow K(\op{Qcoh}[X/G],w) \to \dqcoh{[X/G],w},
\end{displaymath}
 is an exact equivalence of triangulated categories.
\end{proposition}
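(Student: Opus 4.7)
The plan is to establish the equivalence in two steps, essential surjectivity and full faithfulness, following the standard paradigm that the homotopy category of injectives computes the derived category whenever injective coresolutions exist in a sufficiently nice way.

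For essential surjectivity, I would produce for each factorization $\mathcal F \in \op{Qcoh}([X/G],w)$ a quasi-isomorphism $\mathcal F \to \mathcal I$ with $\mathcal I \in \op{Inj}([X/G],w)$. This is the dual to Lemma~\ref{lemma: strictification}: first, pick injective resolutions $\mathcal F^0 \to \mathcal J^{\bullet}_0$ and $\mathcal F^{-1} \to \mathcal J^{\bullet}_{-1}$ in the Grothendieck category of quasi-coherent $G$-equivariant sheaves; second, lift the structure maps $\phi^0_{\mathcal F}$ and $\phi^{-1}_{\mathcal F}$ to the resolutions level-by-level using injectivity of the target terms; third, assemble the resulting bicomplex into a factorization $\mathcal I$ by taking alternating direct sums of even and odd parts exactly as in Lemma~\ref{lemma: strictification}, with appropriate $\mathcal L$-twists. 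The components of $\mathcal I$ are finite direct sums of injectives, hence injective, and the induced map $\mathcal F \to \mathcal I$ is a quasi-isomorphism since its cone, being built componentwise from the mapping cones of $\mathcal F^{\bullet} \to \mathcal J^{\bullet}_{\bullet}$, sits in the class of acyclic factorizations.

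For full faithfulness, it suffices to show the vanishing $\op{Hom}_{K(\op{Qcoh}[X/G],w)}(\mathcal A, \mathcal I) = 0$ for every acyclic $\mathcal A$ and every $\mathcal I \in \op{Inj}([X/G],w)$; once this is established, general Verdier localization theory (roofs can be straightened to honest homotopy classes) gives full faithfulness of the induced functor into the quotient. Since acyclic factorizations form the thick subcategory generated by totalizations of exact complexes, it is enough to verify the vanishing when $\mathcal A = T(\mathcal E_\bullet)$ for a bounded exact complex $\mathcal E_{-n} \to \cdots \to \mathcal E_0$ in $\op{Qcoh}([X/G],w)$. I would compute $\op{Hom}_K(T(\mathcal E_\bullet), \mathcal I)$ as the zeroth cohomology of the total complex of a double complex whose columns are the internal $\op{Hom}$-complexes of factorizations from each $\mathcal E_i$ to $\mathcal I$ and whose horizontal differentials come from the differentials of $\mathcal E_\bullet$. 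Because $\mathcal I^0$ and $\mathcal I^{-1}$ are injective as quasi-coherent $G$-equivariant sheaves and each complex $\mathcal E^j_\bullet$ of components is exact, each row of this double complex is exact. A convergent spectral sequence argument, which converges because the horizontal direction is bounded, then shows the total complex is acyclic and delivers the desired vanishing.

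The main obstacle is this last double-complex argument: one must set up carefully the bigraded $\op{Hom}$-complex for factorizations, keep track of the $\mathcal L$-twists encoded in the factorization structure so that the differentials square to zero, and justify convergence. Once the vanishing of $\op{Hom}_K(\mathcal A, \mathcal I)$ is in hand, it combines with essential surjectivity in a standard way to promote the functor $K(\op{Inj}[X/G],w) \to \dqcoh{[X/G],w}$ to an exact equivalence of triangulated categories.
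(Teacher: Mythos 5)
Your outline is the right one, and it is essentially the argument the paper is silently invoking (the proof in the paper simply defers to Positselski's Theorems 3.5(a) and 3.7 in \cite{Pos1}): orthogonality $\op{Hom}_{K(\op{Qcoh}[X/G],w)}(\mathcal A,\mathcal I)=0$ for acyclic $\mathcal A$ and injective $\mathcal I$ gives full faithfulness via straightening of roofs, and the reduction to totalizations of bounded exact complexes followed by the exact-rows/bounded-direction double-complex argument is exactly how that orthogonality is proved. That half of your argument is complete and needs no extra hypotheses.

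The gap is in essential surjectivity, and it is precisely the point at which the paper's proof says ``recall that we have assumed that $X$ is smooth.'' You write that the components of $\mathcal I$ are \emph{finite} direct sums of injectives, but an injective coresolution of a quasi-coherent $G$-equivariant sheaf is not finite in general; it is finite only because $X$ (hence $[X/G]$) is smooth, so that $\op{Qcoh}[X/G]$ has finite injective dimension. This finiteness is not cosmetic: the category $\dqcoh{[X/G],w}$ is the quotient by the \emph{thick} subcategory generated by totalizations of (bounded) exact complexes, and the paper's totalization is only defined for bounded complexes. If the coresolutions were infinite, the folded object would involve infinite direct sums and its comparison cone would be the totalization of a right-unbounded exact complex, which is \emph{not} known to be acyclic in this absolute sense --- that step is only automatic in the coderived category, where acyclics are additionally closed under infinite direct sums. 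So you should either invoke smoothness to truncate the coresolutions at a finite stage (the kernel/cokernel at step $>\dim X$ is again injective), or restructure the argument as Positselski does. With that finiteness in hand, the rest of your essential surjectivity step (lifting $\phi^0_{\mathcal F}$, $\phi^{-1}_{\mathcal F}$ to the resolutions and folding with the appropriate $\mathcal L$-twists, dually to Lemma~\ref{lemma: strictification}) goes through as stated.
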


\begin{proof}
 Recall that we have assumed that $X$ is smooth. The arguments are repetitions of the proofs of Theorem 3.5.(a) and Theorem 3.7 of \cite{Pos1}.
\end{proof}

\begin{proposition} \label{prop: projective fact}
 Let $([X/G],w)$ be a gauged LG model with $X$ affine and $G$ reductive. The composition,
\begin{displaymath}
 K(\op{Proj}[X/G],w) \hookrightarrow K(\op{Qcoh}[X/G],w) \to \dqcoh{[X/G],w},
\end{displaymath}
 is an exact equivalence of triangulated categories.
\end{proposition}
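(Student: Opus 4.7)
The plan is to mirror the proof of Proposition \ref{prop: injective fact}, but with projective resolutions in place of injective ones; the hypotheses that $X = \op{Spec}R$ is affine and $G$ is reductive are exactly what is needed to guarantee that $\op{Qcoh}([X/G])$ has enough projectives. Indeed, $\op{Qcoh}([X/G])$ identifies with the category of $(R,G)$-modules, and since $G$ is linearly reductive by Proposition \ref{proposition: reductive = linearly reductive}, every free module $R \otimes_k V$ with $V$ a $G$-representation is a projective object, and such modules cover every $(R,G)$-module.

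First I would establish essential surjectivity. For any $\mathcal F \in \op{Qcoh}([X/G],w)$, I would take projective resolutions of the components $\mathcal F^0$ and $\mathcal F^{-1}$ in $\op{Qcoh}([X/G])$. Applying the version of the strictification construction from Lemma \ref{lemma: strictification} that accommodates possibly unbounded resolutions (as in \cite[Theorem 3.9]{BDFIK-res}) packages these into a factorization $\mathcal P$ with components in $\op{Proj}([X/G])$, together with a morphism $\gamma : \mathcal P \to \mathcal F$ whose cone is the totalization of an exact complex of factorizations, and is therefore acyclic. This gives $\mathcal P \cong \mathcal F$ in $\dqcoh{[X/G],w}$.

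The fully-faithful half reduces, in the usual way, to showing that $\op{Hom}_K(\mathcal P, \mathcal A) = 0$ for every $\mathcal P \in K(\op{Proj}[X/G],w)$ and every $\mathcal A \in \op{Acycl}([X/G],w)$. Since acyclics are generated, as a thick subcategory of the homotopy category, by totalizations of exact complexes in $\op{Qcoh}([X/G],w)$, and $\op{Hom}_K(\mathcal P,-)$ is cohomological, it suffices to verify the vanishing when $\mathcal A$ is such a totalization. Given a morphism $f : \mathcal P \to \mathcal A$, the projectivity of $\mathcal P^0$ and $\mathcal P^{-1}$ allows one to lift $f$ stepwise through the exact sequence and build a contracting homotopy.

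The main obstacle will be organizing these lifts so that they commute with the factorization differentials $\phi^0_{\mathcal P}, \phi^{-1}_{\mathcal P}$ simultaneously with the differentials of the successive cones $T_i$ used to define the totalization; this is handled by a double induction, one direction along the length of the complex and the other along the factorization parity, mirroring Positselski's argument in \cite[Theorem 3.5]{Pos1}. Combined with essential surjectivity, this establishes that the composition is an equivalence of triangulated categories.
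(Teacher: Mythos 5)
Your argument matches the paper's own proof: the paper establishes this proposition by citing Positselski's arguments (Theorems 3.5(a) and 3.8 of \cite{Pos1}) for the vanishing of homotopy-category morphisms from projective factorizations into acyclics, and explicitly notes that Lemma \ref{lemma: strictification} yields essential surjectivity, which is exactly the combination you propose. The only caution is that you should stick with the \emph{finite} projective resolutions of Lemma \ref{lemma: strictification} (available because $X$ is smooth and affine and $G$ is reductive, so every quasi-coherent $G$-equivariant module has finite projective dimension) rather than passing to unbounded ones, since the paper's acyclic subcategory is generated by totalizations of finite exact complexes and an unbounded resolution would not obviously have acyclic cone in that sense.
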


\begin{proof}
 Recall that we have assumed that $X$ is smooth. The arguments are repetitions of the proofs of Theorem 3.5.(a) and Theorem 3.8 of \cite{Pos1}, see also Lemma 1.7 of \cite{Pos2}. Lemma \ref{lemma: strictification} can be used to give another proof of essential surjectivity.
\end{proof}

\sidenote{{\color{red} Pawel suggested this was too little to treat the idea. Maybe move it later to the proof and give better references for the non-factorization case.}}
Let us describe local cohomology for $G$-equivariant factorizations. Assume $Z$ is a closed $G$-invariant subset. Let $\mathcal E$ be a quasi-coherent $G$-equivariant factorization. Resolve $\mathcal E$ be an injective factorization, $\mathcal I$. We extend sheafy local (hyper)cohomology to factorizations by
\begin{displaymath}
 H_Z \mathcal E := H_Z^0(X,\mathcal I).
\end{displaymath}
Here, we apply the functor of sheafy local cohomology to each component and to each morphism in the factorization. Similarly, we set
\begin{displaymath}
 Q_Z \mathcal E := j_*j^* \mathcal I
\end{displaymath}
where $j: U = X \setminus Z \to X$ is the inclusion. In general, we say that a factorization, $\mathcal E$, is $Z$-\textbf{torsion} if the natural map $H_Z \mathcal E \to \mathcal E$ is a quasi-isomorphism. Let $\op{D}_Z(\op{coh} [X/G],w)$ be the thick subcategory of coherent $Z$-torsion factorizations.

\begin{proposition} \label{proposition: kernel of restriction for equivariant factorizations}
 For any factorization, $\mathcal E$, there is an exact triangle,
 \begin{displaymath}
  H_Z \mathcal E \to \mathcal E \to Q_Z \mathcal E \to H_Z \mathcal E [1],
\end{displaymath}
 in $\dqcoh{[X/G],w}$. The kernel of the functor,
 \[
 j^*: \dqcoh{[X/G],w} \to \dqcoh{[U/G],w|_U},
 \]
 is $\op{D}_Z(\op{Qcoh} [X/G],w)$.
\end{proposition}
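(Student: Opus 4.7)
The argument has two stages. First, I would produce the exact triangle from a sheaf-level short exact sequence obtained via an injective resolution. Let $\mathcal{I}$ be an injective factorization quasi-isomorphic to $\mathcal{E}$, which exists by Proposition~\ref{prop: injective fact}. For each injective component $\mathcal{I}^i$, the vanishing of $\mathcal{H}^1_Z(\mathcal{I}^i)$ yields the classical short exact sequence
\[ 0 \to \Gamma_Z \mathcal{I}^i \to \mathcal{I}^i \to j_* j^* \mathcal{I}^i \to 0. \]
Because $\Gamma_Z$ and $j_*j^*$ are both $\mathcal{O}_X$-linear endofunctors of $\op{Qcoh}([X/G])$, applying them componentwise respects the factorization identity $\phi^0 \circ \phi^{-1} = w \cdot \op{id}$, and the three resulting factorizations assemble into a short exact sequence
\[ 0 \to H_Z \mathcal{I} \to \mathcal{I} \to Q_Z \mathcal{I} \to 0 \]
in $\op{Qcoh}([X/G],w)$. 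The totalization of this three-term complex is acyclic in Positselski's sense, yielding the claimed triangle in $\dqcoh{[X/G],w}$ after identifying $\mathcal{I}$ with $\mathcal{E}$.

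Next, I would analyze the kernel of $j^*$. For the forward direction, if $\mathcal{E}$ is $Z$-torsion then $H_Z \mathcal{E} \to \mathcal{E}$ is a quasi-isomorphism, and the triangle forces $Q_Z \mathcal{E} \simeq 0$. Applying $j^*$ and using $j^*j_* = \op{id}$ on quasi-coherent sheaves over $U$ gives $j^*\mathcal{E} \simeq j^* \mathcal{I} = j^* Q_Z \mathcal{E} \simeq 0$. For the reverse direction, assume $j^* \mathcal{E} \simeq 0$; since $j^*$ is exact, $j^* \mathcal{I}$ is acyclic in $\dqcoh{[U/G], w|_U}$. Because open immersions of Noetherian schemes preserve injective quasi-coherent sheaves (equivariantly, since $U$ is $G$-invariant), the components of $j^* \mathcal{I}$ remain injective, so $j^* \mathcal{I}$ is an injective factorization on $U$. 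By Proposition~\ref{prop: injective fact}, $j^* \mathcal{I}$ is then null-homotopic. Applying $j_*$ componentwise preserves contracting homotopies, so $Q_Z \mathcal{E} = j_* j^* \mathcal{I}$ is null-homotopic, hence zero in $\dqcoh{[X/G],w}$. The triangle forces $H_Z \mathcal{E} \to \mathcal{E}$ to be a quasi-isomorphism, placing $\mathcal{E}$ in $\op{D}_Z(\op{Qcoh}[X/G],w)$.

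The main obstacle lies in the reverse direction: one needs to know that $j^*$ preserves injective components (standard for open immersions of Noetherian schemes) and that null-homotopies survive application of $j_*$ (immediate from the componentwise, functorial nature of $j_*$). Together these amount to the statement that $j_*: \dqcoh{[U/G], w|_U} \to \dqcoh{[X/G], w}$ is well-defined and right adjoint to $j^*$, after which the identification of the kernel is formal.
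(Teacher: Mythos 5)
Your proof is correct and supplies precisely what the paper delegates to a citation: the paper obtains the triangle from Theorem 1.10 of \cite{Pos2} in the non-equivariant case, observes that the maps are naturally $G$-equivariant, and then reads the kernel statement directly off the triangle. Your reconstruction --- injective resolution, the componentwise short exact sequence $0 \to \Gamma_Z \mathcal I \to \mathcal I \to j_*j^*\mathcal I \to 0$, and the key point that an acyclic factorization with injective components is contractible so that applying $j_*$ kills it --- is exactly the content of that citation, and the one step deserving care in the equivariant setting (that restriction along the $G$-invariant open immersion preserves injectives of $\op{Qcoh}([X/G])$, which holds since this is a locally Noetherian Grothendieck category) is a point you explicitly flag rather than elide.
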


\begin{proof}
 The first statement without $G$ is Theorem 1.10 of \cite{Pos2}. The maps involved are naturally $G$-equivariant. From the exact triangle, we see that $\mathcal E|_U$ is acyclic if and only if $H_Z \mathcal E \cong \mathcal E$, i.e. $\mathcal E$ is $Z$-torsion. 
\end{proof}

We will need to bootstrap fully-faithfulness from derived categories of coherent sheaves to factorizations. Assume we have two gauged LG-models, $(X_1,G_1,\mathcal L_1,w_1)$ and $(X_2,G_2,\mathcal L_2,w_2)$, a morphism,
\begin{displaymath}
 f: X_1 \to X_2,
\end{displaymath}
and a homomorphism,
\begin{displaymath}
 \rho: G_1 \to G_2,
\end{displaymath}
such that
\begin{itemize}
 \item $f(\sigma_1(g,x)) = \sigma_2(\rho(g),f(x))$,
 \item $f^* \mathcal L_2 \cong \mathcal L_1$,
 \item and, via the projection formula and the isomorphism above, the map,
 \begin{displaymath}
  f_* w_1: f_*\mathcal O_{X_1} \to f_* \mathcal L_1,
 \end{displaymath}
 equals 
 \begin{displaymath}
  \op{Id}_{f_* \mathcal O_{X_1}} \otimes w_2 : f_*\mathcal O_{X_1} \to f_*\mathcal O_{X_1} \otimes \mathcal L_2 \cong f_* \mathcal L_1.
 \end{displaymath}
\end{itemize}

One gets functors,
\begin{displaymath}
 f_*: \op{Qcoh}([X_1/G_1],w_1) \to \op{Qcoh}([X_2/G_2],w_2),
\end{displaymath}
and
\begin{displaymath}
 f^*: \op{Qcoh}([X_2/G_2],w_2) \to \op{Qcoh}([X_1/G_1],w_1).
\end{displaymath}
The functor, $f_*$, can be derived by replacing the argument by an injective resolution, as from Proposition \ref{prop: injective fact}, and applying $f_*$. Similarly, if $G$ is reductive, $f^*$ can be derived by taking locally-free resolutions. We get a pair of derived functors,
\begin{align*}
 \mathbf{R}f_* & : \dqcoh{[X_1/G_1],w_1} \to \dqcoh{[X_2/G_2],w_2} \\
 \mathbf{L}f^* & : \dqcoh{[X_2/G_2],w_2} \to \dqcoh{[X_1/G_1],w_1}.
\end{align*}

\begin{lemma} \label{lemma: ff on dbcoh -> ff on dcoh}
 For two factorizations, $\mathcal E_1$ and $\mathcal F_1$, in $\dcoh{[X_1/G_1],w_1}$ the maps,
 \begin{displaymath}
  \mathbf{R}f_* : \op{Hom}_{[X_1/G_1],w_1}(\mathcal E_1,\mathcal F_1[t]) \to \op{Hom}_{[X_2/G_2],w_2}(\mathbf{R}f_*\mathcal E_1,\mathbf{R}f_*\mathcal F_1[t]),
 \end{displaymath}
 are isomorphisms for all $t \in \Z$ if the maps,
 \begin{displaymath}
  \mathbf{R}f_* : \op{Hom}_{[X_1/G_1]}(\mathcal E_1^r,\mathcal F_1^s[t]) \to \op{Hom}_{[X_2/G_2]}(\mathbf{R}f_*\mathcal E_1^r,\mathbf{R}f_*\mathcal F_1^s[t]),
 \end{displaymath}
 are isomorphisms for all $r,s,t \in \Z$.
 
 Assume that $X_2$ is affine and $G$ is reductive. For two factorizations, $\mathcal E_2$ and $\mathcal F_2$, in $\dcoh{[X_2/G_2],w_2}$ the maps,
 \begin{displaymath}
  \mathbf{L}f^* : \op{Hom}_{[X_2/G_2],w_2}(\mathcal E_2,\mathcal F_2[t]) \to \op{Hom}_{[X_1/G_1],w_1}(f^*\mathcal E_2,f^*\mathcal F_2[t]),
 \end{displaymath}
 are isomorphisms for all $t \in \Z$ if the maps,
 \begin{displaymath}
  \mathbf{L}f^* : \op{Hom}_{[X_2/G_2],w_2}(\mathcal E_2^r,\mathcal F_2^s[t]) \to \op{Hom}_{[X_1/G_1],w_1}(f^*\mathcal E_2^r,f^*\mathcal F_2^s[t]),
 \end{displaymath}
 are isomorphisms for all $r,s,t \in \Z$.
\end{lemma}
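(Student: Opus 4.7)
The plan is to reduce the computation of $\op{Hom}$-spaces in the derived category of factorizations to $\op{Hom}$-spaces between components (as objects in the ordinary equivariant derived category), and then apply the termwise hypothesis via a standard filtration/spectral sequence argument.

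First, for the pushforward statement, I would replace $\mathcal{F}_1$ by a quasi-isomorphic K-injective factorization $\mathcal{I}_1$ using Proposition \ref{prop: injective fact}, so that $\op{Hom}_{\dqcoh{[X_1/G_1],w_1}}(\mathcal{E}_1, \mathcal{I}_1[t])$ agrees with $\op{Hom}$ in the homotopy category $K(\op{Qcoh}[X_1/G_1],w_1)$. Next, I would introduce the internal Hom factorization $\mathcal{H}om(\mathcal{E}_1, \mathcal{I}_1)$, which is a factorization of $([X_1/G_1], 0)$ (a $\Z/2$-graded complex) whose two components are finite direct sums of sheaves of the form $\mathcal{H}om(\mathcal{E}_1^r, \mathcal{I}_1^s)$; taking $\mathbf{R}\Gamma$ and cohomology yields a totalization whose cohomology computes $\op{Hom}_{\dqcoh{[X_1/G_1],w_1}}(\mathcal{E}_1, \mathcal{I}_1[t])$ for all $t$.

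The crucial point is that $f_*$ commutes with this construction: since $f_* \mathcal{I}_1$ has components that remain injective (or at least $f_*$-acyclic) and represents $\mathbf{R}f_* \mathcal{F}_1$, the analogous Hom factorization on $[X_2/G_2]$ computing $\op{Hom}(\mathbf{R}f_*\mathcal{E}_1, \mathbf{R}f_* \mathcal{F}_1[t])$ is obtained by applying $f_*$ termwise. The hypothesis then says each term of the induced map of complexes is a quasi-isomorphism. Because the resulting complexes are (up to twisting by $\mathcal{L}$) $\Z/2$-periodic built from only two types of terms, a straightforward filtration of the totalization by the $\mathcal{E}_1^0$ and $\mathcal{E}_1^{-1}$ components yields a short exact sequence of complexes on each side, and a two-out-of-three/five-lemma argument on the associated long exact sequences promotes the termwise isomorphism to an isomorphism of total cohomologies.

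For the pullback case, the argument is symmetric: under the affine/reductive hypothesis, use Lemma \ref{lemma: strictification} to replace $\mathcal{E}_2$ and $\mathcal{F}_2$ by factorizations whose components are locally-free (so that $\mathbf{L}f^*$ equals $f^*$ termwise), then run the same $\op{Hom}$-factorization argument with $f^*$ in place of $f_*$. The main obstacle is bookkeeping: carefully verifying that forming the internal Hom factorization commutes with $\mathbf{R}f_*$ (resp.\ $\mathbf{L}f^*$), which amounts to checking that injective (resp.\ locally-free) resolutions of factorizations can be chosen so that their components are $f_*$-acyclic (resp.\ $f^*$-acyclic). Once this compatibility is in hand, the reduction to the component-level hypothesis is essentially formal.
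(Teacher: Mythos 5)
Your overall strategy---resolve by injectives (resp.\ strictify to locally-free components), form the total Hom-complex, and bootstrap from the component-level hypothesis---is the right shape; the paper itself gives no argument here, deferring entirely to Lemmas 4.12 and 4.13 of \cite{BDFIK-res}, so any self-contained proof must do something like what you describe. However, the step you call ``straightforward'' is exactly where the difficulty sits, and as written it fails. The degree-$n$ piece of the Hom-complex is $\op{Hom}(\mathcal E_1^0,\mathcal F_1^n)\oplus\op{Hom}(\mathcal E_1^{-1},\mathcal F_1^{n-1})$ with differential $g\mapsto \phi_{\mathcal F}\circ g \pm g\circ\phi_{\mathcal E}$. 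The precomposition term $g\circ\phi_{\mathcal E}$ carries a map out of $\mathcal E_1^0$ to a map out of $\mathcal E_1^{-1}$ and vice versa, so neither summand of your proposed decomposition ``by the $\mathcal E_1^0$ and $\mathcal E_1^{-1}$ components'' is a subcomplex: there is no short exact sequence of complexes of the kind you invoke, and the five-lemma has nothing to act on.

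The repair is to filter by the target index instead: the maps landing in $\mathcal F_1^{\geq p}$ do form a subcomplex (postcomposition with $\phi_{\mathcal F}$ raises the target index, precomposition with $\phi_{\mathcal E}$ preserves it), with associated graded pieces the two-term complexes $\op{Hom}(\mathcal E_1^0,\mathcal F_1^p)\to\op{Hom}(\mathcal E_1^{-1},\mathcal F_1^p)$, to which the component-level hypothesis applies. But this filtration is indexed by all of $\Z$, because the Hom-complex is $2$-periodic up to twist by $\mathcal L$ and hence unbounded in both directions; your appeal to long exact sequences tacitly assumes a finite filtration. Convergence has to be argued separately, e.g.\ by noting that in each fixed cohomological degree the filtration has only two nontrivial steps, so cycles and boundaries stabilize at a finite page. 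A second, smaller gap: $f_*\mathcal I_1$ need not have injective components, so the termwise-pushed-forward Hom-complex computes morphisms only in the homotopy category of factorizations on $[X_2/G_2]$; since your hypothesis is stated for derived Homs between components, you need one further resolution on the target side and a compatibility check before it can be applied. Both issues are fixable, but neither is addressed in the proposal.
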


\begin{proof}
 This is an application of \cite[Lemmas 4.12 and 4.13]{BDFIK-res}.
\end{proof}

Finally, we recall a result which allows one to compare derived categories of factorizations and derived categories of sheaves.

Let $X$ be a smooth variety equipped with an action of $G$. Let $\mathcal E$ be a locally-free $G$-equivariant sheaf on $X$ and let $s \in \Gamma(X,\mathcal E)^G$ be a $G$-invariant regular section. Finally let $Y$ be the zero locus of $s$. On the geometric vector bundle, $\op{V}(\mathcal E)$, $s$ induces a $G$-invariant regular function,
\begin{displaymath}
 w: \op{V}(\mathcal E) \to k. 
\end{displaymath}
We let $\mathbb{G}_m$ act on $\op{V}(\mathcal E)$ by scaling the fibers. Let $\op{V}(\mathcal E)|_Y$ be the restriction of the vector bundle to $Y$ and let $\pi: \op{V}(\mathcal E)|_Y \to Y$ be the projection. Let $i: \op{V}(\mathcal E)|_Y \to \op{V}(\mathcal E)$ be the inclusion.

Define a functor,
\begin{displaymath}
 \mathfrak I: \op{coh}[Y/G] \to \op{coh}([\op{V}(\mathcal E)/(G \times \mathbb{G}_m)],w),
\end{displaymath}
by
\begin{center}
  \begin{tikzpicture}[description/.style={fill=white,inner sep=2pt}]
   \matrix (m) [matrix of math nodes, row sep=3em, column sep=3em, text height=1.5ex, text depth=0.25ex]
   {  \mathcal F & 0 & i_*\pi^*\mathcal F, \\ };
   \path[->,font=\scriptsize]
   (m-1-1) edge[|->] (m-1-2)
   (m-1-2) edge[out=30,in=150] node[above] {$0$} (m-1-3)
   (m-1-3) edge[out=210, in=330] node[below] {$0$} (m-1-2);
  \end{tikzpicture}
\end{center}
Extend $\mathfrak I$ to a functor,
\begin{displaymath}
 \mathfrak I: \op{Ch}(\op{coh}[Y/G]) \to \op{coh}([\op{V}(\mathcal E)/(G \times \mathbb{G}_m)],w),
\end{displaymath}
by totalizing each chain complex of coherent $G$-equivariant sheaves. This functor descends to the derived categories, as it takes acyclics to acyclics,
\begin{displaymath}
 \mathfrak I: \dbcoh{[Y/G]} \to \dcoh{[\op{V}(\mathcal E)/(G \times \mathbb{G}_m)],w}.
\end{displaymath}

\begin{theorem} \label{theorem: Isik}
 The functor on derived categories, $\mathfrak I$, is an equivalence. Moreover, this equivalence takes perfect complexes in $Y$ to factorizations which are torsion along the zero section of $\op{V}(\mathcal E)$.
\end{theorem}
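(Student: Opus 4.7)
The plan is to emulate Isik's original argument (which is the non-equivariant, affine special case of this statement) and bootstrap to the present setting using Lemma \ref{lemma: ff on dbcoh -> ff on dcoh}. First, one checks that $\mathfrak{I}$ is well-defined: since $w$ restricts to zero on $\op{V}(\mathcal{E})|_Y$, the multiplication-by-$w$ condition is automatic for $\mathfrak{I}(\mathcal{F})$, and the totalization of a bounded exact complex in $\op{coh}[Y/G]$ is acyclic in $\op{acycl}([\op{V}(\mathcal{E})/(G\times \mathbb{G}_m)],w)$ by definition; hence $\mathfrak{I}$ descends to a triangulated functor on the derived category.

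For full faithfulness I would invoke Proposition \ref{proposition: generators for equivariant derived category} to reduce to the case where $\mathcal{F}$ and $\mathcal{G}$ are $G$-equivariant vector bundles on $Y$. On the $[Y/G]$-side, $\op{Ext}^t_{[Y/G]}(\mathcal{F},\mathcal{G})$ is computed by the Koszul resolution of $i_*\mathcal{O}_Y$ on $X$ (with terms $\wedge^\bullet \mathcal{E}^\vee$, differential contraction with $s$). On the factorization side, I would write $\mathfrak{I}(\mathcal{F})$ as the image of an explicit Koszul factorization whose components are built from $\wedge^k \pi^*\mathcal{E}^\vee \otimes \pi^*\mathcal{F}$ on $\op{V}(\mathcal{E})$, with the two differentials given by wedging with the tautological section and contracting with $s$. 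Using Lemma \ref{lemma: ff on dbcoh -> ff on dcoh} to reduce from factorization $\op{Hom}$'s to $\op{Hom}$'s on the underlying stack $[\op{V}(\mathcal{E})/(G\times\mathbb{G}_m)]$, together with the projection formula along the affine bundle $\pi$, the extra $\mathbb{G}_m$-grading selects exactly the correct weight component and the two computations match term by term. This is essentially the standard self-duality of the Koszul complex repackaged in factorization language.

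For essential surjectivity (equivalently, to construct a quasi-inverse), I would proceed as follows. Given a factorization $\mathcal{E}$, pass to a locally-free model via Lemma \ref{lemma: strictification}, restrict along $i$ to $\op{V}(\mathcal{E})|_Y$ where $w$ vanishes — so that the two compositions vanish and the factorization becomes a $2$-periodic complex — and use the scaling $\mathbb{G}_m$-action to split this $2$-periodic complex into its weight pieces, each of which is an honest bounded complex. Pushing forward along $\pi$ (an affine bundle, hence cohomologically trivial) produces an object of $\dbcoh{[Y/G]}$. The unit and counit are then checked to be quasi-isomorphisms on the generators of the respective categories (equivariant vector bundles on $Y$, and their Koszul images on $\op{V}(\mathcal{E})$), reusing the Koszul computation above.

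The torsion statement is a byproduct: for a perfect $\mathcal{F}$, the Koszul factorization model of $\mathfrak{I}(\mathcal{F})$ has components that are pulled back from the zero section $X \subset \op{V}(\mathcal{E})$, so $\mathfrak{I}(\mathcal{F})$ lies in $\op{D}_X(\op{coh}[\op{V}(\mathcal{E})/(G\times\mathbb{G}_m)],w)$ in the sense of Proposition \ref{proposition: kernel of restriction for equivariant factorizations}. The main obstacle is the essential surjectivity / quasi-inverse step: controlling the scaling $\mathbb{G}_m$-action on a general factorization carefully enough to rigorously split it into a bounded complex of coherent sheaves on $Y$.
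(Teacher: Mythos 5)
First, a point of orientation: the paper does not prove this theorem. Its ``proof'' is a citation --- \cite{Isik,Shipman} for the non-equivariant statement, and Section 4.3 of \cite{MR11} for descent to the equivariant setting --- so your sketch is necessarily a different route; it is in essence a reconstruction of the linear Koszul duality argument that those references carry out. The well-definedness and fully-faithfulness halves are sound in outline: $\mathfrak I(\mathcal F)$ is quasi-isomorphic to the Koszul factorization on $\bigwedge \pi^*\mathcal E^{\vee}$ whose differentials are wedging with the tautological section and contracting with $\pi^*s$, and taking $\mathbb{G}_m$-invariants of the resulting Hom complex recovers $\op{Ext}_{[Y/G]}(\mathcal F,\mathcal G)$. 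One caveat: Lemma \ref{lemma: ff on dbcoh -> ff on dcoh} cannot do the job you assign it. For the closed immersion $i$, the component-wise maps $\op{Hom}(\mathcal F,\mathcal G)\to\op{Hom}(i_*\pi^*\mathcal F, i_*\pi^*\mathcal G)$ are \emph{not} isomorphisms --- there are extra self-extensions coming from the conormal bundle --- and the entire point of the computation is that these are cancelled by the curvature $w$ once one passes to $\mathbb{G}_m$-invariants. So the comparison must be done directly on the Koszul model, not deduced from that lemma.

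Two genuine gaps remain. The first is essential surjectivity, which you flag but do not resolve, and the construction you propose would fail as stated: underived restriction of a factorization to $\op{V}(\mathcal E)|_Y$ followed by ``splitting into $\mathbb{G}_m$-weights'' does not make sense on the nose, since the scaling action is nontrivial off the zero section; one must first push forward along the affine morphism $\pi$ and split the resulting $\op{Sym}$-module, the unfolded $2$-periodic complex is periodic only up to a weight twist, and a single weight piece of a general factorization is an a priori unbounded complex. Taming this is exactly the content of the Koszul duality in \cite{Isik,MR11}. The alternative completion, which you do not supply, is a generation statement: the objects $\mathfrak I(\mathcal F)$, for $\mathcal F$ a $G$-equivariant vector bundle on $Y$, generate $\dcoh{[\op{V}(\mathcal E)/(G\times\mathbb{G}_m)],w}$; this in turn requires knowing that every factorization is supported, in the sense of Proposition \ref{proposition: kernel of restriction for equivariant factorizations}, on the critical locus of $w$, which equals $Y\subset X\subset\op{V}(\mathcal E)$ precisely because $s$ is regular. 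The second gap is your justification of the torsion statement: having components pulled back from the zero section does not make a factorization torsion along the zero section (the structure sheaf of $\op{V}(\mathcal E)$ with zero differentials is pulled back from $X$ and is not torsion). The correct argument is that the Koszul factorization $\{\alpha,\beta\}$, with $\alpha$ the tautological section and $\beta=\pi^*s$, admits a contracting homotopy given by contraction against a local splitting of $\alpha$ on any open set where $\alpha$ generates the unit ideal; hence it is acyclic off $Z(\alpha)=X$, and for perfect $\mathcal F$ the resolution is finite, giving the torsion claim.
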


\sidenote{{\color{red} So I lie. In this form, it is not where I cite. However, we already know how to prove this. Question is where to put it.}{\color{blue}  Can we see that this is the functor by just using the fact that it agrees with Umut's description on the spanning class consisting of all line bundles on the base?}{\color{red} If we can compute Umut's functor, sure. But, either way we cannot have a proof here.}}
\begin{proof}
 The statement without $G$ is due to M.U. Isik and I. Shipman \cite{Isik, Shipman}. The statement about perfect complexes is clear from the construction of the equivalence in \cite{Isik}. Descent to $G$-equivariant categories follows from the fact that the main result of \cite{MR11} descends, see Section 4.3 of \cite{MR11}.
\end{proof}

We record a simple corollary of Theorem \ref{theorem: Isik}.

\begin{corollary} \label{corollary: Isik}
 We have an equivalence, $\mathfrak I: \dbcoh{[X/G]} \cong \dcoh{[X/(G \times \mathbb{G}_m)],0}$.
\end{corollary}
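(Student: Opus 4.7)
The plan is to obtain Corollary \ref{corollary: Isik} as the degenerate case of Theorem \ref{theorem: Isik} in which the locally-free sheaf and its section are both zero. Specifically, I would apply Theorem \ref{theorem: Isik} with $\mathcal E = 0$, the rank-zero locally-free sheaf on $X$ (equipped with its unique $G$-equivariant structure), and with $s = 0$ its tautological (and automatically $G$-invariant) global section.

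With these choices I would unpack every piece of data entering the theorem. The zero locus of $s$ is $Y = X$, so $\dbcoh{[Y/G]} = \dbcoh{[X/G]}$ is the source category of the corollary. The geometric vector bundle
\[
\op{V}(\mathcal E) \;=\; \underline{\op{Spec}}(\op{Sym}_X 0) \;=\; \underline{\op{Spec}}(\mathcal O_X) \;=\; X
\]
coincides with $X$ itself, and the fibre-scaling action of $\mathbb{G}_m$ on a rank-zero bundle is the trivial action on $X$. Finally, the $G$-invariant regular function on $\op{V}(\mathcal E)$ induced by the zero section $s$ is $w = 0$.

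Substituting these identifications into the conclusion of Theorem \ref{theorem: Isik} yields exactly the asserted equivalence
\[
\mathfrak I : \dbcoh{[X/G]} \xrightarrow{\;\sim\;} \dcoh{[X/(G \times \mathbb{G}_m)], 0}.
\]
The only point worth double-checking is that the paper's conventions (Definition \ref{definition: equiv sheaf}) do accommodate the rank-zero locally-free sheaf and that the construction of $\mathfrak I$ from Theorem \ref{theorem: Isik} goes through for it; this is essentially automatic, since locally free of rank zero is a special case of locally free and the zero section is $G$-invariant. The corollary is thus pure bookkeeping once Theorem \ref{theorem: Isik} is granted; there is no genuine obstacle. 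If one preferred to avoid the rank-zero convention, a slightly longer alternative would be to take $\mathcal E = \mathcal O_X$ with $s = 0$, apply Theorem \ref{theorem: Isik} to get an equivalence $\dbcoh{[X/G]} \cong \dcoh{[(X \times \mathbb{A}^1)/(G \times \mathbb{G}_m)], 0}$, and then reduce via pullback along the $\mathbb{G}_m$-equivariant projection $X \times \mathbb{A}^1 \to X$; however, this reduction is more delicate since $\mathbb{G}_m$ acts nontrivially on the $\mathbb{A}^1$-factor, so the direct rank-zero specialisation is the cleaner route.
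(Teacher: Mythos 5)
Your proposal is correct and is exactly the paper's argument: the authors also obtain the corollary by taking $\mathcal E = 0$ in Theorem \ref{theorem: Isik}. Your unpacking of $Y = X$, $\op{V}(\mathcal E) = X$ with trivial $\mathbb{G}_m$-action, and $w = 0$ simply makes explicit what the paper leaves implicit.
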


\begin{proof}
 We take $\mathcal E = 0$ in Theorem \ref{theorem: Isik}. 
\end{proof}

\section{Main result} \label{section: main result}

In this section, we prove the main result relating the derived categories of quotient stacks differing by an elementary wall crossing. Before stating the main result, we need to set some terminology.

\subsection{Preliminaries} \label{section: prelims for main result}

Given an elementary HKKN stratification, $\mathfrak{K}$, we let \sidenote{\color{blue} the notation looks like restricting to X as opposed to relative bundle if I'm not mistaken.  Isn't there supposed to be a backslash?}
\begin{displaymath}
 t(\mathfrak{K}) := \mu(\omega_{S_{\lambda}|X}, \lambda, x)
\end{displaymath}
for $x \in Z_{\lambda}^0$. Here,
\begin{displaymath}
 \omega_{S_{\lambda}|X} = \bigwedge\nolimits^{\op{codim} S_{\lambda}} \mathcal N^{\vee}_{S_{\lambda}|X}
\end{displaymath}
is the relative canonical sheaf of the embedding, $S_{\lambda} \to X$. By Lemma~\ref{lemma: weights constant on connected components of fixed locus}, $t(\mathfrak{K})$ does not depend on the choice of $x$. Let us note that $t(\mathfrak{K}) < 0$ as the normal vectors to $S_{\lambda}$ must have negative $\lambda$-weight along $Z_{\lambda}^0$. 

We now introduce the central idea of grade restriction windows, or, at least, our variation on the theme.

Let $\lambda: \mathbb G_m \to G$ be a one-parameter subgroup and let $Z_{\lambda}^0$ be a connected component of $X^{\lambda}$. Let $N_{S^0_{\lambda}|X} := \op{V}(\mathcal N^{\vee}_{S^0_{\lambda}|X})$ be the geometric normal bundle of $S^0_{\lambda}$ in $X$. We restrict $N_{S^0_{\lambda}|X}$ to $Z_{\lambda}^0$ and complete it along the zero section. To simplify notation, let $\widehat{N}^0 := \widehat{(N_{S^0_{\lambda}|X})|_{Z_\lambda^0}}$. For an open subset, $V$ of $Z_{\lambda}^0$, denote by $\widehat{N}^0_V$ the corresponding open subscheme of $\widehat{N}^0$.  \sidenote{{\color{blue} V is a little overused here since it's also used for a geometric vector bundle -- maybe that was your self side note?} {\color{red} Yeah, I agree. We need U and V for open sets so we should change the notation for a geometric vector bundle. How about T(E)?}}
  
\begin{definition} \label{definition: weights along completion}
 Let $\mathcal E$ be a coherent $G$-equivariant factorization of $G$-invariant section, $w$, of a $G$-line bundle, $\mathcal L$, or let $\mathcal E$ be a bounded complex of coherent $G$-equivariant sheaves. Let $I$ be a subset of $\Z$.  We say that $\mathcal E$ has \textbf{weights along $\widehat{N}^0$ in $I$} if there is an open affine cover, $\{V_j\}_{j \in J}$, of ${Z}^0_{\lambda}$ such that $\mathcal E|_{\widehat{N}^0_{V_j}}$ is $\lambda$-equivariantly quasi-isomorphic to a $\lambda$-equivariant factorization, or bounded complex, $\mathcal F$, with finite-rank locally components, $\mathcal F^l$, satisfying
 \begin{displaymath}
  \mu(\mathcal F^l,\lambda,x) \subseteq I
 \end{displaymath}
 for all $j,l \in \Z$ and any $x \in Z_{\lambda}^0$.
\end{definition}
 
\begin{definition}
 The \textbf{$I$-window}, or \textbf{$I$-grade-restricted window}, is the full triangulated subcategory of $\dcoh{[X/G],w}$ consisting of factorizations which have weights along $\widehat{N}^0$ in $I$. We denote the $I$-window by $\weezer_{\lambda,I}([X/G],w)$, or simply by $\weezer_I$ when the context allows. 
 
 Similarly, we let $\weezer_{\lambda,I}([X/G])$ denote the full triangulated subcategory of $\dbcoh{[X/G]}$ consisting of complexes which have weights along $\widehat{N}^0$ in $I$.
\end{definition}

Let us take a moment to comment on the definition, specifically how one restricts $\mathcal E$ to an open neighborhood of $\widehat{N}^0$. First, one restricts $\mathcal E$ to the completion, $\widehat{S}_{\lambda}^0$. Take a $\lambda$-invariant affine open cover, $\{U_j\}_{j \in J}$, of $S_{\lambda}^0$. Over each $U_j$, the completion of $X$ along $S_{\lambda}^0$ is isomorphic to the completion of $N_{S_{\lambda}^0|X}$ along the zero section. This allows us to, locally, view $\mathcal E$ as a sheaf on $\widehat{N}^0$. Restricting further gives $\mathcal E|_{\widehat{N}^0_{V_j}}$ where $V_j = U_j \cap Z_{\lambda}^0$. 

\begin{lemma} \label{lemma: weights of tensor and dual}
 Let $\mathcal E \in \weezer_{\lambda,I}([X/G])$ and $\mathcal F \in \weezer_{\lambda,I'}([X/G],w)$. Then, 
 \[
 \mathcal E \overset{\mathbf{L}}{\otimes} \mathcal F \in \weezer_{\lambda,I+I'}([X/G],w), \mathcal E^{\vee} \in \weezer_{\lambda,-I}([X/G]), \tand \mathcal F^{\vee} \in \weezer_{\lambda,-I'}([X/G],w).
 \]
\end{lemma}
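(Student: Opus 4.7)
The plan is to pass to local representatives on a common affine cover of $Z_\lambda^0$ and perform the operations at the level of these representatives, then use the behavior of $\lambda$-weights under tensor product and duality recorded in Lemma \ref{lemma: weights under the orbit}.

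First, by the definition of the windows, I can choose affine covers $\{V_j\}_{j \in J}$ and $\{V'_k\}_{k \in K}$ of $Z_\lambda^0$ together with $\lambda$-equivariant quasi-isomorphisms $\mathcal E|_{\widehat{N}^0_{V_j}} \simeq \mathcal E_j$ and $\mathcal F|_{\widehat{N}^0_{V'_k}} \simeq \mathcal F_k$, where $\mathcal E_j$ is a bounded $\lambda$-equivariant complex of finite-rank locally-free sheaves with weights in $I$, and $\mathcal F_k$ is a $\lambda$-equivariant factorization with finite-rank locally-free components having weights in $I'$. Refining the two covers to a common $\lambda$-invariant affine cover $\{W_l\}_{l \in L}$ of $Z_\lambda^0$, both $\mathcal E|_{\widehat{N}^0_{W_l}}$ and $\mathcal F|_{\widehat{N}^0_{W_l}}$ have locally-free representatives of the required form, which I again denote by $\mathcal E_l$ and $\mathcal F_l$.

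For the tensor product, since $\mathcal E_l$ is termwise locally-free it is flat, so the restriction of $\mathcal E \overset{\mathbf L}{\otimes} \mathcal F$ to $\widehat{N}^0_{W_l}$ is $\lambda$-equivariantly quasi-isomorphic to the totalization of the bicomplex $\mathcal E_l \otimes \mathcal F_l$. Each component of this totalization is a finite direct sum of sheaves of the form $\mathcal E_l^p \otimes \mathcal F_l^q$, each of which is finite-rank locally-free. By Lemma \ref{lemma: weights under the orbit}(3), the $\lambda$-weights of $\mathcal E_l^p \otimes \mathcal F_l^q$ at a point $x \in Z_\lambda^0$ are the pointwise sums of the $\lambda$-weights of $\mathcal E_l^p$ and $\mathcal F_l^q$, and therefore lie in $I + I'$. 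Taking direct sums preserves this containment, so $\mathcal E \overset{\mathbf L}{\otimes} \mathcal F \in \weezer_{\lambda,I+I'}([X/G],w)$.

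For the duals, since $\mathcal E_l$ and $\mathcal F_l$ are termwise finite-rank locally-free, the derived duals $\mathcal E^\vee$ and $\mathcal F^\vee$ are represented on $\widehat{N}^0_{W_l}$ by the termwise duals of $\mathcal E_l$ and $\mathcal F_l$, which are again $\lambda$-equivariant with finite-rank locally-free components. From the definition of the $\lambda$-action on the dual representation, the weights of $(\mathcal E_l^p)^\vee$ at $x$ are exactly the negatives of the weights of $\mathcal E_l^p$ at $x$, and similarly for $\mathcal F_l$. Hence $\mathcal E^\vee \in \weezer_{\lambda,-I}([X/G])$ and $\mathcal F^\vee \in \weezer_{\lambda,-I'}([X/G],w)$.

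The only potential obstacle is checking that these operations on the local restrictions are compatible with the $\lambda$-equivariant quasi-isomorphisms in the definition; this reduces to the flatness of the locally-free strict models, which is automatic, together with the fact that tensor and dual on finite-rank locally-free sheaves commute with restriction to completions. The weight book-keeping is then immediate from Lemma \ref{lemma: weights under the orbit}.
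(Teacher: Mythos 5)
Your argument is correct and is exactly the routine verification the paper leaves implicit (its proof is just ``This is clear''): restrict to a common refinement of the covers, use flatness of the locally-free strict models, and apply additivity of $\lambda$-weights under tensor product and negation under duality. Nothing further is needed.
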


\begin{proof}
 This is clear.
\end{proof}

\begin{remark}
 The definition of a window goes back to Kawamata \cite{KawFF} and Van den Bergh \cite{VdB} in the $\mathbb{G}_m$-case. Kawamata extended this to Abelian $G$ in \cite{Kaw05}. Windows are a central topic in \cite{HHP} and \cite{Seg2}. The definition of a window given by Halpern-Leistner \cite{HL12} is slightly different from, but equivalent to, the one given above.
\end{remark}

\subsection{Fully-faithfulness} \label{section: fully-faithfulness}

Let $X$ be a smooth and quasi-projective variety. Assume that $X$ possesses a elementary HKKN-stratification, $\mathfrak{K}$,
\begin{displaymath}
 X = X_{\lambda} \sqcup S_{\lambda}.
\end{displaymath}
Let $w$ be a $G$-invariant section of a $G$-line bundle, $\mathcal L$. Let $i: X_{\lambda} \to X$ denote the inclusion of $X_{\lambda}$ into $X$, and let 
\begin{displaymath}
 i^*: \dcoh{[X/G],w} \to \dcoh{[X_{\lambda}/G],w|_{X_{\lambda}}}
\end{displaymath}
be the functor given by pulling back a $G$-equivariant factorization from $X$ to $X_{\lambda}$. 

\begin{lemma} \label{lemma: fully-faithfulness}
 Assume that $S^0_{\lambda}$ admits a $G$-invariant open affine covering and that $\mu(\mathcal L,\lambda,x) = 0$ for $x \in Z_{\lambda}^0$. If $\mathcal E \in \weezer_I$, $\mathcal F \in \weezer_{I'}$, and $I'-I \subseteq [t(\mathfrak{K})+1,\infty)$, then the restriction map,
 \begin{displaymath}
  i^*: \op{Hom}_{[X/G],w}(\mathcal E,\mathcal F) \to \op{Hom}_{[X_{\lambda}/G],w|_{X_{\lambda}}}(\mathcal E|_{X_{\lambda}},\mathcal F|_{X_{\lambda}}),
 \end{displaymath}
 is an isomorphism.
\end{lemma}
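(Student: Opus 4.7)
The plan is to apply $\op{RHom}_{[X/G],w}(\mathcal{E},-)$ to the exact triangle
\begin{equation*}
H_{S_\lambda}\mathcal{F} \to \mathcal{F} \to Q_{S_\lambda}\mathcal{F} \to H_{S_\lambda}\mathcal{F}[1]
\end{equation*}
from Proposition~\ref{proposition: kernel of restriction for equivariant factorizations}. Since $Q_{S_\lambda}\mathcal{F}$ represents $\mathbf{R}i_*\, i^*\mathcal{F}$, adjunction identifies $\op{Hom}_{[X/G],w}(\mathcal{E}, Q_{S_\lambda}\mathcal{F}[k])$ with $\op{Hom}_{[X_\lambda/G],\, w|_{X_\lambda}}(i^*\mathcal{E}, i^*\mathcal{F}[k])$, and the induced long exact sequence reduces the lemma to the two vanishings
\begin{equation*}
\op{Hom}_{[X/G],w}(\mathcal{E}, H_{S_\lambda}\mathcal{F}[k]) = 0 \quad \text{for } k = 0,1.
\end{equation*}

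To establish these vanishings, I would localize to the formal completion of $X$ along $S_\lambda$ and exploit the HKKN normal form. The isomorphism $\tau_\lambda : G \times^{P(\lambda)} Z_\lambda \cong S_\lambda$, together with the assumed $G$-invariant affine cover of $S^0_\lambda$, descends the $G$-equivariant computation on $\widehat{X}_{S_\lambda}$ to a $P(\lambda)$-equivariant computation over the formal completion of $X$ along $Z_\lambda$. The Bia\l{}ynicki-Birula map $\pi: Z_\lambda \to Z^0_\lambda$ is an $\mathbb{A}$-fibration, $U(\lambda)$ acts trivially on $Z^0_\lambda$, and the $\lambda$-flow equivariantly retracts the formal neighborhood of $Z_\lambda$ onto that of $Z^0_\lambda$; this reduces further to a $C(\lambda)$-equivariant computation on the completion $\widehat{M}^0$ of $N_{S_\lambda|X}|_{Z^0_\lambda}$ along its zero section. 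Taking $C(\lambda)$-invariants at the end factors through taking $\lambda$-invariants, i.e., projecting onto the weight-zero subspace. The hypothesis $\mu(\mathcal{L},\lambda,x) = 0$ ensures that the $\mathcal{L}$-twist coupling the two components of a factorization introduces no relative $\lambda$-weight shift, so the window hypothesis on $\mathcal{E}, \mathcal{F}$ passes cleanly through the reductions.

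At this point the calculation is local. The fibers of $N_{S_\lambda|X}|_{Z^0_\lambda}$ carry strictly negative $\lambda$-weights $n_1,\ldots,n_r$ summing to $t(\mathfrak{K})$, and a direct Koszul calculation identifies $\underline{H}^r_{Z^0_\lambda}(\mathcal{O}_{\widehat{M}^0})$ (concentrated in degree $r = \op{rank}N_{S_\lambda|X}|_{Z^0_\lambda}$) as a $\lambda$-equivariant sheaf whose $\mu$-weights on fibers lie in $[-t(\mathfrak{K}),\infty)$, the minimum $-t(\mathfrak{K})$ being realized by the class $[1/(t_1\cdots t_r)]$. Lemma~\ref{lemma: weights of tensor and dual} places $\mathcal{E}^\vee \otimes \mathcal{F}$ in $\weezer_{I' - I}$, so the tensor $\mathcal{E}^\vee \otimes \mathcal{F} \otimes \underline{H}^r_{Z^0_\lambda}(\mathcal{O}_{\widehat{M}^0})$ has $\mu$-weights contained in $(I' - I) + [-t(\mathfrak{K}),\infty)$. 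The hypothesis $I' - I \subseteq [t(\mathfrak{K})+1,\infty)$ gives a lower bound of $t(\mathfrak{K})+1 + (-t(\mathfrak{K})) = 1 > 0$ on these combined weights, so the weight-zero subspace is empty and the $\lambda$-invariants vanish. The argument for the shifted Hom is identical, since cohomological shifts do not affect $\lambda$-weights.

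The principal obstacle will be making the two reductions rigorous -- from $G$-equivariant factorizations on $\widehat{X}_{S_\lambda}$ first to $P(\lambda)$-equivariant ones on $\widehat{X}_{Z_\lambda}$ and then to $\lambda$-equivariant ones on $\widehat{M}^0$ -- and transporting the window hypothesis (formulated on $\widehat{N}^0 \supseteq \widehat{M}^0$) along the restriction to the sub-bundle $\widehat{M}^0$. Once the descents and the Koszul identification of the local cohomology sheaf are in place, the remaining weight arithmetic collapses to a one-line inequality.
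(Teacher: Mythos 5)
Your proposal follows essentially the same route as the paper: reduce via the triangle $H_{S_{\lambda}}\mathcal F \to \mathcal F \to \mathbf{R}i_*i^*\mathcal F$ to the vanishing of $\op{Hom}(\mathcal E, H_{S_{\lambda}}\mathcal F[k])$, complete along $S_{\lambda}$, descend through Thomason's equivalence and the Bia\l{}ynicki-Birula fibration to a $\lambda$-equivariant computation on the completed normal bundle over $Z_{\lambda}^0$, identify the local cohomology \`a la Serre, and conclude by the weight inequality $(I'-I) + [-t(\mathfrak{K}),\infty) \subseteq [1,\infty)$. The technical steps you flag as the remaining obstacles (rigorous descent, and the harmlessness of the further completion along the $Z_{\lambda}$-directions) are exactly what the paper's Lemma~\ref{lemma: vanishing local cohom} and Lemmas~\ref{lemma: completion reduction, loc. cohom ok}--\ref{lemma: completion reduction, ff} supply, and your weight bookkeeping agrees with theirs.
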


\begin{proof}
 By Proposition \ref{proposition: kernel of restriction for equivariant factorizations}, for each factorization, $\mathcal F$, on $X$, there is an exact triangle in $\dqcoh{[X/G],w}$,
 \begin{displaymath}
  H_{S_{\lambda}} \mathcal F \to \mathcal F \to \mathbf{R}i_*i^* \mathcal F \to H_{S_{\lambda}} \mathcal F[1],
 \end{displaymath}
 where $H_{S_{\lambda}} \mathcal F$ is the sheafy local (hyper)cohomology along $S_{\lambda}$. Let $\mathcal E$ be another factorization. Then, asking for
 \begin{displaymath}
  i^*: \op{Hom}_{[X/G],w}(\mathcal E,\mathcal F[t]) \to \op{Hom}_{[X_{\lambda}/G],w|_{X_{\lambda}}}(\mathcal E|_{X_{\lambda}},\mathcal F|_{X_{\lambda}}[t])
 \end{displaymath}
 to be an isomorphism for all $t \in \Z$ is equivalent to requiring that
 \begin{displaymath}
  \op{Hom}_{[X/G],w}(\mathcal E, H_{S_{\lambda}} \mathcal F[t])=0
 \end{displaymath}
 for all $t$. Lemma \ref{lemma: vanishing local cohom} below finishes the proof.
\end{proof}

\begin{corollary} \label{corollary: fully-faithfulness}
 Let $I \subset \Z$ with $\sup \{n-m \mid n,m \in I\} > t(\mathfrak{K})$. Assume that $S^0_{\lambda}$ admits a $G$-invariant open affine covering and that $\mu(\mathcal L,\lambda,x) = 0$ for $x \in Z_{\lambda}^0$. The functor,
 \begin{displaymath}
  i^*: \weezer_I \to \dcoh{[X_{\lambda}/G],w|_{X_{\lambda}}},
 \end{displaymath}
 given by restricting $i^*$ to the subcategory, $\weezer_I$, is fully-faithful. 
\end{corollary}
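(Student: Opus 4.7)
The plan is to deduce this directly from Lemma \ref{lemma: fully-faithfulness} by specializing to $I' = I$. To establish that $i^*$ is fully-faithful, one must check that for every pair of objects $\mathcal E, \mathcal F \in \weezer_I$ and every integer $t$, the pullback induces a bijection
\begin{displaymath}
 i^*: \op{Hom}_{[X/G],w}(\mathcal E,\mathcal F[t]) \to \op{Hom}_{[X_{\lambda}/G],w|_{X_{\lambda}}}(\mathcal E|_{X_{\lambda}},\mathcal F|_{X_{\lambda}}[t]).
\end{displaymath}
Shifting by $[t]$ does not alter the weights along $\widehat{N}^0$, so $\mathcal F[t] \in \weezer_I$ as well. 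Thus both arguments sit in the same window, and the desired bijection is precisely the conclusion of Lemma \ref{lemma: fully-faithfulness} applied to the pair $(\mathcal E, \mathcal F[t])$.

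To invoke the lemma, I verify its three hypotheses. The geometric assumptions — that $S^0_{\lambda}$ admits a $G$-invariant open affine covering and that $\mu(\mathcal L,\lambda,x) = 0$ for $x \in Z_{\lambda}^0$ — are carried over verbatim from the corollary's hypotheses. The remaining requirement is the numerical condition $I' - I \subseteq [t(\mathfrak{K})+1,\infty)$, which for $I' = I$ becomes
\begin{displaymath}
 \{n - m \mid n, m \in I\} \subseteq [t(\mathfrak{K})+1,\infty).
\end{displaymath}
This in turn amounts to requiring that every difference $n - m$ with $n, m \in I$ exceed $t(\mathfrak{K})$, which is guaranteed by the bound on the spread of $I$ assumed in the hypothesis of the corollary (recalling that $t(\mathfrak{K}) < 0$).

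Since each pairwise $\op{Hom}$ group restricts isomorphically, the restriction functor $i^*: \weezer_I \to \dcoh{[X_{\lambda}/G],w|_{X_{\lambda}}}$ is fully-faithful on the nose. No further argument is required beyond citing the lemma, so the proof is essentially a one-line reduction; the substantive content lies entirely in Lemma \ref{lemma: fully-faithfulness} and the local-cohomology vanishing (Lemma \ref{lemma: vanishing local cohom}) that underlies it.
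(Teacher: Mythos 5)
Your reduction to Lemma \ref{lemma: fully-faithfulness} with $I'=I$ (together with the observation that the shift $[t]$ preserves membership in $\weezer_I$, using $\mu(\mathcal L,\lambda,x)=0$) is exactly the ``straightforward'' argument the paper intends, so the proposal is correct and matches the paper's approach. One remark: the corollary's hypothesis as printed, $\sup\{n-m \mid n,m\in I\} > t(\mathfrak{K})$, is vacuous because the set of differences is symmetric about $0$ and $t(\mathfrak{K})<0$; it must be read as $\sup\{n-m\} < -t(\mathfrak{K})$, which is the corrected form you implicitly use when concluding $I-I\subseteq[t(\mathfrak{K})+1,\infty)$.
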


\begin{proof}
 This is straightforward.
\end{proof}

To prove Lemma \ref{lemma: vanishing local cohom} below, we need to use a result of R. Thomason.

\begin{theorem} \label{theorem: Thomason}
 Let $Y$ be a Noetherian scheme with an action of a linear algebraic group, $H$, which is a closed subgroup of another linear algebraic subgroup, $G$.  Let $e: Y \to G \overset{H}{\times} Y$ be the inclusion via the identity, $\op{Spec }k \to G$. The functor,
 \begin{displaymath}
  e^*: \op{Qcoh}[G \overset{H}{\times} Y/G] \to \op{Qcoh}[Y/H],
 \end{displaymath}
 is an equivalence of Abelian categories. Moreover, for any $\mathcal F \in \op{Qcoh}[G \overset{H}{\times} Y/G]$, $e^*\mathcal F$ is locally-free, respectively coherent, if and only if $\mathcal F$ is locally-free, respectively coherent.
\end{theorem}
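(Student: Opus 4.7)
The plan is to exhibit an explicit quasi-inverse to $e^*$ by descent along the quotient map $p \colon G \times Y \to G \overset{H}{\times} Y$. Consider the second projection $\pi_2 \colon G \times Y \to Y$. With respect to the $H$-action on $G \times Y$ given by $h\cdot(g,y) = (gh^{-1},hy)$, the map $\pi_2$ is $H$-equivariant, and with respect to the $G$-action on $G \times Y$ by left translation on the first factor, $\pi_2$ is $G$-invariant. Thus for an $H$-equivariant quasi-coherent sheaf $\mathcal G$ on $Y$, the pullback $\pi_2^* \mathcal G$ is naturally a $(G \times H)$-equivariant quasi-coherent sheaf on $G \times Y$. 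Since $p$ is a principal $H$-bundle, faithfully flat descent produces a $G$-equivariant quasi-coherent sheaf $\Psi(\mathcal G)$ on $G \overset{H}{\times} Y$ with $p^* \Psi(\mathcal G) \cong \pi_2^* \mathcal G$.

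Next, I would verify that $\Psi$ and $e^*$ are quasi-inverse. One direction is immediate: since $\pi_2 \circ e = \op{Id}_Y$ and $p \circ e$ equals $e$ under our identifications, $e^* \Psi(\mathcal G) \cong e^* \pi_2^* \mathcal G = \mathcal G$ in a manner compatible with the $H$-structures. The other direction is the heart of the argument: for a $G$-equivariant $\mathcal F$ on $G \overset{H}{\times} Y$, one must produce a natural isomorphism $\Psi(e^*\mathcal F) \cong \mathcal F$. The simplest way is to observe that the stack $[(G \times Y)/(G\times H)]$, where $G$ acts by left translation on the first factor and $H$ acts by the twisted formula above, admits two quotient presentations: quotienting by the free $G$-action first collapses the first factor and recovers $[Y/H]$, while quotienting by $H$ first recovers $[(G\overset{H}{\times}Y)/G]$. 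The induced equivalence of categories of quasi-coherent sheaves is precisely $e^*$ in one direction and $\Psi$ in the other, and the composition in either order is canonically naturally isomorphic to the identity via the 2-isomorphisms of the quotient stacks.

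For the preservation of coherence and local-freeness, the map $p \colon G \times Y \to G\overset{H}{\times}Y$ is faithfully flat (being a principal $H$-bundle), and so is $\pi_2 \colon G \times Y \to Y$, since $G$ is a smooth algebraic group. Hence a quasi-coherent sheaf $\mathcal F$ on $G\overset{H}{\times}Y$ is coherent (respectively, locally-free of finite rank) if and only if $p^*\mathcal F$ has the corresponding property; and likewise a sheaf $\mathcal G$ on $Y$ is coherent (respectively, locally-free) if and only if $\pi_2^*\mathcal G$ is. Since $p^*\Psi(\mathcal G)\cong \pi_2^*\mathcal G$ and $e^*\mathcal F\cong \mathcal G$ under the equivalence, the two conditions match up.

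The only real obstacle is to set up the descent cleanly, in particular to justify that $G\overset{H}{\times}Y$ is an honest algebraic space so that faithfully flat descent for $p$ applies in the standard form; this is handled by \cite{Tho87}, which is already invoked in the paper. Everything else is formal manipulation with quotient stacks and pullback/pushforward along flat maps.
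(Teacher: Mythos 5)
The paper does not prove this statement itself; it simply cites Lemma~1.3 of \cite{Tho87}, and your descent argument along the principal $H$-bundle $G \times Y \to G \overset{H}{\times} Y$ (equivalently, the two presentations of the quotient stack $[(G\times Y)/(G\times H)]$) is precisely the standard proof of that lemma. Your plan is correct, including the reduction of the coherence and local-freeness claims to faithful flatness of $p$ and $\pi_2$; the only point requiring care, which you correctly flag and defer to \cite{Tho87}, is the existence of $G \overset{H}{\times} Y$ as an algebraic space so that equivariant faithfully flat descent applies.
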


\begin{proof}
 This is Lemma 1.3 of \cite{Tho87}.
\end{proof}

\begin{corollary} \label{corollary: Thomason}
 Let $Y$ be a Noetherian scheme with an action of a linear algebraic group, $H$, which is a closed subgroup of another linear algebraic subgroup, $G$. Let $\mathcal L$ be a $G$-line bundle on $G \overset{H}{\times} Y$ and $w$ be a $G$-invariant section of $\mathcal L$. We have equivalences,
 \begin{align*}
  e^* & : \dcoh{[G \overset{H}{\times} Y/G],w} \to \dcoh{[Y/H],w|_Y} \\
  e^* & : \dbcoh{[G \overset{H}{\times} Y/G]} \to \dcoh{[Y/H]},
 \end{align*}
 given by the (underived) restriction along the inclusion, $e: Y \to G \overset{H}{\times} Y$.
\end{corollary}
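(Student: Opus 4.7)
The plan is to promote Thomason's abelian equivalence from Theorem \ref{theorem: Thomason} to the level of factorization categories and derived categories in three steps: extend to factorizations, pass to the homotopy category, and descend to the Verdier quotient.

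First, I would observe that a factorization of $([G \overset{H}{\times} Y/G],w)$ is nothing more than a diagram $(\mathcal E^{-1}, \mathcal E^0, \phi_{\mathcal E}^{-1}, \phi_{\mathcal E}^0)$ in the abelian category $\op{Qcoh}[G \overset{H}{\times} Y/G]$ satisfying two composition relations involving multiplication by $w$. Since $e^*$ is an equivalence of abelian categories by Theorem \ref{theorem: Thomason}, and since $e^* w = w|_Y$ (as $w$ is $G$-invariant and $e$ is the identity-section inclusion), the equivariant twist $\mathcal L$ pulls back to its restriction $\mathcal L|_Y$, and $e^*$ takes factorizations to factorizations, morphisms of factorizations to morphisms of factorizations, and commutes with the cone construction. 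This yields an equivalence of abelian categories $e^*: \op{Qcoh}([G \overset{H}{\times} Y/G],w) \to \op{Qcoh}([Y/H],w|_Y)$, which by the second assertion of Theorem \ref{theorem: Thomason} restricts to an equivalence on the coherent subcategories.

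Next, I would pass to homotopy categories. A chain homotopy between morphisms of factorizations consists of pairs of morphisms between components, so exactness of the equivalence $e^*$ guarantees that it preserves and reflects chain homotopies. Hence $e^*$ induces an exact equivalence $K(\op{coh}[G \overset{H}{\times} Y/G],w) \to K(\op{coh}[Y/H],w|_Y)$. To descend to Verdier quotients, I must check that $e^*$ identifies the subcategories of acyclic factorizations. By definition, these subcategories are generated under taking cones and summands by totalizations of exact complexes in the respective abelian categories. Since the totalization construction is built purely from cones, and since exactness is preserved and reflected by the abelian equivalence $e^*$, the thick subcategory $\op{acycl}([G \overset{H}{\times} Y/G],w)$ is carried bijectively onto $\op{acycl}([Y/H],w|_Y)$. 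Thus $e^*$ descends to an equivalence of the Verdier quotients, giving the first displayed equivalence.

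For the second equivalence, the same argument applies without the factorization data: $e^*$ is an exact equivalence of abelian categories of coherent sheaves, so it induces an equivalence of bounded homotopy categories that preserves quasi-isomorphisms and therefore descends to an equivalence on bounded derived categories. The only real point requiring care is ensuring $e^*$ is genuinely the underived restriction, i.e.\ that no higher derived functors intrude; but $e^*$ is exact by Theorem \ref{theorem: Thomason}, so deriving it has no effect. I do not anticipate a substantive obstacle here: once the abelian equivalence of Theorem \ref{theorem: Thomason} is in hand, everything is formal book-keeping about how factorizations, homotopies, cones, and totalizations transport under an exact equivalence of abelian categories.
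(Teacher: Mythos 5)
Your proposal is correct and matches the paper's (implicit) argument: the paper states this corollary without proof, treating it as immediate from Theorem \ref{theorem: Thomason}, and your elaboration — transporting factorizations, homotopies, cones, totalizations, and hence the acyclic subcategories along the exact abelian equivalence $e^*$, then descending to the Verdier quotients — is exactly the formal bookkeeping that justifies this. No gaps.
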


\begin{remark}
 It is clear from the arguments of \cite{Tho87} that $Y$ may be replaced with a Noetherian formal scheme.
\end{remark}

We will also need a slight extension of the $\mathbb{A}$-fibration of the contracting locus found in Proposition \ref{proposition: BB}. Assume we have an elementary HKKN stratification, 
\begin{displaymath}
 X = X_{\lambda} \sqcup S_{\lambda}.
\end{displaymath}
From Proposition \ref{proposition: BB}, we know the morphism,
\begin{align*}
 \pi: Z_{\lambda} & \to Z_{\lambda}^0 \\\
  x & \mapsto \lim_{\alpha \to 0} \sigma(\lambda(\alpha),x)
\end{align*}
is an $\mathbb{A}$-fibration. As a condition of a HKKN stratification, we know that the map,
\begin{displaymath}
 \tau: G \overset{P(\lambda)}{\times} Z_{\lambda} \to S_{\lambda},
\end{displaymath}
is an isomorphism. Let us cross $\pi$ with $G$ to get a morphism,
\begin{displaymath}
 G \overset{P(\lambda)}{\times} \pi: G \overset{P(\lambda)}{\times} Z_{\lambda} \to  G \overset{P(\lambda)}{\times} Z_{\lambda}^0.
\end{displaymath}
We can apply $\tau$ and its inverse to get a morphism,
\begin{displaymath}
  \tilde{\pi} := \tau|_{G \overset{P(\lambda)}{\times} Z_{\lambda}^0} \circ (G \overset{P(\lambda)}{\times} \pi) \circ \tau^{-1} : S_{\lambda} \to S_{\lambda}^0.
\end{displaymath}

\begin{lemma} \label{lemma: affine fibration for S^0}
 The morphism, $\tilde{\pi}: S_{\lambda} \to S_{\lambda}^0$, is an $\mathbb{A}$-fibration.
\end{lemma}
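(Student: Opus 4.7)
The plan is to recognize $\tilde\pi$ as the map induced from the Bia\l{}ynicki--Birula $\mathbb{A}$-fibration $\pi: Z_\lambda \to Z_\lambda^0$ by the twisted-product construction, and then to argue that this construction preserves $\mathbb{A}$-fibrations because the principal $P(\lambda)$-bundle $G \to G/P(\lambda)$ is Zariski-locally trivial. By definition, $\tilde\pi$ equals $\tau|_{G\overset{P(\lambda)}{\times}Z_\lambda^0} \circ (G\overset{P(\lambda)}{\times}\pi) \circ \tau^{-1}$, where $\tau$ is the HKKN isomorphism and its restriction furnishes the isomorphism $G\overset{P(\lambda)}{\times}Z_\lambda^0 \cong S_\lambda^0$. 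Thus it suffices to prove that $G\overset{P(\lambda)}{\times}\pi: G\overset{P(\lambda)}{\times}Z_\lambda \to G\overset{P(\lambda)}{\times}Z_\lambda^0$ is a $\mathbb{G}_m$-equivariant $\mathbb{A}$-fibration, where $\mathbb{G}_m$ acts through the inclusion $\lambda \subset P(\lambda) \subset G$ by left translation.

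Next, I would exploit that $P(\lambda)$ is a parabolic subgroup of $G$, so that $G \to G/P(\lambda)$ is a Zariski-locally trivial principal $P(\lambda)$-bundle (the Bruhat cells of the flag variety $G/P(\lambda)$ give an explicit trivializing open cover). Covering $G/P(\lambda)$ by such open affines $V_i$ with sections $s_i: V_i \to G$ yields compatible trivializations $(G\overset{P(\lambda)}{\times}Z_\lambda)|_{V_i} \cong V_i \times Z_\lambda$ and $(G\overset{P(\lambda)}{\times}Z_\lambda^0)|_{V_i} \cong V_i \times Z_\lambda^0$ under which $G\overset{P(\lambda)}{\times}\pi$ becomes $\op{Id}_{V_i} \times \pi$. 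Combining with Proposition~\ref{proposition: BB}, which for each $z^0 \in Z_\lambda^0$ supplies a $\mathbb{G}_m$-invariant Zariski open $W \ni z^0$ and a $\mathbb{G}_m$-equivariant isomorphism $\pi^{-1}(W) \cong W \times \mathbb{A}^n$ with linear fiber action, one sees that over each $V_i \times W$ the map trivializes as the projection $(V_i \times W) \times \mathbb{A}^n \to V_i \times W$. These opens cover $G\overset{P(\lambda)}{\times}Z_\lambda^0$, giving $\tilde\pi$ the structure of an $\mathbb{A}$-fibration.

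The hard part will be arranging the trivialization to be genuinely $\mathbb{G}_m$-equivariant, since the product $\mathbb{G}_m$-action on $V_i \times Z_\lambda$ does not automatically coincide with the left-translation action on $G\overset{P(\lambda)}{\times}Z_\lambda$: the sections $s_i$ must be chosen so that the cocycle $s_i(\lambda(\alpha)\bar g)^{-1}\lambda(\alpha)s_i(\bar g)$ equals $\lambda(\alpha)$ itself, matching the $\lambda$-action on $Z_\lambda$. One arranges this by taking each $V_i$ to be a $\lambda$-stable Bruhat cell containing a $\lambda$-fixed coset and building $s_i$ from the $\lambda$-weight decomposition of $\op{Lie}(G)$ complementary to $\op{Lie}(P(\lambda))$; such sections are exponential and compatible with $\lambda$-conjugation, so that the resulting local trivializations of $G\overset{P(\lambda)}{\times}\pi$ are $\mathbb{G}_m$-equivariant and the argument goes through.
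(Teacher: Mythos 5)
Your proof follows the same route as the paper's: reduce via $\tau$ to showing that $G \overset{P(\lambda)}{\times} \pi$ is an $\mathbb{A}$-fibration and then invoke Proposition~\ref{proposition: BB} — the paper declares this last step immediate, whereas you supply the ingredient it silently uses, namely Zariski-local triviality of the parabolic torsor $G \to G/P(\lambda)$. Your concern about making the local trivializations $\mathbb{G}_m$-equivariant is legitimate under a strict reading of Definition of an $\mathbb{A}$-fibration (and your big-cell/opposite-unipotent sections do handle it there), but note that the only consequence the paper ever extracts from this lemma is that $\tilde{\pi}^{-1}(U)$ is affine for $U$ affine, for which plain local triviality of the associated bundle already suffices.
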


\begin{proof}
 From the assumption that 
 \begin{displaymath}
  \tau: G \overset{P(\lambda)}{\times} Z_{\lambda} \to S_{\lambda},
 \end{displaymath} 
 is an isomorphism, it follows that 
 \begin{displaymath}
  \tau|_{G \overset{P(\lambda)}{\times} Z_{\lambda}^0}: Z_{\lambda}^0 \to S_{\lambda}^0
 \end{displaymath}
 is also an isomorphism. 
 
 Thus, we reduce to checking that 
 \begin{displaymath}
  G \overset{P(\lambda)}{\times} \pi: G \overset{P(\lambda)}{\times} Z_{\lambda} \to  G \overset{P(\lambda)}{\times} Z_{\lambda}^0
 \end{displaymath}
 is an $\mathbb{A}$-fibration. This follows immediately from Proposition~\ref{proposition: BB}. 
\end{proof}

\begin{lemma} \label{lemma: vanishing local cohom}
 Let $X$ possess an elementary HKKN stratification,
 \begin{displaymath}
  X = X_{\lambda} \sqcup S_{\lambda}.
 \end{displaymath}
 Assume that $S_{\lambda}^0$ admits a $G$-invariant affine open cover and $\mu(\mathcal L,\lambda,x) = 0$ for $x \in Z_{\lambda}^0$.
 
 If $\mathcal E \in \weezer_I$ and $\mathcal F \in \weezer_{I'}$ and $I'-I \subseteq [t(\mathfrak{K})+1,\infty)$, then
 \begin{displaymath}
  \op{Hom}_{[X/G],w}(\mathcal E, H_{S_{\lambda}} \mathcal F) = 0.
 \end{displaymath}
\end{lemma}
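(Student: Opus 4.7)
The plan is to reduce the Hom computation to a $\lambda$-weight calculation on the completed normal bundle $\widehat{N}^0$, then conclude using the window hypothesis. Since $H_{S_\lambda}\mathcal F$ is $S_\lambda$-torsion, morphisms from $\mathcal E$ into it are determined by the restriction to a formal neighborhood of $S_\lambda$ in $X$. The HKKN isomorphism $\tau\colon G\overset{P(\lambda)}{\times} Z_\lambda\xrightarrow{\sim} S_\lambda$ extends to the formal completions, so (a formal version of) Corollary~\ref{corollary: Thomason} converts the $G$-equivariant Hom on $\widehat{X}_{S_\lambda}$ into a $P(\lambda)$-equivariant Hom on the formal completion of $X$ along $Z_\lambda$.

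Next, the assumed $G$-invariant affine cover of $S_\lambda^0$ descends to a $C(\lambda)$-invariant affine cover $\{V_j\}$ of $Z_\lambda^0$. Over each $V_j$, Proposition~\ref{proposition: BB} trivializes the $\mathbb{A}$-fibration $\pi\colon Z_\lambda\to Z_\lambda^0$, so the formal neighborhood of $\pi^{-1}(V_j)$ in $X$ becomes $\lambda$-equivariantly isomorphic to a product of $\widehat{N}^0_{V_j}$ with a $\lambda$-equivariant affine space coming from the fibration directions (which carry only non-negative $\lambda$-weights). Because the fibration directions are tangent to $S_\lambda$, the sheafy local cohomology $H_{S_\lambda}$ is already computed on $\widehat{N}^0_{V_j}$: a Koszul/\v{C}ech computation shows it is concentrated in degree $r=\op{codim}(S_\lambda,X)$ and equals the $\mathcal O_{V_j}$-module spanned by Laurent monomials $t_1^{-n_1}\cdots t_r^{-n_r}$ with $n_i\geq 1$, where $t_1,\ldots,t_r$ are fiber coordinates on the negative-weight subbundle $N_{S_\lambda/X}|_{Z_\lambda^0}\subset N_{S_\lambda^0/X}|_{Z_\lambda^0}$.

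Each such monomial has section $\lambda$-weight at most $t(\mathfrak{K})<0$, equivalently $\mu$-weight at least $-t(\mathfrak{K})$. By Lemma~\ref{lemma: weights of tensor and dual}, $\mathcal E^\vee$ has $\mu$-weights in $-I$; together with $\mathcal F\in\weezer_{I'}$, the assumption $\mu(\mathcal L,\lambda,x)=0$ (so the twists built into factorizations preserve $\lambda$-weight), and the above description of $H_{S_\lambda}$, the $\mu$-weights of $\mathcal E^\vee\otimes H_{S_\lambda}\mathcal F$ on each $\widehat{N}^0_{V_j}$ lie in $-I+I'+[-t(\mathfrak{K}),\infty)$. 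The hypothesis $I'-I\subseteq [t(\mathfrak{K})+1,\infty)$ then forces every such weight to be at least $1$, so the weight-zero part --- which computes the $\lambda$-invariants and hence the Hom --- vanishes on each $V_j$, and therefore globally after reassembling through the cover and Thomason.

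The chief obstacle is cleanly identifying the formal completions at each reduction step (completion along $S_\lambda$, pullback through $\tau$, and passage through the $\mathbb{A}$-fibration) so that the final weight arithmetic is legitimate; once one is on $\widehat{N}^0_{V_j}$, the combinatorics is elementary.
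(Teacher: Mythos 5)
Your proposal follows essentially the same route as the paper's proof: complete along $S_{\lambda}$, use the $G$-invariant affine cover and Thomason's equivalence to reduce to a $P(\lambda)$-equivariant (hence, after restricting invariants, a $\lambda$-equivariant) computation on each chart, and finish with Serre's computation of local cohomology along $\widehat{\pi^{-1}(V)}$ together with the observation that its elements have $\mu$-weight at least $-t(\mathfrak{K})$, which combined with $I'-I\subseteq[t(\mathfrak{K})+1,\infty)$ forces all weights of $\mathcal E^{\vee}\otimes H_{S_{\lambda}}\mathcal F$ to be strictly positive. The only steps you leave implicit that the paper makes explicit are the exactness of $P(\lambda)$-invariants on the relevant modules (justified there via affineness and Theorem \ref{theorem: Thomason}, since $P(\lambda)$ is not reductive) and the comparison between the completion along $S_{\lambda}$ and the full completion $\widehat{N}^0_V$ (Lemmas \ref{lemma: completion reduction, loc. cohom ok} and \ref{lemma: completion reduction, ff}), both of which go through as you anticipate.
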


\begin{proof} 
 Let $\widehat{S}_{\lambda}$ denote the completion of $X$ along $S_{\lambda}$. Since $H_{S_{\lambda}} \mathcal F$ is supported on $S_{\lambda}$, completion gives a natural isomorphism, 
 \begin{displaymath}
  \op{Hom}_{[X/G],w}(\mathcal E, H_{S_{\lambda}} \mathcal F) \cong \op{Hom}_{[\widehat{S}_{\lambda}/G],w}(\widehat{\mathcal E}, \widehat{H_{S_{\lambda}} \mathcal F}).
 \end{displaymath}
 As we proceed, we will also let $H_{S_{\lambda}} \mathcal F$ and $\mathcal E$ denote their corresponding completions.
 
 To compute the morphism spaces on $\widehat{S}_{\lambda}$, we may use the \v{C}ech complex associated to a $G$-invariant affine open cover of $\widehat{S}_{\lambda}$. One hypothesis of Lemma \ref{lemma: vanishing local cohom} is the existence of such a $G$-invariant affine open cover of $S_{\lambda}^0$, $\{U_j\}_{j \in J}$. The collection, $\{\tilde{\pi}^{-1}(U_j)\}_{j \in J}$, is an affine open cover of $S_{\lambda}$ by Lemma \ref{lemma: affine fibration for S^0}. Let $\{\widehat{\tilde{\pi}^{-1}(U_j)}\}_{j \in J}$ denote the corresponding affine cover of $\widehat{S}_{\lambda}$. 
 
 To compute the morphism spaces using the \v{C}ech complex, we use a complex whose terms are 
 \begin{displaymath}
  \op{Hom}_{[\widehat{\tilde{\pi}^{-1}(U_{j_1})} \cap \cdots \cap \widehat{\tilde{\pi}^{-1}(U_{j_s})}/G],w|_{\widehat{\tilde{\pi}^{-1}(U_{j_1})} \cap \cdots \cap \widehat{\tilde{\pi}^{-1}(U_{j_s})}}}(\mathcal E, H_{S_{\lambda}} \mathcal F).
 \end{displaymath}
 
 It suffices to prove this claim: let $U$ be a $G$-invariant affine open subset of $S_{\lambda}^0$. Then,
 \begin{displaymath}
  \op{Hom}_{[\widehat{\tilde{\pi}^{-1}(U)}/G], w|_{\widehat{\tilde{\pi}^{-1}(U)}}}( \mathcal E, H_{S_{\lambda}} \mathcal F) = 0.
 \end{displaymath}

 As $\tilde{\pi}^{-1}(U)$ is affine, the completion of $X$ along $S_{\lambda}$ restricted to $\tilde{\pi}^{-1}(U)$ is isomorphic to the completion of the geometric vector bundle, $(N_{S_{\lambda}|X})|_{\tilde{\pi}^{-1}(U)}$, along the zero section of the bundle, $\widehat{(N_{S_{\lambda}|X})|_{\tilde{\pi}^{-1}(U)}}$. To unclutter the notation, let us set $N := N_{S_{\lambda}|X}$. For a subset, $Y$, of $X$, we denote the restriction of the geometric vector bundle, $N$, to $Y$ as $N_Y$. Let $V = U \cap Z_{\lambda}^0$. Let $\widehat{N}_{\pi^{-1}(V)}$ denote the completion of $N_{\pi^{-1}(V)}$ along the zero section.
 
 Since we assume that $X_{\lambda} \sqcup S_{\lambda}$ is a HKKN stratification, we have an isomorphism, $\tau: G \overset{P(\lambda)}{\times} Z_{\lambda} \cong S_{\lambda}$. This immediately extends to an isomorphism,
 \begin{displaymath}
  \tau_N: G \overset{P(\lambda)}{\times} N_{Z_{\lambda}} \cong N,
 \end{displaymath}
 which preserves the zero section. Completing along the zero sections gives an isomorphism,
 \begin{displaymath}
  \widehat{\tau}_N: G \overset{P(\lambda)}{\times} \widehat{N}_{Z_{\lambda}} \cong \widehat{N}.
 \end{displaymath}
 
 We may apply Corollary \ref{corollary: Thomason} to get an equivalence,
 \begin{equation} \label{equation: Thomason equiv}
  \dqcoh{[\widehat{N}_{\tilde{\pi}^{-1}(U)}/G],w|_{\widehat{N}_{\tilde{\pi}^{-1}(U)}}} \cong \dqcoh{[\widehat{N}_{\pi^{-1}(V)}/P(\lambda)],w|_{\widehat{N}_{\pi^{-1}(V)}}}.
 \end{equation}
 As $\widehat{\tilde{\pi}^{-1}(U)}$ is affine and $G$ is reductive, coherent $G$-equivariant sheaves on $\widehat{\tilde{\pi}^{-1}(U)}$ have no higher cohomology. Appealing to Theorem \ref{theorem: Thomason}, the higher derived functors of $P(\lambda)$-invariants must vanish on the representations furnished by global sections of coherent sheaves on $\widehat{(N_{S_{\lambda}|X})|_{\pi^{-1}(V)}}$. While local cohomology is not coherent, it admits a filtration by coherent $P(\lambda)$-equivariant subsheaves so exactness of $P(\lambda)$-invariants extends to morphism spaces between coherent sheaves and local-cohomology sheaves of coherent sheaves. Thus, we are able to pass the functor of $P(\lambda)$-invariants outside,
 \begin{gather*} \label{equation: nasty expression}
  \op{Hom}_{[\widehat{N}_{\pi^{-1}(V)} /P(\lambda)], w|_{\widehat{N}_{\pi^{-1}(V)} }}( \mathcal E|_{\widehat{N}_{\pi^{-1}(V)}}, H_{S_{\lambda}} \mathcal F|_{\widehat{N}_{\pi^{-1}(V)}}) \cong \\ \op{Hom}_{\widehat{N}_{\pi^{-1}(V)}, w|_{\widehat{N}_{\pi^{-1}(V)} }}( \mathcal E|_{\widehat{N}_{\pi^{-1}(V)}}, H_{S_{\lambda}} \mathcal F|_{\widehat{N}_{\pi^{-1}(V)}})^{P(\lambda)}. 
 \end{gather*}
 
 The $P(\lambda)$ invariants will be a smaller vector space than the $\lambda$-invariants. We may reduce to showing the vanishing of 
 \begin{equation} \label{equation: lambda nasty} 
  \op{Hom}_{\widehat{N}_{\pi^{-1}(V)}, w|_{\widehat{N}_{\pi^{-1}(V)} }}( \mathcal E|_{\widehat{N}_{\pi^{-1}(V)}}, H_{S_{\lambda}} \mathcal F|_{\widehat{N}_{\pi^{-1}(V)}})^{\lambda}.
 \end{equation}
 
 We can complete further, from $\widehat{N}_{\pi^{-1}(V)} = \widehat{(N_{S_{\lambda}|X})|_{\pi^{-1}(V)}}$ to $\widehat{N}^0_V = \widehat{(N_{S_{\lambda}^0|X})|_V}$. Lemma \ref{lemma: completion reduction, ff} below states that the map on the morphism space of Equation~\eqref{equation: lambda nasty} is an isomorphism. Thus, we may replace $\widehat{N}_{\pi^{-1}(V)}$ by $\widehat{N}^0_V$ and continue the argument.
 
 It is straightforward to see that there is an isomorphism of factorizations,
 \begin{equation*} \label{equation: another eqn in ff}
  H_{S_{\lambda}} \mathcal F|_{\widehat{N}_{\pi^{-1}(V)}} \cong H_{\pi^{-1}(V)} (\mathcal F|_{\widehat{N}_{\pi^{-1}(V)}}).
 \end{equation*}
 Passing to $\widehat{N}^0_V$, we have
 \begin{displaymath}
  (H_{\pi^{-1}(V)} (\mathcal F)|_{\widehat{N}_{\pi^{-1}(V)}})|_{\widehat{N}^0_V} \cong H_{\widehat{\pi^{-1}(V)}} (\mathcal F|_{\widehat{N}^0_V})
 \end{displaymath}
 where $\widehat{\pi^{-1}(V)}$ is the completion of $\pi^{-1}(V)$ along $V$.
 
 To check the vanishing of
 \begin{displaymath}
  \op{Hom}_{\widehat{N}^0_V, w|_{\widehat{N}^0_V }}( \mathcal E|_{\widehat{N}^0_V}, H_{\widehat{\pi^{-1}(V)}} (\mathcal F|_{\widehat{N}^0_V}))^{\lambda}
 \end{displaymath}
 it suffices to show the vanishing for the components of the factorizations,
 \begin{displaymath}
  \op{Hom}_{\widehat{N}^0_V}( \mathcal E^r|_{\widehat{N}^0_V}, H_{\widehat{\pi^{-1}(V)}} (\mathcal F^s|_{\widehat{N}^0_V}))^{\lambda} =0
 \end{displaymath}
 for all $r,s \in \Z$.
 
 From the hypothesis of the lemma, possibly after shrinking $V$, we can replace $\mathcal E|_{\widehat{N}^0_V}$ by a $\lambda$-equivariantly quasi-isomorphic factorization whose components are of the form,
 \begin{displaymath}
  \bigoplus_{n \in I} \mathcal O(n)^{\oplus m_n}.
 \end{displaymath}
 Similarly, we can replace $\mathcal F|_{\widehat{N}^0_V}$ by a $\lambda$-equivariantly quasi-isomorphic factorization whose components are of the form,
 \begin{displaymath}
  \bigoplus_{n \in I'} \mathcal O(n)^{\oplus m'_n}.
 \end{displaymath}
 
 Thus,
 \begin{displaymath}
  \op{Hom}_{\widehat{N}^0_V}( \mathcal E^r|_{\widehat{N}^0_V}, H_{\widehat{\pi^{-1}(V)}} (\mathcal F^r|_{\widehat{N}^0_V}))^{\lambda}
 \end{displaymath}
 is a sum of terms of the form
 \begin{displaymath}
  \op{H}^j(\widehat{N}^0_V,H_{\widehat{\pi^{-1}(V)}} \mathcal O(n))^{\lambda}
 \end{displaymath}
 with $n \in [t(\mathfrak{K})+1,\infty)$.
 
 Write $V = \op{Spec} R$. Then,
 \begin{displaymath}
  \widehat{N}^0_V = \op{Spf }R[[z_1,\ldots,z_d,u_1,\ldots,u_c]] =: \op{Spf} R[[\bm{z},\bm{u}]]
 \end{displaymath}
 where $\widehat{\pi^{-1}(V)}$ corresponds to the ideal $(\bm{u})$. The action of $\lambda$ on $z_j$ has negative degrees while the action on $u_j$ has positive degrees. A classical computation of Serre \cite{SerreFAC} shows that
 \begin{displaymath}
  \op{H}^j(\widehat{N}^0_V,H_{\widehat{\pi^{-1}(V)}} \mathcal O) = \begin{cases} \bigoplus_{l_1,\ldots,l_c >0} R[[\bm{z}]]\left(\frac{1}{\bm{u}^{\bm{l}}}\right) & j = c \\ 0 & \text{otherwise}. \end{cases}
 \end{displaymath}
 All elements of $\op{H}^j(\widehat{N}^0_V,H_{\widehat{\pi^{-1}(V)}} \mathcal O)$ have $\lambda$-degrees $ \leq t(\mathfrak{K})$. Thus, 
 \begin{displaymath}
  \op{H}^j(\widehat{N}^0_V,H_{\widehat{\pi^{-1}(V)}} \mathcal O(n))^{\lambda} = 0
 \end{displaymath}
 for $n \in [t(\mathfrak{K})+1,\infty)$. 
\end{proof}

Let $R$ be Noetherian. Assume we have an action of $\mathbb{G}_m$ on 
\begin{displaymath}
 T := \op{Spf }R[z_1,\ldots,z_d][[u_1,\ldots,u_c]] = \op{Spf }R[\bm{z}][[\bm{u}]]
\end{displaymath}
such that degrees of $z_i$ are negative, the degrees of the $u_i$ are positive, and $\op{Spec }R$ is pointwise fixed. Let $Z = \op{Spec }R[\bm{z}]$ and let $\widehat{T}$ be the completion of $R[\bm{z}][[\bm{u}]]$ along the ideal $(\bm{z})$.

\begin{lemma} \label{lemma: completion reduction, loc. cohom ok}
 We have
 \begin{displaymath}
  \op{H}^j(\widehat{T},\widehat{H_Z \mathcal O}(n))^{\mathbb{G}_m} = \op{H}^j(\widehat{T},H_{\widehat{Z}} \mathcal O(n))^{\mathbb{G}_m} = \op{H}^j(T,H_Z \mathcal O(n))^{\mathbb{G}_m}
 \end{displaymath}
 for all $n,j \in \Z$.
\end{lemma}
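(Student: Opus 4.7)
The plan is to compute all three cohomology groups as finite-rank $R$-modules by exploiting the $\mathbb{G}_m$-grading; the sign convention on weights will force each weight piece to be finite, so that completion along $(\bm z)$ becomes trivial piece-by-piece. First I would record Serre's classical calculation: $H^j_Z\mathcal O$ vanishes for $j \neq c$ (where $c$ is the codimension of $Z$), while
\begin{displaymath}
 H^c_Z\mathcal O \;=\; \bigoplus_{\bm l > 0} R[\bm z]\cdot \bm u^{-\bm l},
\end{displaymath}
with the parallel formula for $H^c_{\widehat Z}\mathcal O$ obtained by replacing $R[\bm z]$ with $R[[\bm z]]$. The third sheaf, $\widehat{H^c_Z\mathcal O}$, is the $(\bm z)$-adic completion, which a priori differs from $H^c_{\widehat Z}\mathcal O$ because completion does not commute with the infinite direct sum over $\bm l$.

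Next I would split each of the three sheaves by $\mathbb{G}_m$-weight. Writing $-a_i$ for the (strictly negative) weight of $z_i$ and $b_j$ for the (strictly positive) weight of $u_j$, the monomial $\bm z^{\bm k}\bm u^{-\bm l}$ has weight $-\bm k\cdot\bm a - \bm l\cdot\bm b$, which is strictly negative. For any fixed integer $m$, the set of pairs $(\bm k,\bm l)$ with $\bm l>0$ solving $-\bm k\cdot\bm a-\bm l\cdot\bm b=m$ is finite, so the weight-$m$ piece of $H^c_Z\mathcal O$ is a free $R$-module of finite rank. Replacing $R[\bm z]$ by $R[[\bm z]]$ adds no new elements in a given weight (power series in $\bm z$ truncate in each weight), so the weight-$m$ parts of $H^c_Z\mathcal O$ and $H^c_{\widehat Z}\mathcal O$ coincide. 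For the completion, since $(\bm z)^s H^c_Z\mathcal O$ lowers weight by at least $s\min a_i$, the weight-$m$ piece of $H^c_Z\mathcal O/(\bm z)^s$ stabilizes for $s$ large, so $\widehat{H^c_Z\mathcal O}$ also has the same weight-$m$ piece. After the twist $(n)$, taking $\mathbb{G}_m$-invariants picks out exactly one weight, so all three sheaves yield the same $R$-module of invariant global sections.

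Finally I would upgrade from global sections to all $H^j$. Both $T$ and $\widehat T$ are formal-affine and all three sheaves are quasi-coherent, so higher cohomology vanishes; only $j=0$ is nontrivial, which we just handled. If a \v{C}ech computation is required (the local-cohomology sheaf can be presented by an equivariant \v{C}ech complex for the cover $\{D(u_i)\}$), the same weight-bookkeeping applies termwise: each localization $R[\bm z][[\bm u]]_{u_{i_1}\cdots u_{i_k}}$ has finite weight pieces, those pieces are unchanged by replacing $R[\bm z]$ by $R[[\bm z]]$ or by $(\bm z)$-adic completion, and the differentials are $\mathbb{G}_m$-equivariant, so cohomology of the weight-$n$ subcomplex agrees in all three cases.

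The main obstacle is the very first equality, $\widehat{H_Z\mathcal O}(n)^{\mathbb{G}_m} = H_{\widehat Z}\mathcal O(n)^{\mathbb{G}_m}$: without the grading the two modules are genuinely different because sheaf completion does not commute with the direct sum over $\bm l>0$. The argument succeeds only because the sign convention on $\mathbb{G}_m$-weights forces the weight-$n$ part of the direct sum to be finite. I would therefore set up the weight bookkeeping carefully at the start, before attempting to exchange any limits or infinite sums, and cite Serre's \cite{SerreFAC} computation for the module-level formula used throughout.
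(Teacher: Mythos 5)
Your argument is correct and is essentially the paper's own: both rest on Serre's explicit computation of $H^c_Z\mathcal O$ together with the observation that, since the degrees of the $z_i$ and of $\bm u^{-\bm l}$ are strictly negative, each fixed weight piece involves only finitely many monomials, so the $(\bm z)$-adic completion stabilizes weight-by-weight and all three modules agree after taking the weight-$n$ part. The only cosmetic wobble is the remark that ``only $j=0$ is nontrivial'' where you mean $j=c$; the substance is unaffected.
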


\begin{proof}
 Again using \cite{SerreFAC}, we have
 \begin{displaymath}
  \op{H}^j(\widehat{T},H_{\widehat{Z}} \mathcal O) = \begin{cases} \bigoplus_{l_1,\ldots,l_c >0} R[[\bm{z}]]\left(\frac{1}{\bm{u}^{\bm{l}}}\right) & j = c \\ 0 & \text{otherwise} \end{cases}
 \end{displaymath}
 and
 \begin{displaymath}
  \op{H}^j(T,H_{Z} \mathcal O) = \begin{cases} \bigoplus_{l_1,\ldots,l_c >0} R[\bm{z}]\left(\frac{1}{\bm{u}^{\bm{l}}}\right) & j = c \\ 0 & \text{otherwise}. \end{cases}
 \end{displaymath}

 Let $Y$ be the subscheme corresponding to the ideal $(\bm{z})$. For any $n$, since the degrees of $\bm{z}$ and $1/\bm{u}$ are strictly negative, there exists an $m(n)$ such that the maps,
 \begin{displaymath}
  \left(\bigoplus_{l_1,\ldots,l_c >0} R[[\bm{z}]]\left(\frac{1}{\bm{u}^{\bm{l}}}\right)\right)_n \to \left(\bigoplus_{l_1,\ldots,l_c >0} \frac{R[\bm{z}]}{(\bm{z})^{m+1}}\left(\frac{1}{\bm{u}^{\bm{l}}}\right)\right)_n \to \left(\bigoplus_{l_1,\ldots,l_c >0} \frac{R[\bm{z}]}{(\bm{z})^{m}}\left(\frac{1}{\bm{u}^{\bm{l}}}\right)\right)_n,
 \end{displaymath}
 are isomorphisms for $m \geq m(n)$. Thus,
 \begin{align*}
  \op{H}^j(\widehat{T},\widehat{H_Z \mathcal O}(n))^{\mathbb{G}_m} & = \lim_m \op{H}^j(T,(H_Z \mathcal O/\mathcal I_Y^m) (n))^{\mathbb{G}_m} \\
  & = \op{H}^j(T,(H_Z \mathcal O/\mathcal I_Y^{m(n)}) (n))^{\mathbb{G}_m} \\
  & = \op{H}^j(T,H_Z \mathcal O(n))^{\mathbb{G}_m} \\
  & = \op{H}^j(\widehat{T},H_{\widehat{Z}} \mathcal O(n))^{\mathbb{G}_m}.
 \end{align*}
\end{proof}

Now, assume that, in addition, we have a $\mathbb{G}_m$ invariant regular function, $w \in \op{H}^0(T,\mathcal O(d))^{\mathbb{G}_m}$. Let $\mathcal E$ and $\mathcal F$ be two free finite-rank factorizations in $\dcoh{[T/\mathbb{G}_m],w}$. Let $w$ also denote the corresponding regular function on $\widehat{T}$.

\begin{lemma} \label{lemma: completion reduction, ff}
 The map,
 \begin{displaymath}
  \op{Hom}_{[T/\mathbb{G}_m],w}(\mathcal E, H_Z \mathcal F) \to \op{Hom}_{[\widehat{T}/\mathbb{G}_m],w}(\widehat{\mathcal E}, \widehat{H_Z \mathcal F}),
 \end{displaymath}
 is an isomorphism.
\end{lemma}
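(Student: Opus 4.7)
The plan is to reduce the comparison on Hom spaces to a termwise comparison of local cohomology, which is exactly the content of Lemma \ref{lemma: completion reduction, loc. cohom ok}. Since $\mathcal E$ and $\mathcal F$ are free of finite rank, their components decompose $\mathbb{G}_m$-equivariantly as finite direct sums of line bundles $\mathcal O(n)$ on $T$. Completion along $Z$ preserves this description, so $\widehat{\mathcal E}$ and $\widehat{\mathcal F}$ are the analogous free finite-rank factorizations on $\widehat T$ obtained by replacing each $\mathcal O(n)$ with its completion (which is again $\mathcal O(n)$ on $\widehat T$) and keeping the same structure maps.

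Next, I would compute both Hom spaces using the total $\mathcal{H}om$-complex of factorizations. Because the source is free of finite rank, one has canonical identifications $\mathcal{H}om(\mathcal E^p, H_Z \mathcal F^q) \cong H_Z \mathcal{H}om(\mathcal E^p, \mathcal F^q)$, and each such sheaf is a finite direct sum of $H_Z \mathcal O(n)$ for various $n \in \Z$ determined by the twists appearing in $\mathcal E$ and $\mathcal F$. Taking global sections and then $\mathbb G_m$-invariants turns the 2-periodic bicomplex into a bicomplex of $k$-vector spaces whose entries are finite direct sums of $\op{H}^j(T, H_Z \mathcal O(n))^{\mathbb{G}_m}$. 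The natural completion map is a morphism of bicomplexes, and the corresponding bicomplex on the $\widehat T$ side has entries given by $\op{H}^j(\widehat T, \widehat{H_Z \mathcal O}(n))^{\mathbb{G}_m}$. Since $\mathbb{G}_m$ is linearly reductive (Proposition \ref{proposition: reductive = linearly reductive}), taking invariants commutes with both global sections and cohomology, so the derived Hom groups are computed by $H^0$ of the respective totalizations.

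By Lemma \ref{lemma: completion reduction, loc. cohom ok}, the comparison is an isomorphism termwise in the bicomplex for each $n$ and each $j$. Because $\mathcal E$ and $\mathcal F$ are finite-rank, only finitely many terms contribute in each total degree; hence the map of totalizations is an isomorphism, and so is the induced map on $H^0$. This gives the desired isomorphism on Hom spaces.

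The main obstacle is organizational rather than substantive: one must be careful that the formation of the $\mathcal{H}om$-complex commutes appropriately with completion when $Z$-local cohomology is inserted on the right-hand side, and that $\mathbb G_m$-invariants commute with cohomology at each stage. Both points follow from reductivity of $\mathbb{G}_m$ and the fact that the source factorizations are locally free of finite rank (so no derived functors of $\mathcal{H}om$ enter). Once these commutations are set up, the lemma is a direct consequence of the previous one applied twist by twist.
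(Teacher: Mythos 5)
Your proposal is correct and follows essentially the same route as the paper: the paper likewise reduces to showing that the maps $\op{Hom}_{[T/\mathbb{G}_m]}(\mathcal E^r, H_Z \mathcal F^s[l]) \to \op{Hom}_{[\widehat{T}/\mathbb{G}_m]}(\widehat{\mathcal E^r}, \widehat{H_Z \mathcal F^s}[l])$ are isomorphisms for all $r,s,l$ and then invokes Lemma \ref{lemma: completion reduction, loc. cohom ok}. Your write-up merely makes explicit the bicomplex/totalization bookkeeping that the paper leaves implicit.
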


\begin{proof}
 This is true if we show that the maps, 
 \begin{displaymath}
  \op{Hom}_{[T/\mathbb{G}_m]}(\mathcal E^r, H_Z \mathcal F^s[l]) \to \op{Hom}_{[\widehat{T}/\mathbb{G}_m]}(\widehat{\mathcal E^r}, \widehat{H_Z \mathcal F^s}[l]),
 \end{displaymath}
 are isomorphisms for all $r,s,l \in \Z$. However, the latter computation reduces to Lemma \ref{lemma: completion reduction, loc. cohom ok}. 
\end{proof}

\begin{remark}
 After completion of the original version of this article, we were informed by Halpern-Leistner that Lemma \ref{lemma: vanishing local cohom} was essentially already proven by C. Teleman in \cite{Tel}. In fact, Teleman's argument does not require the existence of a $G$-invariant affine open cover of $S_{\lambda}^0$. Using Teleman's argument, one can reinterpret the window, $\weezer_{\lambda,I}$, as those factorizations in $\dbcoh{[X/G],w}$ whose (derived) restriction to $Z_{\lambda}^0$ have $\lambda$-weights in $I$. Using this definition and Teleman's result/arguments allows one to avoid the requirement that $S_{\lambda}^0$ admits a $G$-invariant affine open cover, see \cite{HL12} for the details of such an argument. 
\end{remark}

\subsection{Essential surjectivity} \label{section: essential surjectivity}

In the previous section, we studied the fullness and faithfulness of $i^* : \dcoh{[X/G],w} \to \dcoh{[X_{\lambda}/G],w|_{X_{\lambda}}}$ on windows. In this section, we address essential surjectivity. First, we describe how to decompose $P(\lambda)$-equivariant sheaves on $Z_{\lambda}$.

\begin{lemma} \label{lemma: filtration on weights on Z lambda}
 Assume that $\mu(\mathcal L, \lambda,x) = 0$ for any $x \in Z_{\lambda}^0$. Let $\mathcal F$ be a locally-free factorization in $\dcoh{[Z_{\lambda}/P(\lambda)],w|_{Z_{\lambda}}}$. For each, $l \in \Z$, there exists a short exact sequence of locally-free $P(\lambda)$-equivariant factorizations,
 \begin{displaymath}
  0 \to \mathcal F_{<l} \to \mathcal F \to \mathcal F/\mathcal F_{<l} \to 0,
 \end{displaymath}
 where $\mathcal F_{<l}$ is, locally on $Z_{\lambda}$, $\lambda$-equivariantly quasi-isomorphic to a factorization whose components are of the form, $\bigoplus_{t < l} \mathcal O(t)^{m_t}$, and $\mathcal F/\mathcal F_{<l}$ is locally $\lambda$-equivariantly quasi-isomorphic to a factorization whose components are of the form, $\bigoplus_{t \geq l} \mathcal O(t)^{m_t}$.
\end{lemma}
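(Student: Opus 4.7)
The plan is to reduce the problem to the fixed locus $Z_{\lambda}^0$ via the retraction $\pi\colon Z_{\lambda} \to Z_{\lambda}^0$ from Proposition~\ref{proposition: BB}, decompose there by $\lambda$-weights, and then transport the decomposition back to $Z_{\lambda}$ using $\pi^*$. Since $C(\lambda)$ preserves $Z_{\lambda}^0$, the restriction $i_0^*\mathcal F$ along $i_0\colon Z_{\lambda}^0 \hookrightarrow Z_{\lambda}$ is a $C(\lambda)$-equivariant locally-free factorization. Because $\lambda \subseteq C(\lambda)$ acts trivially on $Z_{\lambda}^0$, the $\lambda$-eigenspace decomposition yields a $C(\lambda)$-equivariant direct sum
\[
i_0^*\mathcal F \;=\; \bigoplus_t (i_0^*\mathcal F)_t.
\]
The hypothesis $\mu(\mathcal L,\lambda,x) = 0$ on $Z_{\lambda}^0$ forces multiplication by $w$ to have $\lambda$-weight zero, so each summand is itself a factorization.

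Next, I would establish a canonical $P(\lambda)$-equivariant isomorphism $\mathcal F \cong \pi^* i_0^*\mathcal F$. By Proposition~\ref{proposition: P+L+BB}, $\pi$ is $P(\lambda)$-equivariant with $P(\lambda)$ acting on $Z_{\lambda}^0$ through $\rho\colon P(\lambda) \to C(\lambda)$, so $\pi^*$ carries $C(\lambda)$-equivariant sheaves to $P(\lambda)$-equivariant ones. Locally, trivializing the $\mathbb{A}$-fibration as $\pi^{-1}(V) \cong V \times \mathbb{A}^n$ with $\lambda$ acting linearly on $\mathbb{A}^n$ with positive weights on the fibers, a $\lambda$-equivariant locally-free sheaf on $V \times \mathbb{A}^n$ is pulled back from $V$ essentially uniquely; the relevant $\lambda$-equivariant $\operatorname{Ext}^1$ groups vanish since $\mathbb{A}^n$ is affine and $\lambda$ is linearly reductive (Proposition~\ref{proposition: reductive = linearly reductive}), so all $\lambda$-equivariant extensions split. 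The local splittings are canonical in the $\lambda$-equivariant category, hence glue to a global $\lambda$-equivariant isomorphism, and this upgrades to a $P(\lambda)$-equivariant isomorphism because $U(\lambda)$ acts on $\pi^* i_0^*\mathcal F$ only via $\rho(U(\lambda)) = \{e\}$, pinning down the $P(\lambda)$-equivariant structure from the underlying $C(\lambda)$-equivariant data.

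Granting this identification, set
\[
\mathcal F_{<l} \;:=\; \pi^* \bigoplus_{t<l} (i_0^*\mathcal F)_t,
\]
a $P(\lambda)$-equivariant locally-free subfactorization of $\mathcal F$ whose quotient is $\pi^* \bigoplus_{t \geq l}(i_0^*\mathcal F)_t$. Restricting to a small enough $C(\lambda)$-invariant affine open $V \subseteq Z_{\lambda}^0$ over which each $(i_0^*\mathcal F)_t$ is a free $C(\lambda)$-equivariant sheaf of rank $m_t$ with pure $\lambda$-weight $t$, the components of $\mathcal F_{<l}$ over $\pi^{-1}(V)$ take the required form $\bigoplus_{t<l}\mathcal O(t)^{m_t}$ as $\lambda$-equivariant sheaves, and analogously for the quotient. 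The main obstacle is securing the $P(\lambda)$-equivariant identification $\mathcal F \cong \pi^* i_0^*\mathcal F$ globally: the local $\lambda$-equivariant splittings follow from rigidity of equivariant bundles on affine fibrations, but verifying that they assemble in a way compatible with the nilpotent $U(\lambda)$-action (rather than merely the $\lambda$-action) is the delicate point.
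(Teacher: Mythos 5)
Your reduction to $Z_{\lambda}^0$ breaks at the step you did not flag: the claimed isomorphism of \emph{factorizations} $\mathcal F \cong \pi^* i_0^*\mathcal F$ fails already locally and $\lambda$-equivariantly. On $\pi^{-1}(V) \cong V \times \mathbb{A}^d$ the \emph{components} of $\mathcal F$ are indeed equivariantly isomorphic to $\bigoplus_t \mathcal O(t)^{\oplus m_t}$ (your $\op{Ext}^1$-vanishing argument splits extensions of equivariant sheaves), but the differentials $\phi^r_{\mathcal F}$ are matrices whose entries between summands of different weights are multiples of monomials in the fiber coordinates $z_i$; these vanish on $Z_{\lambda}^0$ yet are nonzero on $Z_{\lambda}$, so they are not pulled back along $\pi$. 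Concretely, for $w=0$ on $Z_{\lambda}=\mathbb{A}^1$ the factorization $\mathcal O(1) \xrightarrow{\,z\,} \mathcal O$ has $i_0^*\phi = 0$, so $\pi^* i_0^*\mathcal F$ has zero differential while $\mathcal F$ does not. In general $\mathcal F$ is only \emph{filtered} by $\lambda$-weight as a factorization — $\pi^* i_0^*\mathcal F$ is its associated graded — and your $\mathcal F_{<l}$, being a direct summand of a split object, cannot be identified with a subfactorization of $\mathcal F$. A secondary instance of the same problem: even at the level of components the local identifications with pullbacks are not canonical (there are equivariant maps $\mathcal O(t) \to \mathcal O(s)$ vanishing on the zero section), so they need not glue; only the weight filtration is canonical. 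Finally, the $U(\lambda)$-action likewise preserves only the filtration, not the weight splitting, so no direct-sum description can be $P(\lambda)$-equivariant.

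What survives of your idea is precisely the paper's argument: define $\mathcal F^r_{<l}$ \emph{inside} $\mathcal F^r$ as the span of the local weight-$<l$ generators. This is well defined because the transition functions of the $\mathbb{A}$-fibration, hence of $\mathcal F^r$, respect the weight filtration; the differentials preserve it because their matrix entries have non-positive $\lambda$-degree; and the $P(\lambda)$-action preserves it because existence of $\lim_{\alpha \to 0}\lambda(\alpha)p\lambda(\alpha)^{-1}$ forces $p$ to act by weight-non-decreasing maps on fibers. One then gets the short exact sequence of $P(\lambda)$-equivariant factorizations directly, and the local descriptions of $\mathcal F_{<l}$ and $\mathcal F/\mathcal F_{<l}$ as sums of $\mathcal O(t)$'s hold only up to local $\lambda$-equivariant quasi-isomorphism, exactly as the lemma is stated. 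If you want to salvage your formulation, replace "isomorphic to $\pi^* i_0^*\mathcal F$" by "admits a finite filtration with associated graded $\pi^*$ of the weight summands of $i_0^*\mathcal F$"; that weaker statement is true and suffices.
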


\begin{proof}
 As a consequence of Proposition \ref{proposition: BB}, there exists an open affine cover, $\{V_j\}_{j \in J}$, and $\lambda$-equivariant isomorphisms,
 \begin{displaymath}
  \phi_j: \pi^{-1}(V_j) \to V_j \times \mathbb{A}^d 
 \end{displaymath}
 where $\lambda$ acts linearly on the $\mathbb{A}^d$ and trivially on the $V_j$ factor. Consider the overlap map,
\begin{displaymath}
 \phi_{j_2} \circ \phi_{j_1}^{-1} : (V_{j_1} \cap V_{j_2}) \times \mathbb{A}^d. 
\end{displaymath}
 Let $\op{Spec} R = V_{j_1} \cap V_{j_2}$ and consider the induced ring endomorphism,
\begin{displaymath}
 \psi : R[z_1,\ldots,z_d] \to R[z_1,\ldots,z_d]. 
\end{displaymath}
 As each $\phi_j$ takes $Z_{\lambda}^0 \cap V_j$ to $V_j \times \{0\}$, we can write
\begin{displaymath}
 \psi(z_j) = \sum_l r_{jl} z_l + \sum_{u,v} r_{juv} z_r z_s + \cdots.
\end{displaymath}
 Since each $\phi_j$ is $\lambda$-equivariant, $\psi$ commutes with the $\lambda$ action on $\mathbb{A}^d$. Thus, $\psi$ must take $z_j$ to a polynomial with total degree equal to that of $z_j$. 
 
 Shrinking the cover if necessary, we can assume that, for each component $\mathcal F^r$, there is a $\lambda$-equivariant isomorphism,
 \begin{displaymath}
  \mathcal F^r|_{\pi^{-1}(V_j)} \cong \bigoplus_{t \in \Z} \mathcal O(t)^{\oplus m_t}.
 \end{displaymath}
 For each $t$, we get a subsheaf of $\mathcal F^r|_{\pi^{-1}(V_j)}$ corresponding to $\mathcal O(t)^{\oplus m_t}$. These glue together because the overlap maps, as noted above, preserve the $\lambda$-degree. Denote the resulting sheaf as $(\mathcal F^r)_t$. Thus, $\lambda$-equivariantly, we have a splitting,
 \begin{displaymath}
  \mathcal F^r = \bigoplus_{t \in \Z} (\mathcal F^r)_t.
 \end{displaymath}
 We set 
 \begin{displaymath}
  \mathcal F^r_{<l} := \bigoplus_{t < l} (\mathcal F^r)_t.
 \end{displaymath}
 Restricted to $\pi^{-1}(V_j)$, each morphism in the factorization, $\phi^r: \mathcal F^{r-1} \to \mathcal F^r$, is a matrix whose components have non-positive degrees. As $\phi^r$ must be $\lambda$-equivariant, $\phi^r(\mathcal F^{r-1}_{<l}) \subset \mathcal F^r_{<l}$. Thus, we get a locally-free subfactorization,
 \begin{displaymath}
  \mathcal F_{< l} : = (\mathcal F^r_{<l}, \phi^r|_{\mathcal F^{r-1}_{<l}}).
 \end{displaymath}

 Next, we claim that $\mathcal F_{< l}$ inherits the $P(\lambda)$-equivariant structure of $\mathcal F$. This follows from the following claim. Fix $p \in P(\lambda)$ and $x \in Z_{\lambda}$. We have a map,
 \begin{displaymath}
  \sigma_p: \op{V}(\mathcal F^r)_{x} \to \op{V}(\mathcal F^r)_{\sigma(p,x)},
 \end{displaymath}
 which, we claim, preserves the filtration induced by $\mathcal F^r_{<l}$. Recall that passing to the geometric vector bundle is a contravariant functor. Thus, the quotient, $\mathcal F^r \to \mathcal F^r/\mathcal F^r_{<l}$, becomes a sub-bundle, $\op{V}(\mathcal F^r/\mathcal F^r_{<l}) \subset \op{V}(\mathcal F^r)$. The sub-bundle, $\op{V}(\mathcal F^r/\mathcal F^r_{<l})$, consists of the elements of the fibers that have $\lambda$-weight $\geq l$.
 
 Choose $\lambda$-eigenbases of $\op{V}(\mathcal F^r_x) \cong \mathbb{A}^f$ and $\op{V}(\mathcal F^r_{\sigma(p,x)}) \cong \mathbb{A}^f$. We get a linear map,
 \begin{displaymath}
  A_p: \mathbb{A}^f \to \mathbb{A}^f,
 \end{displaymath}
 corresponding to the action of $p$ on $\mathcal F^r$. We need to check this map preserves the sub-bundle of weights $\geq l$, or, in other words, does not decrease the $\lambda$-weights. The condition that the limit, $\lim_{\alpha \to 0} \lambda(\alpha) p \lambda(\alpha)^{-1}$, exists easily translates into exactly this fact. Thus, the filtration is preserved and we may conclude that $\mathcal F_{<l}$ is $P(\lambda)$-equivariant. 
\end{proof}

\begin{proposition} \label{proposition: essential surjectivity}
 Fix $d \in \Z$. Assume that $S_{\lambda}^0$ admits a $G$-invariant affine open cover and $\mu(\mathcal L,\lambda,x) = 0$ for $x \in Z_{\lambda}^0$. The functor,
 \begin{displaymath}
  i^*: \weezer_{[d+t(\mathfrak{K})+1,d]} \to \dcoh{[X_{\lambda}/G],w|_{X_{\lambda}}},
 \end{displaymath}
 given by restricting $i^*$ to the subcategory, $\weezer_{[d+t(\mathfrak{K})+1,d]}$, is essentially surjective. 
\end{proposition}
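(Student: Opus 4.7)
The plan is to proceed in two stages: first extend any $\mathcal G \in \dcoh{[X_\lambda/G], w|_{X_\lambda}}$ to a coherent factorization $\widetilde{\mathcal G}$ on $[X/G]$, then modify $\widetilde{\mathcal G}$ by a finite sequence of distinguished triangles so as to force all weights along $\widehat N^0$ into the window $[d+t(\mathfrak{K})+1, d]$ without changing the restriction to $X_\lambda$.

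For the extension stage, apply Lemma~\ref{lemma: strictification} on the open smooth substack $[X_\lambda/G]$ to replace $\mathcal G$ by a locally-free factorization. Each $G$-equivariant locally-free component extends across the open immersion $[X_\lambda/G] \hookrightarrow [X/G]$ to a coherent $G$-equivariant sheaf on $X$ (e.g.\ as the image of a $G$-equivariant vector bundle on $X$ surjecting onto the pushforward of the component), and these extensions assemble into a coherent $G$-equivariant factorization $\widetilde{\mathcal G}$ on $[X/G]$ with $i^*\widetilde{\mathcal G} \cong \mathcal G$.

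The core tool for the weight-adjustment stage is the following calculation. Given a coherent $G$-equivariant factorization $\tilde{\mathcal F}$ on $S_\lambda$ corresponding under the equivalence of Corollary~\ref{corollary: Thomason} and the isomorphism $\tau$ to a pure $\lambda$-weight $n$ factorization on $Z_\lambda$, the pushforward $i_{S_\lambda, *} \tilde{\mathcal F}$ has weights along $\widehat N^0$ lying in $[n + t(\mathfrak K), n]$; this follows from the Koszul resolution of $i_{S_\lambda, *}\mathcal O_{S_\lambda}$, whose terms $\Lambda^k \mathcal N^\vee_{S_\lambda|X}$ contribute $\mu$-weights in $[t(\mathfrak K), 0]$. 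Now iterate: if $\widetilde{\mathcal G}$ has maximum weight $n > d$, apply Lemma~\ref{lemma: filtration on weights on Z lambda} to a locally-free resolution of $\widetilde{\mathcal G}|_{Z_\lambda}$ to extract the pure weight-$n$ quotient $\mathcal F_n$, transport it to $S_\lambda$ as $\tilde{\mathcal F}_n$ via $\tau$, and take the fiber of the natural morphism $\widetilde{\mathcal G} \to i_{S_\lambda, *}\tilde{\mathcal F}_n$. The fiber has maximum weight $\leq n-1$ and still restricts to $\mathcal G$, since $i_{S_\lambda, *}\tilde{\mathcal F}_n$ is supported on $S_\lambda$. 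Dually, when the minimum weight $b$ falls below $d+t(\mathfrak K)+1$, use $\omega_{S_\lambda|X}^{-1}$-twisted pushforwards (whose weights lie in $[b, b-t(\mathfrak K)]$) together with the bottom piece of the filtration to raise the minimum weight by one via a cone. Finitely many iterations suffice because the weights of a coherent factorization form a bounded subset of $\Z$.

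The hardest point will be making the natural morphism $\widetilde{\mathcal G} \to i_{S_\lambda, *}\tilde{\mathcal F}_n$ precise in the factorization category and verifying that each iteration strictly contracts the weight range: this requires working on the formal completion along $S_\lambda^0$ and tracking the Koszul calculation on the local model $\op{Spf} R[[\bm z, \bm u]]$ via the formal-neighborhood version of Corollary~\ref{corollary: Thomason}. The hypotheses $\mu(\mathcal L, \lambda, x) = 0$ and the existence of a $G$-invariant affine cover of $S_\lambda^0$ will be used, respectively, to ensure that the weight of $w$ does not perturb the filtration (so all cones remain in $\dcoh{[X/G],w}$) and to reduce these sheaf-theoretic manipulations to the local model.
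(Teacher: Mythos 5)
Your plan is, at its core, the paper's own argument: extend $\mathcal G$ across the open immersion (the paper simply notes $i^*$ is essentially surjective on all of $\dcoh{[X/G],w}$ with kernel the $S_\lambda$-torsion factorizations), compute via the Koszul complex that $\mathcal I_{S_\lambda}$ (equivalently, the pushforward of a pure weight-$n$ object) has weights in $[n+t(\mathfrak K),n]$, and then repeatedly replace $\widetilde{\mathcal G}$ by the kernel of the map to the pushforward of the top-weight quotient supplied by Lemma \ref{lemma: filtration on weights on Z lambda} — this kernel is exactly the paper's $\mathcal K_l(\mathcal F)$, and the verification that each step is legitimate is carried out, as you anticipate, on the local model $\op{Spf}R[[\bm z,\bm u]]$ after Lemma \ref{lemma: strictification}.

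The one place you genuinely diverge, and where your plan is weakest, is the control of the \emph{lower} end of the window. Your symmetric procedure (chip the top, then raise the bottom by a ``dual'' cone against an $\omega_{S_\lambda|X}^{-1}$-twisted pushforward) is the step you yourself flag as hardest, and as stated neither the construction of that morphism nor the termination of the alternating process is pinned down: each top-chipping step can push the minimum down to $n+t(\mathfrak K)$, and each bottom-raising step can push the maximum up, so ``weights are bounded'' alone does not give convergence to the specific window $[d+t(\mathfrak K)+1,d]$. The paper sidesteps this entirely: it first tensors $\mathcal F$ with a high power of $\mathcal I_{S_\lambda}^{\vee}$ (legitimate because the dualized ideal-sheaf triangle shows this changes $\mathcal F$ only by objects supported on $S_\lambda$, and Lemma \ref{lemma: weights of tensor and dual} shows it shifts all weights into $[0,\infty)$), and thereafter only ever chips the top; the key point making this one-sided procedure work is that $\mathcal K_l(\mathcal F)$ locally contains $\mathcal I_{Z_\lambda}\mathcal O(l)^{\oplus m_l}$ with weights in $[l+t(\mathfrak K),l-1]$, so the minimum never drops below $0$ precisely while $l \geq -t(\mathfrak K)$, i.e.\ exactly until the object lands in the target window. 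You could repair your version either by adopting this normalization or by observing that once the weight range has width $-t(\mathfrak K)$ further top-chipping translates the window downward, so it suffices to first arrange the weights to lie above $d$; but as written the lower-bound half of your argument is a real gap.
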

\begin{proof}
 We will argue the case $d=-t(\mathfrak{K})-1$. The arguments for other $d$ are identical.
 
 The functor,
 \begin{displaymath}
  i^*: \dcoh{[X/G],w} \to \dcoh{[X_{\lambda}/G],w|_{X_{\lambda}}},
 \end{displaymath}
 is essentially surjective and has kernel exactly those factorizations whose components are supported on $S_{\lambda}$. To demonstrate that 
 \begin{displaymath}
  i^*: \weezer_{[d+t(\mathfrak{K})+1,d]} \to \dcoh{[X_{\lambda}/G],w|_{X_{\lambda}}},
 \end{displaymath}
 is essentially surjective we will show that for any factorization, $\mathcal F \in \dcoh{[X/G],w}$, there exists a sequence of factorizations, $\mathcal F = \mathcal F_0,\mathcal F_1,\ldots,\mathcal F_r$, and exact triangles,
 \begin{displaymath}
  \mathcal T_i[-1] \to \mathcal F_{i+1} \to \mathcal F_i \to \mathcal T_i,
 \end{displaymath}
 for $0 \leq i \leq r-1$ with $\mathcal T_i$ supported on $S_{\lambda}$ and $\mathcal F_r \in \weezer_{[d+t(\mathfrak{K})+1,d]}$. When arguing this, as a piece of terminology, we say that $\mathcal F_{i+1}$ replaces $\mathcal F_i$ if we have a triangle as above.
 
 We have the exact sequence, 
 \begin{displaymath}
  0 \to \mathcal I_{S_{\lambda}} \to \mathcal O_X \overset{\varepsilon_1}{\to} \mathcal O_{S_{\lambda}} \to 0,
 \end{displaymath}
 where $\mathcal I_{S_{\lambda}}$ is the ideal sheaf of $S_{\lambda}$ with its induced equivariant structure. Taking (derived) $\mathcal O_X$-duals, we get an exact triangle
 \begin{displaymath}
  \mathcal I_{S_{\lambda}}^{\vee} \leftarrow \mathcal O_X \leftarrow \mathcal O_{S_{\lambda}}^{\vee} \leftarrow \mathcal I_{S_{\lambda}}^{\vee}[-1].
 \end{displaymath}
 Tensoring either with $\mathcal F$, we see that we may replace $\mathcal F$ by $\mathcal F \overset{\mathbf{L}}{\otimes} \mathcal I_{S_{\lambda}}$ or $\mathcal F \overset{\mathbf{L}}{\otimes} \mathcal I_{S_{\lambda}}^{\vee}$. 
 
 Let us first show that $\mathcal I_{S_{\lambda}} \in \weezer_{[t(\mathfrak{K}),-1]}$. Choose an affine open subset, $V \cong \op{Spec }R$, of $Z_{\lambda}^0$ and let $\widehat{N}^0_{V_j}$ be the corresponding affine subset of $\widehat{N}^0 = \widehat{(N_{S_{\lambda}^0|X})|_{Z_{\lambda}^0}}$. We can choose an isomorphism,
 \begin{displaymath}
  \widehat{N}^0_{V_j} = \op{Spf }R[[z_1,\ldots,z_d,u_1,\ldots,u_c]] =: \op{Spf }R[[\bm{z},\bm{u}]]
 \end{displaymath}
 with the $\lambda$-degrees of $z_i$ negative, the $\lambda$-degrees of $u_i$ positive, the $\lambda$-degrees of $R$ zero. The ideal sheaf, $\mathcal I_{S_{\lambda}}$, restricted to $\widehat{N}^0_{V_j}$, corresponds to the $(\bm{u}) := (u_1,\ldots,u_c)$.
 
 To replace $(\bm{u})$ by a quasi-isomorphic bounded complex of locally-free $\lambda$-equivariant sheaves we may take the Koszul complex on the ideal $(\bm{u}) $. It is immediate that components of this Koszul complex have weights lying in the interval, $[t(\mathfrak{K}),-1]$.
 
 Using Lemma \ref{lemma: weights of tensor and dual}, there exists an $m$ such that $\mathcal F \overset{\mathbf{L}}{\otimes} (\mathcal I_{S_{\lambda}}^{\vee})^{\overset{\mathbf{L}}{\otimes} m}$ lies in $\weezer_{[0,\infty)}$. We may replace $\mathcal F$ by $\mathcal F \overset{\mathbf{L}}{\otimes} (\mathcal I_{S_{\lambda}}^{\vee})^{\overset{\mathbf{L}}{\otimes} m}$ and assume that $\mathcal F \in \weezer_{[0,\infty)}$ for the remainder of the argument. There is a minimal $l$ such that $\mathcal F \in \weezer_{[0,l]}$. If $l < -t(\mathfrak{K})$, then we are done. Assume otherwise. 
 
 The short exact sequence,
 \begin{equation} \label{equation: weight splitting}
  0 \to \mathcal F|_{Z_{\lambda}}^{<l} \to\mathcal  F|_{Z_{\lambda}} \to \mathcal F|_{Z_{\lambda}}/(\mathcal F|_{Z_{\lambda}}^{<l}) \to 0,
 \end{equation}
 of Lemma \ref{lemma: filtration on weights on Z lambda} corresponds to a short exact sequence of $G$-equivariant locally-free sheaves on $S_{\lambda}$ by Theorem \ref{theorem: Thomason}. As stated in Theorem \ref{theorem: Thomason}, the equivalence, $\op{coh}([S_{\lambda}/G]) \cong \op{coh}([Z_{\lambda}/P(\lambda)])$, is the restriction along the inclusion, $Z_{\lambda} \to S_{\lambda}$. To set some notation, let us denote the inverse functor to the restriction along the inclusion, $Z_{\lambda} \to S_{\lambda}$, by $T$. The short exact sequence of Equation \ref{equation: weight splitting} corresponds to 
 \begin{displaymath}
  0 \to T((\mathcal F|_{Z_{\lambda}})_{>l}) \to \mathcal F|_{S_{\lambda}} \overset{\varepsilon_2}{\to} \mathcal F|_{S_{\lambda}}/T((\mathcal F|_{Z_{\lambda}})_{<l}) \to 0.
 \end{displaymath}
 
 Let, $\mathcal K_l(\mathcal F)$, be the kernel of $\varepsilon_2 \circ (\varepsilon_1 \otimes \mathcal F): \mathcal F \to \mathcal F|_{S_{\lambda}}/T(\mathcal F|_{Z_{\lambda}}^{<l})$ so that we have a short exact sequence,
 \begin{displaymath}
  0 \to \mathcal K_l(\mathcal F) \to \mathcal F \to \mathcal F|_{S_{\lambda}}/T((\mathcal F|_{Z_{\lambda}})_{<l}) \to 0.
 \end{displaymath}
 
 We claim that $\mathcal K_l(\mathcal F) \in \weezer_{[0,l-1]}$. Before proceeding with the proof of this claim, note that the validity of the claim allows us to produce our desired sequence of factorizations, $\mathcal \mathcal F_{i+1} = \mathcal K_{l-i}(\mathcal F_i)$, since $\mathcal F|_{S_{\lambda}}/T((\mathcal F|_{Z_{\lambda}})_{<s}))$ is supported on $S_{\lambda}$ for any $s \in \Z$. 
 
 Next, we proceed with the proof that $\mathcal K_l(\mathcal F) \in \weezer_{[0,l-1]}$. Since $\mathcal F \in \weezer_{[0,l]}$, by definition, we may find an open affine cover, $\{V_j\}_{j \in J}$, of $Z_{\lambda}^0$ and get an associated affine open cover, $\{\widehat{N}^0_{V_j}\}_{j \in J}$, of $\widehat{N}^0 = \widehat{(N_{S_{\lambda}^0|X})|_{Z_{\lambda}^0}}$ such that $\mathcal F|_{\widehat{N}^0_{V_j}}$ is $\lambda$-equivariantly quasi-isomorphic to a factorization each of whose components are of the form, $\bigoplus_{j \in [0,l]} \mathcal O(j)^{\oplus m_j}$. 
 
 It is straightforward to check that, if $\mathcal F'$ is a free factorization on $\widehat{N}^0_{V_j}$ that is $\lambda$-equivariantly quasi-isomorphic to $\mathcal F|_{\widehat{V}_j}$, then there is a $\lambda$-equivariant quasi-isomorphism,
 \begin{displaymath}
  \mathcal K_l(\mathcal F)|_{\widehat{N}^0_{V_j}} \cong \mathcal K_l'(\mathcal F'),
 \end{displaymath}
 where $\mathcal K_l'(\mathcal F')$ fits in the short exact sequence,
 \begin{displaymath}
  0 \to \mathcal K_l'(\mathcal F') \to \mathcal F' \to \mathcal F'|_{\widehat{\pi^{-1}(V_j)}}/(\mathcal F'|_{\widehat{\pi^{-1}(V_j)}})_{<l} \to 0.
 \end{displaymath}
 Thus, we may assume that the components of the $\mathcal F|_{\widehat{N}^0_{V_j}}$ are actually isomorphic to $\lambda$-equivariant modules of the form, $\bigoplus_{j \in [0,l]} \mathcal O(j)^{\oplus m_j}$. 
 
 Let us first treat the case where $\mathcal F$ is a single vector bundle, $\mathcal F'=\bigoplus_{j \in [0,l]} \mathcal O(j)^{m_j}$. Then,
 \begin{displaymath}
  \mathcal K_l'(\mathcal F') \cong \bigoplus_{j \in [0,l-1]} \mathcal O(j)^{\oplus m_j} \oplus \mathcal I_{Z_{\lambda} \cap \widehat{V}_j} \mathcal O(l)^{\oplus m_l}.
 \end{displaymath}
 It is immediate that $\bigoplus_{j \in [0,l-1]} \mathcal O(j)^{m_j} \in \weezer_{[0,l-1]}$. We can use the Koszul resolution of $\mathcal I_{Z_{\lambda}}$, which we have already observed lies in $\weezer_{[t(\mathfrak{K}),-1]}$, and tensor with $\mathcal O(l)^{m_l}$ to get a complex lying in $\weezer_{[l+t(\mathfrak{K}),l-1]}$. We have assumed that $l+t(\mathfrak{K}) > 0$ so we are done.
 
 Now, for a factorization, we may apply Proposition \ref{lemma: strictification} to convert a resolution of the components into a resolution of the factorization.
\end{proof}

\subsection{Wall contributions} \label{section: wall contributions}

In this section, we provide a comparison between $\weezer_I$ and $\weezer_{I'}$ for $I \subseteq I' \subset \Z$. 

\begin{definition}
 Let $G_{\lambda}$ be the quotient group, $C(\lambda)/\lambda$, let $Y_{\lambda}$ be the quotient stack, $[Z_{\lambda}^0/G_{\lambda}]$, and let $w_{\lambda} = w|_{Z_{\lambda}^0}$.
\end{definition}

Let us first start by studying $\dcoh{[Z^0_{\lambda}/C(\lambda)],w_{\lambda}}$. 

\begin{lemma} \label{lemma: lambda-splitting on Z lambda0}
 Let $\mathcal E$ be an object of $\dqcoh{[Z^0_{\lambda}/C(\lambda)],w_{\lambda}}$. There exists a functorial $\lambda$-equivariant splitting,
 \begin{displaymath}
  \mathcal E \cong \bigoplus_{l \in \Z} \mathcal E_l,
 \end{displaymath}
 where each $\mathcal E_l$ is a factorization with components that are locally isomorphic to $\mathcal O(l)^{m_l}$. 
\end{lemma}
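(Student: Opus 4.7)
The plan is to exploit the fact that $\lambda$ sits as a central subgroup of $C(\lambda)$ (by definition of centralizer) and acts trivially on $Z^0_{\lambda}$ (by definition of fixed locus). Under these two conditions, the $\lambda$-equivariant structure on a $C(\lambda)$-equivariant sheaf on $Z^0_{\lambda}$ behaves like a $\Z$-grading that is compatible with, but independent of, the $C(\lambda)$-action.

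More concretely, I would first work at the level of the abelian category $\op{Qcoh}([Z^0_{\lambda}/C(\lambda)],w_{\lambda})$. For any $C(\lambda)$-equivariant quasi-coherent sheaf $\mathcal F$ on $Z^0_{\lambda}$, forgetting down to a $\lambda$-equivariant sheaf and using that $\lambda$ acts trivially on $Z^0_{\lambda}$, one obtains the canonical weight decomposition $\mathcal F = \bigoplus_{l\in\Z} \mathcal F_l$ into $\lambda$-eigensheaves, each of which is locally of the form $\mathcal O(l)^{m_l}$. Centrality of $\lambda$ in $C(\lambda)$ implies that the action of every other element of $C(\lambda)$ commutes with $\lambda$, so it preserves the weight decomposition; hence each $\mathcal F_l$ inherits a canonical $C(\lambda)$-equivariant structure. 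The decomposition is manifestly functorial in $C(\lambda)$-equivariant morphisms.

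Next I would check that this decomposition is compatible with factorizations. Because $w$ is $G$-invariant, $w_{\lambda}$ is $C(\lambda)$-invariant; in particular, it has $\lambda$-weight zero. Therefore multiplication by $w_{\lambda}$ preserves $\lambda$-weights, and the structure maps $\phi^{-1}_{\mathcal E}$ and $\phi^0_{\mathcal E}$, being $\lambda$-equivariant, split according to $\lambda$-weight. This yields a direct sum decomposition $\mathcal E = \bigoplus_l \mathcal E_l$ in the abelian category $\op{Qcoh}([Z^0_{\lambda}/C(\lambda)],w_{\lambda})$, with each $\mathcal E_l$ having components as described.

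Finally, I would pass to the derived category. Since the weight decomposition is an exact functorial splitting of the abelian category into a direct sum of full abelian subcategories, it induces a splitting of $K(\op{Qcoh}[Z^0_{\lambda}/C(\lambda)],w_{\lambda})$ that preserves totalizations of exact sequences, hence preserves the subcategory of acyclic factorizations. Passing to the Verdier quotient of Proposition~\ref{prop: injective fact} gives the desired functorial splitting in $\dqcoh{[Z^0_{\lambda}/C(\lambda)],w_{\lambda}}$. There is no serious obstacle here; the only point requiring a little care is verifying that the weight decomposition commutes with taking injective resolutions, which follows because one can form an injective resolution weight-by-weight using that $\lambda$ is central and $\mathbb{G}_m$-representations are semisimple.
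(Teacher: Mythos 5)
Your proposal is correct and follows essentially the same route as the paper: the weight decomposition comes from the triviality of the $\lambda$-action on $Z^0_{\lambda}$, its $C(\lambda)$-equivariance from the centrality of $\lambda$, and its compatibility with the factorization structure maps from their $\lambda$-equivariance. The only point to make explicit is that the conclusion that each piece is locally of the form $\mathcal O(l)^{m_l}$ requires first replacing $\mathcal E$ by a quasi-isomorphic locally-free factorization, which is the reduction the paper performs at the outset.
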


\begin{proof}
 As any object of $\dqcoh{[Z^0_{\lambda}/C(\lambda)],w_{\lambda}}$ is isomorphic to a locally-free factorizations, it suffices to exhibit the splitting for locally-free factorizations. We first assume that $\mathcal E$ is locally-free. We may view $Z_{\lambda}^0$ as glued from affine open subsets, $\{V_j\}_{j\in J}$, for which $\mathcal E^r|_{V_j}$ is free as a $\lambda$-equivariant sheaf for all $r$. We define $(\mathcal E^r|_{V_j})_l$ to be the weight $l$ summand of $\mathcal E^r|_{V_j}$ and we claim that the collection, $(\mathcal E^r|_{V_j})_l$, glues to a locally-free sheaf on $Z_{\lambda}^0$. The gluing maps for $V_{j_1} \cap V_{j_2}$ are all $\lambda$-weight zero since $\lambda$ acts trivially on $Z_{\lambda}^0$. Thus, the gluing maps preserve the splitting by $\lambda$-weight and the $(\mathcal E^r|_{V_j})_l$ glue to a $\lambda$-equivariant sheaf which we denote by $(\mathcal E^r)_l$. 
 
 Next, we need to check that $(\mathcal E^r)_l$ is $C(\lambda)$-equivariant. We claim that the $C(\lambda)$ equivariant structure on $\mathcal E^r$ descends $\mathcal E^r_l$ for any $l$. To verify this claim, it suffices to look at fibers over points. Fix $x \in Z_{\lambda}^0$ ang $g \in G$. From the $C(\lambda)$-equivarint structure of $\mathcal E^r$, we get a linear map of vector spaces,
 \begin{displaymath}
  A_g: \op{V}(\mathcal E^r)_x \to \op{V}(\mathcal E^r)_{\sigma(g,x)}.
 \end{displaymath}
 As $g \lambda = \lambda g$, $A_g$ must preserve $\lambda$-weight. This proves the claim and allows us to split any locally-free coherent sheaf on $[Z^0_{\lambda}/C(\lambda)]$. 
 
 Each morphism in the factorization, $\phi^r: \mathcal E^{r-1} \to \mathcal E^r$, is $\lambda$-equivariant so must also commute with the $\lambda$-splitting. Thus, we get a splitting,
 \begin{displaymath}
  \mathcal E = \bigoplus_{l \in \Z} \mathcal E_l
 \end{displaymath}
 of the factorization itself. The functoriality of the splitting also follows from the $\lambda$-equivariance of any morphism.
\end{proof}

\begin{definition}
 Fix $l \in \Z$, we set
 \begin{displaymath}
  \dcoh{[Z^0_{\lambda}/C(\lambda)],w_{\lambda}}_l = \{ \mathcal E \in \dcoh{[Z^0_{\lambda}/C(\lambda)],w_{\lambda}} \mid \mathcal E = \mathcal E_l \}.
 \end{displaymath}
 We call $\dcoh{[Z^0_{\lambda}/C(\lambda)],w_{\lambda}}_l$ the \textbf{category of factorizations of weight $l$ on $Y_{\lambda}$}.
\end{definition}

\begin{lemma} \label{lemma: wall compositions are same}
 We have an equivalence,
 \begin{displaymath}
  \dcoh{Y_{\lambda},w_{\lambda}} \cong \dcoh{[Z_{\lambda}^0/C(\lambda)],w_{\lambda}}_0.
 \end{displaymath}

 Assume that there there is a character, $\chi: C(\lambda) \to \mathbb{G}_m$, such that
 \begin{displaymath}
  \chi \circ \lambda(\alpha) = \alpha^l.
 \end{displaymath}
 Then, twisting by $\chi$ provides an equivalence,
 \begin{displaymath}
  \dcoh{[Z^0_{\lambda}/C(\lambda)],w_{\lambda}}_r  \cong \dcoh{[Z^0_{\lambda}/C(\lambda)],w_{\lambda}}_{r+l},
 \end{displaymath}
 for any $r \in \Z$.
\end{lemma}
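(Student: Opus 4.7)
The plan is to handle the two equivalences separately, leaning heavily on the functorial $\lambda$-weight decomposition already established in Lemma \ref{lemma: lambda-splitting on Z lambda0}.

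For the first equivalence, I would exploit the short exact sequence $1 \to \lambda \to C(\lambda) \to G_{\lambda} \to 1$. This induces a morphism of quotient stacks
\[
 \pi: [Z_{\lambda}^0/C(\lambda)] \to [Z_{\lambda}^0/G_{\lambda}] = Y_{\lambda},
\]
and since $\lambda$ acts trivially on $Z_{\lambda}^0$, this morphism is (canonically trivialized as) a $B\lambda$-gerbe. Pullback $\pi^*$ sends a $G_{\lambda}$-equivariant quasi-coherent sheaf to a $C(\lambda)$-equivariant one on which $\lambda$ acts trivially; this pullback extends termwise to factorizations, giving a functor $\pi^*: \op{Qcoh}(Y_{\lambda},w_{\lambda}) \to \op{Qcoh}([Z_{\lambda}^0/C(\lambda)],w_{\lambda})$ whose image lies in the weight-zero subcategory. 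Conversely, the weight-zero projection $\mathcal E \mapsto \mathcal E_0$ from Lemma \ref{lemma: lambda-splitting on Z lambda0} is functorial, so it is compatible with morphisms, homotopies, cones, and totalizations, and thus descends to the derived category. Since $\mathcal E_0$ carries a trivial $\lambda$-action, it descends along $\pi$ to a $G_{\lambda}$-equivariant factorization. A direct check shows these two constructions are mutually inverse equivalences.

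For the second equivalence, I would note that the hypothesis $\chi \circ \lambda(\alpha) = \alpha^l$ means that the $C(\lambda)$-equivariant line bundle $\mathcal O(\chi)$ has $\lambda$-weight equal to $l$ at every point of $Z_{\lambda}^0$. By Lemma \ref{lemma: weights under the orbit}(c), the autoequivalence $- \otimes \mathcal O(\chi)$ of $\dcoh{[Z^0_{\lambda}/C(\lambda)],w_{\lambda}}$ shifts all $\lambda$-weights of a locally-free factorization by $l$. Combined with the functorial splitting of Lemma \ref{lemma: lambda-splitting on Z lambda0}, this shows the twist restricts to a functor from the weight-$r$ subcategory to the weight-$(r+l)$ subcategory; its inverse is the twist by $-\chi$.

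The one step that demands care is checking that the weight-zero projection of Lemma \ref{lemma: lambda-splitting on Z lambda0} is well-defined on the derived category, i.e. that it preserves acyclics. This is where the \emph{functoriality} of that splitting does the work: since the projection is a direct summand functor on the Abelian category $\op{Qcoh}([Z_{\lambda}^0/C(\lambda)],w_{\lambda})$, it is exact, commutes with totalization of exact sequences of factorizations, and therefore sends acyclic factorizations to acyclic ones. Once this is in hand, both parts reduce to straightforward verifications with the $\lambda$-weight decomposition.
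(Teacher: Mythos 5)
Your proposal is correct and follows the same route as the paper, which simply observes that a quasi-coherent sheaf on $Y_{\lambda}=[Z_{\lambda}^0/G_{\lambda}]$ is precisely a $C(\lambda)$-equivariant sheaf on $Z_{\lambda}^0$ on which $\lambda$ acts trivially (i.e. has weight zero), and notes that the twist statement is then clear. Your additional checks (descent of the weight-zero projection to the derived category, compatibility with totalizations) are correct elaborations of details the paper leaves implicit.
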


\begin{proof}
 A quasi-coherent sheaf on $Y_{\lambda}=[Z_{\lambda}^0/G_{\lambda}]$ is a quasi-coherent $C(\lambda)$-equivariant sheaf on $Z_{\lambda}^0$ for which $\lambda$ acts trivially, i.e. of $\lambda$-weight zero. The last statement is clear.
\end{proof}

\sidenote{{\color{red} Too many i's for inclusions...}}
Let $i: S_{\lambda} \to X$ denote the inclusion. Recall that $T$ stands for the Thomason equivalence of Corollary \ref{corollary: Thomason}. Now consider the composition of the following functors,
\begin{align*}
 \nu_l & : \dcoh{[Z^0_{\lambda}/C(\lambda)],w_{\lambda}}_l \to \dcoh{[Z^0_{\lambda}/C(\lambda)],w_{\lambda}} \\
 \pi^* & : \dcoh{[Z^0_{\lambda}/C(\lambda)],w_{\lambda}} \to \dcoh{[Z_{\lambda}/P(\lambda)],w|_{Z_{\lambda}}} \\
 T & : \dcoh{[Z_{\lambda}/P(\lambda)],w|_{Z_{\lambda}}} \to \dcoh{[S_{\lambda}/G],w|_{S_{\lambda}}} \\
 i_* & : \dcoh{[S_{\lambda}/G],w|_{S_{\lambda}}} \to \dcoh{[X/G],w}.
\end{align*}
Call the composition,
\begin{displaymath}
 \Upsilon_l :=  i_* \circ T \circ \pi^* \circ \nu_l : \dcoh{[Z^0_{\lambda}/C(\lambda)],w_{\lambda}}_l \to \dcoh{[X/G],w}.
\end{displaymath}

\begin{lemma} \label{lemma: wall-contribution fully-faithful}
 Assume that $S^0_{\lambda}$ admits a $G$-invariant affine open cover. For any $l \in \Z$, $\Upsilon_l$ is fully-faithful.
\end{lemma}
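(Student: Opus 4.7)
The plan is to factor $\Upsilon_l$ as $i_* \circ T \circ \pi^* \circ \nu_l$ and show fully-faithfulness of each stage, with the last step requiring the bulk of the work. By Lemma \ref{lemma: ff on dbcoh -> ff on dcoh}, it suffices to check fully-faithfulness at the level of plain coherent sheaves on all pairs of components. The inclusion $\nu_l$ is fully-faithful because Lemma \ref{lemma: lambda-splitting on Z lambda0} provides a functorial $\lambda$-weight splitting, identifying $\nu_l$ with the inclusion of a direct summand. The Thomason functor $T$ is an equivalence by Corollary \ref{corollary: Thomason}. The pullback $\pi^*$ is fully-faithful because $\pi \colon Z_{\lambda} \to Z_{\lambda}^0$ is an $\mathbb{A}$-fibration (Proposition \ref{proposition: BB}), so $\pi_*\pi^* \simeq \op{id}$; the argument lifts $P(\lambda)$-equivariantly using the $\rho$-equivariance of $\pi$ from Proposition \ref{proposition: P+L+BB}.

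The crux is handling the closed pushforward $i_*$, which is not fully-faithful on all of $\dbcoh{[S_{\lambda}/G]}$ (extensions through the conormal sheaf $\mathcal N^{\vee}_{S_{\lambda}|X}$ are the obstruction). Fix $\mathcal{E} = T\pi^*\mathcal{E}_l$ and $\mathcal{F} = T\pi^*\mathcal{F}_l$ with $\mathcal{E}_l,\mathcal{F}_l$ of pure $\lambda$-weight $l$. Because $i_*\mathcal{F}$ is supported on $S_{\lambda}$, Proposition \ref{proposition: kernel of restriction for equivariant factorizations} identifies $i_*\mathcal{F}$ with its local cohomology $H_{S_{\lambda}}(i_*\mathcal{F})$, and the \v{C}ech/completion argument from the proof of Lemma \ref{lemma: vanishing local cohom}---using the $G$-invariant affine cover of $S_{\lambda}^0$ pulled back via the $\mathbb{A}$-fibration $\tilde{\pi}$ of Lemma \ref{lemma: affine fibration for S^0}---reduces the computation of $\op{Hom}_{[X/G]}(i_*\mathcal{E},i_*\mathcal{F}[t])$ to taking $P(\lambda)$-invariants of a local calculation on the formal neighborhood $\widehat{N}^0_V = \op{Spf} R[[\bm{z},\bm{u}]]$.

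On such a chart, $i_*\mathcal{E}$ is the $R[[\bm{z},\bm{u}]]$-module $\mathcal{E}_l \otimes_R R[[\bm{z}]]$ with the $u_j$ acting by zero, and resolving via Koszul along $(\bm{u})$ computes
\[
\op{Ext}^{\ast}_{R[[\bm{z},\bm{u}]]}(i_*\mathcal{E},i_*\mathcal{F}) \;\simeq\; \op{Ext}^{\ast}_R(\mathcal{E}_l,\mathcal{F}_l) \otimes_R R[[\bm{z}]] \otimes_R \textstyle\bigwedge^{\bullet} \langle u_1,\ldots,u_c\rangle
\]
as $\lambda$-graded objects, where each $\wedge^k$ summand with $k \geq 1$ has strictly positive $\lambda$-degree (since the $u_j$ do, by the sign convention of Lemma \ref{lemma: vanishing local cohom}). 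The coefficients $\op{Ext}^{\ast}_R(\mathcal{E}_l,\mathcal{F}_l)$ have $\lambda$-weight $l-l = 0$, and $R[[\bm{z}]]$ has only nonpositive $\lambda$-degrees with weight-zero piece equal to $R$. Taking $\lambda$-invariants therefore annihilates every Koszul summand with $k \geq 1$ together with the positive-power $\bm{z}$-terms, leaving exactly $\op{Ext}^{\ast}_R(\mathcal{E}_l,\mathcal{F}_l)$; taking further $C(\lambda)$-invariants (equivalently $G_{\lambda}$-invariants, which is the residual $P(\lambda)/\lambda$-equivariance after the previous step) yields $\op{Hom}_{[Z_{\lambda}^0/C(\lambda)]}(\mathcal{E}_l,\mathcal{F}_l[t])$.

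The main obstacle is managing the local-to-global reduction cleanly and verifying that the various completions and $P(\lambda)$-invariance steps interact correctly, but both are handled by a verbatim application of the machinery developed in the proof of Lemma \ref{lemma: vanishing local cohom}, together with Lemma \ref{lemma: completion reduction, ff} to justify the passage between $\widehat{N}_{\pi^{-1}(V)}$ and $\widehat{N}^0_V$.
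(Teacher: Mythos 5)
Your proposal follows the paper's own proof of Lemma \ref{lemma: wall-contribution fully-faithful} in its architecture: factor $\Upsilon_l$ as $i_*\circ T\circ\pi^*\circ\nu_l$, dispose of $\nu_l$ and $T$ immediately, and reduce $\pi^*$ and $i_*$ to local $\lambda$-weight computations on formal neighborhoods after invoking Lemma \ref{lemma: ff on dbcoh -> ff on dcoh} and Corollary \ref{corollary: Thomason}. The strategy is the right one, but two of your justifications are incorrect as stated. First, $\pi_*\pi^*\not\simeq\operatorname{id}$ for an $\mathbb{A}$-fibration with positive-dimensional fibers: locally $\pi_*\pi^*\mathcal F\cong\mathcal F\otimes_k k[z_1,\ldots,z_d]$. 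The unit only induces an isomorphism on $\op{Hom}(\mathcal E,-)$ after taking $\lambda$-invariants, and only because $\mathcal E$ and $\mathcal F$ are pure of weight $l$ while the $z_j$ carry strictly negative weights. In fact $\pi^*$ is \emph{not} fully-faithful on all of $\dcoh{[Z^0_{\lambda}/C(\lambda)],w_{\lambda}}$ (maps between different pure weights pick up $z$-monomial contributions), so restricting to the image of $\nu_l$ is essential already at this stage, not only for $i_*$.

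Second, and more seriously, the sign in your Koszul computation is backwards in a way that, taken at face value, invalidates the key vanishing. The relevant Ext groups are controlled by $\mathbf{L}i^*i_*\cong\bigoplus_p\wedge^p\mathcal N^{\vee}_{S_{\lambda}|X}[-p]$, so the $\wedge^k$ summands appearing in $\op{Ext}^*(i_*\mathcal E,i_*\mathcal F)$ are exterior powers of the \emph{normal} bundle, spanned by the duals $\partial/\partial u_j$; these have strictly \emph{negative} $\lambda$-degree, not positive (the $u_j$ themselves, i.e.\ the conormal directions, are what carry positive degree). If, as you assert, the $k\geq 1$ summands had strictly positive degree while $R[[\bm{z}]]$ contributes non-positive degrees, then cross terms of total degree zero could survive $\lambda$-invariants and your claimed annihilation would fail. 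With the correct sign, every term with $k\geq 1$ or with a positive power of $\bm{z}$ has strictly negative degree, the invariants collapse to $\op{Ext}^*_R(\mathcal E_l,\mathcal F_l)$, and the argument closes exactly as in the paper. Both errors are local and repairable by the weight bookkeeping you already deploy, but as written the second one is load-bearing.
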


\begin{proof}
 By Lemma \ref{lemma: ff on dbcoh -> ff on dcoh}, we may instead prove that the corresponding functor on bounded derived categories is fully-faithful.

 We show that each functor involved in defining $\Upsilon_l$ is fully-faithful on the image of the previous composition of functors. This is clear for $\nu_l$ and $T$, so it remains to check $\pi^*$ and $i_*$. 
 
 We start with $\pi^*$. We have an adjunction,
 \begin{displaymath}
  \op{Hom}_{[Z_{\lambda}/P(\lambda)]}(\pi^* \mathcal E, \pi^* \mathcal F) \cong \op{Hom}_{[Z^0_{\lambda}/C(\lambda)]}(\mathcal E, \pi_*\pi^* \mathcal F),
 \end{displaymath}
 and a unit morphism,
 \begin{displaymath}
  \mathcal F \to \pi_*\pi^* \mathcal F.
 \end{displaymath}
 We reduce to checking that application of the unit morphism yields an isomorphism,
 \begin{displaymath}
  \op{Hom}_{[Z^0_{\lambda}/C(\lambda)]}(\mathcal E, \mathcal F) \to \op{Hom}_{[Z^0_{\lambda}/C(\lambda)]}(\mathcal E, \pi_*\pi^* \mathcal F).
 \end{displaymath}
 We may compute $\op{Hom}$ for $[Z^0_{\lambda}/C(\lambda)]$ by computing $\op{Hom}$ for $Z_{\lambda}^0$ and applying the functor of $C(\lambda)$-invariants, as $C(\lambda)$ is reductive by Lemma \ref{lemma: C part and U part of P}. Thus, the following claim finishes the proof of fully-faithfulness for $\pi^* \circ \nu_l$. We claim the application of the unit morphism yields an isomorphism,
 \begin{displaymath}
  \op{Hom}_{Z^0_{\lambda}}(\mathcal E, \mathcal F )^{\lambda} \to \op{Hom}_{Z^0_{\lambda}}(\mathcal E, \pi_*\pi^* \mathcal F)^{\lambda},
 \end{displaymath}
 of $\lambda$-invariant vector spaces.
 
 We may reduce further to checking this claim for the components of $\mathcal E$ and $\mathcal F$ so we may assume that $\mathcal E$ and $\mathcal F$ are vector bundles on $Z_{\lambda}^0$. As an additional reduction, we work locally on $Z_{\lambda}^0$. So, take an open affine subset, $V$, of $Z_{\lambda}^0$ such that $\pi^{-1}(V) \cong V \times \mathbb{A}^d$.  Then, 
 \begin{displaymath}
  \pi_* \pi^* \mathcal F \cong \mathcal F \otimes_k k[z_1,\ldots,z_d]
 \end{displaymath}
 where each of the $z_j$ has negative $\lambda$-degrees. Consequently, 
 \begin{align*}
  \op{Hom}_V(\mathcal E, \pi_*\pi^* \mathcal F) & \cong \op{Hom}_V(\mathcal E, \mathcal F[z_1,\ldots,z_d]) .
 \end{align*}
 Since the weight of $\mathcal E$ is concentrated at $l$, only maps to the weight $l$ component of $\mathcal F[z_1,\ldots,z_d]$ will survive application of $\lambda$-invariants. But, 
 \begin{displaymath}
  (\mathcal F[z_1,\ldots,z_d])_{l} = \mathcal F,
 \end{displaymath}
 as $\mathcal F$ is in the image of $\nu_l$ and each of the $z_j$ have negative $\lambda$-degrees. The claim is proven.
 
 We next turn to $i_*$. The computation of 
 \begin{displaymath}
  \op{Hom}_{[X/G]}(i_* \mathcal E, i_* \mathcal F)
 \end{displaymath}
 only depends on the formal neighborhood of $S_{\lambda}$ in $X$. Thus, we may reduce to the completion of $X$ along $S_{\lambda}$. Denote the completion by $\widehat{S}_{\lambda}$. We may work locally on $\widehat{S}_{\lambda}$. Take an open cover, $\{U_j\}_{j\in J}$, of $S^0_{\lambda}$ and consider the open cover, $\{\tilde{\pi}^{-1}(U_j)\}_{j\in J}$, of $S_{\lambda}$. Let $\{\widehat{\tilde{\pi}^{-1}(U_j)}\}_{j\in J}$ be the corresponding cover of $\widehat{S}_{\lambda}$. There is an isomorphism, $\widehat{\tilde{\pi}^{-1}(U_j)} \cong \widehat{N}_{\tilde{\pi}^{-1}(U_j)}$ where $\widehat{N}_{\tilde{\pi}^{-1}(U_j)}$ is the completion of the normal bundle of $S_{\lambda}$ in $X$ restricted to $\tilde{\pi}^{-1}(U_j)$ along the zero section. Applying the Thomason equivalence, Corollary \ref{corollary: Thomason}, we have an isomorphism,
 \begin{gather*}
  \op{Hom}_{[\widehat{N}_{\tilde{\pi}^{-1}(U_j)}/G]}(i_* \mathcal E, i_* \mathcal F) \cong \op{Hom}_{[\widehat{N}_{\pi^{-1}(V_j)}/P(\lambda)]}(i'_* \mathcal E|_{\pi^{-1}(V_j)}, i'_* \mathcal F|_{\pi^{-1}(V_j)})
 \end{gather*}
 where $V_j = U_j \cap Z_{\lambda}^0$, $\widehat{N}_{\pi^{-1}(V_j)}$ is the completion of the restriction of $N_{S_{\lambda}|X}$ to $\pi^{-1}(V_j)$ along the zero section, and
 \begin{displaymath}
  i': \pi^{-1}(V_j) \to \widehat{N}_{\pi^{-1}(V_j)}
 \end{displaymath}
 is the inclusion. 
 
 We then have an adjunction,
 \begin{gather*}
  \op{Hom}_{[\widehat{N}_{\pi^{-1}(V_j)}/P(\lambda)]}(i'_* \mathcal E|_{\pi^{-1}(V_j)}, i'_* \mathcal F|_{\pi^{-1}(V_j)}) \cong \op{Hom}_{[\pi^{-1}(V_j)/P(\lambda)]}(\mathbf{L}i'^{*}i'_* \mathcal E|_{\pi^{-1}(V_j)}, \mathcal F|_{\pi^{-1}(V_j)}).
 \end{gather*}
 
 As $\widehat{\tilde{\pi}^{-1}(U)}$ is affine and $G$ is reductive, coherent $G$-equivariant sheaves on $\widehat{\tilde{\pi}^{-1}(U)}$ have no higher cohomology. Appealing to Theorem \ref{theorem: Thomason}, the higher derived functors of $P(\lambda)$-invariants must vanish on the representations furnished by global sections of coherent sheaves on $\widehat{(N_{S_{\lambda}|X})|_{\pi^{-1}(V)}}$.
 
 To compute
 \begin{displaymath}
  \op{Hom}_{[\pi^{-1}(V_j)/P(\lambda)]}(\mathbf{L}i'^{*}i'_* \mathcal E|_{\pi^{-1}(V_j)}, \mathcal F|_{\pi^{-1}(V_j)})
 \end{displaymath}
 we may compute
 \begin{displaymath}
  \op{Hom}_{\pi^{-1}(V_j)}(\mathbf{L}i'^{*}i'_* \mathcal E|_{\pi^{-1}(V_j)}, \mathcal F|_{\pi^{-1}(V_j)})
 \end{displaymath}
 and then apply the functor of $P(\lambda)$-invariants. It is therefore sufficient that 
 \begin{displaymath}
  \op{Hom}_{\pi^{-1}(V_j)}(\mathcal E|_{\pi^{-1}(V_j)}, \mathcal F|_{\pi^{-1}(V_j)})^{\lambda} \to \op{Hom}_{\pi^{-1}(V_j)}(\mathbf{L}i'^{*}i'_* \mathcal E|_{\pi^{-1}(V_j)}, \mathcal F|_{\pi^{-1}(V_j)})^{\lambda} 
 \end{displaymath}
 is an isomorphism. Shrinking $V_j$ further, we may assume that $\mathcal E|_{\pi^{-1}(V_j)}$ and $\mathcal F|_{\pi^{-1}(V_j)}$ are free. 
 
 By Lemma \ref{lemma: ff on dbcoh -> ff on dcoh} it suffices to prove the statement for components, so we again reduce, now by assuming that $\mathcal E = \mathcal F = \pi^*(\mathcal O_{\pi^{-1}(V_j)}(l))$. We have a $\lambda$-equivariant quasi-isomorphism,
 \begin{displaymath}
  \mathbf{L}i'^{*}i'_* \mathcal O_{\pi^{-1}(V_j)}(l) \cong \bigoplus_p \bigwedge\nolimits^{p} (\mathcal N^{\vee}_{S_{\lambda}|X})|_{\pi^{-1}(V_j)}(l)[-p].
 \end{displaymath}
 Then,
 \begin{displaymath}
  \op{Hom}^p_{\pi^{-1}(V_j)}(\mathbf{L}i'^{*}i'_* \mathcal E|_{\pi^{-1}(V_j)}, \mathcal F|_{\pi^{-1}(V_j)}) \cong  \op{H}^0(\pi^{-1}(V_j),\bigwedge\nolimits^{-p} (\mathcal N_{S_{\lambda}|X})|_{\pi^{-1}(V_j)}).
 \end{displaymath}
 As all degrees of $(\mathcal N_{S_{\lambda}|X})|_{\pi^{-1}(V_j)}$ are negative and the degrees of all coordinates on $\pi^{-1}(V_j)$ are non-positive, we have an isomorphism,
 \begin{displaymath}
  \op{H}^0(\pi^{-1}(V_j),\bigoplus_p \bigwedge\nolimits^{-p} (\mathcal N_{S_{\lambda}|X})|_{\pi^{-1}(V_j)})^{\lambda} \cong \op{H}^0(V_j,\mathcal O_{V_j}),
 \end{displaymath}
 which is compatible with the isomorphism,
 \begin{displaymath}
  \op{Hom}_{\pi^{-1}(V_j)}(\mathcal E|_{\pi^{-1}(V_j)}, \mathcal F|_{\pi^{-1}(V_j)})^{\lambda} \cong \op{H}^0(V_j,\mathcal O_{V_j})[0],
 \end{displaymath}
 under application of the counit. Thus, $i_*$ is fully-faithful, and, as a consequence, so is $\Upsilon_l$.
\end{proof}

\begin{lemma} \label{lemma: image window of wall contribution}
 The essential image of
 \begin{displaymath}
  \Upsilon_l: \dcoh{[Z_{\lambda}^0/C(\lambda)],w_{\lambda}}_l \to \dcoh{[X/G],w}
 \end{displaymath}
 lies in $\weezer_{[l+t(\mathfrak{K}),l]}$.
\end{lemma}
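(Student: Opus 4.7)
The plan is to check the weight condition locally on $Z_{\lambda}^0$ using a Koszul resolution of a regular sequence. Choose an affine open cover $\{V_j = \op{Spec} R_j\}$ of $Z_{\lambda}^0$ that trivializes $(N_{S_{\lambda}^0|X})|_{Z_{\lambda}^0}$, so that $\widehat{N}^0_{V_j} \cong \op{Spf} R_j[[\bm{z}, \bm{u}]]$ in the notation of the proof of Lemma \ref{lemma: vanishing local cohom}; shrinking if necessary, we may arrange that each component of $\mathcal E$ restricts to a free $\lambda$-equivariant sheaf of the form $\mathcal O_{V_j}(l)^{\oplus n}$ on $V_j$, using that $\mathcal E$ is of $\lambda$-weight $l$ together with Lemma \ref{lemma: lambda-splitting on Z lambda0}.

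Next, I would identify $\Upsilon_l(\mathcal E)$ explicitly on $\widehat{N}^0_{V_j}$, component by component. Pulling back along $\pi: Z_{\lambda} \to Z_{\lambda}^0$ transforms $\mathcal E^r|_{V_j}$ into $\mathcal E^r|_{V_j} \otimes_{R_j} R_j[\bm{z}]$ on $Z_{\lambda} \cap \pi^{-1}(V_j) \cong V_j \times \mathbb{A}^d$; applying $T$ and then $i_*$, and passing to the formal neighborhood, yields the $R_j[[\bm{z}, \bm{u}]]$-module $\mathcal E^r|_{V_j} \otimes_{R_j} R_j[[\bm{z}]]$ with its $R_j[[\bm{z}, \bm{u}]]$-structure induced by the quotient $R_j[[\bm{z}, \bm{u}]] \twoheadrightarrow R_j[[\bm{z}, \bm{u}]]/(\bm{u}) = R_j[[\bm{z}]]$.

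I then resolve each component by tensoring with the Koszul complex $\bigwedge^{\bullet} \mathbb{U}$ of the regular sequence $(u_1, \ldots, u_c)$, where $\mathbb{U} \cong R_j[[\bm{z}, \bm{u}]]^{\oplus c}$ carries the $\lambda$-equivariant structure for which the Koszul differential $e_i \mapsto u_i$ is equivariant. Exactly the calculation made in the proof of Proposition \ref{proposition: essential surjectivity} shows that the terms $\bigwedge^p \mathbb{U}$ for $0 \leq p \leq c$ have $\mu$-weights in $[t(\mathfrak{K}), 0]$, with the extremal value $t(\mathfrak{K}) = \mu(\omega_{S_{\lambda}|X}, \lambda, x)$ achieved at $p = c$. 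Tensoring with the pullback of $\mathcal E^r|_{V_j}$, a free sheaf of $\lambda$-weight $l$, shifts these weights by $l$ via Lemma \ref{lemma: weights of tensor and dual}, placing the resulting weights in $[l + t(\mathfrak{K}), l]$.

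To assemble these component-wise resolutions into a single locally-free $\lambda$-equivariant factorization quasi-isomorphic to $\Upsilon_l(\mathcal E)|_{\widehat{N}^0_{V_j}}$, I apply Lemma \ref{lemma: strictification}; the resulting components are direct sums of terms of the form $\bigwedge^p \mathbb{U} \otimes \mathcal E^s|_{V_j}$, hence still have $\mu$-weights in $[l + t(\mathfrak{K}), l]$. This verifies Definition \ref{definition: weights along completion}, so $\Upsilon_l(\mathcal E) \in \weezer_{[l + t(\mathfrak{K}), l]}$. The main difficulty is the second step: showing that $i_* \circ T \circ \pi^*$, restricted to $\widehat{N}^0_{V_j}$, corresponds to quotienting by the ideal $(\bm{u})$ in a $\lambda$-equivariant way; once that identification is in hand, the remaining Koszul bookkeeping is essentially the same as in the already-established essential surjectivity argument.
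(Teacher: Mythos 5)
Your argument is correct and is essentially the paper's own proof: restrict to an affine cover of $Z_{\lambda}^0$ where $\mathcal E$ is free of weight $l$, identify $\Upsilon_l(\mathcal E)$ on $\widehat{N}^0_{V_j}$ as the pushforward $i'_*(\pi^*\nu_l\mathcal E)$ from $\widehat{\pi^{-1}(V_j)}$, resolve by the Koszul complex of $(\bm u)$ whose terms have weights in $[t(\mathfrak{K}),0]$, and invoke Lemma \ref{lemma: strictification} to assemble the component resolutions into a factorization with weights in $[l+t(\mathfrak{K}),l]$. The only difference is that you make the local module-theoretic identification of $i_*\circ T\circ\pi^*$ explicit, which the paper leaves implicit.
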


\begin{proof}
 Let $\mathcal E \in \dcoh{Y_{\lambda},w_{\lambda}}$. Fix an affine open cover, $\{V_j\}_{j \in J}$, of $Z_{\lambda}^0$ such that $\mathcal E|_{V_j}$ is $\lambda$-equivariantly quasi-isomorphic to a free factorization. Let $\{\widehat{N}^0_{V_j}\}_{j \in J}$ be the corresponding open cover of $\widehat{N}^0 = \widehat{(N|_{S^0_{\lambda}|X})|_{Z_{\lambda}}}$. Let $i': \widehat{\pi^{-1}(V_j)} \to \widehat{N}^0_{V_j}$ be the inclusion. To $\lambda$-equivariantly resolve the components of
 \begin{displaymath}
  \Upsilon_l(\mathcal E) = i_*(T(\pi^* \nu_l \mathcal E))|_{\widehat{N}^0_{V_j}} \cong i'_* (\pi^* \nu_l \mathcal E)|_{\widehat{N}^0_{V_j}}
 \end{displaymath}
 by free factorizations on $\widehat{N}^0_{V_j}$ we may tensor with the Koszul resolution of $\widehat{\pi^{-1}(V_j)}$ in $\widehat{N}^0_{V_j}$. As noted previously, the terms of the Koszul resolution have weights between $[t(\mathfrak{K}),0]$. Thus, our resolved components have weights lying in $[l+t(\mathfrak{K}),l]$. We may use Proposition \ref{lemma: strictification} to replace $\Upsilon_l(\mathcal E)|_{\widehat{N}^0_{V_j}}$ with $\lambda$-equivariantly quasi-isomorphic factorization whose weights lie in $[l+t(\mathfrak{K}),l]$.
\end{proof}

Finally, we have the following result.

\begin{proposition} \label{proposition: wall contribution}
 Assume that $S^0_{\lambda}$ admits a $G$-invariant affine cover. Assume that $v-u > -t(\mathfrak{K})$. There is a semi-orthogonal decomposition,
 \begin{displaymath}
  \weezer_{[u,v]} = \langle \Upsilon_{v}(\dcoh{[Z_{\lambda}^0/C(\lambda)],w_{\lambda}}_v), \weezer_{[u,v-1]} \rangle.
 \end{displaymath}
\end{proposition}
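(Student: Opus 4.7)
The plan is to verify the two ingredients of the claimed semi-orthogonal decomposition: the semi-orthogonality $\op{Hom}_{[X/G],w}(\weezer_{[u,v-1]}, \Upsilon_v(\dcoh{[Z_{\lambda}^0/C(\lambda)],w_{\lambda}}_v)) = 0$ and a generation triangle for every $\mathcal{G} \in \weezer_{[u,v]}$. Fully-faithfulness of $\Upsilon_v$ is Lemma~\ref{lemma: wall-contribution fully-faithful}, and its essential image lies in $\weezer_{[v+t(\mathfrak{K}),v]} \subseteq \weezer_{[u,v]}$ by Lemma~\ref{lemma: image window of wall contribution} together with the hypothesis $v - u > -t(\mathfrak{K})$, which yields $v + t(\mathfrak{K}) \geq u + 1$.

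For semi-orthogonality, I will reuse the cascade of adjunctions from the proof of Lemma~\ref{lemma: wall-contribution fully-faithful}. Given $\mathcal{G} \in \weezer_{[u,v-1]}$ and $\mathcal{F} \in \dcoh{[Z_{\lambda}^0/C(\lambda)],w_{\lambda}}_v$, apply successively the $\mathbf{L}i^* \dashv i_*$ adjunction, Thomason's equivalence, and the $\pi^* \dashv \pi_*$ adjunction to transport $\op{Hom}(\mathcal{G},\Upsilon_v(\mathcal{F}))$ onto $[Z_\lambda^0/C(\lambda)]$. A $G$-invariant affine cover of $S_\lambda^0$ together with a \v{C}ech computation and passage to $\lambda$-invariants (using reductivity and Lemma~\ref{lemma: ff on dbcoh -> ff on dcoh}) reduces the question to the component-level vanishing
\begin{displaymath}
 \op{Ext}^{\bullet}_{\widehat{N}^0_V}(\mathcal{O}(a), i'_*\mathcal{O}(v))^\lambda = 0 \qquad \text{for } a \in [u,v-1],
\end{displaymath}
where $i': \widehat{\pi^{-1}(V)} \hookrightarrow \widehat{N}^0_V$ is the closed inclusion of the zero section. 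The $\mathbf{L}(i')^* \dashv i'_*$ adjunction and affineness of $\widehat{\pi^{-1}(V)}$ collapse this Ext to $H^0(\widehat{\pi^{-1}(V)}, \mathcal{O}(v-a))^\lambda = R[[\bm{z}]](v-a)^\lambda$, which vanishes because the $\bm{z}$-monomials all have non-positive $\lambda$-section-weight while the twist contributes $\lambda$-weight $a - v \leq -1$, so no weight-zero elements can appear.

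For generation, given $\mathcal{G} \in \weezer_{[u,v]}$ I will construct an exact triangle $\mathcal{K}_v(\mathcal{G}) \to \mathcal{G} \to \Upsilon_v(\mathcal{H})$ along the lines of Proposition~\ref{proposition: essential surjectivity}. Lemma~\ref{lemma: filtration on weights on Z lambda} applied to $\mathcal{G}|_{Z_\lambda}$ produces the weight-$v$ summand $(\mathcal{G}|_{Z_\lambda})_v$ as a $P(\lambda)$-equivariant locally free factorization. On a local trivialization $\pi^{-1}(V) \cong V \times \mathbb{A}^d$, weight-$v$ generators are independent of the $\bm{z}$ coordinates, and $\lambda$-equivariant transition functions preserve this, so $(\mathcal{G}|_{Z_\lambda})_v = \pi^* \mathcal{H}$ for a locally free factorization $\mathcal{H}$ on $Z_\lambda^0$; since $U(\lambda)$ acts trivially on $Z_\lambda^0$, the $P(\lambda)$-equivariance descends via $\rho: P(\lambda) \to C(\lambda)$ to make $\mathcal{H}$ an object of $\dcoh{[Z_{\lambda}^0/C(\lambda)],w_{\lambda}}_v$. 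Thomason's equivalence converts the surjection $\mathcal{G}|_{Z_\lambda} \twoheadrightarrow \pi^*\mathcal{H}$ into a surjection $i^*\mathcal{G} \twoheadrightarrow T(\pi^*\mathcal{H})$; composing with the counit $\mathcal{G} \to i_* i^* \mathcal{G}$ and applying $i_*$ produces the map $\mathcal{G} \to \Upsilon_v(\mathcal{H})$, whose kernel (in the Abelian category of factorizations) I call $\mathcal{K}_v(\mathcal{G})$.

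It remains to check $\mathcal{K}_v(\mathcal{G}) \in \weezer_{[u,v-1]}$. Locally on $\widehat{N}^0_V$ the construction replaces each weight-$v$ summand $\mathcal{O}(v)^{m_v}$ of $\mathcal{G}$ by $\mathcal{I}_{\widehat{\pi^{-1}(V)}} \otimes \mathcal{O}(v)^{m_v}$ and leaves the lower-weight summands untouched. Resolving $\mathcal{I}_{\widehat{\pi^{-1}(V)}}$ by the Koszul complex on the $\bm{u}$-directions puts its weights in $[t(\mathfrak{K}),-1]$, so after twisting by $\mathcal{O}(v)$ the weights sit in $[v+t(\mathfrak{K}),v-1]$; the hypothesis $v - u > -t(\mathfrak{K})$ forces this interval into $[u+1,v-1] \subseteq [u,v-1]$, matching the untouched lower-weight contribution. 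Lemma~\ref{lemma: strictification} upgrades this component-wise locally free resolution to a resolution of the factorization itself. The principal technical hurdle will be the gluing that identifies $(\mathcal{G}|_{Z_\lambda})_v$ globally with $\pi^*\mathcal{H}$ and equips $\mathcal{H}$ with the correct $C(\lambda)$-equivariant structure; the remainder is bookkeeping with weights and adjunctions.
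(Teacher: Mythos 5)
Your proposal is correct and follows essentially the same route as the paper: orthogonality via the weight estimate on local cohomology of the normal directions (the paper simply invokes Lemma~\ref{lemma: vanishing local cohom}/Lemma~\ref{lemma: fully-faithfulness} together with Lemma~\ref{lemma: image window of wall contribution} and the fact that $\Upsilon_v(\mathcal F)$ restricts to zero on $X_\lambda$, rather than redoing the adjunction cascade), and generation via the short exact sequence $0 \to \mathcal K_v(\mathcal G) \to \mathcal G \to \mathcal G|_{S_\lambda}/T(\mathcal G|_{Z_\lambda}^{<v}) \to 0$ from Lemma~\ref{lemma: filtration on weights on Z lambda} and Proposition~\ref{proposition: essential surjectivity}, identifying the weight-$v$ quotient with $\Upsilon_v$ of $(\pi_*\mathcal F)_v$ exactly as you describe.
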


\begin{proof}
 We may reduce to the case that $u=0$. We first check that the image of $\Upsilon_v$ is right orthogonal to $\weezer_{[0,v-1]}$. Lemma \ref{lemma: image window of wall contribution} and Corollary \ref{corollary: fully-faithfulness} immediately imply this. 
 
 Next, recall from Lemma \ref{lemma: filtration on weights on Z lambda} that, for any $\mathcal E$, we have an exact sequence of factorizations,
 \begin{displaymath}
  0 \to \mathcal K_v(\mathcal E) \to \mathcal E \to \mathcal E|_{S_{\lambda}}/T(\mathcal E|_{Z_{\lambda}}^{<v}) \to 0,
 \end{displaymath}
 with $\mathcal K_v(\mathcal E) \in \weezer_{[0,v-1]}$ and $(\mathcal E|_{S_{\lambda}}/T(\mathcal E|_{Z_{\lambda}}^{<v}))|_{Z_{\lambda}}$ is $\lambda$-equivariantly quasi-isomorphic, locally on $Z_{\lambda}$, to a factorization whose weights are concentrated at $v$. Once we show that $E|_{S_{\lambda}}/T(\mathcal E|_{Z_{\lambda}}^{<v})$ lies in the essential image of $\Upsilon_v$, the proof is finished. 
 
 Take a factorization $\mathcal F \in \dcoh{[Z_{\lambda}/P(\lambda)],w|_{Z_{\lambda}}}$ whose weights are concentrated at $v$ and consider $\pi_*\mathcal F$. We have an inclusion,
 \begin{displaymath}
  (\pi_*\mathcal F)_v \to \pi_*\mathcal F,
 \end{displaymath}
 which, by adjunction, gives a map
 \begin{displaymath}
  \pi^*(\pi_*\mathcal F)_v \to \mathcal F. 
 \end{displaymath}
 We claim this is a quasi-isomorphism. It suffices to work locally and check the maps on the components of $\mathcal F$ are isomorphisms. We take an an open cover, $\{V_j\}_{j \in J}$, of $Z_{\lambda}^0$, restrict to $\pi^{-1}(V_j)$, and assume that $\mathcal F$ is $\mathcal O_{\pi^{-1}(V_j)}(v)^{m_v}$. In this case, a simple computation in coordinates shows that 
 \begin{displaymath}
  \pi^*(\pi_*\mathcal F)_v \to \mathcal F. 
 \end{displaymath}
 is an isomorphism. Taking $\mathcal F = (\mathcal E|_{S_{\lambda}}/T(\mathcal E|_{Z_{\lambda}}^{<v}))|_{Z_{\lambda}}$ finishes the proof that the image of $\Upsilon_v$ and $W_{[0,v-1]}$ generate $W_{[0,v]}$. By Proposition \ref{proposition: characterizations of SODs}, we get the desired semi-orthogonal decomposition.
\end{proof}

\begin{corollary} \label{corollary: wall contribution}
  Assume that $S^0_{\lambda}$ admits a $G$-invariant affine cover. Assume that $v-u > -t(\mathfrak{K}) + s$. There is a semi-orthogonal decomposition,
 \begin{displaymath}
  \weezer_{[u,v]} = \langle \Upsilon_{v-s}(\dcoh{[Z_{\lambda}^0/C(\lambda)],w_{\lambda}}_{v-s}),\ldots,\Upsilon_{v}(\dcoh{[Z_{\lambda}^0/C(\lambda)],w_{\lambda}}_v), \weezer_{[u,v-s-1]} \rangle.
 \end{displaymath}
\end{corollary}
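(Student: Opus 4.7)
The plan is a straightforward induction on $s$, using Proposition \ref{proposition: wall contribution} as the engine and the standard fact that semi-orthogonal decompositions can be refined: if $\mathcal T = \langle \mathcal A, \mathcal B \rangle$ and $\mathcal B = \langle \mathcal B_1, \ldots, \mathcal B_r \rangle$ is itself a semi-orthogonal decomposition, then $\mathcal T = \langle \mathcal A, \mathcal B_1, \ldots, \mathcal B_r \rangle$ (this follows immediately from Proposition \ref{proposition: characterizations of SODs} applied to the orthogonality relations and the existence of filtrations).

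The base case $s = 0$ is precisely Proposition \ref{proposition: wall contribution}, since the hypothesis $v - u > -t(\mathfrak{K})$ is exactly what is needed there. For the inductive step, assume the statement holds for $s-1$ and suppose $v - u > -t(\mathfrak{K}) + s$. In particular, $v - u > -t(\mathfrak{K})$, so Proposition \ref{proposition: wall contribution} applies directly to produce a semi-orthogonal decomposition
\begin{displaymath}
 \weezer_{[u,v]} = \langle \Upsilon_v(\dcoh{[Z_{\lambda}^0/C(\lambda)], w_{\lambda}}_v), \weezer_{[u,v-1]} \rangle.
\end{displaymath}

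Now I apply the inductive hypothesis to $\weezer_{[u,v-1]}$. Since $(v-1) - u > -t(\mathfrak{K}) + (s-1)$, the hypothesis of the corollary at level $s-1$ is satisfied with upper endpoint $v-1$ in place of $v$, giving
\begin{displaymath}
 \weezer_{[u,v-1]} = \langle \Upsilon_{v-s}(\dcoh{[Z_{\lambda}^0/C(\lambda)], w_{\lambda}}_{v-s}), \ldots, \Upsilon_{v-1}(\dcoh{[Z_{\lambda}^0/C(\lambda)], w_{\lambda}}_{v-1}), \weezer_{[u,v-s-1]} \rangle.
\end{displaymath}
Splicing the two semi-orthogonal decompositions yields the claimed refinement.

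The only thing to double-check along the way is that the fully-faithfulness of each $\Upsilon_{v-j}$, established in Lemma \ref{lemma: wall-contribution fully-faithful}, together with the orthogonality inherited from Proposition \ref{proposition: wall contribution} applied at each intermediate step, gives genuine semi-orthogonality of the full list $\Upsilon_{v-s}, \ldots, \Upsilon_v$ against $\weezer_{[u,v-s-1]}$ and pairwise. The orthogonality between $\Upsilon_{v-j}$ and $\Upsilon_{v-k}$ for $j < k$, and between $\Upsilon_{v-j}$ and $\weezer_{[u,v-s-1]}$, follows automatically because their essential images live in $\weezer_{[v-j+t(\mathfrak{K}), v-j]}$ by Lemma \ref{lemma: image window of wall contribution}, which is contained in $\weezer_{[u,v-j]}$ under our hypothesis, so Corollary \ref{corollary: fully-faithfulness} applies. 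No new analytic or geometric ingredient is needed beyond what Proposition \ref{proposition: wall contribution} has already provided; the argument is purely formal category-theoretic bookkeeping of SODs, and I anticipate no serious obstacle.
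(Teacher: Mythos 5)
Your argument is correct and is precisely the paper's proof, which consists of the single line that the corollary follows by iterated application of Proposition \ref{proposition: wall contribution}; the only genuine content is the hypothesis bookkeeping $(v-1)-u > -t(\mathfrak{K})+(s-1)$ for the inductive step, which you verify correctly. The one discrepancy is cosmetic: splicing the decompositions produces the components in the order $\Upsilon_{v}, \Upsilon_{v-1}, \ldots, \Upsilon_{v-s}$, the reverse of the order printed in the corollary, and only the semi-orthogonality for that reversed order (namely $\op{Hom}(\Upsilon_{m}, \Upsilon_{l})=0$ for $m<l$, via Lemma \ref{lemma: image window of wall contribution} and Corollary \ref{corollary: fully-faithfulness}) is actually established — but this is an issue with the paper's stated ordering rather than with your argument, since the paper's own proof yields the same ordering.
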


\begin{proof}
 This is an immediate consequence of iterated applications of Proposition \ref{proposition: wall contribution}.
\end{proof}

\subsection{Varying stratifications and derived categories} \label{section: final statement}

\begin{definition} \label{definition: matched wall crossing}
 Let $X$ be a smooth, quasi-projective variety equipped with a $G$ action. An \textbf{elementary wall crossing} is a pair of elementary HKKN stratifications, $\mathfrak{K}^+$ and $\mathfrak{K}^-$,
 \begin{align*}
  X & = X_+ \sqcup S_{\lambda} \\
  X & = X_- \sqcup S_{-\lambda},
 \end{align*}
 corresponding to a single one-parameter subgroup, $\lambda: \mathbb{G}_m \to G$, and the same connected component of the fixed locus, $Z_{\lambda}^0 = Z_{-\lambda}^0$. We let
 \begin{displaymath}
  \mu := -t(\mathfrak{K}^+) + t(\mathfrak{K}^-).
 \end{displaymath}
\end{definition}

\sidenote{{\color{blue} It looks like you commented out the general thing, meaning that we only use elementary HKKN stratifications at the moment, despite the fact that we defined the general thing.} {\color{red} Yup. No good applications for the general one.}}
\begin{theorem} \label{theorem: elementary wall crossing}
 Let $X$ be a smooth, quasi-projective variety equipped with the action of a reductive linear algebraic group, $G$. Let $w \in \op{H}^0(X,\mathcal L)^G$ be a $G$-invariant section of a $G$-line bundle, $\mathcal L$, and assume that $\mu(\mathcal L,\lambda,x) = 0$ for $x \in Z_{\lambda}^0$. 
 
 Assume we have an elementary wall crossing, $(\mathfrak{K}^+,\mathfrak{K}^-)$, 
 \begin{align*}
  X & = X_+ \sqcup S_{\lambda} \\
  X & = X_- \sqcup S_{-\lambda}.
 \end{align*}
 Assume that $S^0_{\lambda}$ admits a $G$ invariant affine open cover. Fix $d \in \Z$. 
 
 \begin{enumerate}
  \item If $t(\mathfrak{K}^+) < t(\mathfrak{K}^-)$, then there are fully-faithful functors,
  \begin{displaymath}
   \Phi^+_d: \dcoh{[X_-/G],w|_{X_-}} \to \dcoh{[X_+/G],w|_{X_+}},
  \end{displaymath}
  and, for $-t(\mathfrak{K}^-) + d \leq j \leq -t(\mathfrak{K}^+) + d -1$,
  \begin{displaymath}
   \Upsilon_j^+: \dcoh{[Z_{\lambda}^0/C(\lambda)],w_{\lambda}}_j \to \dcoh{[X_+/G],w|_{X_+}},
  \end{displaymath}
  and a semi-orthogonal decomposition,
  \begin{displaymath}
   \dcoh{[X_+/G],w|_{X_+}} = \langle \Upsilon^+_{-t(\mathfrak{K}^-)+d}, \ldots, \Upsilon^+_{-t(\mathfrak{K}^+)+d-1}, \Phi^+_d \rangle.
  \end{displaymath}
  \item If $t(\mathfrak{K}^+) = t(\mathfrak{K}^-)$, then there is an exact equivalence,
  \begin{displaymath}
   \Phi^+_d: \dcoh{[X_-/G],w|_{X_-}} \to \dcoh{[X_+/G],w|_{X_+}}.
  \end{displaymath}
  \item If $t(\mathfrak{K}^+) > t(\mathfrak{K}^-)$, then there are fully-faithful functors,
  \begin{displaymath}
   \Phi^-_d: \dcoh{[X_+/G],w|_{X_+}} \to \dcoh{[X_-/G],w|_{X_-}},
  \end{displaymath}
  and, for $-t(\mathfrak{K}^+) + d \leq j \leq -t(\mathfrak{K}^-) + d -1$,
  \begin{displaymath}
   \Upsilon_j^-: \dcoh{[Z_{\lambda}^0/C(\lambda)],w_{\lambda}}_j \to \dcoh{[X_-/G],w|_{X_-}},
  \end{displaymath}
  and a semi-orthogonal decomposition,
  \begin{displaymath}
   \dcoh{[X_-/G],w|_{X_-}} = \langle \Upsilon^-_{-t(\mathfrak{K}^+)+d}, \ldots, \Upsilon^-_{-t(\mathfrak{K}^-)+d-1}, \Phi^-_d \rangle.
  \end{displaymath}
 \end{enumerate}
\end{theorem}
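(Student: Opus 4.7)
The plan is to realize both $\dcoh{[X_+/G], w|_{X_+}}$ and $\dcoh{[X_-/G], w|_{X_-}}$ as explicit windows inside $\dcoh{[X/G], w}$, and then compare the two via the wall-contribution semi-orthogonal decomposition from Section~\ref{section: wall contributions}.

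The starting point is that, by combining Corollary~\ref{corollary: fully-faithfulness} and Proposition~\ref{proposition: essential surjectivity} applied to $\mathfrak{K}^+$, restriction along $X_+ \hookrightarrow X$ induces an equivalence $i^*_+ : \weezer_{\lambda, I^+} \xrightarrow{\sim} \dcoh{[X_+/G], w|_{X_+}}$ for any interval $I^+ \subset \mathbb{Z}$ of length $-t(\mathfrak{K}^+)-1$. The same argument applied to $\mathfrak{K}^-$ for the one-parameter subgroup $-\lambda$, together with the observation that $-\lambda$-weights along the common component $Z_\lambda^0 = Z_{-\lambda}^0$ are the negatives of $\lambda$-weights, yields an equivalence $i^*_- : \weezer_{\lambda, I^-} \xrightarrow{\sim} \dcoh{[X_-/G], w|_{X_-}}$ for any interval $I^-$ of length $-t(\mathfrak{K}^-)-1$. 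Both windows are then full subcategories of $\dcoh{[X/G], w}$ defined by constraints on $\lambda$-weights at $Z_\lambda^0$, so they may be directly compared.

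For each $d \in \mathbb{Z}$, normalize $I^+ = I^+_d$ and $I^- = I^-_d$ to share a common endpoint (the one dictated by the theorem's indexing convention for $\Upsilon^+_j$ and $\Upsilon^-_j$). In case (1) with $\mu = -t(\mathfrak{K}^+) + t(\mathfrak{K}^-) > 0$, the interval $I^-_d$ is a proper subset of $I^+_d$, and the complementary weights form a block of length $\mu$, namely $[-t(\mathfrak{K}^-)+d,\, -t(\mathfrak{K}^+)+d-1]$. Iterated applications of Proposition~\ref{proposition: wall contribution} (equivalently, Corollary~\ref{corollary: wall contribution}) decompose
\[
\weezer_{\lambda, I^+_d} \;=\; \langle \Upsilon_{-t(\mathfrak{K}^-)+d}, \ldots, \Upsilon_{-t(\mathfrak{K}^+)+d-1},\; \weezer_{\lambda, I^-_d} \rangle.
\]
Define $\Phi^+_d$ to be the composition of $(i^*_-)^{-1}$ with the inclusion $\weezer_{\lambda, I^-_d} \hookrightarrow \weezer_{\lambda, I^+_d}$ followed by $i^*_+$, and define $\Upsilon^+_j$ as the image of $\Upsilon_j$ under $i^*_+$. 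Transporting the displayed decomposition through $i^*_+$ yields the semi-orthogonal decomposition of the theorem. Case (2) with $\mu = 0$ is immediate since $I^+_d = I^-_d$, giving $\Phi^+_d := i^*_+ \circ (i^*_-)^{-1}$ as an equivalence. Case (3) with $\mu < 0$ is obtained from case (1) by interchanging $+$ and $-$ throughout.

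The only real obstacle is bookkeeping: one must check that the two interval normalizations on the $+$ and $-$ sides can be aligned so that the wall-contribution weights fill exactly the stated range $[-t(\mathfrak{K}^\mp)+d,\, -t(\mathfrak{K}^\pm)+d-1]$. This is a straightforward matter of fixing a common endpoint of the intervals $I^\pm_d$ once and for all, after which the counts agree with $\mu$; all of the deeper analytic work is already contained in Sections~\ref{section: fully-faithfulness} through~\ref{section: wall contributions} and no further geometric input is required.
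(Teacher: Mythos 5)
Your proposal is correct and follows essentially the same route as the paper's own proof: realize both sides as windows in $\dcoh{[X/G],w}$ via Corollary \ref{corollary: fully-faithfulness} and Proposition \ref{proposition: essential surjectivity}, identify $\weezer_{-\lambda,I}$ with $\weezer_{\lambda,-I}$, anchor the two intervals at the common endpoint $d$, and peel off the wall contributions with Corollary \ref{corollary: wall contribution} before transporting everything through $i_+^*$. The definitions of $\Phi_d^+$ and $\Upsilon_j^+$ and the reduction of cases (2) and (3) to case (1) are exactly as in the paper.
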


\begin{proof}
 The third case follows from the first by switching, $+ \leftrightarrow -$. We will assume that $t(\mathfrak{K}^+) \leq t(\mathfrak{K}^-)$ and prove the first and second cases at the same time.

 From Corollary \ref{corollary: fully-faithfulness} and Proposition \ref{proposition: essential surjectivity}, we know that the inclusions,
 \begin{align*}
  i_+ & : X_+ \to X \\
  i_- & : X_- \to X,
 \end{align*}
 induce equivalences, via restriction,
 \begin{align*}
  i_+^* & : \weezer_{\lambda,[d,-t(\mathfrak{K}^+)+d-1]} \to \dcoh{[X_+/G],w|_{X_+}} \\
  i_-^* & : \weezer_{-\lambda,[t(\mathfrak{K}^-)-d+1,-d]} \to \dcoh{[X_-/G],w|_{X_-}}.
 \end{align*}
 Note that $\weezer_{\lambda,I} = \weezer_{-\lambda,-I}$ so we may directly compare windows for $\lambda$ and $-\lambda$. In particular, we have an inclusion,
 \begin{displaymath}
  i_d: \weezer_{-\lambda,[t(\mathfrak{K}^-)-d+1,-d]} \to \weezer_{\lambda,[d,-t(\mathfrak{K}^+)+d-1]},
 \end{displaymath}
 as $t(\mathfrak{K}^+) \leq t(\mathfrak{K}^-)$. We define
 \begin{displaymath}
  \Phi_d^+ : = i_+^* \circ i_d \circ (i_-^*)^{-1} : \dcoh{[X_-/G],w|_{X_-}} \to \dcoh{[X_+/G],w|_{X_+}}.
 \end{displaymath}
 From Corollary \ref{corollary: wall contribution}, we have a semi-orthogonal decomposition,
 \begin{displaymath}
  \weezer_{\lambda,[d,-t(\mathfrak{K}^+)+d-1]} = \langle \Upsilon_{-t(\mathfrak{K}^-)+d}, \ldots, \Upsilon_{-t(\mathfrak{K}^+)+d-1}, \weezer_{-\lambda,[t(\mathfrak{K}^-)-d+1,-d]} \rangle
 \end{displaymath}
 from the fully-faithful functors,
 \begin{displaymath}
  \Upsilon_l: \dcoh{[Z_{\lambda}^0/C(\lambda)],w_{\lambda}}_l \to \weezer_{\lambda,[t(\mathfrak{K}^+)+l,l]} \subset \dcoh{[X/G],w}.
 \end{displaymath}
 We set 
 \begin{displaymath}
  \Upsilon_l^+ := i_+^* \circ \Upsilon_l : \dcoh{[Z_{\lambda}^0/C(\lambda)],w_{\lambda}}_l \to \dcoh{[X_+/G],w|_{X_+}}
 \end{displaymath}
 to finish the proof.
\end{proof}

Finally, we give a lemma which allows one to more easily compute the quantity, $\mu = -t(\mathfrak{K}^+) + t(\mathfrak{K}^-)$.

\begin{lemma} \label{lemma: formula for mu}
 Let
 \begin{align*}
  X & = X_+ \sqcup S_{\lambda} \\
  X & = X_- \sqcup S_{-\lambda}
 \end{align*}
 be an elementary wall crossing. Assume that $G_x \subset C(\lambda)$ for $x \in Z_{\lambda}^0$. Then,
 \begin{displaymath}
  \mu := -t(\mathfrak{K}^+) + t(\mathfrak{K}^-) = \mu(\omega_X^{-1},\lambda,x).
 \end{displaymath}
%
\end{lemma}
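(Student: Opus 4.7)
The plan is to compute both sides of the claimed identity as explicit sums of $\lambda$-weights on tangent and normal spaces at $x$, and then to reduce the remaining discrepancy to the vanishing of the total $\lambda$-weight on $\mathfrak{g}$, which follows from standard character theory for reductive groups.

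First, I would decompose $T_xX = (T_xX)^+ \oplus (T_xX)^0 \oplus (T_xX)^-$ and $\mathfrak{g} = \mathfrak{g}^+ \oplus \mathfrak{g}^0 \oplus \mathfrak{g}^-$ into positive, zero, and negative $\lambda$-weight subspaces; by Lemma \ref{lemma: C part and U part of P}, $\mathfrak{g}^0 = \operatorname{Lie} C(\lambda)$ and $\mathfrak{p}(\lambda) = \mathfrak{g}^{\geq 0}$. The hypothesis $G_x \subseteq C(\lambda)$ forces $\mathfrak{g}_x \subseteq \mathfrak{g}^0$, so the differential of the orbit map $\mathfrak{g} \to T_xX$ restricts to injections $\mathfrak{g}^\pm \hookrightarrow (T_xX)^\pm$. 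Combining Proposition \ref{proposition: BB} (which identifies $T_xZ_\lambda$ with $(T_xX)^{\geq 0}$) with the HKKN isomorphism $\tau : G \overset{P(\lambda)}{\times} Z_\lambda \cong S_\lambda$ produces a $\lambda$-weight direct-sum decomposition
\[T_xS_\lambda = T_xZ_\lambda \oplus \mathfrak{g}^- \cdot x = (T_xX)^0 \oplus (T_xX)^+ \oplus \mathfrak{g}^- \cdot x,\]
so that $\mathcal{N}_{S_\lambda|X}|_x \cong (T_xX)^-/(\mathfrak{g}^- \cdot x)$ as a $\lambda$-module. Symmetrically, $\mathcal{N}_{S_{-\lambda}|X}|_x \cong (T_xX)^+/(\mathfrak{g}^+ \cdot x)$.

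Writing $w(V)$ for the sum of $\lambda$-weights of a finite-dimensional $\lambda$-representation $V$, and unwinding the definition of $\mu$ applied to $\omega_{S_\lambda|X} = \det \mathcal{N}^{\vee}_{S_\lambda|X}$ and to $\omega_X^{-1} = \det T_X$, one obtains
\[t(\mathfrak{K}^+) = w((T_xX)^-) - w(\mathfrak{g}^-), \qquad t(\mathfrak{K}^-) = -w((T_xX)^+) + w(\mathfrak{g}^+),\]
\[\mu(\omega_X^{-1},\lambda,x) = -w(T_xX),\]
the sign in $t(\mathfrak{K}^-)$ coming from the fact that its weights are measured against $-\lambda$. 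Since $w((T_xX)^0) = w(\mathfrak{g}^0) = 0$, combining the three identities yields
\[-t(\mathfrak{K}^+) + t(\mathfrak{K}^-) = -w(T_xX) + w(\mathfrak{g}) = \mu(\omega_X^{-1},\lambda,x) + w(\mathfrak{g}).\]

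The lemma therefore reduces to showing $w(\mathfrak{g}) = 0$, equivalently that the algebraic character $\det\operatorname{Ad} : G \to \mathbb{G}_m$ is trivial on $\lambda$. Since $\lambda$ factors through the identity component $G^0$, we may assume $G$ is connected reductive, whence $G = Z(G)^0 \cdot [G,G]$; the adjoint action is trivial on the connected center, and $[G,G]$ is semisimple so admits no nontrivial characters, forcing $\det\operatorname{Ad} \equiv 1$ and $w(\mathfrak{g}) = 0$. The main technical step is the tangent-space identification in the first paragraph: it is the only place where the hypothesis $G_x \subseteq C(\lambda)$ is used in an essential way, and without it the stabilizer could contribute nontrivially to $\mathfrak{g}^\pm$, breaking the clean weight decomposition of $T_xS_\lambda$ and requiring correction terms that would spoil the cancellation between the $\mathfrak{g}$-contributions.
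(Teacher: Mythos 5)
Your proof is correct and takes essentially the same route as the paper's: both decompose the tangent space $T_xX$ at a fixed point, identify the discrepancy $\mu - \mu(\omega_X^{-1},\lambda,x)$ with the total adjoint $\lambda$-weight of $\mathfrak{g}$ (which vanishes by reductivity), and use the hypothesis $G_x \subseteq C(\lambda)$ precisely to ensure the stabilizer contributes nothing to the nonzero weight spaces. The only difference is bookkeeping: you organize the decomposition by weight sign and realize the normal spaces as $(T_xX)^{\mp}/\mathfrak{g}^{\mp}\cdot x$, whereas the paper writes $T_xX = \mathcal N_{S_{-\lambda}|X,x}\oplus\mathcal N_{S_{\lambda}|X,x}\oplus\mathcal T_{S_{\lambda}^0,x}$ and shows the last summand has total weight zero via the orbit map $\mathcal T_{G,e}\oplus\mathcal T_{Z_{\lambda}^0,x}\to\mathcal T_{S_{\lambda}^0,x}$.
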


\begin{proof}
 Recall that 
 \begin{displaymath}
  t(\mathfrak{K}) = \mu(\bigwedge\nolimits^{\op{codim} S_{\lambda}} \mathcal N_{S_{\lambda}|X}^{\vee},\lambda,x)
 \end{displaymath}
 for some $x \in Z_{\lambda}^0$. Thus, 
 \begin{displaymath}
  -\mu = t(\mathfrak{K}^+) - t(\mathfrak{K}^-) = \mu(\bigwedge\nolimits^{\op{codim} S_{\lambda}} \mathcal N_{S_{\lambda}|X}^{\vee} \otimes \bigwedge\nolimits^{\op{codim} S_{-\lambda}} \mathcal N_{S_{-\lambda}|X}^{\vee}, \lambda, x).
 \end{displaymath}
 We may decompose the tangent space at $x$ as
 \begin{displaymath}
  \mathcal  T_{X,x} = \mathcal N_{S_{-\lambda}|X,x} \oplus \mathcal N_{S_{\lambda}|X,x} \oplus \mathcal T_{S_{\lambda}^0,x}.
 \end{displaymath}
 Taking duals and top exterior powers we get
 \begin{displaymath}
  \mu = \mu(\omega_X^{-1},\lambda,x) - \mu(\bigwedge\nolimits^{\op{dim} S_{\lambda}^0}\mathcal T_{S_{\lambda}^0},\lambda,x).
 \end{displaymath}
 It suffices to show that
 \begin{displaymath}
  \mu(\bigwedge\nolimits^{\op{dim} S_{\lambda}^0}\mathcal T_{S_{\lambda}^0},\lambda,x) = 0.
 \end{displaymath}
 Since $S_{\lambda}^0$ is the $G$ orbit of $Z_{\lambda}^0$, we have a surjection
 \begin{displaymath}
  d\sigma:  \mathcal T_{G,e} \oplus \mathcal T_{Z_{\lambda}^0,x} \to  \mathcal T_{S_{\lambda}^0,x}
 \end{displaymath}
 given by linearizing the action, $\sigma: G \times X \to X$, at $(e,x)$. The map, $d\sigma$ is $\lambda$-equivariant if we equip $\mathcal T_{G,e}$ with the adjoint action. Let $K$ be the kernel of $d\sigma$ so we have a short exact sequence,
 \begin{displaymath}
  0 \to K \to \mathcal T_{G,e} \oplus \mathcal T_{Z_{\lambda}^0,x} \overset{d\sigma}{\to} \mathcal T_{S_{\lambda}^0,x} \to 0.
 \end{displaymath}
 The Lie algebra, $\mathcal T_{G,e}$, is reductive.  Therefore it splits into its Abelian and semi-simple components.  The Abelian component has $\lambda$-weight zero. If $l$ is a weight from the semi-simple part, so is $-l$, see for example Lecture $14$ of \cite{FH}. Therefore, $\mu(\bigwedge\nolimits^{\op{dim} G} \mathcal T_{G,e},\lambda,e)$ is zero and $\lambda$ acts trivially on $Z_{\lambda}^0$. So,
 \begin{displaymath}
  \mu(\bigwedge\nolimits^{\op{dim} S_{\lambda}^0} \mathcal T_{S_{\lambda}^0}, \lambda, x) = - \mu( \bigwedge\nolimits^{\op{dim} K} K, \lambda, 0).
 \end{displaymath}
 We have an isomorphism,
 \begin{displaymath}
  K \cong \mathcal T_{G \cdot x \cap Z_{\lambda}^0,x} \oplus \mathcal T_{G_x,e}.
 \end{displaymath}
 All $\lambda$-weights of $\mathcal T_{G  \cdot x \cap Z_{\lambda}^0,x}$ are zero as, again, $\lambda$ acts trivially on $Z_{\lambda}^0$. All the $\lambda$-weights of $\mathcal T_{G_x,e}$ are zero as we have assumed that $\lambda$ commutes with $G_x$. 
\end{proof}

\section{Varying the GIT quotient and comparing derived categories} \label{sec: VGIT}

Our main application of Theorem \ref{theorem: elementary wall crossing} is in comparing the derived categories of GIT quotients obtained from different choices of equivariant line bundle, commonly known as VGIT.

\subsection{Background} \label{subsection: background VGIT and DC}

We let $X$ be a smooth, quasi-projective variety equipped an action of a reductive linear algebraic group, $G$. We recall some important notions surrounding equivariant line bundles.

\begin{definition}
 The group of $G$-equivariant lines bundles up to isomorphism is called the  \textbf{$G$-Picard group} and is denoted by $\op{Pic}^G(X)$. 
 
 Two equivariant line bundles $\mathcal L_1, \mathcal L_2 \in \op{Pic}^G(X)$ are said to be \textbf{$G$-algebraically equivalent} if there is a connected variety, $T$, an action of $G$ on $T \times X$ equivariant with respect to the projection, $T \times X \to X$, points $t_1,t_2 \in T$, and a $G$-line bundle, $\mathcal L$, on $T \times X$, such that $\mathcal L|_{t_1 \times X } \cong \mathcal L_1$ and $\mathcal L|_{t_2 \times X} \cong \mathcal L_2$. The quotient of $\op{Pic}^G(X)$ by $G$-algebraic equivalence is called the \textbf{$G$-Neron Severi group} and is denoted by $\op{NS}^G(X)$.
 
 We say that a $G$-equivariant line bundle, $\mathcal L$, is \textbf{amply $G$-effective} if $\mathcal L$ is ample on $X$ and $X^{\op{ss}}(\mathcal L) \not = \emptyset$. We denote the subset of $\op{NS}^G(X)$ consisting of amply $G$-effective line bundles by $C^G(X)$. We say two equivariant line bundles, $\mathcal L_1$ and $\mathcal L_2$, are \textbf{GIT-equivalent} if
 \begin{displaymath}
  X^{\op{ss}}(\mathcal L_1) = X^{\op{ss}}(\mathcal L_2).
 \end{displaymath}
 
 If the closures of the GIT-equivalence classes in $C^G(X)_{\R}$ form a fan, we call this fan the \textbf{GIT fan} for the action of $G$ on $X$. The top dimensional cones of the GIT fan are called \textbf{chambers} and the codimension one cones are called \textbf{walls}.
\end{definition}

The following is the main result of \cite{Res}, see also Section 3 of \cite{DH98} and Section 2 of \cite{Tha96}.

\begin{theorem} \label{theorem: GIT fan}
 Let $X$ be a smooth, projective variety equipped an action of a reductive linear algebraic group, $G$. The closures of the GIT-equivalence classes in $C^G(X)_{\R}$ form a fan.
\end{theorem}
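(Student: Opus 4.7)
The plan is to deduce the fan structure from the Hilbert--Mumford numerical criterion (Theorem~\ref{theorem: Hilbert-Mumford numerical stability}) by realizing the GIT chambers as the open cells of a locally finite hyperplane arrangement inside $C^G(X)_{\R}$. The key input is that for a fixed pair $(\lambda,x^{*})$ with $\lambda$ a one-parameter subgroup and $x^{*}\in X^{\lambda}$, the function
\[
 \op{NS}^G(X)_{\R} \to \R, \qquad \mathcal L \mapsto \mu(\mathcal L,\lambda,x^{*}),
\]
is linear; this follows from Lemma~\ref{lemma: weights under the orbit}(c) together with the definition of $\mu$ in terms of $\lambda$-weights on fibers. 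Thus each such pair cuts out a rational hyperplane $H_{\lambda,x^{*}}$, and by Theorem~\ref{theorem: Hilbert-Mumford numerical stability} the semistable locus of any amply $G$-effective $\mathcal L$ is determined by the signs of $\mu(\mathcal L,\lambda,x^{*})$ over all such pairs.

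First I would make precise that two line bundles $\mathcal L_{1},\mathcal L_{2}\in C^G(X)_{\R}$ are GIT-equivalent if and only if for every pair $(\lambda,x^{*})$ we have $\op{sgn}\mu(\mathcal L_{1},\lambda,x^{*}) = \op{sgn}\mu(\mathcal L_{2},\lambda,x^{*})$. The ``if'' direction is immediate from Hilbert--Mumford. For ``only if'', if the signs disagree on some $(\lambda,x^{*})$, then one produces a point which is semistable for one linearization and unstable for the other by flowing to $x^{*}$ along $\lambda$; properness of $X$ guarantees the limit exists. Consequently the GIT-equivalence classes are exactly the relatively open cells of the hyperplane arrangement $\{H_{\lambda,x^{*}}\} \cap C^G(X)_{\R}$.

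The main obstacle is showing this arrangement is \emph{locally finite} on $C^G(X)_{\R}$, since a priori there are infinitely many one-parameter subgroups and fixed points. This is where properness of $X$ and the HKKN stratification machinery (Theorem~\ref{theorem: GIT HKKN strat}) enter. The plan is: fix an amply $G$-effective $\mathcal L_{0}$ and a relatively compact neighborhood $U \subset C^G(X)_{\R}$ around it. Using the Kempf--Hesselink--Kirwan--Ness stratification for $\mathcal L_{0}$, the unstable locus is cut out by finitely many conjugacy classes of optimal one-parameter subgroups $\lambda_{1},\dots,\lambda_{n}$ and finitely many components $Z_{\lambda_{i}}^{0}$ of their fixed loci (via Lemma~\ref{lemma: weights constant on connected components of fixed locus}, $\mu$ is constant on each such component). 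Only walls coming from this finite list, together with the walls produced when perturbations of $\mathcal L_{0}$ create new optimal destabilizers from strata of larger $\mathcal L_{0}$-semistable loci, can affect semistability in $U$; one then bounds the weights that can appear using the fact that a HKKN stratum is proper and the weights vary continuously as $\mathcal L$ varies in $U$. This yields local finiteness of the walls inside $C^G(X)_{\R}$.

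Once local finiteness is established, the rest is formal: a locally finite, rational hyperplane arrangement intersected with a convex cone decomposes the cone into the relative interiors of finitely (locally) many rational polyhedral cones, whose closures satisfy the face-intersection axioms of a fan. Interpreting these closures as GIT-equivalence classes via the first step completes the proof. The hard step is the local finiteness of the walls, and I would expect to reduce it, as in \cite{DH98,Tha96,Res}, to a careful analysis of how the HKKN stratification varies with $\mathcal L$.
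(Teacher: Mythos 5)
First, note that the paper does not actually prove this statement: its ``proof'' is a citation to Theorem 5.2 of \cite{Res}, building on \cite{Tha96} and \cite{DH98}, so you are attempting to reconstruct the cited argument. Your sketch has the right general shape (linearity of $\mathcal L \mapsto \mu(\mathcal L,\lambda,x^{*})$, a wall-and-chamber structure, local finiteness), but it contains two genuine gaps. The first is the biconditional ``$\mathcal L_1$ and $\mathcal L_2$ are GIT-equivalent iff $\op{sgn}\mu(\mathcal L_1,\lambda,x^{*})=\op{sgn}\mu(\mathcal L_2,\lambda,x^{*})$ for all pairs.'' The ``if'' direction is indeed immediate from Theorem~\ref{theorem: Hilbert-Mumford numerical stability}, but your argument for ``only if'' does not work: if $\mu(\mathcal L_1,\lambda,x^{*})>0>\mu(\mathcal L_2,\lambda,x^{*})$, a point flowing to $x^{*}$ along $\lambda$ is certainly $\mathcal L_1$-unstable, but it may also be $\mathcal L_2$-unstable, destabilized by a \emph{different} one-parameter subgroup --- instability is an existential condition over all $\lambda$, so a single sign flip need not change the semistable locus. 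Without this direction you only know that each GIT class is a \emph{union} of relatively open cells of the arrangement, and a union of cells need not have convex closure, so the fan axioms do not follow ``formally'' as you assert. The convexity of the equivalence-class closures is precisely the hard content of \cite{DH98} and \cite{Res}; it is established there by an interpolation argument (e.g. $X^{\op{ss}}(\mathcal L_t)\supseteq X^{\op{ss}}(\mathcal L_0)\cap X^{\op{ss}}(\mathcal L_1)$ for $\mathcal L_t$ on the segment joining $\mathcal L_0$ and $\mathcal L_1$, obtained by multiplying invariant sections), not by identifying classes with cells of a hyperplane arrangement.

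The second gap is that you explicitly defer the local finiteness of the walls (``I would expect to reduce it \dots to a careful analysis of how the HKKN stratification varies with $\mathcal L$''). This is the other essential ingredient, and as written it is an appeal to the literature rather than an argument; the perturbation analysis of optimal destabilizing subgroups as $\mathcal L$ varies is exactly where the finiteness statements of \cite{Kir}, \cite{Ness}, \cite{DH98} are needed and is not routine. As a citation of \cite{Res} your answer would match what the paper itself does; as a self-contained proof it is an outline with the two decisive steps missing.
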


\begin{proof}
 This is Theorem 5.2 of \cite{Res} building off Theorem 2.3 of \cite{Tha96} and Section 3 of \cite{DH98}.
\end{proof}

There is an analogous statement for affine space.

\begin{proposition} \label{proposition: affine GIT fan}
 Let $G$ be a reductive linear algebraic group acting linearly on $X:=\mathbb{A}^n$. The closures of the GIT-equivalence classes in $C^G(X)_{\R}$ form a fan.
\end{proposition}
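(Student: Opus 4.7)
The plan is to reduce to the projective statement, Theorem \ref{theorem: GIT fan}, via the compactification already employed in Corollary \ref{cor: affine GIT HKKN strat}. I would embed $\mathbb{A}^n$ into $\mathbb{P}^n$ as the complement of a hyperplane $H = \{x_0 = 0\}$ and extend the linear $G$-action to $\mathbb{P}^n$ by letting $G$ act trivially on $x_0$, via the embedding $G \hookrightarrow \op{GL}_{n+1}$, $A \mapsto \op{diag}(1, A)$. Then $H$ is a $G$-invariant divisor. Since $\op{Pic}(\mathbb{A}^n)$ is trivial, $\op{NS}^G(\mathbb{A}^n)$ is identified with the character lattice $\op{Hom}(G, \mathbb{G}_m)$, while $\op{NS}^G(\mathbb{P}^n) \cong \mathbb{Z}\cdot[\mathcal O(1)] \oplus \op{Hom}(G, \mathbb{G}_m)$. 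Restriction along $\mathbb{A}^n \hookrightarrow \mathbb{P}^n$ induces a surjection $\op{NS}^G(\mathbb{P}^n)_{\mathbb{R}} \twoheadrightarrow \op{NS}^G(\mathbb{A}^n)_{\mathbb{R}}$ collapsing the degree direction.

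The key technical step would be to compare semi-stable loci: I would show that, given a class $\chi \in C^G(\mathbb{A}^n)_{\mathbb{R}}$, for $d$ sufficiently large (depending on $\chi$) the lift $\mathcal O_{\mathbb{P}^n}(d) \otimes \chi$ is ample on $\mathbb{P}^n$ and satisfies
\[
(\mathbb{P}^n)^{\op{ss}}\bigl(\mathcal O_{\mathbb{P}^n}(d) \otimes \chi\bigr) \cap \mathbb{A}^n = (\mathbb{A}^n)^{\op{ss}}(\chi).
\]
This is verified by contrasting the two Hilbert-Mumford criteria, Theorem \ref{theorem: Hilbert-Mumford numerical stability} on $\mathbb{P}^n$ and Proposition \ref{proposition: affine HM numerical stability} on $\mathbb{A}^n$. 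The only discrepancy arises when a one-parameter subgroup $\lambda$ drives $x$ off to the hyperplane at infinity; that is, when $x^* := \lim_{\alpha \to 0} \lambda(\alpha)\cdot x \in H$. In that case $\mu(\mathcal O(1), \lambda, x^*) > 0$, because the $x_0$-coordinate of $x^*$ vanishes while $G$ acts trivially on $x_0$. Hence for $d$ large the contribution $d\cdot \mu(\mathcal O(1), \lambda, x^*)$ dominates $\mu(\chi, \lambda, x^*)$, so any such $\lambda$ already detects instability on $\mathbb{P}^n$. Meanwhile, destabilising $\lambda$'s whose limit remains in $\mathbb{A}^n$ contribute the same weight $\mu(\chi, \lambda, x^*)$ in both criteria, so the semi-stable loci coincide on $\mathbb{A}^n$.

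Once this equality is in place, the GIT equivalence classes on $\mathbb{A}^n$ are exactly the images under $\op{NS}^G(\mathbb{P}^n)_{\mathbb{R}} \twoheadrightarrow \op{NS}^G(\mathbb{A}^n)_{\mathbb{R}}$ of the intersections of the chambers of the $\mathbb{P}^n$ GIT fan with affine hyperplanes of the form $\{d = d_0\}$ for $d_0$ sufficiently large. These slices inherit a fan structure from Theorem \ref{theorem: GIT fan}, yielding the desired fan on $C^G(\mathbb{A}^n)_{\mathbb{R}}$. I expect the main obstacle to be making the ``$d$ sufficiently large'' bound uniform across chambers; I would handle this by working chamber-by-chamber over rational polyhedral cones and using rationality of the weight data to bound $\mu(\chi, \lambda, x^*)$ uniformly over the finitely many relevant destabilising $\lambda$.
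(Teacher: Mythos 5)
Your overall strategy --- compactify $\mathbb{A}^n$ to $\mathbb{P}^n$ with $G$ acting trivially on the extra coordinate $x_0$, and slice the projective GIT fan of Theorem \ref{theorem: GIT fan} --- is exactly the route the paper takes (the paper simply cites \cite{Halic} for the details). However, your key comparison of semi-stable loci is argued backwards. In the paper's conventions (Theorem \ref{theorem: Hilbert-Mumford numerical stability}: $x$ is unstable iff $\mu(\mathcal L,\lambda,x^*)>0$ for some $\lambda$), the value $\mu(\mathcal O_{\mathbb{P}^n}(1),\lambda,x^*)$ for $x=[1:v]$ is the minimal $\lambda$-weight occurring among the nonzero coordinates of the lift $(1,v)$; since $x_0$ has weight $0$, this is always $\leq 0$, and it is \emph{strictly negative} precisely when $\lambda$ drives $x$ to the hyperplane at infinity. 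So your claim that $\mu(\mathcal O(1),\lambda,x^*)>0$ in that case is false, and the conclusion you draw from it --- that such $\lambda$ ``already detects instability on $\mathbb{P}^n$'' for large $d$ --- is the opposite of what is needed. If those $\lambda$ did destabilize, then every affine-semistable point that can be sent to infinity by some one-parameter subgroup would become projectively unstable, and the two semi-stable loci would fail to agree; already for the trivial character this would wrongly exclude points of $\mathbb{A}^n$ from $(\mathbb{P}^n)^{\op{ss}}(\mathcal O(d))\cap\mathbb{A}^n$, even though $x_0^d$ is an invariant, non-vanishing section there.

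The correct mechanism is the reverse: for $\lambda$ whose limit stays in $\mathbb{A}^n$ one has $\mu(\mathcal O(1),\lambda,x^*)=0$, so the two criteria agree there (as you say); for $\lambda$ whose limit escapes to $H$ one has $\mu(\mathcal O(1),\lambda,x^*)\leq -1$, so for $d\gg 0$ the term $d\cdot\mu(\mathcal O(1),\lambda,x^*)$ is very negative and such $\lambda$ \emph{never} destabilizes. This is exactly what forces $(\mathbb{P}^n)^{\op{ss}}(\mathcal O(d)\otimes\chi)\cap\mathbb{A}^n=(\mathbb{A}^n)^{\op{ss}}(\chi)$ for large $d$: the only one-parameter subgroups that can destabilize a point of $\mathbb{A}^n$ on $\mathbb{P}^n$ are those whose limits exist in $\mathbb{A}^n$, matching Proposition \ref{proposition: affine HM numerical stability}. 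With this correction, the remaining issue you flag --- uniformity of the bound on $d$ over a chamber --- is the genuine technical content, and is handled as you suggest (and as in \cite{Halic}) using homogeneity of $\mu$ in $\lambda$ and the finiteness of the weight data of the representation.
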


\begin{proof}
 One compactifies $\mathbb{A}^n$ to $\mathbb{P}^n$, embeds $G$ into $\op{PGL}_{n+1}$ via the inclusion,
 \begin{align*}
 \op{GL}_n & \to \op{GL}_{n+1} \\
 A & \mapsto \begin{pmatrix} 1 & 0 \\ 0 & A \end{pmatrix},
\end{align*}
 and applies Theorem \ref{theorem: GIT fan}. This is carried out in \cite{Halic}.
\end{proof}

In our later applications, we will be interested in the behavior of the derived category under crossing a wall between adjacent chambers. We will consider a restricted class of variations described below.

\begin{definition}
 Let $X$ be a smooth and quasi-projective variety acted on by a reductive linear algebraic group, $G$. Let $\mathcal L_-$ and $\mathcal L_+$ be two $G$-equivariant line bundles which are both ample as line bundles. We will call the pair $(\mathcal L_-,\mathcal L_+)$ a \textbf{variation}. 
 
 For $t \in [-1,1]$, let $\mathcal L_t$ be the line, $\mathcal L_-^{\frac{1-t}{2}} \otimes \mathcal L_+^{\frac{1+t}{2}}$, in $\op{NS}^G(X)_{\R}$. We say that the variation satisfies the \textbf{DHT condition} if 
 \begin{itemize}
  \item The set, $X^{\op{ss}}(\mathcal L_t)$, is constant for $-1 \leq t < 0$ and for $0 < t \leq 1$. Denote the two subsets by $X^{\op{ss}}(-)$ and $X^{\op{ss}}(+)$. Set $X^{\op{ss}}(0) := X^{\op{ss}}(\mathcal L_0)$.
  \item For each $x \in X^{\op{ss}}(0) \setminus \left(X^{\op{ss}}(-) \cup X^{\op{ss}}(+) \right)$, the stabilizer of $x$ is isomorphic to $\mathbb{G}_m$.
  \item The set, $X^{\op{ss}}(0) \setminus \left(X^{\op{ss}}(-) \cup X^{\op{ss}}(+) \right)$, is connected.
 \end{itemize}
\end{definition}

We recall the following result from \cite{Tha96} and \cite{DH98} which is our main source of elementary wall crossings.

\begin{theorem} \label{theorem: VGIT give elementary wall crossing}
 Let $X$ be a smooth and projective variety acted on by a reductive linear algebraic group, $G$. Let $\mathcal L_-$ and $\mathcal L_+$ be two $G$-equivariant line bundles that are both ample as line bundles. Assume the variation satisfies the DHT condition. Then, there is a one-parameter subgroup, $\lambda: \mathbb{G}_m \to G$, a connected component, $Z_{\lambda}^0$, of the fixed locus, $(X^{\op{ss}}(0))^{\lambda}$, and an elementary wall crossing,
 \begin{align*}
  X^{\op{ss}}(0) & = X^{\op{ss}}(+) \sqcup S_{\lambda} \\
  X^{\op{ss}}(0) & = X^{\op{ss}}(-) \sqcup S_{-\lambda}.
 \end{align*}
\end{theorem}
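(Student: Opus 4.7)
The plan is to extract the one-parameter subgroup and fixed component from the critical locus $W := X^{\op{ss}}(0) \setminus (X^{\op{ss}}(+) \cup X^{\op{ss}}(-))$, and then verify the HKKN axioms via the Hilbert--Mumford numerical criterion. First, choose any $x_0 \in W$; by the DHT hypothesis $G_{x_0} \cong \mathbb{G}_m$, so we let $\lambda:\mathbb{G}_m \to G$ be this inclusion and let $Z_\lambda^0$ be the connected component of $(X^{\op{ss}}(0))^\lambda$ containing $x_0$. Connectedness of $W$, together with an application of Luna's étale slice theorem and upper semi-continuity of stabilizer dimension, ensures that every stabilizer of a point of $W$ is $G$-conjugate to $\lambda(\mathbb{G}_m)$. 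Hence, after replacing points by $G$-translates, $W$ is contained in $Z_\lambda^0$.

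Next, analyze the $\lambda$-weights along $Z_\lambda^0$. By Lemma \ref{lemma: weights constant on connected components of fixed locus}, $\mu(\mathcal L_t, \lambda, \cdot)$ is constant on $Z_\lambda^0$ for each $t$. Since $x_0 \in X^{\op{ss}}(0) \setminus (X^{\op{ss}}(+) \cup X^{\op{ss}}(-))$, the Hilbert--Mumford criterion (Theorem \ref{theorem: Hilbert-Mumford numerical stability}) forces $\mu(\mathcal L_0, \lambda, x_0) = 0$ and $\mu(\mathcal L_\pm, \lambda, x_0) \neq 0$. Because $\mathcal L_0$ is the midpoint between $\mathcal L_+$ and $\mathcal L_-$ in $\op{NS}^G(X)_{\R}$, the relation $2\mu(\mathcal L_0,\lambda,x) = \mu(\mathcal L_+,\lambda,x) + \mu(\mathcal L_-,\lambda,x)$ forces these nonzero weights to have opposite signs. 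We may arrange that $\mu(\mathcal L_+, \lambda, x) > 0 > \mu(\mathcal L_-, \lambda, x)$ for every $x \in Z_\lambda^0$.

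Then I would verify $X^{\op{ss}}(0) = X^{\op{ss}}(+) \sqcup S_\lambda$ as follows. For $x \in X^{\op{ss}}(0) \setminus X^{\op{ss}}(+)$, Hilbert--Mumford produces a one-parameter subgroup $\lambda'$ such that $x^* := \lim_{\alpha \to 0}\lambda'(\alpha)\cdot x$ exists and satisfies $\mu(\mathcal L_+, \lambda', x^*) > 0$. Because $x^* \in X^{\op{ss}}(0)$, we have $\mu(\mathcal L_0, \lambda', x^*) \leq 0$, and the midpoint relation then gives $\mu(\mathcal L_-, \lambda', x^*) < 0$, so $x^* \in W$. Applying the first step, $\lambda'$ is $G$-conjugate to $\lambda$ and, after a suitable translation of $x$, $x^* \in Z_\lambda^0$; thus $x \in S_\lambda$. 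The reverse inclusion is immediate from the sign of $\mu(\mathcal L_+, \lambda, x^*)$ for $x \in S_\lambda$. The second decomposition is obtained identically with $\lambda$ replaced by $-\lambda$.

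Finally, I would verify the HKKN axioms: $S_\lambda$ is closed in $X^{\op{ss}}(0)$ since its complement $X^{\op{ss}}(+)$ is open, and the isomorphism $\tau_\lambda : G \overset{P(\lambda)}{\times} Z_\lambda \to S_\lambda$ follows from Proposition \ref{proposition: P+L+BB} together with the fact that $P(\lambda)$ is precisely the subgroup of $G$ preserving the attractor $Z_\lambda$, so the natural map is a bijection on points and an isomorphism of schemes. The principal obstacle is the uniqueness in the third step -- that the destabilizing one-parameter subgroup at the wall can be chosen $G$-conjugate to our fixed $\lambda$. This is the crux of the Thaddeus \cite{Tha96} and Dolgachev--Hu \cite{DH98} VGIT analysis, and exploits precisely the connectedness of $W$ together with the uniform $\mathbb{G}_m$-stabilizer DHT condition to ensure that there is a single ``direction'' of wall-crossing to consider.
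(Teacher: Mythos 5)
The paper gives no argument here at all: its ``proof'' is a one-line citation to Proposition 4.6 of \cite{Tha96} and Lemma 4.2.4 of \cite{DH98}. What you have written is a reconstruction of the argument behind those citations, and it follows the same strategy those references use (extract $\lambda$ from the $\mathbb{G}_m$-stabilizer of a point of the critical locus $W$, use connectedness and the constancy of weights on $Z_\lambda^0$ to pin down the signs of $\mu(\mathcal L_\pm,\lambda,\cdot)$, and then identify $X^{\op{ss}}(0)\setminus X^{\op{ss}}(\pm)$ with $S_{\pm\lambda}$ via Hilbert--Mumford). So you are not taking a different route; you are filling in the route the paper outsources.

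Two of the steps you present as formal are, however, exactly the substantive content of \cite{Tha96,DH98}, and as written they have gaps. First, the assertion that $\tau_\lambda: G\overset{P(\lambda)}{\times}Z_\lambda\to S_\lambda$ is an isomorphism does not follow from Proposition \ref{proposition: P+L+BB}: injectivity requires that whenever $g\cdot z\in Z_\lambda$ for some $z\in Z_\lambda$ one has $g\in P(\lambda)$, which is Kempf's uniqueness of the adapted (destabilizing) parabolic, not the statement that $P(\lambda)$ preserves the attractor; and ``bijective on points'' yields an isomorphism of schemes only after invoking smoothness/normality and Zariski's main theorem, which you should say. Second, the inference that $\mu(\mathcal L_\pm,\lambda,x_0)\neq 0$ is ``forced'' by Hilbert--Mumford is not immediate: instability of $x_0$ for $\mathcal L_+$ only produces \emph{some} destabilizing one-parameter subgroup $\lambda'$, and the identification of $\lambda'$ with (a conjugate of) the stabilizer $\lambda$ is again the Kempf/Ness input. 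You do flag the conjugacy issue at the end as ``the crux,'' which is the honest thing to do, but the same caveat applies to the $\tau_\lambda$ step, where you currently claim it is automatic. With those two points either proved via the Kempf--Ness theory or explicitly delegated to \cite{Tha96,DH98} (as the paper itself does), the argument is fine.
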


\begin{proof}
 This is implicit in Proposition 4.6 of \cite{Tha96}. This is also Lemma 4.2.4 of \cite{DH98}. 
\end{proof}

\subsection{Comparing derived categories} \label{subsection: comparing derived categories}

We present some simple applications of Theorem \ref{theorem: elementary wall crossing} in light of the facts discussed in the previous section.

\begin{theorem} \label{theorem: VGIT and derived categories}
 Let $X$, $G$, $\mathcal L_+$, $\mathcal L_-$ satisfy the hypotheses of Theorem \ref{theorem: VGIT give elementary wall crossing} and let $w \in \op{H}^0(X,\mathcal L)^G$ for a $G$-line bundle, $\mathcal L$.  Let $\lambda: \mathbb{G}_m \to G$ be the one-parameter subgroup and $(\mathfrak{K}^+,\mathfrak{K}^-)$ be the elementary wall crossing guaranteed in the conclusions of Theorem \ref{theorem: VGIT give elementary wall crossing}. 
 
 Set $X \modmod{} - := [X^{\op{ss}}(-)/G]$ and $X \modmod{} + := [X^{\op{ss}}(+)/G]$. Assume $\mu(\mathcal L,\lambda,x) = 0$ for $x \in Z_{\lambda}^0$. Fix $d \in \Z$. 
 
 \begin{enumerate}
  \item If $t(\mathfrak{K}^+) < t(\mathfrak{K}^-)$, then there are fully-faithful functors,
  \begin{displaymath}
   \Phi^+_d: \dbcoh{X \modmod{} -, w|_{X_-}} \to \dbcoh{X \modmod{} +,w|_{X_+}},
  \end{displaymath}
  and, for $-t(\mathfrak{K}^-) + d \leq j \leq -t(\mathfrak{K}^+) + d -1$,
  \begin{displaymath}
   \Upsilon_j^+:  \dbcoh{[Z_{\lambda}^0/C(\lambda)],w_{\lambda}}_j \to \dcoh{X \modmod{} +,w|_{X_+}},
  \end{displaymath}
  and a semi-orthogonal decomposition,
  \begin{displaymath}
   \dcoh{X \modmod{} +,w|_{X_+}} = \langle \Upsilon^+_{-t(\mathfrak{K}^-)+d}, \ldots, \Upsilon^+_{-t(\mathfrak{K}^+)+d-1}, \Phi^+_d \rangle.
  \end{displaymath}
  \item If $t(\mathfrak{K}^+) = t(\mathfrak{K}^-)$, then there is an exact equivalence,
  \begin{displaymath}
   \Phi^+_d: \dcoh{X \modmod{} -,w|_{X_-}} \to \dcoh{X \modmod{} +,w|_{X_+}}.
  \end{displaymath}
  \item If $t(\mathfrak{K}^+) > t(\mathfrak{K}^-)$, then there are fully-faithful functors,
  \begin{displaymath}
   \Phi^-_d: \dcoh{X \modmod{} +,w|_{X_+}} \to \dcoh{X \modmod{} -,w|_{X_-}},
  \end{displaymath}
  and, for $-t(\mathfrak{K}^+) + d \leq j \leq -t(\mathfrak{K}^-) + d -1$,
  \begin{displaymath}
   \Upsilon_j^-:  \dcoh{[Z_{\lambda}^0/C(\lambda)],w_{\lambda}}_j \to \dcoh{X \modmod{} -,w|_{X_-}},
  \end{displaymath}
  and a semi-orthogonal decomposition,
  \begin{displaymath}
   \dcoh{X \modmod{} -,w|_{X_-}} = \langle \Upsilon^-_{-t(\mathfrak{K}^+)+d}, \ldots, \Upsilon^-_{-t(\mathfrak{K}^-)+d-1}, \Phi^-_d \rangle.
  \end{displaymath}
 \end{enumerate}
\end{theorem}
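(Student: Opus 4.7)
The plan is to deduce Theorem \ref{theorem: VGIT and derived categories} directly from Theorem \ref{theorem: elementary wall crossing} by taking the ambient variety to be $X^{\op{ss}}(0)$ rather than $X$ itself. First, I would observe that $X^{\op{ss}}(0)$ is a smooth, quasi-projective, $G$-stable open subvariety of $X$, so it is a legitimate input for Theorem \ref{theorem: elementary wall crossing}. By Theorem \ref{theorem: VGIT give elementary wall crossing}, the hypotheses of the DHT condition produce a one-parameter subgroup $\lambda$, a connected component $Z_\lambda^0$ of $(X^{\op{ss}}(0))^\lambda$, and a pair of elementary HKKN stratifications
\begin{align*}
 X^{\op{ss}}(0) & = X^{\op{ss}}(+) \sqcup S_{\lambda}, \\
 X^{\op{ss}}(0) & = X^{\op{ss}}(-) \sqcup S_{-\lambda},
\end{align*}
sharing the common fixed component $Z_\lambda^0$. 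This is exactly the data of an elementary wall crossing $(\mathfrak{K}^+,\mathfrak{K}^-)$ in the sense of Definition \ref{definition: matched wall crossing}, with $X^{\op{ss}}(+)$ and $X^{\op{ss}}(-)$ playing the roles of $X_+$ and $X_-$.

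Second, I would verify the two remaining hypotheses needed to apply Theorem \ref{theorem: elementary wall crossing}. The condition $\mu(\mathcal L,\lambda,x) = 0$ for $x \in Z_{\lambda}^0$ is assumed outright, so $w|_{X^{\op{ss}}(0)}$ restricts to a $G$-invariant section of $\mathcal L|_{X^{\op{ss}}(0)}$ whose $\lambda$-weight vanishes on $Z_\lambda^0$. The slightly less trivial point is that $S_\lambda^0 \subset X^{\op{ss}}(0)$ must admit a $G$-invariant affine open cover. This follows from GIT: by Definition \ref{definition: semi-stable,stable,unstable loci}, $X^{\op{ss}}(0)$ is covered by the $G$-invariant affine opens $X_f$ associated to $G$-invariant sections $f \in \op{H}^0(X,\mathcal L_0^n)^G$, and restricting this cover to the closed $G$-invariant subset $S_\lambda^0 \subset X^{\op{ss}}(0)$ gives the required cover of $S_\lambda^0$.

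Finally, with all hypotheses in hand, I would simply invoke Theorem \ref{theorem: elementary wall crossing} applied to the gauged LG-model $([X^{\op{ss}}(0)/G], w|_{X^{\op{ss}}(0)})$ with its elementary wall crossing $(\mathfrak{K}^+,\mathfrak{K}^-)$. Each of the three cases, $t(\mathfrak{K}^+) < t(\mathfrak{K}^-)$, $t(\mathfrak{K}^+) = t(\mathfrak{K}^-)$, and $t(\mathfrak{K}^+) > t(\mathfrak{K}^-)$, immediately produces the fully-faithful functors $\Phi^\pm_d$, the wall-contribution functors $\Upsilon^\pm_j$ into $\dcoh{[Z_\lambda^0/C(\lambda)], w_\lambda}_j$, and the asserted semi-orthogonal decompositions. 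The essential images $X \modmod{} \pm = [X^{\op{ss}}(\pm)/G]$ agree with the notation of the theorem, so the conclusion is exactly the statement being proved.

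I do not foresee a genuine obstacle; the work has been done in Theorem \ref{theorem: elementary wall crossing} and Theorem \ref{theorem: VGIT give elementary wall crossing}, and the only mild subtlety is confirming the $G$-invariant affine cover of $S_\lambda^0$, which is a direct consequence of the definition of semi-stability. If one wished, the statement could be phrased for $\dcoh{[X^{\op{ss}}(0)/G], w|_{X^{\op{ss}}(0)}}$ as an intermediate step, but this is not necessary for the conclusion.
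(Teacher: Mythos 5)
Your proposal is correct and follows essentially the same route as the paper: invoke Theorem \ref{theorem: VGIT give elementary wall crossing} to produce the elementary wall crossing on $X^{\op{ss}}(0)$, check that $S_\lambda^0$ inherits a $G$-invariant affine open cover from the $G$-invariant affine opens $X_f$ covering the semi-stable locus, and then apply Theorem \ref{theorem: elementary wall crossing}. The paper's proof is just a terser version of exactly this argument.
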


\begin{proof}
 Theorem \ref{theorem: VGIT give elementary wall crossing} guarantees that all of the hypotheses of Theorem \ref{theorem: elementary wall crossing}, except the $G$-invariant cover of $S^0_{\lambda}$, hold. Since semi-stable loci have $G$ invariant open affine covers by definition, the condition on $S_{\lambda}^0$ also holds.
\end{proof}

\begin{remark}
 We can treat the case of the derived categories of coherent sheaves on $X \modmod{} +$ and $X \modmod{} -$ by appealing to Corollary \ref{corollary: Isik}.
\end{remark}

\begin{remark} \label{remark: walls as GIT quotients}
Recall from Lemma~\ref{lemma: wall compositions are same} that there is a smooth stack, $Y_{\lambda} = [Z_{\lambda}^0/G_{\lambda}]$, and an equivalence,
\[
  \dcoh{Y_{\lambda},w_{\lambda}} \cong \dcoh{[Z_{\lambda}^0/C(\lambda)],w_{\lambda}}_0.
\]
Furthermore, if there is a character, $\chi: C(\lambda) \to \mathbb{G}_m$, such that
 \begin{displaymath}
  \chi \circ \lambda(\alpha) = \alpha,
 \end{displaymath}
Then by Lemma~\ref{lemma: wall compositions are same}
\[
  \dcoh{Y_{\lambda},w_{\lambda}} \cong \dcoh{[Z_{\lambda}^0/C(\lambda)],w_{\lambda}}_l.
\]
for all $l$.

The $Y_{\lambda}$ are themselves GIT quotients. Indeed, if we let $X^{\lambda}_0$ be the appropriate connected component of the fixed locus in $X$ of our one-parameter subgroup, $\lambda$, then, $Y_{\lambda} = X^{\lambda}_0 \modmod{\mathcal L_0} C(\lambda)$. See Remark 9.1 of \cite{Ness}. This provides a pleasant inductive structure to which we will later appeal.
\end{remark}

Another, slightly different, situation of import in this paper is the following.

\begin{proposition} \label{proposition: affine GIT and der cat}
 Let $G$ be a reductive linear algebraic group acting linearly on affine space, $X:=\mathbb{A}^n$. Let $\chi_-$ and $\chi_+$ be two characters of $G$. Let $\mathcal L_- := \mathcal O(\chi_-)$ and $\mathcal L_+ := \mathcal O(\chi_+)$ be the corresponding variation. Assume the variation satisfies the DHT condition. 
 
 Then, there exists a one-parameter subgroup, $\lambda: \mathbb{G}_m \to G$, and an elementary wall crossing $(\mathfrak{K}^+,\mathfrak{K}^-)$,
 \begin{align*}
  X^{\op{ss}}(0) & = X^{\op{ss}}(+) \sqcup S_{\lambda} \\
  X^{\op{ss}}(0) & = X^{\op{ss}}(-) \sqcup S_{-\lambda}.
 \end{align*}
 
 Let $w$ be a $G$ semi-invariant regular function on $X$ transforming under the character, $\chi$. Assume that $\chi \circ \lambda$ is constant. Set $X \modmod{} - := [X^{\op{ss}}(-)/G]$ and $X \modmod{} + := [X^{\op{ss}}(+)/G]$. Fix $d \in \Z$. 
 
 \begin{enumerate}
  \item If $t(\mathfrak{K}^+) < t(\mathfrak{K}^-)$, then there are fully-faithful functors,
  \begin{displaymath}
   \Phi^+_d: \dcoh{X \modmod{} - ,w_{-}} \to \dcoh{X \modmod{} +,w_{+}},
  \end{displaymath}
  and, for $-t(\mathfrak{K}^-) + d \leq j \leq -t(\mathfrak{K}^+) + d -1$,
  \begin{displaymath}
   \Upsilon_j^+: \dcoh{[Z_{\lambda}^0/C(\lambda)],w_{\lambda}}_j \to \dcoh{X \modmod{} +,w_{+}},
  \end{displaymath}
  and a semi-orthogonal decomposition,
  \begin{displaymath}
   \dcoh{X \modmod{} +,w_{+}} = \langle \Upsilon^+_{-t(\mathfrak{K}^-)+d}, \ldots, \Upsilon^+_{-t(\mathfrak{K}^+)+d-1}, \Phi^+_d \rangle.
  \end{displaymath}
  \item If $t(\mathfrak{K}^+) = t(\mathfrak{K}^-)$, then there is an exact equivalence,
  \begin{displaymath}
   \Phi^+_d: \dcoh{X \modmod{} -,w_{-}} \to \dcoh{X \modmod{} +,w_{+}}.
  \end{displaymath}
  \item If $t(\mathfrak{K}^+) > t(\mathfrak{K}^-)$, then there are fully-faithful functors,
  \begin{displaymath}
   \Phi^-_d: \dcoh{X \modmod{} +,w_{+}} \to \dcoh{X \modmod{} -,w_{-}},
  \end{displaymath}
  and, for $-t(\mathfrak{K}^+) + d \leq j \leq -t(\mathfrak{K}^-) + d -1$,
  \begin{displaymath}
   \Upsilon_j^-: \dcoh{[Z_{\lambda}^0/C(\lambda)],w_{\lambda}}_j \to \dcoh{X \modmod{} -,w_{-}},
  \end{displaymath}
  and a semi-orthogonal decomposition,
  \begin{displaymath}
   \dcoh{X \modmod{} -,w_{-}} = \langle \Upsilon^-_{-t(\mathfrak{K}^+)+d}, \ldots, \Upsilon^-_{-t(\mathfrak{K}^-)+d-1}, \Phi^-_d \rangle.
  \end{displaymath}
 \end{enumerate}
\end{proposition}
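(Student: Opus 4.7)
The strategy is to reduce Proposition~\ref{proposition: affine GIT and der cat} to Theorem~\ref{theorem: elementary wall crossing} by producing the required elementary wall crossing from the given DHT variation on affine space. The structure of the argument parallels the proof of Theorem~\ref{theorem: VGIT and derived categories}, with the only substantive change being that the existence of the elementary wall crossing is obtained via the compactification trick of Corollary~\ref{cor: affine GIT HKKN strat} rather than via Theorem~\ref{theorem: VGIT give elementary wall crossing} directly.

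First, I would produce the one-parameter subgroup $\lambda$ and the elementary wall crossing. Compactify $X = \mathbb{A}^n$ to $\mathbb{P}^n$, embed $G \hookrightarrow \op{PGL}_{n+1}$ via the block inclusion used in Corollary~\ref{cor: affine GIT HKKN strat}, and extend the $G$-linearizations $\mathcal L_\pm$ to ample $G$-linearizations on $\mathbb{P}^n$ as in \cite{Halic}. The DHT assumption on the variation $(\mathcal L_-,\mathcal L_+)$ on $X$ pulls through to an analogous statement for the extended linearizations on $\mathbb{P}^n$ on the open subvariety obtained by restricting the semistable loci back to $\mathbb{A}^n$. Invoking Theorem~\ref{theorem: VGIT give elementary wall crossing} on $\mathbb{P}^n$ and intersecting with $X = \mathbb{A}^n$ yields the desired elementary wall crossing
\begin{align*}
 X^{\op{ss}}(0) & = X^{\op{ss}}(+) \sqcup S_{\lambda} \\
 X^{\op{ss}}(0) & = X^{\op{ss}}(-) \sqcup S_{-\lambda},
\end{align*}
along with a connected component $Z_\lambda^0 \subset X^\lambda$.

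Next, I would verify the two remaining hypotheses of Theorem~\ref{theorem: elementary wall crossing}. The $G$-line bundle carrying $w$ is $\mathcal L = \mathcal O_X(\chi)$, and $\mu(\mathcal L,\lambda,x)$ at any $x \in Z_\lambda^0$ is, by Definition~\ref{definiton: weights of vector bundle and Mumford stability function}, precisely the integer character $\chi \circ \lambda : \mathbb{G}_m \to \mathbb{G}_m$. The hypothesis that $\chi \circ \lambda$ is constant forces this integer to be zero, giving $\mu(\mathcal L,\lambda,x) = 0$ as required. For the $G$-invariant affine cover of $S_\lambda^0$: since $S_\lambda^0 \subset X^{\op{ss}}(0)$, and by definition semi-stable loci are covered by $G$-invariant basic affine opens $X_f$ for $f \in \op{H}^0(X,\mathcal L_0^n)^G$, restricting this cover to $S_\lambda^0$ produces the required $G$-invariant affine open cover.

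With these checks in place, the conclusion is immediate: apply Theorem~\ref{theorem: elementary wall crossing} to the elementary wall crossing $(\mathfrak{K}^+,\mathfrak{K}^-)$ on $X^{\op{ss}}(0)$, noting that $[X^{\op{ss}}(0)\modmod{}\pm] = [X^{\op{ss}}(\pm)/G] = X\modmod{}\pm$, and take the $G$-invariant restrictions $w_\pm$ of $w$. The three cases distinguished by the sign of $t(\mathfrak{K}^+) - t(\mathfrak{K}^-)$ transfer verbatim, producing the fully-faithful functors $\Phi_d^\pm$, the wall-crossing functors $\Upsilon_j^\pm$ from Section~\ref{section: wall contributions}, and the claimed semi-orthogonal decompositions. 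I do not anticipate any real obstacle: every ingredient (the affine GIT fan, the affine HKKN compactification argument, the main elementary wall-crossing theorem, and the translation of $\mu(\mathcal L,\lambda,x)=0$ into the semi-invariance condition on $w$) is already in place; the plan is essentially bookkeeping to funnel the affine setup into Theorem~\ref{theorem: elementary wall crossing}.
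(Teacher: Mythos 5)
Your proposal is correct and follows essentially the same route as the paper: the paper's proof is the one-line "compactify $\mathbb{A}^n$ to $\mathbb{P}^n$, embed $G$ into $\op{PGL}_{n+1}$ via the block inclusion, and apply Theorems \ref{theorem: VGIT give elementary wall crossing} and \ref{theorem: VGIT and derived categories}." Your additional verifications (that $\chi\circ\lambda$ constant gives $\mu(\mathcal L,\lambda,x)=0$, and that $S_\lambda^0$ inherits a $G$-invariant affine cover from the semi-stable locus) are exactly the checks the paper performs inside the proof of Theorem \ref{theorem: VGIT and derived categories}, so this is faithful bookkeeping rather than a different argument.
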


\begin{proof}
 One compactifies $\mathbb{A}^n$ to $\mathbb{P}^n$, embeds $G$ into $\op{PGL}_{n+1}$ via the inclusion,
 \begin{align*}
 \op{GL}_n & \to \op{GL}_{n+1} \\
 A & \mapsto \begin{pmatrix} 1 & 0 \\ 0 & A \end{pmatrix},
\end{align*}
 and applies Theorems \ref{theorem: VGIT give elementary wall crossing} and \ref{theorem: VGIT and derived categories}.
\end{proof}

\subsection{D-equivalence versus K-equivalence under elementary wall crossings}

\begin{definition}
 Let $Y_1$ and $Y_2$ be smooth projective varieties. We say that $Y_1$ \textbf{$D$-dominates} $Y_2$ if $Y_1$ and $Y_2$ are birational and $\dbcoh{Y_2}$ is equivalent to an admissible subcategory of $\dbcoh{Y_1}$. We write $Y_1 \geq_D Y_2$. We say that $Y_1$ and $Y_2$ are \textbf{$D$-equivalent} if they are birational and there is an equivalence, $\dbcoh{Y_1} \cong \dbcoh{Y_2}$. We write $Y_1 =_D Y_2$.
 
 We say that $Y_1$ \textbf{$K$-dominates} $Y_2$ if there exists a smooth projective variety, $Z$, and birational morphisms, $f_1: Z \to Y_1$ and $f_2: Z \to Y_2$, such that the difference of pullbacks of the canonical divisors is effective, $f_1^*K_{Y_1} - f_2^*K_{Y_2} \geq 0$. If $Y_1$ $K$-dominates $Y_2$, then we write $Y_1 \geq_K Y_2$.  We say that $Y_1$ and $Y_2$ are \textbf{$K$-equivalent} if $f_1^*K_{X_1}$ and $f_2^*K_{Y_2}$ are linearly equivalent. We write $Y_1 =_K Y_2$ for $K$-equivalence.
\end{definition}

In \cite{KawD-K}, Kawamata formulates the following conjecture.
\begin{conjecture} \label{conjecture: D=K}
 Two smooth projective varieties are $D$-equivalent if and only if they are $K$-equivalent.
\end{conjecture}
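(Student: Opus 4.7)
The statement is Kawamata's $D$-$K$ conjecture and remains open in general; any honest plan must proceed by reducing the problem to situations the VGIT framework of this paper can handle. The engine is Theorem~\ref{theorem: VGIT and derived categories} together with Lemma~\ref{lemma: formula for mu}: for an elementary VGIT wall crossing, $X \modmod{} - =_D X \modmod{} +$ iff $\mu = 0$ iff the numerical discrepancy $\mu(\omega_X^{-1},\lambda,x)$ across the wall vanishes, which in turn is equivalent to $K$-equivalence of $X \modmod{}-$ and $X \modmod{}+$ (this is the Corollary that the paper actually proves). The plan is to leverage this local statement together with factorization theorems to attack each direction of the conjecture separately.

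For $K \Rightarrow D$, I would start with birational smooth projective $Y_1, Y_2$ and a common smooth model $f_i : Z \to Y_i$ witnessing $f_1^*K_{Y_1} \sim f_2^*K_{Y_2}$, invoke the Weak Factorization Theorem of Abramovich--Karu--Matsuki--W\l odarczyk to write $Y_1 \dashrightarrow Y_2$ as a chain of smooth blowups, blowdowns and small modifications, and then use work of Hu--Keel and W\l odarczyk to realize each elementary step as an elementary GIT wall crossing on a smooth ambient variety $X$ with reductive $G$-action. Theorem~\ref{theorem: VGIT and derived categories} then applies at every step; the aim would be to refine the factorization so that each wall crossing is itself a $K$-equivalence (each $\mu = 0$), so that case~(2) of Theorem~\ref{theorem: VGIT and derived categories} yields a derived equivalence at every step, and composing gives $\dbcoh{Y_1} \cong \dbcoh{Y_2}$.

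For $D \Rightarrow K$, my approach would be via kernels. Any equivalence between smooth projective varieties is Fourier--Mukai, $\Phi \cong \Phi_{\mathcal P}$ with $\mathcal P \in \dbcoh{Y_1 \times Y_2}$, and intertwines the Serre functors: $\Phi \circ (- \otimes \omega_{Y_1}[\dim Y_1]) \cong (- \otimes \omega_{Y_2}[\dim Y_2]) \circ \Phi$. This forces a compatibility on canonical bundles that I would try to geometrize. Resolving the support of $\mathcal P$ by a smooth variety $Z$ mapping birationally to $Y_1$ and $Y_2$, one would hope to upgrade the Serre-functor compatibility, combined with rigidity properties of the kernel $\mathcal P$, to the linear equivalence $f_1^*K_{Y_1} \sim f_2^*K_{Y_2}$ on $Z$. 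Realizing $\Phi$ (or a suitable modification of it by shifts, twists, and spherical functors attached to wall strata as in Theorem~\ref{theorem: VGIT and derived categories}) as a composition of elementary VGIT wall-crossing equivalences would then close the loop.

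The main obstacle is exactly the gap between the local statement the paper proves and the global one the conjecture asserts. In the $K \Rightarrow D$ direction, a generic factorization of a $K$-equivalence passes through intermediate wall crossings with $\mu \neq 0$, yielding only strict semi-orthogonal decompositions (fully-faithful but not essentially surjective functors) that need not compose to an equivalence even when the individual $\mu$'s telescope to zero; one needs a very fine factorization theorem which, to my knowledge, is not available. In the $D \Rightarrow K$ direction, there is no known mechanism forcing an abstract Fourier--Mukai kernel between birational varieties to arise from a chain of VGIT wall crossings, so upgrading the Serre-functor compatibility to a linear equivalence of canonical pullbacks is genuinely the hard step. Realistically, the output of this plan is the corollary the paper does prove (coincidence of $D$- and $K$-equivalence for elementary VGIT wall crossings) together with a precise identification of what further input --- a finer factorization theorem on the geometric side, or a rigidity/descent theorem for Fourier--Mukai kernels under birational constraints --- would be needed to reach the full conjecture.
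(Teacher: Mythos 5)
The statement you were assigned is a conjecture; the paper contains no proof of it, and in fact records that it is \emph{false} as literally stated: the remark following Conjecture \ref{conjecture: D=K} cites Uehara's example \cite{Ue04} of derived-equivalent minimal elliptic surfaces that are not $K$-equivalent, so the implication ``$D$-equivalent $\Rightarrow$ $K$-equivalent'' fails in general. This is the one concrete gap in your proposal: the second half of your plan --- upgrading the Serre-functor compatibility of a Fourier--Mukai kernel to a linear equivalence of pulled-back canonical divisors --- is not merely the ``genuinely hard step'' you call it but is impossible in general, and no rigidity or descent theorem for kernels can close it. Your assessment of the $K \Rightarrow D$ direction (weak factorization produces intermediate wall crossings with $\mu \neq 0$, and only a much finer factorization theorem could force each elementary step to be a $\mu=0$ crossing) is accurate and consistent with what the paper does and does not claim.

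What the paper actually proves is precisely the local statement you identify as the realistic output: Proposition \ref{proposition: D=K for elementary wall crossings} and Corollary \ref{corollary: D=K for elementary wall crossing}, asserting that $D$-domination and $K$-domination coincide for quotients related by a single DHT wall crossing with $G$ acting freely on both semistable loci. The forward implication is as you describe: $\mu \geq 0$ yields the fully faithful functor of Theorem \ref{theorem: VGIT and derived categories}, and the discrepancy is computed as $f_+^*K_{X\modmod{}+} - f_-^*K_{X\modmod{}-} = \mu E$ via Thaddeus's identification of the fiber product over the central quotient as a common blowup. But for the converse the paper does not argue through kernels at all: when $\mu < 0$ it rules out a $D$-domination in the wrong direction by additivity of Hochschild homology under semi-orthogonal decomposition (Theorem 7.3 of \cite{Kuz09a}), since the nontrivial wall components $\dbcoh{Y_\lambda}$ force a strict inequality of Hochschild dimensions. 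If you want a converse mechanism in your write-up, that numerical obstruction is the correct tool, not Serre-functor compatibility of a Fourier--Mukai kernel.
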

The above conjecture was proven for toroidal flips and toroidal divisorial contractions in \cite{Kaw05}. In addition, derived equivalence of birational Calabi-Yau threefolds is proven in \cite{Bridgeland-flops}. However, we thank Kawamata for pointing out the counterexample in \cite{Ue04} illustrating that $D$-equivalence may not always imply $K$-equivalence.  On the other hand, the full statement of the conjecture is true in many cases, including our situation.

We can combine the results on the birational geometry of elementary GIT wall crossings due to Thaddeus \cite{Tha96} and Dolgachev-Hu \cite{DH98} with Theorem \ref{theorem: VGIT and derived categories} to prove that $D$-domination and $K$-domination are equivalent when $Y_1$ and $Y_2$ are related by an elementary GIT wall crossing. 

Let $X$ be a smooth and proper variety acted on by a reductive linear algebraic group, $G$. Let $\mathcal L_-$ and $\mathcal L_+$ be two $G$-equivariant line bundles. Denote the the GIT quotients by
\begin{align*}
 X \modmod{} - & := X \modmod{\mathcal L_-} G \\
 X \modmod{} + & := X \modmod{\mathcal L_+} G.
\end{align*}

\begin{proposition} \label{proposition: D=K for elementary wall crossings}
 Assume that $G$ acts freely on both $X^{\op{ss}}(+)$ and $X^{\op{ss}}(-)$ and that $(\mathcal L_-,\mathcal L_+)$ is a DHT variation. Then, 
 \begin{enumerate}
  \item $X \modmod{} - \geq_K X \modmod{} +$ if and only if $\mu \leq 0$.
  \item $X \modmod{} + \geq_K X \modmod{} -$ if and only if $\mu \geq 0$.
 \end{enumerate}
\end{proposition}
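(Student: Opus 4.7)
The plan is to exhibit a common smooth resolution $Z$ of $X\modmod{}+$ and $X\modmod{}-$ with a single irreducible effective exceptional divisor $E$, and to prove that
\[
f_+^* K_{X\modmod{}+} - f_-^* K_{X\modmod{}-} = \mu \cdot E.
\]
Both statements then follow at once: effectiveness of $\pm(f_+^* K_{X\modmod{}+} - f_-^* K_{X\modmod{}-})$ on $Z$ amounts to $\pm \mu \geq 0$, and conversely, any candidate K-domination witnessed on some $Z'$ pulls back to a further common resolution of $Z$ and $Z'$, where the difference class is identified with $\pm \mu E$, forcing the sign of $\mu$.

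First I would invoke Theorem~\ref{theorem: VGIT give elementary wall crossing} to extract $\lambda$, the connected fixed component $Z_\lambda^0$, and the HKKN stratifications. Freeness of the $G$-action on $X^{\op{ss}}(\pm)$ forces every $\lambda$-weight on $N_{Z_\lambda^0 | X}$ to be $\pm 1$: an infinitesimal displacement from $x \in Z_\lambda^0$ along a normal vector of weight $w$ has $\lambda$-stabilizer $\mu_{|w|}$, and such a displacement belongs to $X^{\op{ss}}(+)$ or $X^{\op{ss}}(-)$ according to the sign of $w$, so freeness forces $|w|=1$. Write $p$ and $q$ for the ranks of the $+1$ and $-1$ weight subbundles $N^{\pm} \subset N_{Z_\lambda^0 | X}$.

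By Luna's slice theorem at $x \in Z_\lambda^0$, an \'etale neighborhood of $G \cdot x$ in $X$ is $G$-equivariantly isomorphic to $G \times^{\lambda}(T_x Z_\lambda^0 \oplus N^+_x \oplus N^-_x)$, so neighborhoods in $X\modmod{}\pm$ are \'etale-modeled on the toric wall crossing of $\mathbb{G}_m$ acting on $\mathbb{A}^{p+q}$ with weights $(+1^p, -1^q)$. Under the convention $X^{\op{ss}}(0) = X^{\op{ss}}(+) \sqcup S_\lambda$ with $S_\lambda = Z_\lambda = \{y=0\}$, the local chambers are
\[
(X\modmod{}+)_{\op{loc}} \cong \op{Tot}(\mathcal O(-1)^{\oplus p} \to \mathbb{P}^{q-1}), \qquad (X\modmod{}-)_{\op{loc}} \cong \op{Tot}(\mathcal O(-1)^{\oplus q} \to \mathbb{P}^{p-1}),
\]
and each admits a smooth resolution by blowing up its zero section; the two resolutions coincide with the blow-up of the cone point of the wall's Mumford quotient $\op{Spec}\, k[x_i y_j]$, with common exceptional divisor $\mathbb{P}^{q-1}\times\mathbb{P}^{p-1}$. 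These local resolutions glue over $Z_\lambda^0/G_\lambda$, either via Thaddeus's master-space construction (Section~5 of~\cite{Tha96}) applied to an interpolating linearization on $X \times \mathbb{P}^1$, or directly by blowing up $X\modmod{}+$ along the smooth subvariety $(S_{-\lambda}^0 \cap X^{\op{ss}}(+))/G$ and verifying via the slice theorem that this simultaneously resolves the rational map to $X\modmod{}-$. The resulting $E \subset Z$ is smooth and connected (as $Z_\lambda^0/G_\lambda$ is), hence irreducible.

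Finally, the standard blow-up discrepancy formula yields
\[
K_Z = f_+^* K_{X\modmod{}+} + (p-1)E = f_-^* K_{X\modmod{}-} + (q-1)E,
\]
so $f_+^* K_{X\modmod{}+} - f_-^* K_{X\modmod{}-} = (q-p)E$. By Lemma~\ref{lemma: formula for mu}, $\mu = \mu(\omega_X^{-1}, \lambda, x)$, and under the sign convention of Definition~\ref{definiton: weights of vector bundle and Mumford stability function} (weights on the geometric vector bundle, opposite in sign to those on the sheaf) this equals $q - p$, since the $\lambda$-weights on $T_x X$ are $0$ on $T_{Z_\lambda^0}|_x$, $+1$ with multiplicity $p$ on $N^+_x$, and $-1$ with multiplicity $q$ on $N^-_x$. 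The main obstacle will be the global gluing step: confirming that the local toric resolutions genuinely assemble into a smooth common resolution $Z$ with a single irreducible exceptional divisor (rather than several components or a weighted blow-up). Thaddeus's master-space construction addresses this; the remainder is sign bookkeeping between $S_\lambda$ versus $S_{-\lambda}$, the identification of $X^{\op{ss}}(\pm)$, and the sign in $\mu = \mu(\omega_X^{-1}, \lambda, x)$.
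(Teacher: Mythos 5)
Your proposal is correct and follows the same strategy as the paper: produce a common resolution $Z$ with one exceptional divisor $E$, show $f_+^*K_{X \modmod{} +} - f_-^*K_{X \modmod{} -} = \mu E$ via the blow-up discrepancy formula and the fact that all $\lambda$-weights on the normal bundles are $\pm 1$, and get the converse from the independence of $K$-domination on the choice of resolution. The one point you flag as the ``main obstacle'' --- globalizing the local toric resolutions --- is exactly where the paper takes a shortcut you should adopt: it defines $Z$ as the fiber product $X \modmod{} - \times_{\bar{Z}} X \modmod{} +$ over Mumford's GIT quotient $\bar{Z}$ at the wall linearization $\mathcal L_0$, and cites Theorem 4.8 of Thaddeus for the statement that the two projections are precisely the blow-ups of $X \modmod{} \pm$ along the (smooth) images of $S_{\mp\lambda} \setminus S_{\lambda}^0$; likewise the $\pm 1$ weight statement is Proposition 4.9 of the same paper rather than something you need to re-derive from Luna slices (though your freeness argument for it is sound). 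Two small corrections: your proposed blow-up center $(S^0_{-\lambda} \cap X^{\op{ss}}(+))/G$ is empty, since $S^0_{-\lambda} = S^0_{\lambda}$ is exactly what gets removed from $X^{\op{ss}}(0)$ to form $X^{\op{ss}}(+)$ --- the intended center is $(S_{-\lambda} \setminus S^0_{\lambda}) \cap X^{\op{ss}}(+)$ modulo $G$; and you should double-check that your $q-p$ agrees with the paper's $\mu = c_+ - c_- = \op{codim}\, S_{\lambda} - \op{codim}\, S_{-\lambda}$ (it does, since $N_{S_{\lambda}|X}$ is the weight $-1$ part of rank $q$), so the sign bookkeeping closes.
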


\begin{proof}
 Let $\lambda$ be the one-parameter subgroup controlling the wall crossing of Theorem \ref{theorem: VGIT give elementary wall crossing}. By Proposition 4.9 of \cite{Tha96}, the weights of $\lambda$ on the normal bundle to $S_{\lambda}$ are all $-1$ and the weights of $\lambda$ on the normal bundle to $S_{-\lambda}$ are all $+1$. 

 Let $\bar{Z}$ be the good moduli space of $X \modmod{\mathcal L_0} G$. This is Mumford's GIT quotient, \cite{Alper}. Set
 \begin{displaymath}
  Z := X \modmod{} - \times_{\bar{Z}} X \modmod{} +,
 \end{displaymath}
 and denote the two projections by
 \begin{align*}
  f_- & : Z \to X \modmod{} - \\
  f_+ & : Z \to X \modmod{} +. 
 \end{align*}
 By Theorem 4.8 of \cite{Tha96}, $f_-$ is isomorphic to the blow up of $X \modmod{} -$ along $S_{\lambda} \modmod{} - := (S_{\lambda} \setminus S_{\lambda}^0) \modmod{\mathcal L_-} G$ and $f_+$ is isomorphic to the blow up of $X \modmod{} +$ along $S_{-\lambda} \modmod{} + := (S_{-\lambda} \setminus S_{\lambda}^0) \modmod{\mathcal L_-} G $. Thus,
 \begin{align*}
  f_-^*K_{X \modmod{} -} & = K_Z + (\op{codim}_{X \modmod{} -} (S_{\lambda} \modmod{} -) -1)E \\
  f_+^*K_{X \modmod{} +} & = K_Z + (\op{codim}_{X \modmod{} +} (S_{-\lambda} \modmod{} +) -1)E
 \end{align*}
 where $E$ is the common exceptional divisor of $f_-$ and $f_+$. Since $G$ acts freely on both semi-stable loci, we have
 \begin{align*}
  c_+ & := \op{codim}_{X \modmod{} -} (S_{\lambda} \modmod{} -) = \op{codim}_X S_{\lambda} \\
  c_- & := \op{codim}_{X \modmod{} +} (S_{-\lambda} \modmod{} +) = \op{codim}_X S_{-\lambda}.
 \end{align*}
 As the weights of $\lambda$ are all $\pm 1$, we have $\mu = c_+ - c_-$. Thus,
 \begin{displaymath}
  f_+^*K_{X \modmod{} +} - f_-^*K_{X \modmod{} -} = \mu E. 
 \end{displaymath}
 Thus, $\mu \geq 0$ implies that $X \modmod{} + \geq_K X \modmod{} -$ and $\mu \leq 0$ implies that $X \modmod{} + \leq_K X \modmod{} -$. 
 
 As $K$-domination is independent of the choice of birational resolution, $X \modmod{} - \leftarrow Z' \to X \modmod{} +$, if we assume that $X \modmod{} + \geq_K X \modmod{} -$, then $f_+^*K_{X \modmod{} +} - f_-^*K_{X \modmod{} -} \geq 0$ and $\mu \geq 0$. We get a similar statement with the roles reversed.  
\end{proof}

\begin{corollary} \label{corollary: D=K for elementary wall crossing}
 Propagating the assumptions from Proposition \ref{proposition: D=K for elementary wall crossings}, $X \modmod{} +$ $D$-dominates $X \modmod{} -$ if and only if $X \modmod{} +$ $K$-dominates $X \modmod{} -$.
\end{corollary}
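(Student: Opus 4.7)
The plan is to combine Proposition \ref{proposition: D=K for elementary wall crossings} with Theorem \ref{theorem: VGIT and derived categories}. Since the proposition identifies $X \modmod{} + \geq_K X \modmod{} -$ with the numerical condition $\mu \geq 0$, the corollary reduces to showing that $X \modmod{} + \geq_D X \modmod{} -$ is also equivalent to $\mu \geq 0$. Birationality of the two quotients is automatic, since both are birational to Mumford's GIT quotient $\bar Z$ at the wall, as recorded in the proof of the proposition.

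For the direction $\mu \geq 0 \Rightarrow X \modmod{} + \geq_D X \modmod{} -$, I would invoke Theorem \ref{theorem: VGIT and derived categories}. When $\mu > 0$, case (1) realizes $\dbcoh{X \modmod{} -}$ as the last term of a semi-orthogonal decomposition of $\dbcoh{X \modmod{} +}$, and hence as an admissible subcategory; when $\mu = 0$, case (2) gives an outright equivalence.

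For the converse, suppose an admissible fully-faithful embedding $\Psi: \dbcoh{X \modmod{} -} \hookrightarrow \dbcoh{X \modmod{} +}$ witnesses $X \modmod{} + \geq_D X \modmod{} -$, and argue by contradiction assuming $\mu < 0$. Then Theorem \ref{theorem: VGIT and derived categories}(3) gives
\[
\dbcoh{X \modmod{} -} = \langle \Upsilon^-_{\mu+d+1}, \ldots, \Upsilon^-_{d}, \Phi^-_d \rangle,
\]
so $\Phi^-_d \circ \Psi : \dbcoh{X \modmod{} -} \hookrightarrow \dbcoh{X \modmod{} -}$ is an admissible self-embedding whose semi-orthogonal complement contains the nonzero subcategory generated by the images of $\Upsilon^-_{\mu+d+1}, \ldots, \Upsilon^-_{d}$.

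The main obstacle is to convert this self-embedding into a concrete contradiction, which I would do using additivity of Hochschild homology under semi-orthogonal decompositions. Since $G$ acts freely on the semi-stable loci, $X \modmod{} -$ is a smooth projective variety, so its Hochschild homology is a finite-dimensional $k$-vector space. Additivity then forces the Hochschild homology of the complement of $\Phi^-_d \circ \Psi$ to vanish. On the other hand, by Remark \ref{remark: walls as GIT quotients} each $\Upsilon^-_j$ is equivalent, up to a twist, to the derived category of the nonempty smooth proper DM stack $Y_\lambda = [Z_\lambda^0/G_\lambda]$, and $HH_0(Y_\lambda)$ contains the nonzero class of $\mathcal O_{Y_\lambda}$ since $H^0(Y_\lambda, \mathcal O_{Y_\lambda}) \neq 0$. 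This contradicts the required vanishing and forces $\mu \geq 0$, completing the proof.
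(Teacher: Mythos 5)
Your proof is correct and follows essentially the same route as the paper: one direction via Theorem \ref{theorem: VGIT and derived categories}, the other via additivity of Hochschild homology under semi-orthogonal decompositions together with the nonvanishing of $\op{HH}_*(Y_{\lambda})$; your repackaging of the contradiction as a proper admissible self-embedding of $\dbcoh{X \modmod{} -}$ is equivalent to the paper's direct comparison of $\op{dim}\op{HH}_*$ of the two quotients. The only nit is that identifying every $\Upsilon^-_j$ with a twist of $\dbcoh{Y_{\lambda}}$ requires the character hypothesis of Lemma \ref{lemma: wall compositions are same}; it suffices (as the paper does) to choose $d$ so that the weight-zero component $\dbcoh{Y_{\lambda}}$ itself appears in the complement.
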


\begin{proof}
 From Proposition \ref{proposition: D=K for elementary wall crossings} we know that $X \modmod{} + \geq_K X \modmod{} -$ if and only if $\mu \geq 0$. By Theorem \ref{theorem: VGIT and derived categories}, we know that $X \modmod{} - \geq_D X \modmod{} +$ if $\mu \geq 0$. Thus, $X \modmod{} + \geq_K X \modmod{} -$ implies $X \modmod{} + \geq_D X \modmod{} -$.
 
 Let us now assume that $X \modmod{} + \not \geq_K X \modmod{} -$. Again by Proposition \ref{proposition: D=K for elementary wall crossings}, this is true if and only if $\mu < 0$. Since $\mu < 0$, there is a semi-orthogonal decomposition,
 \begin{displaymath}
  \dbcoh{X \modmod{} -} = \langle \dbcoh{[Z_{\lambda}/C(\lambda)]}_0, \ldots, \dbcoh{[Z_{\lambda}/C(\lambda)]}_{-\mu-1}, \dbcoh{X \modmod{} +}.
 \end{displaymath}
 by Theorem \ref{theorem: VGIT and derived categories}. 
 
 Additivity of Hochschild homology under semi-orthogonal decomposition, Theorem 7.3 of \cite{Kuz09a}, gives that
 \begin{displaymath}
  \op{HH}_*(X \modmod{} -) \cong \op{HH}_*(X \modmod{} +) \oplus \bigoplus_{l=0}^{-\mu-1} \op{HH}_*(\dbcoh{[Z_{\lambda}^0/C(\lambda)]}_l).
 \end{displaymath}
 Recall that $\dbcoh{[Z_{\lambda}^0/C(\lambda)]}_0 \cong \dbcoh{Y_{\lambda}}$. Since we have a DHT variation, $G_{\lambda}$ acts freely on $Z_{\lambda}^0$ so, by Proposition \ref{proposition: when is the GIT quotient is a scheme}, $Y_{\lambda}$ is a variety and cannot have trivial Hochschild homology. Thus,
 \begin{displaymath}
  \op{dim} \op{HH}_*(X \modmod{} -) \gneq \op{dim} \op{HH}_*(X \modmod{} +).
 \end{displaymath}
 If we assume that $\dbcoh{X \modmod{} +}$ is an admissible subcategory of $\dbcoh{X \modmod{} -}$, we have
 \begin{displaymath}
  \op{dim} \op{HH}_*(X \modmod{} -) \leq \op{dim} \op{HH}_*(X \modmod{} +)
 \end{displaymath}
 from Theorem 7.3 of \cite{Kuz09a}. This gives a contradiction so $X \modmod{} + \not \geq_D X \modmod{} -$. 
\end{proof}

\begin{remark}
Corollary \ref{corollary: D=K for elementary wall crossing} subsumes previous work of Kawamata on Abelian groups, \cite{Kaw05}.
\end{remark}

Given the focus on LG-models in this paper, it makes sense to extend these questions to LG-models. We first give some definitions.

\begin{definition}
 Let $(Y_1,L_1,w_1)$ and $(Y_2,L_2,w_2)$ be two LG-models with $Y_1$ and $Y_2$ smooth. We say that $(Y_1,w_1)$ \textbf{$K$-dominates} $(Y_2,w_2)$ if there exists a smooth variety, $Z$, and proper birational morphisms, $f_1: Z \to Y_1$ and $f_2: Z \to Y_2$, such that 
 \begin{itemize}
  \item $f_1^*L_1 \cong f_2^*L_2$,
  \item $f_1^*w_1 = f_2^*w_2$ under the previous isomorphism,
  \item and $f_1^*K_{Y_1} - f_2^*K_{Y_2} \geq 0$.
 \end{itemize}
 If $f_1^*K_{Y_1} = f_2^*K_{Y_2}$ in addition, we say that $(Y_1,w_1)$ is \textbf{$K$-equivalent} to $(Y_2,w_2)$. 
\end{definition}

As the following conjecture arises in a straightforward manner from the ideas in \cite{KawD-K}, we call it Kawamata's LG-model conjecture.

\begin{conjecture} \label{conjecture: LG D and K}
 If $(Y_1,w_1)$ $K$-dominates $(Y_2,w_2)$, then $\dcoh{Y_2,w_2}$ is equivalent to an admissible subcategory of $\dcoh{Y_1,w_1}$. 
\end{conjecture}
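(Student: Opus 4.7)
The plan is to reduce Conjecture \ref{conjecture: LG D and K} to an iterated application of Theorem \ref{theorem: elementary wall crossing} along a chain of elementary VGIT wall crossings that are compatible with the potential, in direct analogy with the proof of Corollary \ref{corollary: D=K for elementary wall crossing}.

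First, I would attempt to construct a common ambient gauged LG-model $([X/G], w)$ together with two $G$-equivariant ample line bundles $\mathcal L_\pm$ on $X$ such that the GIT quotient stacks $[X^{\op{ss}}(\mathcal L_+)/G]$ and $[X^{\op{ss}}(\mathcal L_-)/G]$ recover $Y_1$ and $Y_2$ respectively, and $w$ restricts to $w_1$ and $w_2$. The compatibility hypothesis $f_1^*w_1 = f_2^*w_2$ is precisely what guarantees that the potentials glue to a well-defined section on the common resolution $Z$; when $Y_1$ and $Y_2$ are Mori dream spaces, one can then use the Hu--Keel realization \cite{HK99} to produce such an ambient $X$, and the $G$-invariance of $w$ is arranged by construction.

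Next, I would invoke weak factorization \cite{Wlo00} together with \cite{Tha96, DH98} to break the variation from $\mathcal L_-$ to $\mathcal L_+$ into a finite sequence of elementary GIT wall crossings, producing intermediate smooth DM stacks $Y_2 = W_0, W_1, \ldots, W_n = Y_1$, each carrying a pulled-back potential $v_i$ obtained by restriction of $w$. By Lemma \ref{lemma: formula for mu} and the divisor computation in the proof of Proposition \ref{proposition: D=K for elementary wall crossings}, the $K$-domination inequality $f_1^*K_{Y_1} - f_2^*K_{Y_2} \geq 0$ forces the sign of $\mu$ at each wall to be the one giving a fully-faithful embedding; the requirement that $\mu(\mathcal L_w, \lambda, x) = 0$ for the line bundle $\mathcal L_w$ carrying $w$ follows from the $G$-invariance of $w$ and the fact that $\lambda(\mathbb{G}_m) \subset G$. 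Applying Theorem \ref{theorem: VGIT and derived categories} at each wall produces an admissible embedding $\dcoh{W_i, v_i} \hookrightarrow \dcoh{W_{i+1}, v_{i+1}}$, and composing these yields the desired admissible embedding $\dcoh{Y_2, w_2} \hookrightarrow \dcoh{Y_1, w_1}$.

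The principal obstacle is securing a weak factorization that carries the potential. Standard weak factorization handles only the underlying varieties: a regular section of a line bundle on $W_{i+1}$ need not descend through a blow-down to $W_i$, so one cannot simply run the argument one birational step at a time. To circumvent this one must realize all intermediate models simultaneously as GIT quotients of a single ambient $([X/G], w)$, so that each $v_i$ is a restriction of a globally defined $w$. When such a global ambient LG-model exists --- as in all the applications considered in this paper --- the argument outlined above should go through; the genuinely hard case, and the reason the statement is posed as a conjecture, is when no such ambient structure is available, in which case an LG-version of weak factorization would need to be developed from scratch.
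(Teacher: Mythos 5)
The statement you are asked about is a \emph{conjecture}: the paper does not prove it in general. What the paper does is verify it in one special case, namely a single elementary GIT wall crossing (the proposition immediately following Conjecture \ref{conjecture: LG D and K}): there one has a fixed ambient $([X/G],w)$ with a DHT variation $(\mathcal L_-,\mathcal L_+)$, the common resolution $Z$ is taken to be the fiber product over the good moduli space of the wall quotient, the divisor computation from Proposition \ref{proposition: D=K for elementary wall crossings} identifies $f_+^*K_{X\modmod{}+}-f_-^*K_{X\modmod{}-}$ with $\mu E$, and Theorem \ref{theorem: VGIT and derived categories} (resp.\ Proposition \ref{proposition: affine GIT and der cat}) supplies the admissible embedding when $\mu\geq 0$. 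Your proposal contains this same special-case argument and, to its credit, correctly diagnoses why the general statement remains open: a potential does not descend through an arbitrary birational factorization, so one cannot simply run weak factorization one step at a time. In that sense your write-up is an honest proof \emph{strategy} plus an identification of the obstruction, which is consistent with the paper's decision to state this as a conjecture supported by the elementary-wall-crossing case.

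Two caveats about the part of your argument that goes beyond the paper. First, weak factorization \`a la W\l{}odarczyk is not the relevant decomposition tool once you are inside a fixed GIT problem: there the chain of intermediate models comes from choosing a path in the GIT fan (Theorem \ref{theorem: GIT fan}) crossing walls one at a time, and the DHT condition must be checked at each wall separately --- it is not automatic. Second, and more seriously, your claim that $K$-domination of the two endpoints ``forces the sign of $\mu$ at each wall'' does not follow from the endpoint inequality alone: writing $f_1^*K_{Y_1}-f_2^*K_{Y_2}=\sum_i \mu_i E_i$ on some common resolution, effectivity of the sum yields $\mu_i\geq 0$ for each $i$ only if one first shows the $E_i$ are distinct prime divisors with no further cancellation, which requires constructing a single resolution dominating the whole chain and controlling all exceptional loci simultaneously. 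The paper avoids this entirely by treating only one wall. If you want to upgrade the paper's evidence to chains of wall crossings, this multi-wall discrepancy bookkeeping is the step you would actually have to supply.
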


As evidence, we verify this conjecture for elementary wall crossings in GIT. Let $X$ be a smooth and proper variety, or affine space, acted by a reductive linear algebraic group, $G$. Let $\mathcal L_-$ and $\mathcal L_+$ be two $G$-equivariant line bundles. Denote the the GIT quotients by
\begin{align*}
 X \modmod{} - & := X \modmod{\mathcal L_-} G \\
 X \modmod{} + & := X \modmod{\mathcal L_+} G.
\end{align*}
Let $w$ be a $G$-invariant section of a $G$-line bundle, $\mathcal L$, on $X$. Set
\begin{align*}
 w_- & := w|_{X^{\op{ss}}(-)} \\
 w_+ & := w|_{X^{\op{ss}}(+)}.
\end{align*}

\begin{proposition}
 Assume that $G$ acts freely on both $X^{\op{ss}}(+)$ and $X^{\op{ss}}(-)$ and that $(\mathcal L_-,\mathcal L_+)$ is a DHT variation. Let $\lambda$ be the one-parameter subgroup controlling the variation. Assume that $\mu(\mathcal L,\lambda, x) = 0$ for any $x \in Z_{\lambda}^0$. If $(X\modmod{} +,w_+)$ $K$-dominates $(X \modmod{} -,w_-)$, then $\dcoh{X \modmod{} -,w_-}$ is equivalent to an admissible subcategory of $\dcoh{X \modmod{} +,w_+}$.
\end{proposition}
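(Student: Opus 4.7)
The plan is to reduce the LG $K$-domination hypothesis to the numerical inequality $\mu \geq 0$ and then invoke Theorem \ref{theorem: VGIT and derived categories}.

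First, I would observe that the LG $K$-domination of $(X \modmod{} -, w_-)$ by $(X \modmod{} +, w_+)$ immediately entails the variety $K$-domination $X \modmod{} + \geq_K X \modmod{} -$. Indeed, the smooth variety $Z$ with proper birational maps $f_\pm \colon Z \to X \modmod{} \pm$ witnessing the LG $K$-domination satisfies $f_+^* K_{X \modmod{} +} - f_-^* K_{X \modmod{} -} \geq 0$ by definition, and this is precisely the condition defining variety $K$-domination on the resolution $Z$.

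Second, I would appeal to Proposition \ref{proposition: D=K for elementary wall crossings}, whose hypotheses are all in force here (the group $G$ acts freely on both $X^{\op{ss}}(\pm)$ and $(\mathcal L_-, \mathcal L_+)$ is a DHT variation). Its conclusion that $X \modmod{} + \geq_K X \modmod{} -$ is equivalent to $\mu \geq 0$, combined with the previous step, yields $\mu \geq 0$. Note that the proof of Proposition \ref{proposition: D=K for elementary wall crossings} explicitly uses resolution-independence of $K$-domination, so we are free to apply it with the specific $Z$ supplied by the LG $K$-domination.

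Finally, I would apply Theorem \ref{theorem: VGIT and derived categories}. The assumption $\mu(\mathcal L, \lambda, x) = 0$ is given, and the DHT condition ensures the required $G$-invariant affine open cover of $S_\lambda^0$, since $S_\lambda^0 \subset X^{\op{ss}}(0)$ and semi-stable loci admit $G$-invariant affine open covers by definition. When $\mu > 0$ we land in case (1) of the theorem, which produces a semi-orthogonal decomposition of $\dcoh{X \modmod{} +, w_+}$ exhibiting $\Phi_d^+(\dcoh{X \modmod{} -, w_-})$ as an admissible component. When $\mu = 0$ we land in case (2) and obtain an outright equivalence. Either way, $\dcoh{X \modmod{} -, w_-}$ is equivalent to an admissible subcategory of $\dcoh{X \modmod{} +, w_+}$.

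The only mildly substantive point is the observation that LG $K$-domination contains variety $K$-domination as part of its definition; the rest of the argument is a formal cascade through Proposition \ref{proposition: D=K for elementary wall crossings} and Theorem \ref{theorem: VGIT and derived categories}. In particular, the additional LG-theoretic data ($f_+^* \mathcal L \cong f_-^* \mathcal L$ and compatibility of the pulled-back potentials) plays no role in the translation to $\mu \geq 0$, since that translation only uses the effectivity of the canonical-divisor difference.
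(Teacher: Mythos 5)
Your argument is correct and is essentially the paper's proof: both reduce the hypothesis to $\mu \geq 0$ via the canonical-bundle computation on the fiber product $Z := X \modmod{} - \times_{\bar{Z}} X \modmod{} +$ (you via citing Proposition \ref{proposition: D=K for elementary wall crossings} together with resolution-independence of $K$-domination, the paper by repeating the computation $f_+^*K_{X \modmod{} +} - f_-^*K_{X \modmod{} -} = \mu E$), and then invoke Theorem \ref{theorem: VGIT and derived categories}. You are also right that only the forward implication is needed, so the extra LG data (compatibility of line bundles and potentials) plays no role in this direction. The one small omission is the affine case, which the proposition also covers: there $X \modmod{} \pm$ need not be projective, so Proposition \ref{proposition: D=K for elementary wall crossings} and Theorem \ref{theorem: VGIT and derived categories} do not apply verbatim; one compactifies $\mathbb{A}^n$ to $\mathbb{P}^n$ to run the canonical-bundle computation and uses Proposition \ref{proposition: affine GIT and der cat} in place of the theorem.
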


\begin{proof}
 Let $\bar{Z}$ be the good moduli space of $X \modmod{\mathcal L_0} G$. Set
 \begin{displaymath}
  Z := X \modmod{} - \times_{\bar{Z}} X \modmod{} +,
 \end{displaymath}
 and denote the two projections by
 \begin{align*}
  f_- & : Z \to X \modmod{} - \\
  f_+ & : Z \to X \modmod{} +. 
 \end{align*} 
 It is easy to check that $f_1^*w_+ = f_2^*w_-$ and we have already seen that
 \begin{displaymath}
  f_+^*K_{X \modmod{} +} - f_-^*K_{X \modmod{} -} = \mu E,
 \end{displaymath}
 when $X$ is smooth and proper. If $X$ is affine space, $\mathbb{A}^n$, one may compactify to $\mathbb{P}^n$ and reuse the argument. Thus, $(X\modmod{} +,w_+)$ $K$-dominates $(X \modmod{} -,w_-)$ if and only if $\mu \geq 0$. Theorem \ref{theorem: VGIT and derived categories}, in the smooth and projective case, or Proposition \ref{proposition: affine GIT and der cat}, in the affine case, implies that $\dcoh{X \modmod{} -,w_-}$ is equivalent to an admissible subcategory of $\dcoh{X \modmod{} +,w_+}$.
\end{proof}

\begin{remark}
 There is a simple counterexample to the converse of Conjecture~\ref{conjecture: LG D and K}, i.e., there exist $K$-inequivalent LG-models, $(Y_1,w_1)$ and $(Y_2,w_2)$, with $\dcoh{Y_1,w_1} \cong \dcoh{Y_2,w_2}$. Moreover, one may assume that $Y_1=Y_2$ in this counterexample.
 
 Take $Y_1=Y_2=Y:= \op{V}(\mathcal O_{\P^1}(4))$ and let $f_1$ and $f_2$ be two homogeneous quartic polynomials in $k[x,y]$. Assume that $f_1$ and $f_2$ define smooth hypersurfaces in $\P^1$ and let $w_1$ and $w_2$ denote the regular functions on $Y$ corresponding to $f_1$ and $f_2$, respectively. For both $w_1$ and $w_2$, the singular locus consists of $4$ ordinary double points. Therefore, the idempotent completions of $\dcoh{Y_1,w_1}$ and $\dcoh{Y_2,w_2}$ are equivalent by \cite{OrlFC}. In this case, one can quickly verify that both categories are already idempotent complete so $\dcoh{Y_1,w_1} \cong \dcoh{Y_2,w_2}$.
 
 If $(Y,w_1)$ and $(Y,w_2)$ are $K$-equivalent, $f_1$ and $f_2$ must lie in the same $\op{GL}_2$-orbit. Taking $f_1$ and $f_2$ from different $\op{GL}_2$-orbits gives the counterexample. We thank Kuznetsov for this observation.

 We thank Stellari for pointing out a counterexample to the converse of Conjecture~\ref{conjecture: LG D and K} when one allows smooth Artin stacks: one can find homogeneous cubic polynomials, $f_1$ and $f_2$, in $k[x_0,\ldots,x_5]$ with 
 \begin{displaymath}
  \dcoh{[\mathbb{A}^6/\mathbb{G}_m], f_1} \cong \dcoh{[\mathbb{A}^6/\mathbb{G}_m], f_2}
 \end{displaymath}
 and with $f_1$ and $f_2$ defining non-isomorphic projective hypersurfaces so lying in different $\op{GL}_6$-orbits, see Remark 6.4 of \cite{BMMS}. It is interesting to note that such a counterexample does not exist for cubics in $5$ variables by Theorem 1.1 of \cite{BMMS}.
 
 We also thank Kawamata for his enlightening comments on the material in this section.
\end{remark}

\section{Derived categories of coherent sheaves on toric varieties} \label{section: toric}

In this section, we use Theorem \ref{theorem: elementary wall crossing} to study the derived categories of toric varieties and simplify a result of Kawamata on the existence of full exceptional collections, \cite{Kaw06,Kaw12}.

\subsection{Background on toric varieties and GIT}

\begin{definition}
 Let $G$ be a reductive linear algebraic group with a homomorphism, $\phi: G \to \mathbb{G}_m^n$, with finite kernel. We shall call a GIT quotient of $X: = \mathbb{A}^n$, with respect to the $G$ action, a \textbf{toric Deligne-Mumford stack} or toric DM-stack. We will say the stack is projective if $k[x_1,\ldots,x_n]^G = k$. If the quotient is represented by a scheme, we shall call the quotient a \textbf{toric variety}.
\end{definition}

\begin{remark}
 Note that, in general, a GIT quotient of affine space is automatically quasi-projective. In particular, if the quotient is proper, then it is projective. However, the most common definition of a toric DM-stack proceeds through a stacky fan, as in \cite{BCS}. This definition also includes non-quasi-projective toric stacks/varieties. At least in the case of varieties, our, more restrictive, definition does appear elsewhere in the literature, see \cite{MS} as an example. 
\end{remark}

The semi-stable locus is unaffected by finite extensions so we focus our discussion on the case where $G$ is a subgroup of $\mathbb{G}_m^n$. Since $G$ is Abelian, for any one-parameter subgroup, $\lambda: \mathbb{G}_m \to G$, we have $S_{\lambda} = Z_{\lambda}$ and $P(\lambda) = C(\lambda) = G$, which simplifies the analysis. Since we are on affine space, $\op{Pic}(X) = \op{NS}(X) = 1$, and $\op{Pic}^G(X) = \op{NS}^G(X) = \widehat{G}$, the space of characters of $G$. We have
\begin{displaymath}
 C^G(X) = \{\chi \in \widehat{G} \mid \exists f \in k[x_1,\ldots,x_n] \text{ with } f \not = 0 \text{ and } f(\sigma(g,x)) = \chi(g)^nf(x) \text{ for some } n > 0 \}.
\end{displaymath}

\begin{definition}
 From Proposition \ref{proposition: affine GIT fan}, there is a fan structure, $\Sigma_{\op{GKZ}}$, on $C^G(X) \otimes_{\Z} \R \subseteq \widehat{G} \otimes_{\Z} \R=:\widehat{G}_{\R}$. The fan, $\Sigma_{\op{GKZ}}$, predates the general GIT fan construction. It existence is due to I.M. Gel'fand, Kapranov, and A. Zelevinsky \cite{GKZ}. We shall therefore call it the \textbf{GKZ fan}. It is also often called the \textbf{secondary fan}.
 
 If $C$ is a chamber in $\Sigma_{\op{GKZ}}$ which intersects the closure of the complement of $C^G(X)_{\R}$, we shall call $C$ a \textbf{boundary chamber}. We shall also refer to the closure of the complement of $C^G(X)_{\R}$ as a chamber and call it the \textbf{empty chamber}.
\end{definition}

For a character, $\chi$, we will denote the semi-stable locus of $\mathcal O(\chi)$ by $X^{\op{ss}}(\chi)$ and the GIT quotient of $X$ by $\mathcal O(\chi)$ as $X \modmod{\chi} G$.

Recall that one-parameter subgroups and characters of $G$ are dual. Let $\Lambda(G)$ be the group of one-parameter subgroups of $G$. There is a perfect pairing,
\begin{align*}
 \langle \bullet, \bullet \rangle : \widehat{G} \times \Lambda(G) & \to \widehat{\mathbb{G}_m} \cong \Z \\
 (\chi, \lambda) & \mapsto \chi \circ \lambda. 
\end{align*}

\begin{proposition} \label{proposition: variation for toric var gives elem wall crossing}
 Assume that either $C_-$ and $C_+$ are adjacent chambers in $\Sigma_{\op{GKZ}}$ or $C_-$ is the empty chamber and $C_+$ is a boundary chamber. Let $\lambda$ be a one-parameter subgroup defining the hyperplane separating $C_-$ and $C_+$. Assume that $\lambda$ pairs non-negatively with $C_+$. Let $M_0$ be the wall separating $C_-$ and $C_+$. Let $\chi_0$ be a character in the interior of $M_0$, $\chi_-$ be a character in the interior of $C_-$, and $\chi_+$ be a character in the interior of $C_+$.
 
 There is an elementary wall crossing
 \begin{align*}
  X^{\op{ss}}(\chi_0) & = X^{\op{ss}}(\chi_+) \sqcup S_{\lambda} \\
  X^{\op{ss}}(\chi_0) & = X^{\op{ss}}(\chi_-) \sqcup S_{-\lambda}.
 \end{align*}
\end{proposition}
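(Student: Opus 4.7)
My plan is to exploit the fact that $G$ is abelian, which greatly simplifies the conditions of Definition~\ref{definition: HKKN strat}. Since $P(\lambda) = C(\lambda) = G$ for any one-parameter subgroup $\lambda$ of an abelian group, the contracting variety $Z_\lambda$ is automatically $G$-invariant, so $Z_\lambda = S_\lambda$ and the evaluation morphism $\tau_\lambda$ is trivially an isomorphism. Moreover, once the set-theoretic decomposition $X^{\op{ss}}(\chi_0) = X^{\op{ss}}(\chi_+) \sqcup S_\lambda$ is established, closedness of $S_\lambda$ in $X^{\op{ss}}(\chi_0)$ is automatic, since $X^{\op{ss}}(\chi_+)$ is open in the ambient affine space. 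So the proof reduces entirely to verifying the two set-theoretic decompositions.

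I would translate the problem into combinatorics via the affine Hilbert-Mumford criterion (Proposition~\ref{proposition: affine HM numerical stability}): using that for a linear action on affine space $\mu(\mathcal O(\chi), \lambda', x^*)$ depends only on the pairing $\langle \chi, \lambda' \rangle$, one obtains $x \in X^{\op{ss}}(\chi)$ if and only if $\langle \chi, \lambda' \rangle \geq 0$ for every one-parameter subgroup $\lambda'$ such that $\lim_{\alpha \to 0} \sigma(\lambda'(\alpha), x)$ exists (after adjusting for the sign convention relating Definitions~\ref{definition: weight}, \ref{definition: degree}, and \ref{definiton: weights of vector bundle and Mumford stability function}). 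The set of such $\lambda'$ forms a rational polyhedral cone $\Lambda(x)^+ \subseteq \Lambda(G)_\R$, and semistability becomes the dual-cone condition. I would then take $Z^0_\lambda$ to be the connected component of $X^\lambda \cap X^{\op{ss}}(\chi_0)$ singled out by the wall-crossing data; this is natural because $X^\lambda$ is itself a coordinate subspace of $\mathbb{A}^n$ and its intersection with the $\chi_0$-semistable locus carries a canonical stratum associated to the wall.

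The key step is characterizing $X^{\op{ss}}(\chi_0) \setminus X^{\op{ss}}(\chi_+)$. For $x$ in this set, there is some $\lambda'$ with $\langle \chi_0, \lambda' \rangle \geq 0$ and $\langle \chi_+, \lambda' \rangle < 0$. Combined with the hypothesis that $\chi_0$ lies in the relative interior of $M_0$ and $\chi_+$ in the interior of the adjacent chamber $C_+$ on the positive side of $\lambda$, these constraints force $\langle \chi_0, \lambda' \rangle = 0$ and $\lambda'$ to be a positive rational multiple of $-\lambda$; equivalently, $x$ contracts to the $\lambda$-fixed locus, placing $x$ in $Z_\lambda = S_\lambda$. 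Conversely, for $x \in S_\lambda$, the subgroup $-\lambda$ witnesses $\chi_+$-instability while a direct check with Proposition~\ref{proposition: affine HM numerical stability} shows $\chi_0$-semistability, giving the reverse inclusion. The symmetric argument with $-\lambda$ and $\chi_-$ yields the second decomposition, and the boundary-chamber case is handled identically, since the Hilbert-Mumford criterion does not distinguish the ``empty'' side beyond producing the analogous contracting stratum. The main obstacle I anticipate is careful bookkeeping of sign conventions relating weights, degrees, and $\mu$-values in the paper; once these are fixed, the geometric content of the proof is a straightforward convexity argument in the GKZ fan.
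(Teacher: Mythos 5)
Your overall strategy is the same as the paper's: reduce to the two set-theoretic decompositions using that $G$ abelian makes $\tau_\lambda$ trivially an isomorphism and closedness automatic, then apply the affine Hilbert--Mumford criterion and a convexity argument in the GKZ fan to pin the destabilizing one-parameter subgroup to the ray of $\pm\lambda$, finishing with a direct check of the reverse inclusions. Two points, however, need repair before the argument closes.

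First, the key step is under-justified as written. From the two inequalities $\langle \chi_0,\lambda'\rangle \geq 0$ and $\langle \chi_+,\lambda'\rangle < 0$ for a \emph{single} choice of $\chi_0 \in \op{Int}(M_0)$ and $\chi_+ \in \op{Int}(C_+)$, one cannot conclude that $\lambda'$ is proportional to $\lambda$: already in rank two, with $M_0$ a ray in a hyperplane $\lambda^\perp$, there are covectors $\lambda'$ not proportional to $\lambda$ satisfying both inequalities. The mechanism that forces $\lambda' \in \mathbb{Q}\cdot\lambda$ is the constancy of the semi-stable locus on the relative interiors of the cones of the GKZ fan (Proposition \ref{proposition: affine GIT fan}): because $x$ remains $\chi_0'$-semi-stable for every $\chi_0'$ in the relative interior of $M_0$, the linear functional $\langle \cdot,\lambda'\rangle$ is non-negative on a relatively open subset of the hyperplane containing $M_0$ and vanishes at $\chi_0$, hence vanishes on all of $\op{span}(M_0) = \lambda^\perp$, which has codimension one. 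This perturbation of $\chi_0$ within the wall is exactly how the paper argues, and it is the one idea your "straightforward convexity argument" elides.

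Second, your conclusion is internally inconsistent on signs. If the destabilizing subgroup $\lambda'$ is a positive rational multiple of $-\lambda$ and $\lim_{\alpha\to 0}\sigma(\lambda'(\alpha),x)$ exists (which is required for $\lambda'$ to be "applicable"), then by definition $x \in Z_{-\lambda} = S_{-\lambda}$, not $Z_{\lambda} = S_{\lambda}$; likewise, for $x \in S_{\lambda}$ the subgroup whose limit at $x$ exists is $\lambda$ itself, not $-\lambda$, so $-\lambda$ cannot serve as the Hilbert--Mumford witness of $\chi_+$-instability. One of the two assertions (the sign of the multiple, or the stratum $x$ lands in) must be flipped; which one depends on the sign convention relating $\mu(\mathcal O(\chi),\lambda',x^*)$ to $\langle\chi,\lambda'\rangle$ that you left unresolved. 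Since the entire content of the proposition is which of $S_{\lambda}$, $S_{-\lambda}$ is removed on which side of the wall, this is not cosmetic: the bookkeeping you deferred is precisely what determines the statement, and it must be carried out.
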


\begin{proof}
 If $C_-$ is the empty chamber, then $S_{-\lambda} = X^{\op{ss}}(\chi_0)$ as the $\lambda$-degree of all monomials is non-positive. We are left to check the cases where $\chi_+$ and $\chi_-$ lie in $C^G(X)$.

 By the Hilbert-Mumford numerical criterion, Proposition \ref{proposition: affine HM numerical stability}, $x \in X$ is unstable for $\mathcal O(\chi_{+})$ if and only if there exists a one-parameter subgroup, $\lambda': \mathbb{G}_m \to G$, such that $\lim_{\alpha \to 0} \sigma(\lambda'(\alpha),x) =: x^*$ exists and $\mu(\mathcal O(\chi_{+}),\lambda',x^*) > 0$. 
 
 Assume $x \in X^{\op{ss}}(\chi_0) \setminus X^{\op{ss}}(\chi_+)$. Then, by continuity of $\mu(\bullet,\lambda',x^*)$, we must have the inequality, $\mu(\mathcal O(\chi_{0}),\lambda',x^*) \geq 0$.  As $x \in X^{\op{ss}}(\chi_0)$, we must have $\mu(\mathcal O(\chi_{0}),\lambda',x^*) = 0$. If $\lambda' \not = \lambda$, $\chi_0$ lies in the intersection of the hyperplanes defined by $\lambda$ and $\lambda'$ in the GKZ fan. By Proposition \ref{proposition: affine GIT fan}, we may replace $\chi_0$ by another $\chi_0'$ in the interior of $M_0$ without affecting the semi-stable locus. As $C_0$ has codimension one, we can choose $\chi_0'$ to lie outside the hyperplane defined by $\lambda'$ and get a contradiction. Thus, $\lambda' = \lambda$ and
 \begin{displaymath}
   X^{\op{ss}}(\chi_0) \setminus X^{\op{ss}}(\chi_+) \subseteq S_{\lambda}.
 \end{displaymath}
 Similarly,
 \begin{displaymath}
   X^{\op{ss}}(\chi_0) \setminus X^{\op{ss}}(\chi_-) \subseteq S_{-\lambda}.
 \end{displaymath}
 
 It is straightforward to check that $S_{\lambda}$ is semi-stable for $\chi_0$ but not for $\chi_+$ while $S_{-\lambda}$ is semi-stable for $\chi_0$ but not for $\chi_-$. Thus, 
 \begin{align*}
  X^{\op{ss}}(\chi_0) \setminus X^{\op{ss}}(\chi_+) & \supseteq S_{\lambda} \\
  X^{\op{ss}}(\chi_0) \setminus X^{\op{ss}}(\chi_-) & \supseteq S_{-\lambda}.
 \end{align*}

 The subvarieties $S_{-\lambda}$ and $S_{\lambda}$, when defined on $X$, are linear subspaces of $X$. Thus, they are both closed as are their restrictions to $X^{\op{ss}}(\chi_0)$.
\end{proof}

\subsection{Derived categories of toric varieties} \label{subsection: derived categories of toric varieties}

We restate Theorem \ref{theorem: elementary wall crossing} in the case of toric DM stacks. Let $C_+$ and $C_-$ be adjacent chambers in $\Sigma_{\op{GKZ}}$ with $M_0$ be their common face contained in the hyperplane defined by a one-parameter subgroup, $\lambda$. Assume that $\lambda$ is normalized so that it pairs non-negatively with $C_+$. Fix $\chi_- \in C_-, \chi_0 \in M_0, \chi_+ \in C_+$ in their respective interiors. We have $G_{\lambda} = G/\lambda$ and $Y_{\lambda} = X^{\lambda} \modmod{\mathcal O(\chi_0)} G_{\lambda}$. Set $X \modmod{} + : = [X^{\op{ss}}(\chi_+)/G]$ and $X \modmod{} - : = [X^{\op{ss}}(\chi_-)/G]$. Let $(\mathfrak{K}^+,\mathfrak{K}^-)$ be the elementary wall crossing promised by Proposition \ref{proposition: variation for toric var gives elem wall crossing}. 

\begin{theorem} \label{theorem: SOD of derived categories for toric variations}
 Fix $d \in \Z$. Let $x \in Z_{\lambda}^0$.
 \begin{enumerate}
  \item If $\mu(\omega^{-1}_X,\lambda,x) > 0$, there exists fully-faithful functors,
  \begin{displaymath}
   \Phi^+_d : \dbcoh{ X \modmod{} - } \to \dbcoh{X \modmod{} +}
  \end{displaymath}
  and
  \begin{displaymath}
   \Upsilon^+: \dbcoh{ Y_{\lambda} } \to \dbcoh{X \modmod{} +},
  \end{displaymath}
   and a semi-orthogonal decomposition, with respect to $\Phi^+_d, \Upsilon^+$,
  \begin{displaymath}
   \dbcoh{ X \modmod{} + } = \langle \dbcoh{Y_{\lambda}}(-t(\mathfrak{K}^-) + d), \ldots, \dbcoh{Y_{\lambda}}(-t(\mathfrak{K}^+) + d -1), \dbcoh{X \modmod{} -} \rangle,
  \end{displaymath}
  where $\dbcoh{Y_{\lambda}}(l)$ is the image of $\Upsilon^+$ tensored with $\mathcal O(l \chi)$ where $\chi$ is a character such that $\chi \circ \lambda(\alpha) = \alpha$.
  \item If $\mu(\omega^{-1}_X,\lambda,x) = 0$, there is an equivalence,
  \begin{displaymath}
   \Phi^+_d : \dbcoh{ X \modmod{} - } \to \dbcoh{X \modmod{} +}.
  \end{displaymath}
  \item If $\mu(\omega^{-1}_X,\lambda,x) < 0$, there exists fully-faithful functors,
  \begin{displaymath}
   \Phi^-_d : \dbcoh{ X \modmod{} + } \to \dbcoh{X \modmod{} -}
  \end{displaymath}
  and
  \begin{displaymath}
   \Upsilon^-: \dbcoh{ Y_{\lambda} } \to \dbcoh{X \modmod{} -},
  \end{displaymath}
  and a semi-orthogonal decomposition, with respect to $\Phi^-_d, \Upsilon^-$,
  \begin{displaymath}
   \dbcoh{ X \modmod{} - } = \langle \dbcoh{Y_{\lambda}}(t(\mathfrak{K}^-)-d+1), \ldots, \dbcoh{Y_{\lambda}}(t(\mathfrak{K}^+)-d), \dbcoh{X \modmod{} +} \rangle,
  \end{displaymath}
  where $\dbcoh{Y_{\lambda}}(l)$ is the image of $\Upsilon^-$ tensored with $\mathcal O(l \chi)$ where $\chi$ is a character such that $\chi \circ \lambda(\alpha) = \alpha^{-1}$.
 \end{enumerate}
\end{theorem}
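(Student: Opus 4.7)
The strategy is to specialize Proposition \ref{proposition: affine GIT and der cat} (with $w=0$) to the toric setting and then pass from derived categories of factorizations to derived categories of coherent sheaves via Corollary \ref{corollary: Isik}, as the remark following Theorem \ref{theorem: VGIT and derived categories} suggests. The toric case is particularly clean because $G$ is Abelian: $P(\lambda) = C(\lambda) = G$, $S_{\lambda} = Z_{\lambda}$, and the wall contributions collapse into twists of a single derived category $\dbcoh{Y_{\lambda}}$.

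Concretely, I would first invoke Proposition \ref{proposition: variation for toric var gives elem wall crossing} to produce the elementary wall crossing
\begin{align*}
 X^{\op{ss}}(\chi_0) &= X^{\op{ss}}(\chi_+) \sqcup S_\lambda, \\
 X^{\op{ss}}(\chi_0) &= X^{\op{ss}}(\chi_-) \sqcup S_{-\lambda}.
\end{align*}
With $\mathcal L = \mathcal O_X$ trivially linearized and $w = 0$, the hypotheses $\mu(\mathcal L, \lambda, x) = 0$ and $\chi \circ \lambda$ constant are automatic. Proposition \ref{proposition: affine GIT and der cat}, applied after formally enlarging the group to $G \times \mathbb{G}_m$ with the extra $\mathbb{G}_m$ acting trivially, then yields fully-faithful functors $\Phi_d^{\pm}$ and wall functors $\Upsilon_j^{\pm}$ with source $\dcoh{[Z_\lambda^0/(G \times \mathbb{G}_m)],0}_j$, along with the desired semi-orthogonal decomposition. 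Corollary \ref{corollary: Isik} translates each factorization category with trivial potential back into an honest bounded derived category of coherent sheaves, so the target category becomes $\dbcoh{X \modmod{} \pm}$.

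To identify the wall pieces, I would combine Lemma \ref{lemma: wall compositions are same}(1) with Corollary \ref{corollary: Isik} to get $\dbcoh{Y_{\lambda}}$ from the weight-zero component. For the remaining weights, Lemma \ref{lemma: wall compositions are same}(2) requires a character $\chi \colon G \to \mathbb{G}_m$ with $\chi \circ \lambda(\alpha) = \alpha$. Since $\lambda$ may be taken primitive in $\Lambda(G)$ (it defines a codimension-one wall of $\Sigma_{\op{GKZ}}$), the perfect pairing between $\widehat{G}$ and $\Lambda(G)$ supplies such a $\chi$. Repeated twisting by $\chi$ then identifies the weight-$l$ piece with $\dbcoh{Y_{\lambda}}(l)$ in the sense of the statement.

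Finally, since $G$ is Abelian every stabilizer is contained in $G = C(\lambda)$, so Lemma \ref{lemma: formula for mu} applies and yields
\[
 -t(\mathfrak{K}^+) + t(\mathfrak{K}^-) = \mu(\omega_X^{-1}, \lambda, x).
\]
This converts the trichotomy of Proposition \ref{proposition: affine GIT and der cat}, phrased by the sign of $t(\mathfrak{K}^+) - t(\mathfrak{K}^-)$, into the trichotomy of the present theorem in terms of $\mu(\omega_X^{-1}, \lambda, x)$, with the index ranges $-t(\mathfrak{K}^-)+d \leq j \leq -t(\mathfrak{K}^+)+d-1$ transporting unchanged. The only real obstacle is bookkeeping — checking that the twist conventions of Lemma \ref{lemma: wall compositions are same}(2) assemble into the family $\dbcoh{Y_{\lambda}}(l) = \Upsilon^{+}(\bullet) \otimes \mathcal O(l \chi)$ used in the statement — but all the substantive work has already been done in Section \ref{section: main result}.
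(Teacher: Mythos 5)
Your proposal follows the paper's own proof almost exactly: establish the elementary wall crossing via Proposition \ref{proposition: variation for toric var gives elem wall crossing}, translate between sheaves and factorizations with Corollary \ref{corollary: Isik}, identify the wall pieces with twists of $\dbcoh{Y_{\lambda}}$ using Lemma \ref{lemma: wall compositions are same} (the Abelian collapse $P(\lambda)=C(\lambda)=G$), and convert the trichotomy via Lemma \ref{lemma: formula for mu}. The one routing issue: you should apply Theorem \ref{theorem: elementary wall crossing} directly to the wall crossing you just constructed, rather than Proposition \ref{proposition: affine GIT and der cat}, since the latter carries the DHT condition as a hypothesis and that condition (connectedness of the wall locus, stabilizers exactly $\mathbb{G}_m$) is neither verified nor guaranteed for general toric DM stacks — indeed Proposition \ref{proposition: variation for toric var gives elem wall crossing} exists precisely to bypass DHT by checking the stratification decompositions directly with the Hilbert--Mumford criterion. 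With that substitution, which costs nothing since you already have the elementary wall crossing and $S^0_{\lambda}$ trivially admits a $G$-invariant affine cover on affine space, the argument is complete.
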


\begin{proof}
 Corollary \ref{corollary: Isik} allows us to replace our derived categories of sheaves by derived categories of factorizations. 

 Proposition \ref{proposition: variation for toric var gives elem wall crossing} guarantees we can apply Theorem \ref{theorem: elementary wall crossing}. Since $G$ is Abelian, Lemma \ref{lemma: wall compositions are same} states that $\dbcoh{[Z^0_{\lambda}/C(\lambda)]}_m \cong \dbcoh{Y_{\lambda}}$ for all $m \in \Z$. Note that we can twist by $(m)$ on $Y_{\lambda}$ or $X$ when defining $\Upsilon_m$. 
 
 We can apply Lemma \ref{lemma: formula for mu} to compute $-t(\mathfrak{K}^+) + t(\mathfrak{K}^-)$ via $\mu(\omega_X^{-1},\lambda,x)$. Remark \ref{remark: walls as GIT quotients} notes that $Y_{\lambda}$ is the GIT quotient of the fixed locus, $X^{\lambda}$, by $C(\lambda)/\lambda = G/\lambda = G_{\lambda}$ using the character, $\chi_0$. 
\end{proof}

\begin{remark} \label{remark: which side is the anti-canonical on}
 There is a particularly simple method for determining the ``larger'' derived category in Theorem \ref{theorem: SOD of derived categories for toric variations}. By this, we mean the direction of the semi-orthogonal decomposition. {\it The larger derived category, the one being decomposed, always lies in the same side of the wall as the anti-canonical bundle}.
\end{remark}

Let us use Theorem \ref{theorem: SOD of derived categories for toric variations} to recover the following result of Kawamata, \cite{Kaw06,Kaw12}.
\begin{theorem} \label{theorem: full exc coll on toric var}
 Let $X \modmod{\chi} G$ be a projective DM toric stack and assume that $\chi$ lies in the interior of a chamber of the GKZ fan. The derived category, $\dbcoh{X \modmod{\chi} G}$, admits a full exceptional collection.
\end{theorem}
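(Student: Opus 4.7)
I plan to argue by induction on $n = \op{dim}_k X$. The base case $n = 0$ is immediate since the stack is a point. For the inductive step, fix $(X = \mathbb{A}^n, G, \chi)$ with $\chi$ in the interior of a chamber $C$ of $\Sigma_{\op{GKZ}}$, and write $\chi_1, \ldots, \chi_n \in \widehat{G}$ for the weights of $G$ on the coordinates of $\mathbb{A}^n$. The projectivity assumption $k[x_1, \ldots, x_n]^G = k$ is equivalent to the cone $C^G(X)_{\R} = \op{pos}(\chi_1, \ldots, \chi_n)$ being strongly convex; in particular, $\omega_X^{-1}$ corresponds to the non-zero character $\sum_i \chi_i$ and $-\omega_X^{-1} \notin C^G(X)_{\R}$.

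The strategy is to move from $\chi$ toward the empty chamber along a ray in direction $-\omega_X^{-1}$ and to apply Theorem \ref{theorem: SOD of derived categories for toric variations} at each wall crossing. Choose a small generic perturbation $v$ of $-\omega_X^{-1}$ so that the ray $\chi(t) := \chi + t v$, $t \geq 0$, meets the walls of $\Sigma_{\op{GKZ}}$ transversely, one at a time. Since $v \notin C^G(X)_{\R}$, this ray exits $C^G(X)_{\R}$ after crossing finitely many walls $W_1, \ldots, W_m$, passing through chambers $C = C_0, C_1, \ldots, C_{m-1}$, and entering the empty chamber $C_m$ for $t > t_m$. Let $\lambda_i$ be the one-parameter subgroup defining $W_i$, normalized to pair positively with $C_{i-1}$. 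The direction of motion satisfies $\langle v, \lambda_i \rangle < 0$, and combined with $v \approx -\omega_X^{-1}$ this gives $\mu_i := \mu(\omega_X^{-1}, \lambda_i, x) > 0$ by Lemma \ref{lemma: formula for mu}. Case (1) of Theorem \ref{theorem: SOD of derived categories for toric variations} (together with Proposition \ref{proposition: variation for toric var gives elem wall crossing}) then yields a semi-orthogonal decomposition
\begin{displaymath}
\dbcoh{X \modmod{C_{i-1}} G} = \langle \dbcoh{Y_{\lambda_i}}(l_1^{(i)}), \ldots, \dbcoh{Y_{\lambda_i}}(l_{\mu_i}^{(i)}), \dbcoh{X \modmod{C_i} G} \rangle,
\end{displaymath}
where $Y_{\lambda_i} = X^{\lambda_i} \modmod{\chi_0^{(i)}} G_{\lambda_i}$ and the final term is interpreted as zero when $C_i$ is the empty chamber (i.e., at $i = m$).

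I next verify that each $Y_{\lambda_i}$ is again a projective toric DM stack whose ambient affine space has dimension strictly less than $n$. Non-triviality of the one-parameter subgroup $\lambda_i$ together with the finite-kernel condition on $G \hookrightarrow \mathbb{G}_m^n$ forces $\lambda_i$ to act non-trivially on at least one coordinate, whence $X^{\lambda_i} \subsetneq X$; moreover, any $G_{\lambda_i}$-invariant monomial on $X^{\lambda_i}$ lifts to a $G$-invariant monomial on $X$ and is therefore constant by projectivity. By the outer induction on $n$, each $\dbcoh{Y_{\lambda_i}}$ (and each twist thereof) admits a full exceptional collection.

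The proof then concludes by downward induction on $i$. At $i = m$, $\dbcoh{X \modmod{C_m} G} = 0$, so the displayed semi-orthogonal decomposition exhibits $\dbcoh{X \modmod{C_{m-1}} G}$ as the concatenation of full exceptional collections on the wall pieces $\dbcoh{Y_{\lambda_m}}(l_*^{(m)})$. At each subsequent stage, prepending the full exceptional collections on the wall pieces to a full exceptional collection on $\dbcoh{X \modmod{C_i} G}$ produces one on $\dbcoh{X \modmod{C_{i-1}} G}$. After $m$ steps, I obtain a full exceptional collection on $\dbcoh{X \modmod{\chi} G}$. The principal obstacle is the geometric claim that the ray in direction $-\omega_X^{-1}$ exits $C^G(X)_{\R}$, for which the projectivity hypothesis is essential; the remaining bookkeeping, namely that $Y_{\lambda_i}$ inherits the projective toric DM property with strictly smaller ambient dimension, is a straightforward weight calculation.
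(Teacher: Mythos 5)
Your argument is correct and follows essentially the same route as the paper: induct on the dimension of the ambient affine space and slide $\chi$ to the empty chamber along a path each of whose wall crossings moves away from $\omega_X^{-1}$, so that Theorem \ref{theorem: SOD of derived categories for toric variations} peels off twists of $\dbcoh{Y_{\lambda_i}}$ that are handled by the inductive hypothesis. The only differences are cosmetic --- the paper uses the straight line through $\omega_X^{-1}$ and a generic point of $C$ rather than your ray from $\chi$ in a perturbed direction $-\omega_X^{-1}$, and you should allow $\mu_i = 0$ (case (2), an equivalence) rather than insisting on $\mu_i > 0$, since a wall's hyperplane may contain $\omega_X^{-1}$ and the integrality of $\mu_i$ together with a sufficiently small perturbation only yields $\mu_i \geq 0$.
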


\begin{proof}
 We use Theorem \ref{theorem: SOD of derived categories for toric variations}, induction, and the following fact: for a one-parameter subgroup, $\lambda$, the GKZ fan of the action of $G_{\lambda}$ on $X^{\lambda}$ is isomorphic to the restriction of the GKZ fan, for the action of $G$ on $X$, to the hyperplane defined by $\lambda$. This fact can be checked by working through the construction of the GKZ fan, see \cite{GKZ} or \cite{CLS}, or, noting that, by Proposition \ref{proposition: affine GIT fan}, the relative interiors of the cones of the GKZ fan are exactly the subsets of constant semi-stable locus. 
 
 We use induction on the dimension of $X$. When $X$ has dimension zero, $G$ is finite. The derived category is simply the derived category of finite-dimensional representations of $G$. As $k$ has characteristic zero, any representation is completely reducible. The finite collection of simple representations of $G$ is an exceptional collection by Schur's Lemma ($k$ is algebraically closed). 
 
 Now assume we have proven the statement whenever the dimension of $X$ is strictly less than $n$ and assume that $X = \mathbb{A}^n$. Let $\chi \in C$ lie in the interior. Let $C_{-K}$ be a chamber containing $\omega_{X}^{-1}$. Choose a straight line path, $\gamma: [0,1] \to \widehat{G}_{\mathbb{R}}$, such that,
\begin{itemize}
 \item $\gamma(0) = \omega_X^{-1}$,
 \item $\gamma([0,1]) \cap \op{Int} C$ is nonempty,
 \item if $[t_i,t_e] = \gamma([0,1]) \cap C$, then there is some $\delta > 0$ so that $\gamma((t_e-\delta,1])$ does not intersect any cone of codimension $2$,
 \item $\gamma(1)$ lies in the interior of the empty chamber.
\end{itemize}
\sidenote{{\color{red} Picture? }}
 To see that such an $\gamma$ exists, fix an Euclidean metric on $\widehat{G}_{\mathbb{R}} \cong \R^r$. Choose some $\epsilon > 0$ so that, for any $\chi' \in B_{\epsilon}(0) \cap \op{Int} C$, the straight line path, $r: [0,\infty) \to \widehat{G}_{\mathbb{R}}$, defined by $r(0) = \omega_X^{-1}$ and $r(1) = \chi'$, has $r(t)$ lying in the complement of the secondary fan for $t >> 0$. Since we have assumed projectivity, the secondary fan is strongly convex by Proposition 14.1.3 of \cite{CLS}. Therefore, such an $\epsilon$ exists. Let $S$ be a small sphere centered at $\omega_X^{-1}$. The image of the radial projection of the union of all cones of codimension $\geq 2$ onto $S$ has codimension $\geq 1$, while the radial projection of $B_{\epsilon}(0) \cap \op{Int} C$ is an open subset. Thus, if we choose a general point, $\chi'$, in $B_{\epsilon}(0) \cap \op{Int} C$, then the straight line path from $\omega_X^{-1}$ through $\chi'$ will satisfy all the conditions after rescaling.

 The path, $\gamma$, passes through a finite number of walls, $M_1, \ldots, M_s$, on its way through the chambers, $C =: C_0, \ldots, C_s := C_{\emptyset}$, where $C_{\emptyset}$ denotes the empty chamber. At each wall, $M_i$, we take the one-parameter subgroup, $\lambda_i$, defining the hyperplane containing $M_i$ normalized so that $\lambda_i$ pairs non-negatively with $C_i$. By Theorem \ref{theorem: SOD of derived categories for toric variations}, we have a semi-orthogonal decomposition,
 \begin{displaymath}
  \dbcoh{X \modmod{\chi_{i-1}} G} = \langle \dbcoh{Y_{\lambda_i}},\ldots,\dbcoh{Y_{\lambda_i}}(-\mu_i - 1), \dbcoh{X \modmod{\chi_i} G} \rangle,
 \end{displaymath}
 where $\mu_i = \mu(\omega_X^{-1},\lambda_i,x) < 0$ for $x \in Z_{\lambda_i}^0$. Hence, combining these semi-orthogonal decompositions, we obtain a semi-orthogonal decomposition,
 \begin{gather*}
  \dbcoh{X \modmod{\chi} G} = \langle \dbcoh{Y_{\lambda_1}}, \ldots , \dbcoh{Y_{\lambda_1}}(-\mu_1 - 1) , \ldots ,\\  \dbcoh{Y_{\lambda_s}}, \ldots, \dbcoh{Y_{\lambda_s}}(-\mu_s - 1) \rangle.
 \end{gather*}
 Note, $\op{dim }(X^{\lambda_i}) < \op{dim }(X)$ for all $i$. Applying the induction hypothesis, we conclude that $\dbcoh{X}$ possesses a full exceptional collection.
\end{proof}

\begin{definition}
 Let $P$ be a property of a triangulated category. We say that $P$ \textbf{descends under semi-orthogonal decomposition} if whenever $\mathcal A$ is an admissible subcategory of $\mathcal T$ and $P$ is true for $\mathcal T$, then $P$ is also true for $\mathcal A$.
\end{definition}

We may run the argument of Theorem \ref{theorem: full exc coll on toric var} backwards to show the following.

\begin{proposition} \label{proposition: SOD prop for toric}
 Let $P$ be a property of triangulated categories that descends under semi-orthogonal decomposition. If $P$ is true for the derived categories of all weakly-Fano projective toric DM stacks, then it is true for the derived categories of all projective toric DM stacks.
\end{proposition}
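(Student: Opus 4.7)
The plan is to invert the path-argument used in the proof of Theorem \ref{theorem: full exc coll on toric var}: instead of driving the path from $\omega_X^{-1}$ out to the empty chamber (which exhibits the entire derived category as built up from wall contributions), I will drive it from $\omega_X^{-1}$ to an arbitrary character $\chi$, and read off that $\dbcoh{X \modmod{\chi} G}$ is an admissible subcategory of the derived category of some weakly-Fano quotient. Once that is established, the hypothesis on $P$ finishes the argument immediately.

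Concretely, let $X \modmod{\chi} G$ be an arbitrary projective toric DM stack with $\chi$ in the interior of a chamber $C$ of $\Sigma_{\op{GKZ}}$. First, I would choose a chamber $C_{-K}$ of $\Sigma_{\op{GKZ}}$ whose closure contains $\omega_X^{-1}$, and fix $\chi_{-K}$ in its interior. Any quotient $X \modmod{\chi_{-K}} G$ so obtained has nef anti-canonical class, hence is weakly-Fano, and so $P$ holds for $\dbcoh{X \modmod{\chi_{-K}} G}$ by hypothesis. Next, exactly as in the proof of Theorem \ref{theorem: full exc coll on toric var}, I would construct a straight-line path $\gamma \colon [0,1] \to C^G(X)_{\R}$ from (a point near) $\omega_X^{-1}$ to $\chi$, generically perturbed so that $\gamma$ meets the codimension-one skeleton of $\Sigma_{\op{GKZ}}$ transversely and avoids all cones of codimension $\geq 2$. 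Such a perturbation exists by the same radial-projection/measure argument used in Theorem \ref{theorem: full exc coll on toric var}, applied now with endpoint $\chi$ in place of the empty-chamber endpoint.

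The path $\gamma$ crosses a finite list of walls $M_1, \ldots, M_s$, passing through chambers $C_{-K} = C_0, C_1, \ldots, C_s = C$. At each wall $M_i$, governed by a one-parameter subgroup $\lambda_i$, the chamber $C_{i-1}$ lies on the side of $\omega_X^{-1}$ because $\gamma$ is a straight line emanating from $\omega_X^{-1}$. By Theorem \ref{theorem: SOD of derived categories for toric variations} together with Remark \ref{remark: which side is the anti-canonical on}, the quotient on the anti-canonical side has the larger derived category, yielding a semi-orthogonal decomposition
\begin{displaymath}
 \dbcoh{X \modmod{\chi_{i-1}} G} = \langle \dbcoh{Y_{\lambda_i}}(\ast), \ldots, \dbcoh{Y_{\lambda_i}}(\ast), \dbcoh{X \modmod{\chi_i} G} \rangle,
\end{displaymath}
in particular exhibiting $\dbcoh{X \modmod{\chi_i} G}$ as an admissible subcategory of $\dbcoh{X \modmod{\chi_{i-1}} G}$. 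Since an admissible subcategory of an admissible subcategory is admissible, chaining these inclusions gives that $\dbcoh{X \modmod{\chi} G}$ is an admissible subcategory of $\dbcoh{X \modmod{\chi_{-K}} G}$. Since $P$ descends under semi-orthogonal decomposition and holds for the weakly-Fano quotient, it holds for $\dbcoh{X \modmod{\chi} G}$, as required.

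The only genuine point to check is that one can always orient each wall crossing so that the "larger" side is in fact the one closer to $\omega_X^{-1}$ along $\gamma$; this is exactly the content of Remark \ref{remark: which side is the anti-canonical on}, which in turn follows from the sign of $\mu(\omega_X^{-1}, \lambda_i, x)$ via Lemma \ref{lemma: formula for mu}. The other minor technicality — that $\omega_X^{-1}$ might lie on a wall or lower-dimensional face of $\Sigma_{\op{GKZ}}$ — is handled by simply taking $\chi_{-K}$ in the interior of any chamber whose closure contains $\omega_X^{-1}$, and starting $\gamma$ from $\chi_{-K}$ rather than from $\omega_X^{-1}$ itself. Note that no induction on dimension is needed here: the wall contributions $\dbcoh{Y_{\lambda_i}}$ are irrelevant to the argument, which only requires the admissibility of $\dbcoh{X \modmod{\chi_i} G}$ inside $\dbcoh{X \modmod{\chi_{i-1}} G}$.
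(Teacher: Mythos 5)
Your proposal is correct and follows essentially the same route as the paper, which explicitly runs the path argument of Theorem \ref{theorem: full exc coll on toric var} with endpoint $\chi$ instead of the empty chamber and uses only the admissibility of $\dbcoh{X \modmod{\chi_i} G}$ in $\dbcoh{X \modmod{\chi_{i-1}} G}$ at each crossing. Your observation that the wall contributions and the dimension induction are irrelevant here is exactly the point, and your handling of the orientation of each crossing via Remark \ref{remark: which side is the anti-canonical on} matches the paper's argument.
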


\begin{proof}
 Recall that that a projective toric DM stack, $X \modmod{\chi} G$, is weakly-Fano if its anti-canonical bundle is nef and big. This is equivalent to $\omega_X^{-1}$ lying in the same chamber of the GKZ fan as $\chi$. Via an argument similar to the proof of Theorem \ref{theorem: full exc coll on toric var}, for any $\chi'$ lying in the interior of a chamber, there exists a $\chi$ in the interior of a chamber containing $\omega_X^{-1}$ such that $\dbcoh{X \modmod{\chi'} G}$ is an admissible category of $\dbcoh{X \modmod{\chi} G}$. Since $P$ is true for $\dbcoh{X \modmod{\chi} G}$, it descends to $\dbcoh{X \modmod{\chi'} G}$.
\end{proof}

\begin{corollary} \label{corollary: rouquier dimension toric}
 Let $X \modmod{\chi} G$ be a projective DM toric stack of Picard rank $\leq 2$ or of dimension $\leq 2$. The Rouquier dimension of $X$ is equal to the Krull dimension of $X$.
\end{corollary}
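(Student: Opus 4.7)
The plan is to separate the easy lower bound from the upper bound and then to reduce the upper bound to a manageable base case via the tools already assembled in this paper. For the lower bound, Rouquier's inequality for smooth projective varieties and DM stacks gives that the Rouquier dimension of $\dbcoh{X \modmod{\chi} G}$ is at least $\op{dim} X \modmod{\chi} G$, unconditionally.

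For the upper bound, I would first observe that the property ``Rouquier dimension is at most $n$'' descends under semi-orthogonal decomposition: if $\iota : \mathcal A \hookrightarrow \mathcal T$ is admissible with right adjoint $\pi$ and $G$ is a classical generator of $\mathcal T$ with generation time $n$, then $\pi(G)$ generates $\mathcal A$ in the same number of steps, since $\pi$ is exact and $\pi \iota \cong \op{Id}_{\mathcal A}$. Applying Proposition~\ref{proposition: SOD prop for toric}, it then suffices to verify that Rouquier dimension equals Krull dimension for weakly-Fano projective toric DM stacks of Picard rank $\leq 2$ or of dimension $\leq 2$; both the Picard rank and the dimension are preserved by the VGIT wall-crossings within a fixed pair $(X,G)$, so these classes of stacks are stable under the reduction.

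For the base cases, I would proceed as follows. In Picard rank one, the weakly-Fano projective toric DM stacks are weighted projective stacks $\mathbb P(a_0, \ldots, a_n)$, for which the line bundles $\mathcal O, \mathcal O(1), \ldots, \mathcal O(N)$ generate the derived category; the Koszul resolution of the residue field at the stacky origin on the total space of the defining $\mathbb G_m$-action, pushed down, exhibits any object as an $(n+1)$-step extension of these line bundles, giving Rouquier dimension at most $n$. In Picard rank two, the weakly-Fano smooth projective toric DM stacks are, by the stacky analogue of Kleinschmidt's classification, iterated weighted projective bundles over weighted projective stacks, and a strong generator realizing the sharp generation time can be built by tensoring a base generator with a fibrewise Koszul generator and applying the Picard-rank-one bound on each factor. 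The dimension $\leq 2$ case reduces either to curves, where $X \modmod{\chi} G$ is a weighted projective line of Rouquier dimension one, or to weakly-Fano smooth toric surfaces, where a direct inspection of the (finite) classification, or the general result that any smooth projective surface has Rouquier dimension two, closes the argument.

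The principal obstacle is the Picard-rank-two case in arbitrary dimension, where one must confirm that the tensor-product generator realizes the optimal generation time; this requires a careful Koszul-style computation on the fibre bundle, ensuring that the relations on the base and on the fibre can be spliced without introducing an extra shift. Once this is verified, combining the descent property of Rouquier dimension, Proposition~\ref{proposition: SOD prop for toric}, and Rouquier's inequality yields equality of Rouquier and Krull dimension in both cases of the corollary.
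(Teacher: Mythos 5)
Your reduction is exactly the paper's: the lower bound from the stacky version of Rouquier's inequality, the observation that generation time cannot increase under passage to an admissible subcategory (your argument via the right adjoint $\pi$ with $\pi\iota \cong \op{Id}$ is the standard and correct one), and Proposition~\ref{proposition: SOD prop for toric} to reduce to the weakly-Fano case. The divergence is in the base case, and that is where your proposal has a genuine gap. The paper does not prove the weakly-Fano case at all; it cites \cite{BF}, where the equality of Rouquier and Krull dimension for weakly-Fano projective toric DM stacks of Picard rank $\leq 2$ or dimension $\leq 2$ is established via generation times of tilting bundles. You instead attempt to prove these base cases directly, and you yourself flag the Picard-rank-two case as unresolved: asserting that a tensor product of a base generator with a fibrewise Koszul generator ``realizes the sharp generation time'' is precisely the hard content, and without the Koszul-splicing computation you describe, the upper bound in that case is simply not established. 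A resolution of length $n+1$ by summands of a candidate generator does give generation time $\leq n$, but producing such resolutions uniformly for all objects of the derived category of a weighted projective bundle is not routine.

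There is a second, independent problem in your dimension-$\leq 2$ base case. The objects here are Deligne-Mumford \emph{stacks}, not varieties, so neither of your two proposed outs works: there is no finite classification of weakly-Fano smooth toric DM surfaces (already the weighted projective planes $\P(a,b,c)$ form an infinite family), and a general theorem that smooth projective surfaces have Rouquier dimension two was not available (and would not directly cover stacks in any case). One further small point worth making explicit in your reduction: you need that the weakly-Fano model produced by Proposition~\ref{proposition: SOD prop for toric} is a quotient of the \emph{same} $\mathbb{A}^n$ by the same $G$, so that its dimension and its stacky Picard rank (namely the rank of $\widehat{G}$) agree with those of the original quotient; this is true but should be said. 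In short: your architecture matches the paper's, but the corollary as you have written it rests on base cases that are either explicitly incomplete (Picard rank two) or argued by appeals that fail for stacks (dimension two), whereas the paper discharges both by citation to \cite{BF}.
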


\begin{proof}
 We refer the definition of and details about Rouquier dimension to \cite{BF}. In \cite{BF}, the Rouquier dimension of a projective, weakly-Fano DM toric stack of Picard rank $\leq 2$ or of dimension $\leq 2$ was shown to be equal to $\op{dim} X$.  
 
 We note that the Rouquier dimension cannot increase under passage to an admissible subcategory. Thus, the property ``Rouquier dimension is $\leq \op{dim} X$'' descends under semi-orthogonal decompositions. By Proposition \ref{proposition: SOD prop for toric}, the Rouquier dimension of a projective DM toric stack is $\leq \op{dim} X$. As the Rouquier dimension is always at least the dimension of the DM stack, Lemma 2.17 \cite{BF}, we reach the desired conclusion. 
\end{proof}

\begin{remark}
 In \cite{O4}, Orlov conjectured that the Rouquier dimension of any smooth projective variety equaled the Krull dimension. As evidence, he proved his conjecture for curves. Corollary \ref{corollary: rouquier dimension toric} verifies Orlov's conjecture for projective DM toric stacks of Picard rank at most $2$ or of dimension at most $2$. 
\end{remark}

\section{Exceptional collections on moduli spaces of pointed rational curves} \label{section: moduli}

In this section, we first consider GIT quotients of $(\mathbb{P}^1)^n$ by the diagonal action of $\op{PGL}_2$. Subsequently, we consider the GIT quotients of the Fulton-MacPherson compactification of $n$ points on $\P^1$, \cite{FM}, under the action of $\op{PGL}_2$. We prove the existence of full exceptional collections for many of these moduli spaces of pointed rational curves.
 
\subsection{Moduli of weighted points on a line} \label{subsection: moduli of weighted points}

Let us start with the so-called ``elementary example'' of \cite{MFK}, the diagonal action of $\op{PGL}_2$ on $P_n := (\mathbb{P}^1)^n$. For technical reasons, we will also need to consider the action of $\op{SL}_2$ on $P_n$. Most of the results in this section on the structure of the GIT fan appear in \cite{Tha96,DH98}. Set $[n] := \{1,\ldots,n\}$.

Let $\pi_i: P_n \to \P^1$ be the projection onto the $i$-th factor. The Picard group of $P_n$ equals the Neron-Severi group of $P_n$. Both are isomorphic to $\Z^n$ and generated by $\pi_i^*\mathcal O_{\P^1}(1)$ for $1 \leq i \leq n$. We set
\begin{displaymath}
 \mathcal O(\bm{d}) := \mathcal O(d_1,\ldots,d_n) := \bigotimes_{i=1}^n \pi_i^*\mathcal O_{\P^1}(d_i).
\end{displaymath}

\begin{lemma} \label{lemma: Picard group of (P1)n}
 There is an isomorphism, 
 \begin{displaymath}
  \op{Pic}^{\op{SL}_2}(P_n) = \op{NS}^{\op{SL}_2}(P_n) \cong \Z^n.
 \end{displaymath}
 The bundles, $\pi_i^*\mathcal O_{\P^1}(1)$, form a basis for $\op{Pic}^{\op{SL}_2}(P_n)$. 
 
 There is an isomorphism, 
 \begin{displaymath}
  \op{Pic}^{\op{PGL}_2}(P_n) = \op{NS}^{\op{PGL}_2}(P_n) = \{ \mathcal O(\bm{d}) \in \op{NS}^{\op{SL}_2}(P_n) \mid \sum d_i \text{ is even }\}.
 \end{displaymath}
\end{lemma}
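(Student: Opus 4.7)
The plan is to compute the ordinary Picard and Néron-Severi groups of $P_n$ first, and then determine which line bundles admit equivariant structures for each group.

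First I would recall that by the Künneth formula (and vanishing of $H^1(\P^1,\mathcal O)$), $\op{Pic}(P_n) \cong \op{Pic}(\P^1)^n \cong \Z^n$, with basis $\pi_i^*\mathcal O_{\P^1}(1)$. Since $\op{Pic}(\P^1) = \op{NS}(\P^1)$ and $P_n$ is a product of $\P^1$'s, algebraic equivalence coincides with linear equivalence, giving $\op{NS}(P_n) = \op{Pic}(P_n) = \Z^n$.

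Next I would handle the $\op{SL}_2$ case. I would appeal to the standard exact sequence (for a connected group $G$ acting on a connected variety $X$)
\[
 0 \to \widehat{G} \to \op{Pic}^G(X) \to \op{Pic}(X) \to \op{Pic}(G),
\]
and use that $\op{SL}_2$ is semisimple and simply connected, so $\widehat{\op{SL}_2} = 0$ and $\op{Pic}(\op{SL}_2) = 0$. This yields $\op{Pic}^{\op{SL}_2}(P_n) \cong \op{Pic}(P_n) \cong \Z^n$. A completely concrete alternative, which I prefer as it directly exhibits the basis, is to observe that $\mathcal O_{\P^1}(1)$ carries a canonical $\op{SL}_2$-equivariant structure (coming from the standard representation of $\op{SL}_2$ on $H^0(\P^1,\mathcal O(1))$), so each $\pi_i^*\mathcal O_{\P^1}(1)$ carries one by pullback, and uniqueness of the equivariant structure on each line bundle follows from $\widehat{\op{SL}_2} = 0$. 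Passing to Néron-Severi is automatic since $\op{Pic}(P_n) = \op{NS}(P_n)$.

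For the $\op{PGL}_2$ case I would use the central extension
\[
 1 \to \{\pm I\} \to \op{SL}_2 \to \op{PGL}_2 \to 1.
\]
A line bundle $\mathcal O(\bm{d})$ with its canonical $\op{SL}_2$-equivariant structure descends to a $\op{PGL}_2$-equivariant structure if and only if the central element $-I$ acts trivially on every fiber. Since $-I$ acts on $H^0(\P^1,\mathcal O(1)) = k^2$ by $-\op{Id}$, it acts on the fiber of $\mathcal O_{\P^1}(1)$ over any $\op{SL}_2$-fixed point by $-1$, and hence by $(-1)^{d_i}$ on the fiber of $\pi_i^*\mathcal O_{\P^1}(d_i)$. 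Multiplicativity in tensor products gives that $-I$ acts on fibers of $\mathcal O(\bm{d})$ by $(-1)^{\sum d_i}$. Thus the equivariant structure descends precisely when $\sum d_i$ is even. Since $\widehat{\op{PGL}_2} = 0$, this equivariant structure is again unique, and $\op{Pic}^{\op{PGL}_2}(P_n) = \op{NS}^{\op{PGL}_2}(P_n)$ inherits the identification $\{\mathcal O(\bm{d}) \mid \sum d_i \text{ even}\}$.

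The main obstacle here is purely a bookkeeping one: making sure the claim about $\op{Pic}(\op{SL}_2)=0$ (or equivalently, that every line bundle admits an equivariant structure) is stated with a precise reference, and carefully computing the weight of the central element $-I$ on the canonical $\op{SL}_2$-linearization. Everything else is a direct application of the character/Picard sequence together with the Künneth calculation of $\op{Pic}(P_n)$.
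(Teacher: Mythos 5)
Your proposal is correct and follows essentially the same route as the paper: uniqueness of equivariant structures from the absence of characters of $\op{SL}_2$ and $\op{PGL}_2$, the canonical $\op{SL}_2$-linearization of $\pi_i^*\mathcal O_{\P^1}(1)$ coming from the standard representation on $\mathbb{A}^2$, and the central element $-I$ acting by $(-1)^{\sum d_i}$ on fibers deciding descent to $\op{PGL}_2$. (One small wording slip: $\P^1$ has no $\op{SL}_2$-fixed points; what you need is that $-I$ acts trivially on $\P^1$, hence acts on \emph{every} fiber of $\mathcal O_{\P^1}(1)$, and does so by $-1$.)
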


\begin{proof}
 Two different $G$-equivariant structures on the same line bundle differ by a twist of a character. As both $\op{PGL}_2$ and $\op{SL}_2$ have no characters, an equivariant structure is unique, if it exists. It is clear that $\pi_i^*\mathcal O_{\P^1}(1)$ admits a natural $\op{SL}_2$ equivariant structure coming from the action on $\mathbb{A}^2$. 

 Any $\op{PGL}_2$ equivariant structure automatically provides a $\op{SL}_2$ equivariant structure. However, the center of $\op{SL}_2$ acts nontrivially on $\mathcal O(\bm{d})$ if $\sum d_i$ is odd. Thus, only $\mathcal O(\bm{d})$ with $\sum d_i$ even admit a $\op{PGL}_2$ equivariant structure. 
\end{proof}

Any one-parameter subgroup, $\lambda$, has exactly two fixed points on $\P^1$. Let $y^-_{\lambda}$ be the fixed point with
\begin{displaymath}
 \mu( \varOmega_{\P^1},\lambda,y^-_{\lambda}) = -1
\end{displaymath}
and let $y^+_{\lambda}$ be the fixed point with 
\begin{displaymath}
 \mu( \varOmega_{\P^1},\lambda,y^+_{\lambda}) = 1.
\end{displaymath}
Let us next compute the possible values of the Hilbert-Mumford numerical function.

\begin{lemma} \label{lemma: computing the HM function for (P1)n}
 Let $\lambda$ be a one parameter subgroup of $\op{PGL}_2$ and let $x = (x_1,\ldots,x_n) \in P_n$. Let $x^* := \lim_{t\to 0} \sigma(\lambda(\alpha),x)$ and set
 \begin{displaymath}
  I := \{i \in [n] \mid x_i = y_{\lambda}^- \}.
 \end{displaymath}
 Let $I^c$ denote the complement of $I$ in $[n]$. Then,
 \begin{displaymath}
  \mu(\mathcal O(\bm{d}),\lambda,x^*) = \frac{1}{2}\left(\sum_{i\in I} d_i - \sum_{i \in I^c} d_i\right).
 \end{displaymath}
\end{lemma}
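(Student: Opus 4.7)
The strategy is to use additivity and pullback-functoriality of $\mu$ to reduce to a single factor of $P_n$, identify $x_i^*\in\{y_\lambda^-,y_\lambda^+\}$, and then compute $\mu(\mathcal O_{\P^1}(d),\lambda,y_\lambda^{\pm})$ by comparison with the anti-canonical bundle.

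First I would invoke parts (2) and (3) of Lemma~\ref{lemma: weights under the orbit}. Additivity applied to $\mathcal O(\bm d) = \bigotimes_{i} \pi_i^{*}\mathcal O_{\P^1}(d_i)$, together with pullback-functoriality along the $\op{PGL}_2$-equivariant projections $\pi_i$, yields
\[
 \mu(\mathcal O(\bm d),\lambda,x^*) \;=\; \sum_{i=1}^n \mu(\mathcal O_{\P^1}(d_i),\lambda,x_i^{*}), \qquad x_i^{*}:=\pi_i(x^{*}) = \lim_{\alpha\to 0}\lambda(\alpha)\cdot x_i.
\]

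Next I would identify $x_i^{*}$ in terms of $I$. The condition $\mu(\varOmega_{\P^1},\lambda,y_\lambda^{\pm})=\pm 1$ says that the $\lambda$-weight on the tangent space $T_{y_\lambda^{-}}\P^1$ is negative, whereas on $T_{y_\lambda^{+}}\P^1$ it is positive; hence $y_\lambda^{-}$ is the repelling and $y_\lambda^{+}$ the attracting fixed point of $\lambda$ on $\P^1$. Therefore $x_i^{*}=y_\lambda^{-}$ precisely when $x_i=y_\lambda^{-}$ (i.e.\ $i\in I$), and $x_i^{*}=y_\lambda^{+}$ otherwise.

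It remains to compute $\mu(\mathcal O_{\P^1}(d_i),\lambda,y_\lambda^{\pm})$. Although $\mathcal O_{\P^1}(1)$ is not $\op{PGL}_2$-equivariant, the anti-canonical bundle $\omega_{\P^1}^{-1}\cong\mathcal O_{\P^1}(2)$ is, with $\mu(\mathcal O_{\P^1}(2),\lambda,y_\lambda^{\pm}) = -\mu(\varOmega_{\P^1},\lambda,y_\lambda^{\pm}) = \mp 1$ by duality. To avoid fractional weights entirely, I would apply the two previous steps to the $\op{PGL}_2$-equivariant bundle $\mathcal O(\bm d)^{\otimes 2}=\mathcal O(2\bm d)$, obtaining the integer identity
\[
 \mu(\mathcal O(2\bm d),\lambda,x^{*}) \;=\; \sum_{i\in I}(+d_i) + \sum_{i\in I^c}(-d_i) \;=\; \sum_{i\in I}d_i - \sum_{i\in I^c}d_i,
\]
and since $\mu(\mathcal O(\bm d)^{\otimes 2},\lambda,x^{*})=2\mu(\mathcal O(\bm d),\lambda,x^{*})$ by additivity, dividing by two gives the formula.

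The calculation is essentially mechanical once additivity and functoriality are invoked; the one subtlety is the $\tfrac12$, which reflects that the image of the cocharacter lattice of the standard torus of $\op{SL}_2$ inside that of $\op{PGL}_2$ is the index-two sublattice of even cocharacters. The squaring trick sidesteps this by working only with bundles admitting a $\op{PGL}_2$-equivariant structure.
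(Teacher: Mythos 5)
Your proof is correct and follows essentially the same route as the paper's: reduce to the individual factors by additivity and pullback-functoriality of $\mu$, identify $x_i^*$ as $y_\lambda^-$ or $y_\lambda^+$ according to whether $i\in I$, and resolve the half-integrality by an index-two trick. The only (cosmetic) difference is where you put the factor of $2$: the paper squares the one-parameter subgroup and lifts $\lambda^2$ to $\op{SL}_2$, where $\mathcal O_{\P^1}(1)$ is equivariant, whereas you square the line bundle and normalize against $\omega_{\P^1}^{-1}\cong\mathcal O_{\P^1}(2)$ — two equivalent ways of exploiting that the cocharacter lattice of $\op{SL}_2$ sits with index two in that of $\op{PGL}_2$.
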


\begin{proof} 
 Any one-parameter subgroup of $\op{PGL}_2$ is conjugate to
 \begin{displaymath}
  \begin{pmatrix} \alpha & 0 \\ 0 & 1 \end{pmatrix}  \text{ or } \begin{pmatrix} \alpha^{-1} & 0 \\ 0 & 1 \end{pmatrix}.
 \end{displaymath}

 By Lemma \ref{lemma: weights under the orbit}, we may assume that $\lambda$ is one of these one-parameter subgroups. Let us assume that 
 \begin{displaymath}
  \lambda(\alpha) = \begin{pmatrix} \alpha & 0 \\ 0 & 1 \end{pmatrix};
 \end{displaymath}
 the other case is completely analogous.
 
 The weight of $\lambda$ at $x^*$ is one half of the weight of 
 \begin{displaymath}
  \lambda^2(\alpha) = \begin{pmatrix} \alpha^2 & 0 \\ 0 & 1 \end{pmatrix} = \begin{pmatrix} \alpha & 0 \\ 0 & \alpha^{-1} \end{pmatrix} \in \op{PGL}_2.
 \end{displaymath}
 This has the pleasant property that it lifts to a one-parameter subgroup, $\widetilde{\lambda}^2$, of $\op{SL}_2$. Notice also that
 \begin{align*}
  \mu(\mathcal O_{\P^1}(1),\widetilde{\lambda}^2,[1:0]) & =  1 \\
  \mu(\mathcal O_{\P^1}(1),\widetilde{\lambda}^2,[0:1]) & = -1.
 \end{align*}

 Let $x^* = \lim_{\alpha \to 0} \sigma(\lambda(\alpha),x)$. We have
 \begin{displaymath}
  x^*_i = \lim_{\alpha \to 0} \sigma(\lambda(\alpha),x_i) = \begin{cases} y^-_{\lambda} & x_i = y^-_{\lambda} \\ y^+_{\lambda} & \text{otherwise}. \end{cases}
 \end{displaymath}
 For $\lambda$ as above, we have $y^-_{\lambda} = [1:0]$ and $y^+_{\lambda} = [0:1]$. 
 
Now using linearity of $\mu$, we compute,
 \begin{align*}
  \mu(\mathcal O(\bm{d}),\lambda,x^*) &  =   \frac{1}{2} \mu(\mathcal O(\bm{d}),\widetilde{\lambda}^2,x^*) \\
    & =    \frac{1}{2} \left(\sum_{i \in I} \mu(\mathcal O_{\P^1}(d_i),\widetilde{\lambda}^2, [1:0]) + \sum_{i \in I^c} \mu(\mathcal O_{\P^1}(d_i), \widetilde{\lambda}^2, [0:1])\right) \\
    & = \frac{1}{2} (\sum_{i \in I} d_i - \sum_{i \in I^c} d_i).
 \end{align*}
\end{proof}

\begin{lemma} \label{lemma: ss locus P1n}
 We have 
 \begin{displaymath}
  P_n^{\op{ss}}(\bm{d}) := P_n^{\op{ss}}(\mathcal O(\bm{d})) = \{ (x_1,\ldots, x_n) \mid \text{if } I \subset [n] \text{ with } x_i=x_j~ \forall i,j\in I, \text{ then } \sum_{i \in I} d_i \leq \sum_{i \in I^c} d_i \}.
 \end{displaymath}
\end{lemma}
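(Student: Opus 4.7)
The plan is to apply the Hilbert-Mumford numerical criterion (Theorem~\ref{theorem: Hilbert-Mumford numerical stability}) and use the explicit computation in Lemma~\ref{lemma: computing the HM function for (P1)n} to rewrite the resulting family of inequalities in combinatorial form. Since $\mathcal O(\bm{d})$ is ample precisely when $d_i > 0$ for all $i$ (which we may assume; otherwise the claim is vacuous or interpreted with the analogous criterion for non-ample bundles), the criterion says that a point $x = (x_1,\ldots,x_n) \in P_n$ is unstable if and only if there exists a one-parameter subgroup $\lambda$ of $\op{PGL}_2$ with $\mu(\mathcal O(\bm{d}),\lambda,x^*) > 0$, where $x^* = \lim_{\alpha \to 0}\sigma(\lambda(\alpha),x)$. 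So $x$ is semi-stable if and only if for every $\lambda$, $\mu(\mathcal O(\bm{d}),\lambda,x^*) \leq 0$.

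By Lemma~\ref{lemma: computing the HM function for (P1)n}, this inequality is exactly
\[
\sum_{i \in I_\lambda} d_i \;\leq\; \sum_{i \in I_\lambda^c} d_i, \qquad I_\lambda := \{i \in [n] \mid x_i = y^-_\lambda\}.
\]
The next step is to translate the quantification over $\lambda$ into the quantification over subsets $I$ stated in the lemma. Because $\op{PGL}_2$ acts transitively on $\P^1$, as $\lambda$ varies over all one-parameter subgroups, the point $y^-_\lambda$ varies over all of $\P^1$. Therefore $I_\lambda$ varies over all sets of the form $\{i \mid x_i = p\}$ for $p \in \P^1$, which are precisely the maximal subsets of $[n]$ on which the coordinates of $x$ coincide. (If $p \notin \{x_1,\ldots,x_n\}$ then $I_\lambda = \emptyset$ and the inequality is automatic since each $d_i \geq 0$.)

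Finally, I compare this with the condition in the statement, which a priori quantifies over \emph{every} subset $I$ on which the $x_i$ coincide, not merely the maximal ones. This equivalence is elementary: given any such $I$, let $J \supseteq I$ be the maximal coincidence set containing $I$. Since $d_j \geq 0$, passing from $I$ to $J$ increases the left side of the inequality and decreases the right side, so the inequality for $J$ implies the inequality for $I$. Conversely, every maximal coincidence set is itself of the required form. Hence the two conditions are equivalent, and combined with the HM reduction above this identifies the semi-stable locus as claimed. There is no real obstacle here; the only point requiring a moment's attention is the passage between arbitrary coincidence subsets and maximal ones, which uses $d_i \geq 0$.
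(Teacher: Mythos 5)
Your proof is correct and follows essentially the same route as the paper: reduce to the Hilbert--Mumford criterion (Theorem~\ref{theorem: Hilbert-Mumford numerical stability}), plug in the weight formula of Lemma~\ref{lemma: computing the HM function for (P1)n}, and translate the quantification over one-parameter subgroups into one over coincidence subsets. Your extra remark that the inequality for a maximal coincidence set implies it for all of its subsets (using $d_i>0$) is exactly the point the paper dismisses as ``clear,'' so you have merely made the same argument slightly more explicit.
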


\begin{proof}
 The Hilbert-Mumford numerical criterion, Theorem \ref{theorem: Hilbert-Mumford numerical stability}, states that
 \begin{displaymath}
   P_n^{\op{ss}}(\bm{d}) = \{ x \mid \mu(\mathcal O(\bm{d}),\lambda,x) \leq 0~ \forall \lambda \}.
 \end{displaymath}
 Lemma \ref{lemma: computing the HM function for (P1)n} gives 
 \begin{displaymath}
  \mu(\mathcal O(\bm{d}),\lambda,x) = \frac{1}{2} \left(\sum_{i\in I} d_i - \sum_{i \in I^c} d_i\right)
 \end{displaymath}
 where $I = \{i \in [n] \mid x_i = y_{\lambda}^- \}$. If we have a subset, $I \subset [n]$, with $x_i = x_j$ for all $i,j \in I$, then we can find a one-parameter subgroup $\lambda$ with $y_{\lambda}^- = x_i$ for $i \in I$. The conclusion is clear from these facts.
\end{proof}

Let $I \subseteq [n]$. Define a hyperplane, 
\begin{displaymath}
 H_I := \{ \mathcal O(\bm{d}) \mid \sum_{i \in I} d_i = \sum_{i \in I^c} d_i \} \subset \R^n,
\end{displaymath}
and half-spaces
\begin{align*}
 H_I^{\geq 0} & := \{ \mathcal O(\bm{d}) \mid \sum_{i \in I} d_i \geq \sum_{i \in I^c} d_i \} \subset \R^n \\
 H_I^{> 0} & := \{ \mathcal O(\bm{d}) \mid \sum_{i \in I} d_i > \sum_{i \in I^c} d_i \} \subset \R^n \\
 H_I^{\leq 0} & := \{ \mathcal O(\bm{d}) \mid \sum_{i \in I} d_i \leq \sum_{i \in I^c} d_i \} \subset \R^n \\
 H_I^{< 0} & := \{ \mathcal O(\bm{d}) \mid \sum_{i \in I} d_i < \sum_{i \in I^c} d_i \} \subset \R^n,
\end{align*}

Since $\op{SL}_2$ and $\op{PGL}_2$ differ by a finite group, their semi-stable loci coincide whenever the line bundle admits an equivariant structure for both groups.

\begin{corollary} \label{corollary: GIT fan for points in P1}
 The subsets of constant semi-stable locus within the ample cone are the intersections,
 \begin{displaymath}
  H_{I_1}^{> 0} \cap \cdots \cap H_{I_{j_+}}^{> 0} \cap H_{I_{j_+ + 1}}^{< 0} \cap \cdots \cap H_{I_{j_+ + j_-}}^{< 0} \cap H_{I_{j_+ + j_- + 1}} \cap \cdots \cap H_{I_{j_+ + j_- +j_0}} \cap(\R_{>0})^n.
 \end{displaymath}
 
 The closures of these subsets form a fan whose support is the ample cone. The chambers of this fan correspond to intersections with $j_0 = 0$ and the walls correspond to intersections with $j_0 = 1$.
\end{corollary}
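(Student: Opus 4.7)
The plan is to deduce the corollary directly from the explicit description of the semi-stable locus given in Lemma~\ref{lemma: ss locus P1n}. That description shows that, for a point $x = (x_1,\ldots,x_n) \in P_n$, whether $x$ is semi-stable with respect to $\mathcal O(\bm d)$ is controlled by a finite system of linear inequalities on $\bm d$, one per subset $I \subseteq [n]$ on which the coordinates of $x$ collide. So the semi-stable locus is determined entirely by the sign pattern of the functions $\ell_I(\bm d) := \sum_{i \in I} d_i - \sum_{i \in I^c} d_i$ as $I$ ranges over subsets of $[n]$.

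First I would verify that if $\bm d$ and $\bm d'$ belong to the same cell of the arrangement --- meaning $\ell_I(\bm d)$ and $\ell_I(\bm d')$ have the same sign (positive, negative, or zero) for every $I \subseteq [n]$ --- then $P_n^{\op{ss}}(\bm d) = P_n^{\op{ss}}(\bm d')$. This is immediate from Lemma~\ref{lemma: ss locus P1n}, since the inequalities determining semi-stability involve only the signs of the $\ell_I$. For the converse, I would show that the sign pattern is recovered from the semi-stable locus. Given any $I \subsetneq [n]$ with $|I| \geq 2$, consider a configuration $x^I$ in which all points indexed by $I$ coincide at a fixed $p \in \P^1$ while the remaining points are pairwise distinct and distinct from $p$; the only nontrivial coincidence condition at $x^I$ is on $I$ itself, so $x^I \in P_n^{\op{ss}}(\bm d)$ iff $\ell_I(\bm d) \leq 0$, and $x^I \notin P_n^{\op{ss}}(\bm d)$ iff $\ell_I(\bm d) > 0$. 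Thus two line bundles whose $\ell_I$-signs differ for some $I$ yield different semi-stable loci. (The singleton and empty $I$ impose no constraint since $\bm d \in (\R_{>0})^n$.) This pins down the cells of constant semi-stability as precisely the intersections of the half-spaces $H_I^{>0}, H_I^{<0}$ and hyperplanes $H_I$ described in the statement.

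Next I would observe that the resulting decomposition of $(\R_{>0})^n$ is exactly the restriction of the central hyperplane arrangement $\{H_I\}_{I \subset [n]}$ to the ample cone. It is standard that the closures of the connected components of the complement of a finite central hyperplane arrangement, together with the faces thereof, form a fan --- each cell is a rational polyhedral cone (an intersection of closed half-spaces and hyperplanes), any face of such a cell is again a cell, and two cells meet in a common face. Intersecting with the (open) ample cone $(\R_{>0})^n$ preserves these properties and gives a fan whose support is $(\R_{>0})^n$. The top-dimensional (chamber) cones are exactly those for which no defining equality $\ell_I = 0$ is imposed, i.e.\ $j_0 = 0$, and the codimension-one faces (the walls) are those with $j_0 = 1$ and the remaining inequalities strict.

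The only mildly subtle point will be the converse direction in the first step: one must be careful that the distinguishing configurations $x^I$ one writes down genuinely isolate the single condition $\ell_I$, which is why I take the coordinates outside of $I$ to be pairwise distinct and different from the common value of the coordinates in $I$. With that caveat addressed, the corollary reduces to a bookkeeping exercise with the inequalities of Lemma~\ref{lemma: ss locus P1n} and the standard fan structure associated to a hyperplane arrangement.
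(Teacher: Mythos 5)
Your approach is the same as the paper's, which simply declares the first claim ``immediate'' from Lemma \ref{lemma: ss locus P1n} and the fan structure ``evident''; your write-up supplies the missing details, and the overall strategy (read off the semi-stable locus from the sign pattern of the $\ell_I$, then invoke the standard fan attached to a hyperplane arrangement) is correct.

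One step needs a small repair. The biconditional ``$x^I \in P_n^{\op{ss}}(\bm{d})$ iff $\ell_I(\bm{d}) \leq 0$'' is not quite right: the conditions of Lemma \ref{lemma: ss locus P1n} for singleton subsets hold vacuously at \emph{every} point, so semi-stability of $x^I$ also requires $\ell_{\{j\}}(\bm{d}) \leq 0$ for all $j \in [n]$, and for $j \notin I$ this is not implied by $\ell_I(\bm{d}) \leq 0$. Relatedly, your injectivity claim --- distinct sign patterns give distinct semi-stable loci --- fails exactly on the locus where the semi-stable set is empty: the cells $\{\ell_{\{1\}} > 0\}$ and $\{\ell_{\{2\}} > 0\}$ carry different sign patterns but both have empty semi-stable locus. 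Neither point damages the corollary (whose literal statement has the same blemish; the paper later accommodates it by allowing several ``empty chambers''): one checks that $\ell_{\{i_0\}}(\bm{d}) > 0$ forces $\ell_I(\bm{d}) > 0$ for every $I \ni i_0$ and $\ell_I(\bm{d}) < 0$ for every $I \not\ni i_0$, so each region $\{\ell_{\{i_0\}} > 0\} \cap (\R_{>0})^n$ is already a single cell of the arrangement, while on the complementary region where all $\ell_{\{j\}} \leq 0$ your test configurations $x^I$ (together with $x^{I^c}$, to separate $\ell_I = 0$ from $\ell_I < 0$) work exactly as you describe.
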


\begin{proof}
 The first conclusion is immediate from Lemma \ref{lemma: ss locus P1n}. The remainder of the conclusions are evident.
\end{proof} 

\begin{definition}
 We shall call the fan from Corollary \ref{corollary: GIT fan for points in P1}, the \textbf{extended GIT fan}. A chamber in the extended GIT fan is called an \textbf{empty chamber} if its interior has empty semi-stable locus.
\end{definition}

\begin{lemma} \label{lemma: empty chambers for P1n}
 Assume that $d_i > 0$ for all $i$. The semi-stable locus, $P_n^{\op{ss}}(\bm{d})$, is empty if and only if there exists an $i_0$ with $d_{i_0} > \sum_{i \not = i_0} d_i$.
\end{lemma}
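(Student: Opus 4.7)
The plan is to apply directly the characterization of the semi-stable locus established in Lemma \ref{lemma: ss locus P1n}, which says that $(x_1,\ldots,x_n) \in P_n^{\op{ss}}(\bm{d})$ precisely when, for every subset $I \subseteq [n]$ on which the coordinates coincide ($x_i = x_j$ for all $i,j \in I$), one has $\sum_{i \in I} d_i \leq \sum_{i \in I^c} d_i$. The lemma is essentially a direct consequence of this, so there is no serious obstacle; the two directions just amount to choosing the right subset and the right test point, respectively.

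For the ``only if'' direction, suppose there exists $i_0$ with $d_{i_0} > \sum_{i \neq i_0} d_i$. Take any point $x = (x_1, \ldots, x_n) \in P_n$ and consider the singleton $I = \{i_0\}$. The condition $x_i = x_j$ for all $i, j \in I$ is trivially satisfied. Thus the necessary inequality $\sum_{i \in I} d_i \leq \sum_{i \in I^c} d_i$ becomes $d_{i_0} \leq \sum_{i \neq i_0} d_i$, which fails by hypothesis. Hence no point is semi-stable, and $P_n^{\op{ss}}(\bm{d}) = \emptyset$.

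For the ``if'' direction, assume $d_{i_0} \leq \sum_{i \neq i_0} d_i$ for every $i_0 \in [n]$. Since $\P^1$ is infinite, we may choose a point $x = (x_1, \ldots, x_n) \in P_n$ with pairwise distinct coordinates. For such a point, the only subsets $I \subseteq [n]$ satisfying $x_i = x_j$ for all $i, j \in I$ are the singletons. The semi-stability conditions from Lemma \ref{lemma: ss locus P1n} thus reduce precisely to the inequalities $d_{i_0} \leq \sum_{i \neq i_0} d_i$ for each $i_0$, which hold by assumption. Hence $x \in P_n^{\op{ss}}(\bm{d})$, proving the semi-stable locus is non-empty. (The positivity hypothesis $d_i > 0$ is used only implicitly to ensure we are inside the ample cone where the fan structure of Corollary \ref{corollary: GIT fan for points in P1} applies; the combinatorial argument itself does not require it.)
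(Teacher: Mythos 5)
Your proof is correct and follows the same route as the paper: both directions are read off directly from Lemma \ref{lemma: ss locus P1n}, testing singletons for emptiness and a point with pairwise distinct coordinates for non-emptiness. If anything, your version is slightly more careful, since the paper's converse direction assumes the strict inequalities $d_j < \sum_{i \neq j} d_i$ while you correctly handle the boundary case $d_{i_0} = \sum_{i \neq i_0} d_i$ as well.
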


\begin{proof}
 We use the Hilbert-Mumford numerical criterion, Theorem \ref{theorem: Hilbert-Mumford numerical stability}. Assume that there exists an $i_0$ with $d_{i_0} > \sum_{i \not = i_0} d_i$. It is clear from Lemma \ref{lemma: ss locus P1n} that $P_n^{\op{ss}}(\bm{d}) = \emptyset$. 
 
 Assume that $d_{j} < \sum_{i \not = j} d_i$ for all $1 \leq j \leq n$. Lemma \ref{lemma: ss locus P1n} implies that a point, $x = (x_1,\ldots,x_n)$, with $x_i \not = x_j$ if $i \not = j$, lies in $P_n^{\op{ss}}(\bm{d})$.
\end{proof}

\begin{lemma} \label{lemma: P1n elem wall cross}
 Let $C_-$ and $C_+$ be two chambers of of the extended GIT fan and let $M$ be their common face. Choose, $\mathcal O(\bm{d}_-) \in \op{Int} C_-$ and $\mathcal O(\bm{d}_+) \in \op{Int} C_+$ near $M$ and let $\mathcal O(\bm{d}_0)$ be the point where the straight line path between $\mathcal O(\bm{d}_-)$ and $\mathcal O(\bm{d}_+)$ intersects $M$. The variation, $(\mathcal O(\bm{d}_-), \mathcal O(\bm{d}_+))$, satisfies the DHT condition. Consequently, there is an an elementary wall crossing,
 \begin{align*}
  P_n^{\op{ss}}(\bm{d}_0) & = P_n^{\op{ss}}(\bm{d}_+) \sqcup S_{\lambda} \\
  P_n^{\op{ss}}(\bm{d}_0) & = P_n^{\op{ss}}(\bm{d}_-) \sqcup S_{-\lambda}.
 \end{align*}
\end{lemma}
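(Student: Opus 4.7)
The plan is to verify that the variation $(\mathcal O(\bm{d}_-), \mathcal O(\bm{d}_+))$ satisfies the DHT condition and then invoke Theorem \ref{theorem: VGIT give elementary wall crossing}. The common face $M$ sits inside some hyperplane $H_{I_0}$ (for a subset $I_0 \subseteq [n]$), and without loss of generality the two chambers lie on opposite sides, say $C_+ \subseteq H_{I_0}^{>0}$ and $C_- \subseteq H_{I_0}^{<0}$. Since $C_+$ and $C_-$ share the full face $M$, by choosing $\mathcal O(\bm{d}_\pm)$ sufficiently close to $M$, the straight line path $\mathcal L_t$ will cross no other wall $H_I$ for $I \neq I_0, I_0^c$; hence $\mathcal L_t \in \op{Int}C_+$ for $t > 0$ and $\mathcal L_t \in \op{Int}C_-$ for $t < 0$, giving constancy of $P_n^{\op{ss}}(\mathcal L_t)$ on each interval by Corollary \ref{corollary: GIT fan for points in P1}.

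Next, I would identify the flip locus, $P_n^{\op{ss}}(0) \setminus \left(P_n^{\op{ss}}(+) \cup P_n^{\op{ss}}(-)\right)$, using the description in Lemma \ref{lemma: ss locus P1n}. Given $x = (x_1,\ldots,x_n) \in P_n$, let $E_1, \ldots, E_k$ be the equivalence classes of the relation $i \sim j \iff x_i = x_j$. A point $x$ fails to be semi-stable for $\mathcal O(\bm{d}_+)$ but is semi-stable for $\mathcal O(\bm{d}_0)$ precisely when some $E_a$ causes the only inequality that flips between $\bm{d}_0$ and $\bm{d}_+$, namely $E_a = I_0$ (or $I_0^c$) with equality at $\bm{d}_0$. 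The same argument with $+ \leftrightarrow -$ gives $E_b = I_0^c$ (or $I_0$). Intersecting, the flip locus consists exactly of points $x$ for which $\{I_0, I_0^c\}$ is the full partition of $[n]$ induced by $x$, i.e., $x_i = p$ for $i \in I_0$ and $x_i = q$ for $i \in I_0^c$ with $p \neq q$. This locus is identified with the open subvariety $(\mathbb{P}^1 \times \mathbb{P}^1) \setminus \Delta$, which is irreducible and hence connected.

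For the stabilizer condition, any point $x$ in the flip locus has $\op{PGL}_2$-stabilizer equal to the subgroup of $\op{PGL}_2$ fixing the two distinct points $p, q \in \mathbb{P}^1$; this is a maximal torus of $\op{PGL}_2$, isomorphic to $\mathbb{G}_m$. Thus all three hypotheses of the DHT condition hold. Applying Theorem \ref{theorem: VGIT give elementary wall crossing} produces the one-parameter subgroup $\lambda$ (which, up to conjugation, is the one with $y^-_\lambda = p$, $y^+_\lambda = q$ along a chosen flip locus point) and the promised elementary wall crossing.

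The argument is largely formal given the fan description of Corollary \ref{corollary: GIT fan for points in P1}; the only subtlety is ensuring that ``near $M$'' is interpreted strongly enough that the segment from $\bm{d}_-$ to $\bm{d}_+$ meets no hyperplane $H_I$ other than $H_{I_0}$, which is possible because $M$ is a face of codimension one shared by both chambers and the fan is finite.
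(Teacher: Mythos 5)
Your proposal is correct and follows essentially the same route as the paper: verify the three DHT conditions using the fan description of Corollary \ref{corollary: GIT fan for points in P1} and the explicit semi-stability criterion of Lemma \ref{lemma: ss locus P1n}, identify the flip locus as the connected set of configurations supported on exactly two distinct points distributed along $I_0$ and $I_0^c$, and then invoke Theorem \ref{theorem: VGIT give elementary wall crossing}. Your stabilizer computation is in fact slightly more careful than the paper's (you restrict to points of the flip locus, where the stabilizer really is the maximal torus $\mathbb{G}_m$, rather than asserting a blanket claim about all points of $(\mathbb{P}^1)^n$), but this is a refinement of the same argument, not a different one.
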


\begin{proof}
 The first condition of being a DHT variation follows from Corollary \ref{corollary: GIT fan for points in P1}. The second condition follows as the stabilizer in $\op{PGL}_2$ of any point in $(\P^1)^n$ is either $\mathbb{G}_m$ or $1$. Let $H_I$ be the hyperplane that $M$ lies in. Then, applying the Hilbert-Mumford numerical criterion, Theorem \ref{theorem: Hilbert-Mumford numerical stability}, 
 \begin{gather*}
  P_n^{\op{ss}}(\bm{d}_0)\setminus \left(  P_n^{\op{ss}}(\bm{d}_-) \cup P_n^{\op{ss}}(\bm{d}_+) \right) = \\ \{ (x_1,\ldots,x_n) \mid x_i = x_j \text{ if } i,j \in I \text{ or } i,j \in I^c \text{ and } x_i \not = x_j \text{  if } i \in I, j \in I^c\},
 \end{gather*}
 which is connected. The final statement is Theorem \ref{theorem: VGIT give elementary wall crossing}.
\end{proof}

We let
\begin{displaymath}
 P_n \modmod{\bm{d}} \op{PGL}_2 := [P_n^{\op{ss}}(\bm{d})/\op{PGL}_2]
\end{displaymath}
and 
\begin{displaymath}
 P_n \modmod{\bm{d}} \op{SL}_2 := [P_n^{\op{ss}}(\bm{d})/\op{SL}_2].
\end{displaymath}

We will prove the following proposition.

\begin{proposition} \label{proposition: exc coll SL2}
 Let $\mathcal O(\bm{d})$ lie in the interior of a chamber of the GIT fan for the action of $\op{SL}_2$ on $(\P^1)^n$. We have a full exceptional collection,
 \begin{displaymath}
  \dbcoh{P_n \modmod{\bm{d}} \op{SL}_2} = \langle \mathcal E_1,\ldots, \mathcal E_{l(\bm{d})}, \mathcal F_1,\ldots, \mathcal F_{l(\bm{d})} \rangle
 \end{displaymath}
 where the center of $\op{SL}_2$, $\Z/(2)$, acts as $\op{Id}_{\mathcal E_i}$ on $\mathcal E_i$ and acts as $-\op{Id}_{\mathcal F_i}$ on $\mathcal F_i$.
\end{proposition}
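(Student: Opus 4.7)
The plan is to induct on a sequence of wall crossings in the extended GIT fan of Corollary \ref{corollary: GIT fan for points in P1}. Lemma \ref{lemma: empty chambers for P1n} guarantees the existence of an empty chamber. The first step is to choose, as in the proof of Theorem \ref{theorem: full exc coll on toric var}, a generic straight-line path in $\op{NS}^{\op{SL}_2}(P_n)_{\R}$ from $\mathcal O(\bm{d})$ to the interior of such a chamber, arranged to avoid all cones of codimension $\geq 2$ and to cross a finite sequence of walls $M_1,\ldots,M_r$ separating chambers $C_0 = C(\bm{d}), C_1, \ldots, C_r$ with $C_r$ empty. Lemma \ref{lemma: P1n elem wall cross} ensures each crossing is a DHT variation, so Theorem \ref{theorem: VGIT and derived categories} (combined with Corollary \ref{corollary: Isik} to pass back to derived categories of sheaves) applies at each step. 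The base case, $P_n \modmod{} \op{SL}_2 = \emptyset$ in $C_r$, is vacuous with $l = 0$.

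At the $i$-th wall, let $\lambda_i : \mathbb{G}_m \to \op{SL}_2$ be a primitive one-parameter subgroup defining $M_i$; up to conjugation $\lambda_i(\alpha) = \op{diag}(\alpha,\alpha^{-1})$. The fixed locus $(P_n)^{\lambda_i}$ is the discrete set $\{y^-_{\lambda_i}, y^+_{\lambda_i}\}^n$, so $Z^0_{\lambda_i}$ is a single point $z_i$. The centralizer $C(\lambda_i)$ is the maximal torus $T \subset \op{SL}_2$; because $\lambda_i$ is primitive, $G_{\lambda_i} = C(\lambda_i)/\lambda_i$ is trivial, so $Y_{\lambda_i} \cong \op{Spec} k$ and $\dbcoh{Y_{\lambda_i}}$ has a unique exceptional object. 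On any proper wall the stabilizer of $z_i$ equals $T = C(\lambda_i)$, so Lemma \ref{lemma: formula for mu} gives $\mu_i = \mu(\omega_{P_n}^{-1}, \lambda_i, z_i)$. Since $\omega_{P_n}^{-1} = \mathcal O(2,\ldots,2)$, the $\op{SL}_2$ version of the weight computation in Lemma \ref{lemma: computing the HM function for (P1)n} (without the factor $\tfrac{1}{2}$) yields $\mu_i = \pm 2(|I_i| - |I_i^c|)$, which is always \emph{even}. Theorem \ref{theorem: VGIT and derived categories} then contributes $|\mu_i|$ new semi-orthogonal summands, each equivalent to $\dbcoh{\op{Spec} k}$, indexed by consecutive integer weights $j \in [j_0, j_0 + |\mu_i|-1]$.

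To track the action of the center $\Z/(2) \subset \op{SL}_2$, note that $-\op{Id} = \lambda_i(-1)$, so on the weight-$j$ summand $\dbcoh{[Z^0_{\lambda_i}/C(\lambda_i)]}_j$ the center acts by the scalar $(-1)^j$. Because $|\mu_i|$ is even, any run of $|\mu_i|$ consecutive integers splits into $|\mu_i|/2$ of each parity. Thus each wall contributes $|\mu_i|/2$ exceptional objects on which the center acts as $+\op{Id}$ and $|\mu_i|/2$ on which it acts as $-\op{Id}$. Inducting backward through $M_r, M_{r-1}, \ldots, M_1$ builds a full semi-orthogonal decomposition of $\dbcoh{P_n \modmod{\bm{d}} \op{SL}_2}$ by a total of $2 \sum_i |\mu_i|/2$ exceptional objects balanced between the two center eigenspaces, giving $l(\bm{d}) = \sum_i |\mu_i|/2$.

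Finally, any morphism between objects on which $-\op{Id}$ acts by different signs is itself multiplied by $-1$ by the center but must be center-invariant, hence is zero (as $\op{char} k = 0$). Therefore objects of opposite center parity are mutually orthogonal, and the semi-orthogonal blocks obtained from the wall crossings can be reshuffled into the desired form $\langle \mathcal E_1,\ldots,\mathcal E_{l(\bm{d})}, \mathcal F_1,\ldots,\mathcal F_{l(\bm{d})} \rangle$. The main obstacle, ensuring a balanced split across the entire iterated wall-crossing procedure, reduces cleanly to the parity of each $\mu_i$, which is forced by the fact that $\omega_{P_n}^{-1}$ has even total degree; all remaining bookkeeping is routine iteration of Theorem \ref{theorem: VGIT and derived categories}.
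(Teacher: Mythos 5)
Your wall-contribution bookkeeping is fine: with $\lambda(\alpha)=\op{diag}(\alpha,\alpha^{-1})$ each weight-$j$ summand $\dbcoh{[Z^0_{\lambda}/C(\lambda)]}_j$ is a single exceptional object on which $-\op{Id}=\lambda(-1)$ acts by $(-1)^j$, the $\lambda$-weights on the normal directions are $\pm 2$ so $|\mu_i|$ is even, and the alternating parities balance. (This is a consistent alternative to the paper's accounting, which instead counts $||I_i|-|I_i^c||$ weights each contributing $\dbcoh{[\op{pt}/(\Z/2)]}$; the totals agree.) The orthogonality of opposite center-parities and the reshuffling step are also fine.

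The gap is in the choice of path. For the induction to run, every wall crossing must go in the direction in which the category \emph{shrinks}, i.e.\ away from the anti-canonical side: crossing $H_{I}$ from $H_I^{<0}$ to $H_I^{>0}$ is admissible only when $|I|\le |I^c|$ (this is exactly the sign of your unresolved ``$\pm$'' in $\mu_i=\pm 2(|I_i|-|I_i^c|)$; if the sign is wrong, Theorem \ref{theorem: VGIT and derived categories} exhibits $\dbcoh{P_n\modmod{\bm{d}_{i-1}}\op{SL}_2}$ as an admissible \emph{subcategory} of $\dbcoh{P_n\modmod{\bm{d}_{i}}\op{SL}_2}$, from which no exceptional collection is inherited). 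A straight line from $\mathcal O(\bm{d})$ controls nothing; even a straight line from $\omega_{P_n}^{-1}=\mathcal O(\bm{2})$, which does force all crossings to be good, exits the ample cone through a facet $\{d_i=0\}$ and thus lands in a \emph{boundary} chamber, not an empty one. Worse, for $n\ge 5$ there are chambers from which \emph{no} path of good crossings reaches an empty chamber (cf.\ Remark \ref{remark: which side is the anti-canonical on} and the remark closing Section \ref{section: moduli}): for the chamber of $(7,7,7,1,1)$ one has $\epsilon_{\{1,2\}}=\epsilon_{\{1,3\}}=\epsilon_{\{2,3\}}=+$, and since a wall $H_J$ with $|J|=2$ can never be recrossed from $+$ to $-$ in the good direction, while every empty chamber $\{d_{i_0}>\sum_{i\ne i_0}d_i\}$ requires $\epsilon_J=-$ for all $|J|\le 2$ with $i_0\notin J$, no admissible route exists for any $i_0$. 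This is why the paper's proof instead inducts on $n$: it travels from $\omega_{P_n}^{-1}$ to a boundary chamber (always reachable by a straight line with good crossings), where Lemma \ref{lemma: semi-stable locus for boundary chamber P1n} gives $P_n^{\op{ss}}\cong \P^1\times P_{n-1}^{\op{ss}}(\bm{d}')$, and then applies Orlov's projective bundle theorem together with the inductive hypothesis for $n-1$ points. Your argument needs this detour; sliding into the abyss alone cannot prove the proposition for all chambers.
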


As a corollary we get the following statement.

\begin{theorem} \label{theorem: exc coll on moduli of weighted points}
 Let $\mathcal O(\bm{d})$ lie in the interior of a chamber of the GIT fan for the action of $\op{PGL}_2$ on $(\P^1)^n$. The derived category, $\dbcoh{P_n \modmod{\bm{d}} \op{PGL}_2}$, admits a full exceptional collection. 
\end{theorem}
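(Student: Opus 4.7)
The plan is to deduce this statement directly from Proposition~\ref{proposition: exc coll SL2} by extracting the half of the full exceptional collection which descends from the $\op{SL}_2$-quotient to the $\op{PGL}_2$-quotient. Throughout, the hypothesis that $\mathcal O(\bm{d})$ is $\op{PGL}_2$-equivariant forces $\sum d_i$ to be even by Lemma~\ref{lemma: Picard group of (P1)n}, and the semi-stable loci for the two groups agree since they differ only by the finite center $Z:=\{\pm I\} \subset \op{SL}_2$. Moreover, $\mathcal O(\bm{d})$ lies in the interior of a chamber of the $\op{PGL}_2$-fan if and only if it lies in the interior of a chamber of the $\op{SL}_2$-fan, because, again, the additional chamber structure for $\op{SL}_2$ cannot refine the $\op{PGL}_2$ structure.

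The main structural observation is that $P_n \modmod{\bm{d}} \op{SL}_2 \to P_n \modmod{\bm{d}} \op{PGL}_2$ is a trivializable $\mu_2$-gerbe, since $\op{PGL}_2 = \op{SL}_2/Z$ and $Z$ acts trivially on $P_n$. Consequently, pullback gives a fully-faithful embedding,
\begin{displaymath}
 \dbcoh{P_n \modmod{\bm{d}} \op{PGL}_2} \hookrightarrow \dbcoh{P_n \modmod{\bm{d}} \op{SL}_2},
\end{displaymath}
whose essential image consists of those complexes on which $Z$ acts trivially. Since $Z$ is reductive (characteristic zero) and central, every object of $\dbcoh{P_n \modmod{\bm{d}} \op{SL}_2}$ decomposes uniquely as a direct sum of two pieces, one on which $Z$ acts by the trivial character and one on which $Z$ acts by the sign character. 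This yields a canonical direct-sum decomposition of triangulated categories,
\begin{displaymath}
 \dbcoh{P_n \modmod{\bm{d}} \op{SL}_2} \cong \dbcoh{P_n \modmod{\bm{d}} \op{PGL}_2} \oplus \mathcal C_{-},
\end{displaymath}
where $\mathcal C_{-}$ is the subcategory of objects on which $Z$ acts by $-\op{Id}$ (a twisted derived category of the coarse space).

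With this decomposition in hand, the proof is essentially a bookkeeping exercise. By Proposition~\ref{proposition: exc coll SL2}, there is a full exceptional collection $\langle \mathcal E_1,\ldots,\mathcal E_{l(\bm{d})},\mathcal F_1,\ldots,\mathcal F_{l(\bm{d})}\rangle$ in which the $\mathcal E_i$ lie in the trivial-character summand and the $\mathcal F_j$ lie in the sign summand. By orthogonality and the direct-sum decomposition above, $\op{Hom}(\mathcal E_i,\mathcal F_j[t])=0$ and $\op{Hom}(\mathcal F_j,\mathcal E_i[t])=0$ automatically for all $i,j,t$; in particular, the $\mathcal E_i$ are already semi-orthogonal among themselves in the order inherited from the full collection, and each $\mathcal E_i$ is exceptional. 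Fullness in the $\op{PGL}_2$-category follows because any object of $\dbcoh{P_n \modmod{\bm{d}} \op{PGL}_2}$, viewed inside $\dbcoh{P_n \modmod{\bm{d}} \op{SL}_2}$ via pullback, is generated by the union of the $\mathcal E_i$ and the $\mathcal F_j$; projecting onto the trivial-character component and using that the $\mathcal F_j$ project to zero there, we see that it is generated by the $\mathcal E_i$ alone. Hence $\langle \mathcal E_1,\ldots,\mathcal E_{l(\bm{d})}\rangle$ is a full exceptional collection for $\dbcoh{P_n \modmod{\bm{d}} \op{PGL}_2}$.

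The only step requiring genuine care is verifying that the splitting of $\dbcoh{P_n \modmod{\bm{d}} \op{SL}_2}$ by the $Z$-character is compatible with the particular exceptional objects produced in Proposition~\ref{proposition: exc coll SL2}; but this compatibility is built into the statement of that proposition, so the only real input here is the proposition itself. Thus the present theorem is immediate once Proposition~\ref{proposition: exc coll SL2} is established.
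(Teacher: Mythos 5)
Your proposal is correct and follows essentially the same route as the paper: identify $\dbcoh{P_n \modmod{\bm{d}} \op{PGL}_2}$ with the summand of $\dbcoh{P_n \modmod{\bm{d}} \op{SL}_2}$ on which the central $\Z/(2)$ acts trivially, note that the character decomposition forces $\op{Hom}(\mathcal E_i,\mathcal F_j[t]) = \op{Hom}(\mathcal F_j,\mathcal E_i[t]) = 0$, and conclude that the $\mathcal E_i$ alone form a full exceptional collection. Your write-up merely makes explicit the gerbe/direct-sum structure that the paper leaves implicit.
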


\begin{proof}
 We have an inclusion,
 \begin{displaymath}
  \dbcoh{P_n \modmod{\bm{d}} \op{PGL}_2} \to \dbcoh{P_n \modmod{\bm{d}} \op{SL}_2},
 \end{displaymath}
 as the subcategory of complexes for which the center of $\op{SL}_2$, $\Z/(2)$, acts trivially.  Note that, in the decomposition,
 \begin{displaymath}
  \dbcoh{P_n \modmod{\bm{d}} \op{SL}_2} = \langle \mathcal E_1,\ldots, \mathcal E_{l(\bm{d})}, \mathcal F_1,\ldots, \mathcal F_{l(\bm{d})} \rangle,
 \end{displaymath}
 of Proposition \ref{proposition: exc coll SL2} 
 \begin{align*}
  \op{Hom}_{P_n \modmod{\bm{d}} \op{SL}_2}(\mathcal E_i, \mathcal F_j[t]) & = 0 \\
  \op{Hom}_{P_n \modmod{\bm{d}} \op{SL}_2}(\mathcal F_j, \mathcal E_i[t]) & = 0.
 \end{align*}
 for all $1 \leq i, j \leq l(\bm{d}), t\in \Z$. Thus, 
 \begin{displaymath}
  \dbcoh{P_n \modmod{\bm{d}} \op{PGL}_2} = \langle \mathcal E_1,\ldots, \mathcal E_{l(\bm{d})} \rangle.
 \end{displaymath}
\end{proof}

\begin{remark}
 The quotient stacks, $P_n \modmod{\bm{d}} \op{PGL}_2$, are varieties by Proposition \ref{proposition: when is the GIT quotient is a scheme}.
\end{remark}

The argument to prove Proposition \ref{proposition: exc coll SL2} will proceed by passing to the boundary of the ample cone in the GIT fan.

\begin{definition}
 A \textbf{boundary chamber} in the extended GIT fan for the action of $\op{SL}_2$ on $P_n$ is a chamber whose closure in $\op{Pic}^{\op{SL}_2}(P_n)_{\R}$ intersected the boundary of the ample cone has dimension $n-1$.
\end{definition}

\begin{lemma} \label{lemma: semi-stable locus for boundary chamber P1n}
 Let $\mathcal O(\bm{d})$ lie in the interior of a boundary chamber, $C$. Take $i_0$ so that $\overline{C} \cap \{ d_{i_0} = 0 \}$ is full-dimensional. Let $\mathcal O(\bm{d}')$ lie in the interior of $\overline{C} \cap \{ d_{i_0} = 0 \}$. We have an isomorphism,
 \begin{displaymath}
  P_n^{\op{ss}}(\bm{d}) \cong \P^1 \times P_{n-1}^{\op{ss}}(\bm{d}').
 \end{displaymath}
\end{lemma}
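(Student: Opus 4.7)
The plan is to deduce the isomorphism as the restriction of the natural $\op{SL}_2$-equivariant product decomposition $P_n \cong \P^1 \times P_{n-1}$, obtained by separating the $i_0$-th factor from the remaining $n-1$ factors. The task then reduces to verifying that, under this identification, $P_n^{\op{ss}}(\bm d)$ coincides with $\P^1 \times P_{n-1}^{\op{ss}}(\bm d')$ as subschemes.

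Since $P_n^{\op{ss}}(\bm d)$ depends only on the chamber $C$ and $P_{n-1}^{\op{ss}}(\bm d')$ only on the face $\overline C \cap \{d_{i_0}=0\}$, I would first choose convenient representatives. For $I' \subseteq [n]\setminus\{i_0\}$, set $\alpha_{I'}(\bm e) := \sum_{i \in I'} e_i - \sum_{i \in [n]\setminus(\{i_0\} \cup I')} e_i$. The full-dimensionality hypothesis forces each $\alpha_{I'}$ to be non-trivial on the face, so $\alpha_{I'}(\bm d') \neq 0$ for every $I'$. Choosing $\epsilon > 0$ with $\epsilon < \min_{I'}|\alpha_{I'}(\bm d')|$, the point $\bm d' + \epsilon e_{i_0}$ lies in the interior of $C$ by continuity of the sign functions on each wall $H_I$, so I may assume $\bm d = \bm d' + \epsilon e_{i_0}$.

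The next step is the combinatorial translation via Lemma~\ref{lemma: ss locus P1n}. Set $\Pi_-(\bm d) := \{I \subseteq [n] : \sum_{i \in I} d_i > \sum_{i \notin I} d_i\}$ and define $\Pi_-(\bm d') \subseteq 2^{[n]\setminus\{i_0\}}$ analogously. The central computations
\[
\textstyle\sum_{i \in I'} d_i - \sum_{i \in [n]\setminus I'} d_i \;=\; \alpha_{I'}(\bm d') - \epsilon, \qquad \sum_{i \in I'\cup\{i_0\}} d_i - \sum_{i \notin I'\cup\{i_0\}} d_i \;=\; \alpha_{I'}(\bm d') + \epsilon
\]
have both signs equal to $\op{sgn}(\alpha_{I'}(\bm d'))$ by the choice of $\epsilon$, yielding the key identity
\[
\Pi_-(\bm d) \;=\; \Pi_-(\bm d') \,\sqcup\, \{I'\cup\{i_0\} : I' \in \Pi_-(\bm d')\}.
\]

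Finally, set $y := (x_1, \ldots, \widehat{x_{i_0}}, \ldots, x_n)$ and $\mathcal I(x) := \{I \subseteq [n] : x_i = x_j \text{ for all } i,j \in I\}$. Since $I \in \mathcal I(x)$ implies $I \setminus \{i_0\} \in \mathcal I(y)$, and any $I' \in \mathcal I(y)$ automatically lies in $\mathcal I(x)$ as a subset of $[n]\setminus\{i_0\}$, the bijection on $\Pi_-$ yields $\mathcal I(x) \cap \Pi_-(\bm d) = \emptyset$ iff $\mathcal I(y) \cap \Pi_-(\bm d') = \emptyset$. Invoking Lemma~\ref{lemma: ss locus P1n} on each side then gives the desired equivalence of semistability. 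The main obstacle throughout is the reduction in the first step: without the full-dimensionality hypothesis some $\alpha_{I'}(\bm d')$ would vanish, so the two candidate sums $\alpha_{I'}(\bm d') \pm \epsilon$ could have opposite signs, destroying the clean bijection and producing honestly different semistable loci (as one sees already for $\bm d=(3,1,1,2)$ with $i_0=4$).
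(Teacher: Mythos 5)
Your argument is correct and is essentially the paper's own proof, written out in full: the paper likewise observes that for a boundary chamber one may let $d_{i_0}\to 0$ without changing the sign of any inequality $\sum_{i\in I}d_i\leq\sum_{i\in I^c}d_i$ and then applies Lemma~\ref{lemma: ss locus P1n}, which is exactly your sign computation $\alpha_{I'}(\bm{d}')\pm\epsilon$ together with the observation that once $d_{i_0}=0$ the semistability condition no longer sees $x_{i_0}$. Your explicit choice of $\epsilon$, the identity for $\Pi_-(\bm{d})$, and the counterexample $(3,1,1,2)$ showing the hypothesis is needed are just careful elaborations of that same route.
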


\begin{proof}
 Since $C$ is a boundary chamber, in an inequality, 
 \begin{displaymath}
  \sum_{i \in I} d_i \leq \sum_{i \in I^c} d_i,
 \end{displaymath}
 we can take $d_{i_0} \to 0$ without violating the inequality. Applying Lemma \ref{lemma: ss locus P1n} gives the result.
\end{proof}

\begin{proof}[Proof of Proposition \ref{proposition: exc coll SL2}] 
 We proceed by induction on $n$. The base case is $n=3$ where there are four chambers in the GIT fan. The three boundary chambers have empty semi-stable locus while the GIT quotient for non-boundary chamber is isomorphic to $[\op{pt}/(\Z/(2))]$. The base case is therefore clear.
 
 Assume the statement of Proposition \ref{proposition: exc coll SL2} holds whenever the number of points is $< n$. Let $\bm{d}$ be a point in the interior of a chamber, $C$, in the GIT fan. There exists a straight line path,
 \begin{displaymath}
  \gamma(t): [0,1] \to \op{Pic}^{\op{SL}_2}(P_n)_{\R},
 \end{displaymath}
 such that
 \begin{itemize}
  \item $\gamma(0) = \omega_{P_n}^{-1} = \mathcal O(\bm{2})$,
  \item $\gamma([0,1]) \cap \op{Int} C \not = \emptyset$,
  \item $\gamma(1)$ lies in the interior of a boundary chamber,
  \item $\gamma$ passes through no codimension $2$ walls after entering $C$.
 \end{itemize}
 Let $C =: C_0,\ldots,C_s$ be the list of chambers $\gamma$ passes through after, and including, $C$. Let $M_1,\ldots,M_s$ be the walls separating them. Pick $\bm{d}_i \in \op{Int} C_i$. Thanks to Lemma \ref{lemma: P1n elem wall cross}, we can apply Theorem \ref{theorem: elementary wall crossing} and use Lemma \ref{lemma: formula for mu} to get a semi-orthogonal decomposition,
 \begin{displaymath}
  \dbcoh{ P_n \modmod{\bm{d_{i-1}}} \op{SL}_2} = \langle \dbcoh{Y_{\lambda_i}},\ldots,\dbcoh{Y_{\lambda_i}}, \dbcoh{ P_n \modmod{\bm{d_{i}}} \op{SL}_2} \rangle.
 \end{displaymath}
 \sidenote{{\color{red} Here the notation hurts because its the same 1-ps but different Z's} {\color{blue} why not use superscripts for the numbers and just $\lambda$ in the subscript?}}
 and, consequently,
 \begin{displaymath}
  \dbcoh{ P_n \modmod{\bm{d}} \op{SL}_2} = \langle \dbcoh{Y_{\lambda_1}},\ldots,\dbcoh{Y_{\lambda_l}}, \dbcoh{ P_n \modmod{\bm{d_{l}}} \op{SL}_2} \rangle
 \end{displaymath} 
 The fixed locus of $\lambda$ consists of a finite number of points, all of which lie in the same $\op{SL}_2$-orbit while $G_{\lambda} = \Z/(2)$. Thus, $\dbcoh{Y_{\lambda,j}} = \dbcoh{[\op{pt}/(\Z/(2))]}$ admits a full exceptional collection corresponding to the irreducible representations of $\Z/(2)$. We reduce to checking that $P_n \modmod{\bm{d_{l}}} \op{SL}_2$ has a full exceptional collection of the appropriate form. 
 
 Applying Lemma \ref{lemma: semi-stable locus for boundary chamber P1n}, 
 \begin{displaymath}
  P_n^{\op{ss}}(\bm{d}_l) \cong \P^1 \times P_{n-1}^{\op{ss}}(\bm{d}')
 \end{displaymath}
 where $\bm{d}'$ lies in the interior of $\overline{C}_l \cap \{ d_i = 0 \}$ for some $1 \leq i \leq n$. As we can lift the action of $\op{SL}_2$ from $\P^1$ to $\mathbb{A}^2$, $P_n \modmod{\bm{d_{l}}} \op{SL}_2$ is the projectivization of a rank $2$ vector bundle on $P_{n-1}^{\op{ss}}\modmod{\bm{d}'} \op{SL}_2$. Orlov's theorem on derived categories of projective bundles, \cite{Orl92}, states there is a semi-orthogonal decomposition,
 \begin{displaymath}
  \dbcoh{P_n \modmod{\bm{d_{l}}} \op{SL}_2} = \langle \dbcoh{P_{n-1}^{\op{ss}}\modmod{\bm{d}'} \op{SL}_2}, \dbcoh{P_{n-1}^{\op{ss}}\modmod{\bm{d}'} \op{SL}_2} \otimes \mathcal O(1) \rangle. 
 \end{displaymath}
 By the induction hypothesis, we may write
 \begin{displaymath}
  \dbcoh{P_{n-1}^{\op{ss}} \modmod{\bm{d}'} \op{SL}_2} = \langle \mathcal E_0,\ldots,\mathcal E_{l(\bm{d}')}, \mathcal F_0,\ldots, \mathcal F_{l(\bm{d}')} \rangle
 \end{displaymath}
 where the center of $\op{SL}_2$ acts trivially on $\mathcal E_i$ and by $-\op{Id}_{\mathcal F_i}$ on $\mathcal F_i$. Thus, the center acts trivially on $\mathcal E_i$ and $\mathcal F_i(1)$ and by $-1$ on $\mathcal F_i$ and $\mathcal E_i(1)$ giving the appropriate semi-orthogonal decomposition of $\dbcoh{P_n \modmod{\bm{d_{l}}} \op{SL}_2}$ and completing the induction.
\end{proof}

\begin{remark}
 In case the reader is worried about validity of the results of \cite{Orl92} in the general setting of quotient stacks, we invite her/him to reprove and extend Orlov's theorem using Theorem \ref{theorem: elementary wall crossing}. 
\end{remark}

\subsection{Moduli of pointed rational curves}

We augment and apply some of the ideas of Section \ref{subsection: moduli of weighted points} by considering GIT quotients of the Fulton-MacPherson compactification, $\P^1[n]$, \cite{FM} by $\op{PGL}_2$.

Let us recall the construction of $\P^1[n]$ as given in \cite{KM11}. Let $F_0 = (\P^1)^n$ and for $S \subseteq [n]$ set
\begin{displaymath}
 \Sigma_0^S := \{ (x_1,\ldots,x_n) \in (\P^1)^n \mid x_i = x_j \text{ for } i,j \in S\}
\end{displaymath}
and let
\begin{displaymath}
 \Sigma^j_0 := \bigcup_{|S|=j} \Sigma^S_0.
\end{displaymath}
Now, inductively define $F_l$ as the blow up of $F_{l-1}$ along $\Sigma_{l-1}^{n-l-1}$. Let $\Sigma^S_j$ be the component of the exceptional divisor of the blow up lying over $\Sigma^S_{j-1}$ if $|S|=n-l-1$ and otherwise let it be the proper transform of $\Sigma_{j-1}^S$. 

\begin{proposition} \label{proposition: this is P1[n]}
 The varieties $F_j$ are smooth and $F_{n-2} \cong \P^1[n]$.
\end{proposition}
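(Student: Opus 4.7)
The plan is to establish both claims by induction on $l$, strengthening the inductive hypothesis so that at each stage we also track smoothness and intersection behavior of the (strict transforms of the) various diagonals. The identification of $F_{n-2}$ with $\P^1[n]$ will then reduce to matching our sequence of blowups against the explicit construction of the Fulton-MacPherson compactification in \cite{FM} (and in the form used in \cite{KM11}).

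More precisely, I would carry the following inductive hypothesis: for every subset $T \subseteq [n]$ with $|T| \geq 2$ that remains relevant at stage $l$, the subvariety $\Sigma^T_l$ is smooth and closed in $F_l$, and moreover, for each fixed size $j$ not yet blown up, the subvarieties $\{\Sigma^S_l\}_{|S|=j}$ have pairwise intersections contained only in $\Sigma^U_l$ with $|U|>j$. The base case $l=0$ is immediate: each $\Sigma_0^S$ is cut out in $(\P^1)^n$ by $|S|-1$ transverse equations, and distinct $\Sigma_0^S, \Sigma_0^{S'}$ of the same size meet precisely along $\Sigma_0^{S\cup S'}$, which has strictly larger index.

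For the inductive step, the key point to verify is that the center of the next blowup---the union $\Sigma_{l-1}^{n-l-1}$ of the strict transforms of the relevant diagonals---is a disjoint union of smooth subvarieties. Disjointness is the main content of the argument: the previous blowups have already separated the smaller-index diagonals, which were exactly the loci where the $\Sigma_0^S$ of the currently considered size meet each other, so their proper transforms in $F_{l-1}$ are disjoint. Given disjointness, smoothness of each strict transform is standard (strict transform of a smooth subvariety under blowup of a smooth subvariety meeting it cleanly remains smooth), so the center is smooth and hence $F_l$ is smooth. Maintaining the inductive hypothesis at level $l$ is then routine: strict transforms of uninvolved $\Sigma^T_{l-1}$ remain smooth, exceptional divisors $\Sigma^S_l$ with $|S|=n-l-1$ are smooth projective bundles over their centers, and the required intersection pattern is inherited from level $l-1$ after blowing up.

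The main obstacle is the combinatorial verification of disjointness at each stage---keeping track of exactly which old intersections persist and which have been resolved by previous blowups. This is essentially the bookkeeping carried out in \cite{FM}, so for the identification step I would simply compare the sequence of blowups to the Fulton-MacPherson construction (or invoke \cite{KM11}) rather than re-derive the universal property from scratch; both constructions blow up proper transforms of diagonals in order of increasing dimension, and the resulting sequences agree term by term.
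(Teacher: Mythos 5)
The paper itself does not give an argument here: it simply cites Proposition 2.8 of \cite{Li09}, Lemma 3.1 of \cite{KM11}, and Proposition 1.8 of \cite{MM}, so your attempt to actually run the induction is more ambitious than what the paper records. The overall shape of your argument (induct on the stage, track smoothness and intersection patterns of the strict transforms, then match the blow-up sequence against \cite{FM}/\cite{KM11}) is the right one --- it is essentially Li's wonderful compactification argument.

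However, there is a genuine error in the key step. Your inductive hypothesis asserts that two diagonals $\Sigma_0^S,\Sigma_0^{S'}$ of the same size meet only inside a larger diagonal $\Sigma_0^U$, $|U|>|S|$, and hence that the previous blow-ups have separated the strict transforms, so that each blow-up center is a \emph{disjoint} union of smooth subvarieties. This is false whenever $S\cap S'=\emptyset$: already in the base case, $\Sigma_0^{\{1,2\}}\cap\Sigma_0^{\{3,4\}}=\{x_1=x_2\}\cap\{x_3=x_4\}$ is a transverse intersection not contained in any larger diagonal. Concretely, for $n=6$ the point $(a,a,a,b,b,b)$ with $a\neq b$ lies in $\Sigma^{\{1,2,3\}}\cap\Sigma^{\{4,5,6\}}$ but in no $\Sigma^T$ with $|T|\geq 4$, so it is untouched by all earlier blow-ups and the strict transforms of these two centers at the final stage still meet. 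Thus for $n\geq 6$ the center $\Sigma_{l-1}^{j}$ is neither smooth nor a disjoint union, and your argument for smoothness of $F_l$ breaks down. The correct statement, and the actual content of Li's Proposition 2.8, is that the components of the center intersect \emph{transversally} (their pairwise transverse crossings in $F_0$ persist, while the non-transverse overlaps, which are exactly the larger diagonals, have been resolved); one then blows up the components one at a time, with transversality guaranteeing that the order does not matter and that smoothness is preserved. Your proof needs this weaker transversality statement and the order-independence lemma in place of disjointness; with that repair, the rest of your outline (and the identification with $\P^1[n]$ via \cite{KM11} and \cite{MM}) goes through.
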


\begin{proof}
 This is a combination of Proposition 2.8 of \cite{Li09}, Lemma 3.1 of \cite{KM11}, and Proposition 1.8 of \cite{MM}.
\end{proof}

As we are blowing up along closed $\op{PGL}_2$ orbits, each $F_j$ inherits an action of $\op{PGL}_2$. Let $\psi_j: F_j \to F_{j-1}$ be the blow up morphism. We also inductively define $G$-equivariant $(\mathbb{Q}$-)line bundles starting with $\mathcal L_0 = \mathcal O(\bm{2})$. Then, we set
\begin{displaymath}
 \mathcal L_j = \psi_j^* \mathcal L_{j-1} \otimes \mathcal O(-\delta_j \Sigma_j^{n-l-j})
\end{displaymath}
where $\delta_j$ is a decreasing sequence of sufficiently small positive rational numbers. Y.-H. Kiem and H.-B. Moon identify the GIT quotients for the line bundles, $\mathcal L_j$, as Hassett moduli spaces. The following definition is due to Hassett \cite{Has03}. 

\begin{definition}
 Fix $a_1,\ldots,a_n \in \mathbb{Q}$ with $0 < a_i \leq 1$ and $\sum a_i > 2$. An \textbf{$n$-pointed $\bm{a}=(a_1,\ldots,a_n)$-stable rational curve} is a
 \begin{itemize}
  \item a nodal connected curve, $C$, of arithmetic genus $0$ and 
  \item points, $p_1,\ldots,p_n$, in the smooth locus of $C$
 \end{itemize}
 such that 
 \begin{itemize}
  \item if $p_{i_1} = \cdots = p_{i_r}$, then $\sum_{j=1}^r a_{i_j} \leq 1$,
  \item and $K_C + a_1p_1 + \cdots +a_np_n$ is ample.
 \end{itemize}
\end{definition}

\begin{theorem}
 There is a smooth, projective variety, $\overline{M}_{0,\bm{a}}$, representing the moduli problem of $n$-pointed $\bm{a}$-stable rational curves.
\end{theorem}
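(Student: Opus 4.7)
The plan is to realize $\overline{M}_{0,\bm{a}}$ as a GIT quotient of one of the varieties $F_j$ from the iterated construction above, and then to read off smoothness, projectivity, and the moduli property from the general theory of GIT combined with Proposition~\ref{proposition: this is P1[n]}.

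First I would reduce the range of weights. Given $\bm{a}=(a_1,\dots,a_n)$ with $\sum a_i > 2$, after rescaling I may identify the vector $\bm{a}$ with a $\op{PGL}_2$-linearized $\mathbb{Q}$-line bundle of the form $\mathcal{L}_j(\bm{a})$ on some $F_j$, built from $\mathcal{O}(2a_1,\dots,2a_n)$ on $F_0=(\P^1)^n$ by Kiem--Moon's recipe: at each blow-up stage $\psi_j: F_j\to F_{j-1}$ one twists by the relevant exceptional divisors with coefficients controlled by how many weights sum to at most one. The index $j$ is chosen so that along $\Sigma_j^S$ the combinatorial $\bm{a}$-stability condition ``$\sum_{i\in S}a_i\leq 1$'' is precisely what is needed for the class $\mathcal{L}_j(\bm{a})$ to be ample on the appropriate stratum, and so that no further blow-up is required.

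Second I would invoke GIT. By Proposition~\ref{proposition: this is P1[n]} each $F_j$ is smooth and projective, and $\op{PGL}_2$ acts on it. The line bundle $\mathcal{L}_j(\bm{a})$ is $\op{PGL}_2$-linearized and, with the coefficients chosen as above, lies in the ample cone (after clearing denominators). Applying Theorem~\ref{theorem: GIT fan} and the Hilbert--Mumford numerical criterion, Theorem~\ref{theorem: Hilbert-Mumford numerical stability}, together with the explicit computation of weights on $(\P^1)^n$ in Lemma~\ref{lemma: computing the HM function for (P1)n}, I would identify the semistable locus $F_j^{\op{ss}}(\mathcal{L}_j(\bm{a}))$ with the locus of $\bm{a}$-prestable configurations: points where no collision with total weight exceeding $1$ has been left un-blown-up, and where marked points lie in the smooth locus. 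Because the condition $\sum a_i>2$ is strict and the $a_i$ are chosen so that no nontrivial coincidence achieves weight exactly one, all semistable points are in fact stable with trivial stabilizer (there is no $\op{PGL}_2$-symmetry of a stable pointed rational curve). Proposition~\ref{proposition: when is the GIT quotient is a scheme} then identifies the GIT quotient stack with a smooth algebraic space, and since Mumford's GIT quotient is projective whenever the ambient variety is projective and the linearization is ample, we obtain a smooth projective variety
\[
 \overline{M}_{0,\bm{a}}:=F_j \modmod{\mathcal{L}_j(\bm{a})} \op{PGL}_2.
\]

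Third I would check the moduli interpretation. The Fulton--MacPherson family over $F_j$, which is the iterated blow-up of $F_{j}\times \P^1$ along the strict transforms of the diagonals, is $\op{PGL}_2$-equivariant, and its fiber over a stable point is an $n$-pointed $\bm{a}$-stable rational curve by construction: the blow-ups replace each forbidden collision by a chain of $\P^1$'s bearing the colliding points, and the ampleness of $\mathcal{L}_j(\bm{a})$ is exactly the condition that $K_C+\sum a_i p_i$ be ample on each fiber. Descending this family via the free $\op{PGL}_2$-action gives a universal family on $\overline{M}_{0,\bm{a}}$, and conversely the universal property of blow-ups together with descent theory shows that any family of $n$-pointed $\bm{a}$-stable curves over a scheme $T$ factors uniquely through a morphism $T\to \overline{M}_{0,\bm{a}}$.

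The main obstacle will be step three, the precise matching of GIT semistability on $F_j$ with Hassett's combinatorial stability: one must verify that the small positive rationals $\delta_j$ used to define $\mathcal{L}_j$ can be chosen so that both the strict inequalities ``$\sum_{i\in S}a_i<1$'' at un-blown-up collisions and ``$\sum_{i\in S}a_i>1$'' at blown-up ones correspond to the Hilbert--Mumford $\mu$-function having the correct sign along every relevant one-parameter subgroup of $\op{PGL}_2$. This is a finite combinatorial check using Lemma~\ref{lemma: computing the HM function for (P1)n} applied stratum by stratum, but it is the crux of the argument. Once it is done, smoothness is automatic from Proposition~\ref{proposition: BB}-type arguments at the exceptional divisors together with the freeness of the action, and projectivity follows from ampleness of $\mathcal{L}_j(\bm{a})$.
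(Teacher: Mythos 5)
The paper does not prove this statement at all: it is Hassett's Theorem 2.1, quoted verbatim from \cite{Has03}, and the ``proof'' is a one-line citation. So your GIT sketch is necessarily a different route. It is close in spirit to the Kiem--Moon identification that the paper \emph{does} use later (Theorem \ref{theorem: moduli of stable curves as GIT quotient}), but note that that result runs in the opposite logical direction: it presupposes the existence of $\overline{M}_{0,n\cdot\epsilon}$ \`a la Hassett and then identifies it with a GIT quotient of $F_j$. Trying to run it backwards as a construction of $\overline{M}_{0,\bm{a}}$ leaves two genuine gaps.

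First, coverage and wall issues. The Kiem--Moon linearizations $\mathcal L_j$ realize only the \emph{symmetrically} weighted spaces, and only for $\epsilon$ in the open ranges $\frac{1}{m+1-j}<\epsilon_j\le\frac{1}{m-j}$. For arbitrary admissible $\bm{a}$ you have not shown that some ample $\op{PGL}_2$-linearization on some $F_j$ has semistable locus equal to the $\bm{a}$-stable locus; this is exactly the ``crux'' you defer, and it is not a routine finite check. Worse, Hassett's theorem allows weight data with $\sum_{i\in S}a_i=1$ for some $S$; the corresponding linearization then sits on a wall of the GIT fan, there are strictly semistable points with $\mathbb{G}_m$-stabilizers (compare Lemma \ref{lemma: ss locus P1n}), and Proposition \ref{proposition: when is the GIT quotient is a scheme} no longer gives you a scheme, let alone the fine moduli space. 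You cannot perturb off the wall without changing which collisions are permitted, i.e.\ without changing the moduli problem. Second, representability. The content of the theorem is that a projective variety \emph{represents the moduli functor} over arbitrary base schemes. Producing a projective quotient whose closed points biject with isomorphism classes of $\bm{a}$-stable curves is not enough: you must construct a flat universal family, verify the universal property for families over any $T$, and establish properness of the functor (valuative criterion), which is precisely the deformation-theoretic work in \cite{Has03} and does not follow formally from GIT. As written, your step three assumes the answer.
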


\begin{proof}
 This is Theorem 2.1 of \cite{Has03}.
\end{proof}

In the case that $a_1=\cdots=a_n=\epsilon$, we will denote $\overline{M}_{0,\bm{a}}$ by $\overline{M}_{0,n \cdot \epsilon}$ and call it a \textbf{symmetrically-weighted} moduli space. Note that $\overline{M}_{0,n \cdot 1} = \overline{M}_{0,n}$, the Deligne-Grothendieck-Knudsen-Mumford moduli space of stable $n$-pointed rational curves.

\begin{theorem} \label{theorem: moduli of stable curves as GIT quotient}
 Let $m = \lfloor n/2 \rfloor$. For $1 \leq j \leq m-2$, the GIT quotient, $F_{n-m+j} \modmod{\mathcal L_{n-m+j}} \op{PGL}_2$, is isomorphic to $\overline{M}_{0,n \cdot \epsilon_j}$ with weights $\epsilon_j = (\epsilon_j,\ldots,\epsilon_j)$ for $\frac{1}{m+1-j} < \epsilon_j \leq \frac{1}{m-j}$. In particular, 
 \begin{displaymath}
  F_{n-2} \modmod{\mathcal L_{n-2}} \op{PGL}_2 \cong  F_{n-1} \modmod{\mathcal L_{n-1}} \op{PGL}_2 \cong \overline{M}_{0,n}.
 \end{displaymath}
\end{theorem}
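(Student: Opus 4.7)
The plan is to recognize this theorem as essentially a consequence of Kiem--Moon's identification \cite{KM11} of partial Fulton--MacPherson compactifications, under suitable $\op{PGL}_2$-linearizations, with Hassett's weighted moduli spaces, and then to verify the index bookkeeping that matches the blowup step $n-m+j$ with the weight interval $\left(\frac{1}{m+1-j},\frac{1}{m-j}\right]$.

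First I would set up the baseline: on $F_0=(\P^1)^n$ with $\mathcal L_0=\mathcal O(\bm 2)$, the Hilbert--Mumford analysis of Section~\ref{subsection: moduli of weighted points} (Lemma~\ref{lemma: ss locus P1n}) shows that a configuration is semi-stable if and only if no more than $m=\lfloor n/2\rfloor$ of the marked points coincide, with strict semi-stability when exactly $m$ coincide in each half (for $n$ even). This is precisely Hassett's stability with symmetric weight $\epsilon_0$ on the boundary $\epsilon_0=\frac{2}{n}$, i.e. the smallest relevant interval $\left(\frac{1}{m+1},\frac{1}{m}\right]$.

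Next I would trace through the inductive blowup. At step $l$, the center $\Sigma^{n-l-1}_{l-1}$ parametrises configurations where $n-l-1$ points have crashed together, and the twist by $-\delta_l\,\Sigma^{n-l-1}_l$ with $\delta_l$ small and positive in $\mathcal L_l$ makes precisely the exceptional locus become semi-stable after the blowup. Since the $\op{PGL}_2$-orbit of a point on the exceptional divisor is represented by a rational curve with a ``bubbled'' $\P^1$-component carrying the colliding points, the GIT quotient $F_l \modmod{\mathcal L_l}\op{PGL}_2$ is the moduli space obtained from the previous quotient by allowing a single bubble absorbing $n-l-1$ points. This is exactly the effect of decreasing Hassett's symmetric weight across the wall $\epsilon=\frac{1}{n-l-1}$. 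Careful accounting of the bounds on $\delta_l$ (which are dictated by the Fulton--MacPherson construction) translates into the interval $\left(\frac{1}{m+1-j},\frac{1}{m-j}\right]$ when $l=n-m+j$; this is the identification made precise in \cite{KM11}, so I would cite their main theorem to conclude Steps $j=1,\dots,m-2$.

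For the last assertion, I would handle $F_{n-2}$ and $F_{n-1}$ by combining the Kiem--Moon identification with Hassett's wall-and-chamber description of the weight space: for symmetric weights with $\epsilon>\tfrac{1}{2}$ no two marked points can collide, so $\overline{M}_{0,n\cdot\epsilon}\cong \overline{M}_{0,n}$ throughout the half-open interval $(\tfrac12,1]$. Therefore once the inductive procedure reaches the blowup level separating all pairwise collisions, both $F_{n-2}\modmod{}\op{PGL}_2$ and $F_{n-1}\modmod{}\op{PGL}_2$ must land in this single chamber and hence agree with $\overline{M}_{0,n}$. The main obstacle is purely combinatorial: matching the indices of the Fulton--MacPherson blowup sequence with the Hassett critical weights, and in particular verifying that the line bundles $\mathcal L_{n-m+j}$ land in the interior of the GIT chamber corresponding to the weight interval claimed. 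This requires a careful compatibility check of the small rationals $\delta_l$ with the Hassett stability inequalities, but no essentially new geometric input beyond \cite{KM11, Has03}.
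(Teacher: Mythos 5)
Your approach coincides with the paper's: the proof there is essentially a one-line citation of Theorem 4.1 of \cite{KM11}, and all of the wall-crossing and index bookkeeping you describe is exactly what that reference supplies, so there is no need to re-derive it.

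There is one point you omit that the paper is careful about. In this paper the GIT quotient $F_{n-m+j} \modmod{\mathcal L_{n-m+j}} \op{PGL}_2$ is by definition the quotient \emph{stack} $[F_{n-m+j}^{\op{ss}}(\mathcal L_{n-m+j})/\op{PGL}_2]$, whereas Kiem--Moon identify Mumford's GIT quotient, i.e.\ the underlying coarse moduli space, with $\overline{M}_{0,n\cdot\epsilon_j}$. To upgrade this to an isomorphism of the stack with the Hassett space one must check that the stack is actually represented by a scheme, which follows from Proposition \ref{proposition: when is the GIT quotient is a scheme} once one knows the stabilizers on the relevant semi-stable loci are trivial (this holds for all $j$ when $n$ is odd and for $j > m$ when $n$ is even, which covers the range in the statement). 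Adding that one observation closes the only gap; the rest of your argument, including the treatment of $F_{n-2}$ and $F_{n-1}$ via the chamber $\epsilon \in (\tfrac12,1]$, is consistent with what the cited theorem already provides.
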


\begin{proof}
 This is Theorem 4.1 of \cite{KM11} for the underlying coarse moduli spaces of our GIT quotient stacks. However, by Proposition \ref{proposition: when is the GIT quotient is a scheme}, each quotient stack, $F_j \modmod{\mathcal L_j} \op{PGL}_2$, is a variety for all $j$ if $n$ is odd and for $j > m$ if $n$ even. 
\end{proof}

Let
\begin{displaymath}
 \psi_{j,l} := \psi_j \circ \cdots \circ \psi_{l+1} : F_j \to F_l
\end{displaymath}
so $\psi_{j,l}^* \mathcal O_{F_l}(\Sigma_l^S) = \mathcal O_{F_j}(\Sigma_j^S)$ when $|S| \geq n-l+1$. 

\begin{lemma} \label{lemma: G-Pic of P1[n]}
 We have an isomorphism,
 \begin{displaymath}
  \op{Pic}^{\op{PGL}_2}(F_j) = \op{NS}^{\op{PGL}_2}(F_j) = \psi^*_{j,0}\op{NS}^{\op{PGL}_2}(F_0) \oplus \bigoplus_{\substack{S \subseteq [n] \\ |S| \geq n-j-1 }} \Z \cdot \mathcal O(\Sigma^S_j)
 \end{displaymath}
\end{lemma}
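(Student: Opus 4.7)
The plan is to proceed by induction on $j$. The base case $j = 0$ is precisely Lemma \ref{lemma: Picard group of (P1)n}, since $F_0 = P_n = (\P^1)^n$ and the sum over exceptional divisors is empty in the relevant range.

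For the inductive step, I would first use the explicit construction: $\psi_j : F_j \to F_{j-1}$ is, by definition, the blow-up of $F_{j-1}$ along the disjoint union $\bigsqcup_{|S| = n-j-1} \Sigma_{j-1}^S$ of smooth, pairwise disjoint, $\op{PGL}_2$-invariant subvarieties. The $\op{PGL}_2$-invariance of each $\Sigma_{j-1}^S$ is immediate from the fact that the diagonal action preserves the locus where the coordinates indexed by $S$ agree; the smoothness and disjointness of the stratum being blown up at this stage follow from the iterative Fulton–MacPherson construction (Proposition \ref{proposition: this is P1[n]} and the references \cite{FM,Li09,KM11,MM}). The standard blow-up formula then gives a split short exact sequence
\[
0 \longrightarrow \op{Pic}(F_{j-1}) \xrightarrow{\psi_j^*} \op{Pic}(F_j) \longrightarrow \bigoplus_{|S| = n-j-1} \Z \cdot \mathcal{O}(\Sigma_j^S) \longrightarrow 0,
\]
with splitting given by sending each new exceptional divisor class to itself.

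To upgrade this to the equivariant statement, I would use that $\op{PGL}_2$ has no non-trivial characters, so a line bundle on a $\op{PGL}_2$-variety admits at most one equivariant structure. Hence $\op{Pic}^{\op{PGL}_2}(F_j)$ sits inside $\op{Pic}(F_j)$ as the subset of classes admitting such a structure. Now $\psi_j^*\mathcal{L}$ inherits a canonical equivariant structure for any $\mathcal{L} \in \op{Pic}^{\op{PGL}_2}(F_{j-1})$, and each $\mathcal{O}(\Sigma_j^S)$ carries a canonical $\op{PGL}_2$-equivariant structure because $\Sigma_j^S$ is $\op{PGL}_2$-invariant. Thus the split exact sequence descends to one for equivariant Picard groups, and combining with the inductive hypothesis and the identity $\psi_{j,0}^* = \psi_j^* \circ \psi_{j-1,0}^*$ gives the desired direct sum decomposition.

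For the identification $\op{Pic}^{\op{PGL}_2}(F_j) = \op{NS}^{\op{PGL}_2}(F_j)$, I would invoke the fact that each $F_j$ is an iterated blow-up of $(\P^1)^n$ at smooth centers, hence a smooth projective rational variety; therefore $H^1(F_j, \mathcal{O}_{F_j}) = 0$, $\op{Pic}^0(F_j) = 0$, and $\op{Pic}(F_j) = \op{NS}(F_j)$. Combined with the uniqueness of equivariant structures, this gives $\op{Pic}^{\op{PGL}_2}(F_j) = \op{NS}^{\op{PGL}_2}(F_j)$. The main obstacle I anticipate is the geometric bookkeeping for the Fulton–MacPherson construction—confirming that at each stage the proper transforms $\Sigma_{j-1}^S$ with $|S| = n-j-1$ are pairwise disjoint and smooth, so that $\psi_j$ really is a single blow-up along a smooth center—but this is exactly the content of the cited results, so there is no new difficulty.
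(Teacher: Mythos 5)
Your proposal is correct and follows essentially the same route as the paper's proof: the paper likewise combines the standard blow-up formula for $\op{Pic}$ of the iterated blow-up with the observation that $\op{PGL}_2$ has no characters (so equivariant structures are unique) and that the centers $\Sigma^S_{j-1}$ are $\op{PGL}_2$-invariant, so each $\mathcal O(\Sigma^S_j)$ carries a canonical equivariant structure. You merely make explicit the induction on $j$ and the identification $\op{Pic}=\op{NS}$ via rationality, which the paper leaves implicit.
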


\begin{proof}
 As $\op{PGL}_2$ has no characters, any two equivariant structures on the same line bundle must coincide. We are left to determine which line bundles admit $\op{PGL}_2$-equivariant structures. As $F_j$ is an iterated blow up, we have
 \begin{displaymath}
  \op{Pic}(F_j) = \psi^*_{j,0}\op{Pic}(F_0) \oplus \bigoplus_{\substack{S \subseteq [n] \\ |S| \geq n-j-1 }} \Z \cdot \mathcal O(\Sigma^S_j).
 \end{displaymath}
 The line bundles, $\mathcal O(\Sigma^S_j)$, admit $\op{PGL}_2$-equivariant structures given by the derivative of the $\op{PGL}_2$ action restricted to the normal bundles of the $\Sigma^S_l$ for $|S| \geq n-l+1$.
\end{proof}

We let 
\begin{displaymath}
 \mathcal O(\bm{d} + \bm{a}) := \psi^*_{j,0} \mathcal O_{F_0}(\bm{d}) \otimes \mathcal O( \sum_S a_S \Sigma^S_j) \in \op{Pic}^{\op{PGL}_2}(F_j).
\end{displaymath}

\begin{lemma} \label{lemma: P1[n] values of HM function}
 Let $\lambda$ be a one-parameter subgroup of $\op{PGL}_2$ and let $x \in F_j$. Set $\bar{x} = \psi_{j,0}(x)$ and 
 \begin{displaymath}
  I := \{ i \in [n] \mid \bar{x}_i = y^-_{\lambda} \}.
 \end{displaymath}
 Let $x^* = \lim_{\alpha \to 0} \sigma(\lambda(\alpha),x)$. We have an equality,
 \begin{displaymath}
  \mu( \mathcal O(\bm{d} + \bm{a}), \lambda, x^*) = \frac{1}{2}\left(\sum_{i \in I} d_i - \sum_{i \in I^c} d_i\right) - \sum_{\substack{S \subseteq I \\ |S| \geq n-j-1 }} a_S + \sum_{\substack{S \subseteq I^c \\ |S| \geq n-j-1 }} a_S.
 \end{displaymath}
\end{lemma}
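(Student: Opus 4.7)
The plan is to decompose
\[
\mathcal O(\bm d + \bm a) \;=\; \psi_{j,0}^{\ast}\mathcal O_{F_0}(\bm d)\;\otimes\;\bigotimes_{|S|\ge n-j-1}\mathcal O(\Sigma^S_j)^{\otimes a_S},
\]
and apply additivity and functoriality of $\mu$ (Lemma~\ref{lemma: weights under the orbit}(2),(3)) to reduce to two independent computations. By additivity,
\[
\mu(\mathcal O(\bm d + \bm a),\lambda,x^{\ast}) = \mu(\psi_{j,0}^{\ast}\mathcal O_{F_0}(\bm d),\lambda,x^{\ast}) + \sum_{|S|\ge n-j-1} a_S\,\mu(\mathcal O(\Sigma^S_j),\lambda,x^{\ast}).
\]

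For the first summand I would first argue that $\psi_{j,0}(x^{\ast})=\bar x^{\ast}$. Since $\psi_{j,0}$ is $\op{PGL}_2$-equivariant, it is $\lambda$-equivariant, and since both $F_j$ and $F_0$ are separated, the unique extensions $\mathbb{A}^1 \to F_j$ and $\mathbb{A}^1\to F_0$ of the orbit maps of $x$ and $\bar x$ commute with $\psi_{j,0}$ (Definition~\ref{definition: 1-ps}); evaluating at $0$ yields $\psi_{j,0}(x^{\ast})=\bar x^{\ast}$. Then functoriality gives $\mu(\psi_{j,0}^{\ast}\mathcal O_{F_0}(\bm d),\lambda,x^{\ast})=\mu(\mathcal O_{F_0}(\bm d),\lambda,\bar x^{\ast})$, and Lemma~\ref{lemma: computing the HM function for (P1)n} delivers the first term $\tfrac12\bigl(\sum_{i\in I}d_i-\sum_{i\in I^c}d_i\bigr)$ (the indexing set $I$ defined for $\bar x$ and for $\bar x^{\ast}$ coincides).

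For the second summand the key claim is the local calculation
\[
\mu(\mathcal O(\Sigma^S_j),\lambda,x^{\ast}) \;=\;
\begin{cases}
-1 & \text{if } S\subseteq I,\\
+1 & \text{if } S\subseteq I^{c},\\
\phantom{+}0 & \text{otherwise.}
\end{cases}
\]
I would split into cases according to whether $x^{\ast}\in\Sigma^S_j$. Since $\psi_{j,0}$ restricts to a birational, $\op{PGL}_2$-equivariant morphism $\Sigma^S_j\to \Sigma^S_0$ (here using the Fulton--MacPherson/Kiem--Moon description of $\Sigma^S_j$ as the proper transform or exceptional divisor tracking the collision indexed by $S$), we have $x^{\ast}\in\Sigma^S_j$ if and only if $\bar x^{\ast}\in\Sigma^S_0$, which is precisely the condition $S\subseteq I$ or $S\subseteq I^c$. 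In the ``otherwise'' case, $\mathcal O(\Sigma^S_j)$ is trivialized in a $\lambda$-invariant neighborhood of $x^{\ast}$ by its canonical section $s_{\Sigma^S_j}$; because $\op{PGL}_2$ has no nontrivial characters (Lemma~\ref{lemma: Picard group of (P1)n} and its proof) and $\Sigma^S_j$ is $\op{PGL}_2$-invariant, $s_{\Sigma^S_j}$ is $\lambda$-invariant, so the fiber weight at $x^{\ast}$ vanishes.

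For the two nontrivial cases I would proceed by induction on $j$. The base case is $|S|=2$, where $\Sigma^S_0\subset F_0$ is a smooth divisor and $\mathcal O(\Sigma^S_0)\!\mid_{\Sigma^S_0}\cong \mathcal N_{\Sigma^S_0/F_0}$; the conormal at $\bar x^{\ast}$ is spanned by $z_i - z_{i_0}$ (or $w_i - w_{i_0}$), each local coordinate having the weight computed in the proof of Lemma~\ref{lemma: computing the HM function for (P1)n}, which pins down the sign according as $S\subseteq I$ or $S\subseteq I^c$. For the inductive step, the blow up formula for $\psi_j\colon F_j\to F_{j-1}$ gives
\[
\psi_j^{\ast}\mathcal O(\Sigma^{S}_{j-1}) \;\cong\; \mathcal O(\Sigma^{S}_{j}) \otimes \mathcal O\!\left(\sum_{\substack{|S'|=n-j-1\\ S'\subseteq S}}\Sigma^{S'}_j\right)\quad (\text{for }|S|>n-j-1),
\]
and for $|S|=n-j-1$ the divisor $\Sigma^S_j$ is the exceptional divisor of the blow up of $\Sigma^S_{j-1}\subset F_{j-1}$. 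In either situation, the weight at $x^{\ast}$ of the line $\mathcal O(\Sigma^S_j)|_{x^\ast}$ is determined by the $\lambda$-action on the corresponding line in the normal bundle of $\Sigma^S_{j-1}$ inside $F_{j-1}$, which by induction reduces to the weight of a difference of local $\P^1$-coordinates, all of which coincide at $y^-_{\lambda}$ (resp.\ $y^+_{\lambda}$) when $S\subseteq I$ (resp.\ $S\subseteq I^c$). Combining the two cases yields the desired sign, and summing gives the formula.

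The main obstacle will be the bookkeeping of the blow up step: one must verify that the proper-transform/exceptional-divisor decomposition of $\psi_j^{\ast}\mathcal O(\Sigma^S_{j-1})$ is precisely the one above, and that tracking $\lambda$-weights through a blow up at a $\lambda$-invariant center preserves the induction hypothesis without introducing stray contributions from the additional $\Sigma^{S'}_j$. Since all relevant centers are smooth and $\lambda$-invariant, this is a routine but slightly delicate local computation in coordinates on each affine chart of $F_j$ over a point of $\Sigma^S_{j-1}$ where the $\lambda$-eigenspace decomposition is explicit.
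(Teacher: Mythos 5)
Your overall architecture is the same as the paper's: split $\mathcal O(\bm d+\bm a)$ by additivity of $\mu$ (Lemma~\ref{lemma: weights under the orbit}), push the $\bm d$-part down to $F_0$ via equivariance of $\psi_{j,0}$ and Lemma~\ref{lemma: computing the HM function for (P1)n}, and reduce each $\mu(\mathcal O(\Sigma^S_j),\lambda,x^*)$ to the $\lambda$-weight of a normal direction to $\Sigma^S_0$ in $F_0$, with the trichotomy $0/\mp1$ according to whether $x^*\notin\Sigma^S_j$, $S\subseteq I$, or $S\subseteq I^c$. Your observations that $\psi_{j,0}(x^*)=\bar x^*$ and that the index set $I$ is the same for $\bar x$ and $\bar x^*$ are correct, and your identification of the normal directions as differences of tangent vectors to the $\P^1$ factors indexed by $S$ is exactly the paper's computation.

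The genuine problem is the displayed inductive step. The formula $\psi_j^{*}\mathcal O(\Sigma^S_{j-1})\cong\mathcal O(\Sigma^S_j)\otimes\mathcal O\bigl(\sum_{S'\subseteq S}\Sigma^{S'}_j\bigr)$ is false. An exceptional component $\Sigma^{S'}_j$ enters the total transform of the divisor $\Sigma^S_{j-1}$ only if the center $\Sigma^{S'}_{j-1}$ is \emph{contained} in $\Sigma^S_{j-1}$; for the underlying diagonals, $\Sigma^{S'}_0\subseteq\Sigma^S_0$ forces $S\subseteq S'$, which is impossible since the stage-$j$ centers satisfy $|S'|<|S|$. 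So strict transform equals total transform and $\psi_{j,l}^{*}\mathcal O_{F_l}(\Sigma^S_l)=\mathcal O_{F_j}(\Sigma^S_j)$ with \emph{no} correction terms (this is precisely the identity the paper records just before the lemma). If you ran your recursion with the extra factors, every $S'\subseteq S$ with $S'\subseteq I$ (automatic when $S\subseteq I$) would contribute a spurious $-1$, and the induction would not return the value $-1$. Relatedly, your proposed base case $|S|=2$ is not where the induction bottoms out: for $|S|\ge 3$ the divisor $\Sigma^S_j$ is a genuine exceptional divisor, not a transform of a divisor on $F_0$. Both issues disappear once you replace the induction by the direct reduction: use the trivial pullback identity to reduce to the stage at which $\Sigma^S$ is the freshly created exceptional divisor, factor that blow-up so only the component over $\Sigma^S$ matters, write $x^*=(\pi(x^*),[v])$ and identify $\op{V}(\mathcal O(\Sigma^S))_{x^*}$ with $(kv)^{\vee}$, and use the $\lambda$-equivariant isomorphism of $N_{\Sigma^S_{j-1}}$ with $N_{\Sigma^S_0}$ to see that $v$ has weight $+1$ when $S\subseteq I$ and $-1$ when $S\subseteq I^c$.
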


\begin{proof}
 Due to the linearity of $\mu$, we have
 \begin{displaymath}
  \mu( \mathcal O(\bm{d} + \bm{a}),\lambda,x^*) = \mu(\mathcal O_{F_j}(\bm{d}),\lambda,x^*) + \sum_{|S| \geq n-j-1 } a_S \mu(\mathcal O_{F_j}(\Sigma^S_j),\lambda,x^*).
 \end{displaymath}
 By Lemma \ref{lemma: weights under the orbit}, $\mu$ respects pullbacks,
 \begin{displaymath}
  \mu(\mathcal O_{F_j}(\bm{d}),\lambda,x^*) = \mu(\mathcal O_{F_0}(\bm{d}),\lambda,\bar{x}^*) .
 \end{displaymath}
 Applying Lemma \ref{lemma: computing the HM function for (P1)n}, we then get
 \begin{displaymath}
  \mu(\mathcal O_{F_j}(\bm{d}),\lambda,x^*) = \frac{1}{2}\left(\sum_{i \in I} d_i - \sum_{i \in I^c} d_i\right) .
 \end{displaymath}

\sidenote{{\color{red} blow-up vs blow up?}}
 We are left to compute $\mu(\mathcal O(\Sigma^S_j),\lambda,x^*)$. Again, since $\mu$ respects pullbacks, we may assume that $|S|=n-j+1$. We may factor the blow up of $F_{j-1}$ at $\Sigma^{n-j+1}_{j-1}$ by blowing up $\Sigma_{j-1}^S$ first and then blowing up the strict transforms of the remaining $\Sigma^{S'}_{j-1}$ for $|S'| = n-j+1$. Thus, we may replace $F_j$ by the blow up of $F_{j-1}$ at $\Sigma_{j-1}^S$ in our computation. Let us denote this blow up by $F^S_j$, let $\pi: F^S_j \to F_{j-1}$ be the associated morphism, and denote the exceptional locus by $\Sigma_j^S$.
 
 Let $x^* := \lim_{\alpha \to 0} \sigma(\lambda(\alpha),x)$. If $x^* \not \in \Sigma_j^S$, then we have an isomorphism of $\lambda$-equivariant vector spaces, $\op{V}(\mathcal O(\Sigma^S_j)_{x^*}) \cong \op{V}(\mathcal O_{x^*})$, so
 \begin{displaymath}
  \mu(\mathcal O(\Sigma^S_j),\lambda,x^*) = 0.
 \end{displaymath}
 Assume that $x^* \in \Sigma_j^S$. This is true if and only if $\bar{x}^* \in \Sigma_0^S$ which is true if and only if $S \subseteq I$ or $S \subseteq I^c$. Assume that $S \subseteq I$. The argument in the case, $S \subseteq I^c$, will be completely analogous. Write $x^* = (\pi(x^*),[v])$ with $v \in N_{\Sigma_{j-1}^S,\pi(x^*)}$ and $[v] \in \P(N_{\Sigma_{j-1}^S,\pi(x^*)})$. Recall that $N_{\Sigma_{j-1}^S,\pi(x^*)} = \op{V}(\mathcal N_{\Sigma_{j-1}^S,\pi(x^*)}^{\vee})$ denotes the geometric normal bundle to $\Sigma_{j-1}^S$ at $\pi(x^*)$.
 
 Then, $\op{V}(\mathcal O(\Sigma_j^S))_{x^*} = (kv)^{\vee}$. As $x^*$ is a fixed point, $v$ must be an eigenvector for $\lambda$, $\sigma(\lambda(\alpha),v) = \alpha^{l}v$. Then 
 \begin{displaymath}
  \mu(\mathcal O(\Sigma^S_j),\lambda,x) = -l.
 \end{displaymath}
 We need to determine the weights of $\lambda$ on $N_{\Sigma_{j-1}^S,\pi(x^*)}$. The subvariety, $\Sigma_{j-1}^S$, is the strict transform of the smooth variety, $\Sigma_0^S$, so we have a $\lambda$-equivariant isomorphism,
 \begin{displaymath}
  d\psi_{j-1,0}: N_{\Sigma_{j-1}^S,\pi(x^*)} \to N_{\Sigma_0^S,\bar{x}^*}.
 \end{displaymath}
 We can compute the weights on $N_{\Sigma_0^S,\bar{x}^*}$. We split the tangent space
 \begin{displaymath}
  T_{(\P^1)^n,\bar{x}^*} = \bigoplus_{i=1}^n T_{\P^1,\bar{x}^*_i}.
 \end{displaymath}
 We let 
 \begin{align*}
  T_{\P^1,\bar{x}^*_i}^{\oplus I} & := \bigoplus_{i \in I} T_{\P^1,\bar{x}^*_i} \subset T_{(\P^1)^n,\bar{x}^*} \\
  T_{\P^1,\bar{x}^*_i}^{\oplus I^c} & := \bigoplus_{i \in I^c} T_{\P^1,\bar{x}^*_i} \subset T_{(\P^1)^n,\bar{x}^*}.
 \end{align*}
 We have
 \begin{displaymath}
  T_{\Sigma^S_0,\bar{x}^*} = \{ (v_1,\ldots,v_n) \in T_{(\P^1)^n,\bar{x}^*} \mid v_i = v_j~ \forall i,j \in S \}.
 \end{displaymath}
 Note that $T_{\Sigma^S_0,\bar{x}^*} \subseteq T_{\P^1,\bar{x}^*_i}^{\oplus I^c}$ as $S \subseteq I$. So there is a $\lambda$-equivariant surjection, $T_{\P^1,\bar{x}^*_i}^{\oplus I} \to N_{\Sigma^S_0,\bar{x}^*}$. Therefore,  $N_{\Sigma^S_0,\bar{x}^*}$ only has weight $1$. Combining everything, we have 
 \begin{displaymath}
  \mu(\mathcal O(\Sigma_j^S),\lambda,x) = \begin{cases} 0 & x \not \in \Sigma_j^S \\ -1 & S \subseteq I \\ 1 & S \subseteq I^c, \end{cases}
 \end{displaymath}
 finishing the computation.
\end{proof}

\begin{corollary} \label{corollary: fixed locus of lambda FM}
 The fixed locus of $\lambda$ on $F_j$ is the inverse image of the fixed locus of $\lambda$ on $F_0$ under $\psi_{j,0}: F_j \to F_0$. 
\end{corollary}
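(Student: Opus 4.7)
The plan is to prove the two inclusions separately. The inclusion $F_j^{\lambda} \subseteq \psi_{j,0}^{-1}(F_0^{\lambda})$ is immediate: since every blow-up $\psi_l : F_l \to F_{l-1}$ is performed along a $\op{PGL}_2$-invariant (in particular $\lambda$-invariant) subvariety, each $\psi_l$ is $\lambda$-equivariant, hence so is the composition $\psi_{j,0}$. Therefore the image of a $\lambda$-fixed point is $\lambda$-fixed.

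The reverse inclusion will be proved by induction on $j$, the case $j=0$ being trivial. For the inductive step, fix $\bar{x} \in F_0^{\lambda}$ and let $x \in \psi_{j,0}^{-1}(\bar{x})$; then $\psi_j(x) \in \psi_{j-1,0}^{-1}(\bar{x}) \subseteq F_{j-1}^{\lambda}$ by induction. If $\psi_j(x) \notin \Sigma_{j-1}^{n-j-1}$, then $\psi_j$ is a local isomorphism near $\psi_j(x)$ and $x$ is its unique preimage, so $x \in F_j^{\lambda}$. Otherwise, as in the proof of Lemma \ref{lemma: P1[n] values of HM function}, we may factor the blow-up of $\Sigma_{j-1}^{n-j-1}$ as a sequence of blow-ups along individual smooth subvarieties $\Sigma_{j-1}^S$ (or their strict transforms) for the subsets $S$ with $|S| = n-j+1$. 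Hence it suffices to treat the case where $\psi_j$ is the blow-up along a single smooth $\lambda$-invariant subvariety $\Sigma_{j-1}^S$ containing $\psi_j(x)$.

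The key observation, and the main content of the argument, is then that $\lambda$ acts with a single weight on the normal bundle $N_{\Sigma_{j-1}^S, \psi_j(x)}$. Indeed, via the $\lambda$-equivariant isomorphism $d\psi_{j-1,0} : N_{\Sigma_{j-1}^S, \psi_j(x)} \cong N_{\Sigma_0^S, \bar{x}}$, this reduces to the computation already carried out in Lemma \ref{lemma: P1[n] values of HM function}: writing $I = \{ i \in [n] \mid \bar{x}_i = y_{\lambda}^- \}$, the condition $\bar{x} \in \Sigma_0^S$ forces $S \subseteq I$ or $S \subseteq I^c$, and the surjection $T_{\P^1,\bar{x}_i}^{\oplus I} \twoheadrightarrow N_{\Sigma_0^S,\bar{x}}$ (resp.\ $T_{\P^1,\bar{x}_i}^{\oplus I^c} \twoheadrightarrow N_{\Sigma_0^S,\bar{x}}$) shows every weight is $+1$ (resp.\ $-1$). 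Since $\lambda$ acts on $N$ by a single character, the induced action on the projectivized fiber $\P(N_{\Sigma_{j-1}^S, \psi_j(x)})$ of the exceptional divisor is trivial, and every point of this fiber — in particular $x$ — is $\lambda$-fixed.

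The only step that requires care is the reduction to blowing up one smooth subvariety at a time; the potential obstacle is that $\psi_j(x)$ can lie simultaneously in several components $\Sigma_{j-1}^S$ (for different subsets $S$ of the same cardinality), but these components intersect only along the deeper diagonals that have already been blown up at earlier stages, so they are transverse at $\psi_j(x)$ and the factorization argument goes through exactly as in Lemma \ref{lemma: P1[n] values of HM function}. This completes the proof.
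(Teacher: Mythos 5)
Your proof is correct and follows essentially the same route as the paper: the paper's own (two-sentence) proof likewise observes that the computation in the proof of Lemma \ref{lemma: P1[n] values of HM function} shows $\lambda$ acts with a single weight on the normal bundle at a fixed point, so the whole exceptional fiber over a fixed point is fixed, and then composes over the iterated blow-ups. Your write-up simply makes explicit the easy inclusion, the induction, and the reduction to one smooth center at a time, all of which are implicit in the paper's argument.
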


\begin{proof}
 In the proof of Lemma \ref{lemma: P1[n] values of HM function}, we showed that the whole fiber of a fixed point under a single blow up is itself fixed. The total map is a composition of these blow ups.
\end{proof}

\begin{lemma} \label{lemma: ss locus FM P1}
 Assume that $\mathcal O(\bm{d} + \bm{a})$ is ample. Then,
 \begin{gather*}
  F_j^{\op{ss}}(\bm{d}+\bm{a}):=  F_j^{\op{ss}}(\mathcal O(\bm{d}+\bm{a})) = \{ x \in F_j \mid \text{if } I \subset [n] \text{ with } \psi_{j,0}(x)_i=\psi_{j,0}(x)_{i'}~ \forall i,i'\in I, \text{ then } \\ \frac{1}{2}\sum_{i \in I} d_i + \sum_{\substack{S \subseteq I^c \\ |S| \geq n-j-1 }} a_S \leq \frac{1}{2}\sum_{i \in I^c} d_i + \sum_{\substack{S \subseteq I \\ |S| \geq n-j-1 }} a_S\}.
 \end{gather*}
\end{lemma}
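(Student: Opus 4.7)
The plan is to apply the Hilbert-Mumford numerical criterion (Theorem \ref{theorem: Hilbert-Mumford numerical stability}) directly, using the explicit formula for the numerical function obtained in Lemma \ref{lemma: P1[n] values of HM function}. Since $\mathcal O(\bm{d}+\bm{a})$ is assumed ample and $F_j$ is proper (being an iterated blow-up of a proper variety), Theorem \ref{theorem: Hilbert-Mumford numerical stability} says that $x \in F_j^{\op{ss}}(\bm{d}+\bm{a})$ if and only if $\mu(\mathcal O(\bm{d}+\bm{a}),\lambda,x^*) \leq 0$ for every one-parameter subgroup $\lambda$ of $\op{PGL}_2$ (where $x^* = \lim_{\alpha \to 0}\sigma(\lambda(\alpha),x)$, and the limit automatically exists by properness).

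First I would substitute the value of $\mu(\mathcal O(\bm{d}+\bm{a}),\lambda,x^*)$ computed in Lemma \ref{lemma: P1[n] values of HM function}, namely
\[
\mu( \mathcal O(\bm{d} + \bm{a}), \lambda, x^*) = \tfrac{1}{2}\!\left(\sum_{i \in I} d_i - \sum_{i \in I^c} d_i\right) - \sum_{\substack{S \subseteq I \\ |S| \geq n-j-1 }} a_S + \sum_{\substack{S \subseteq I^c \\ |S| \geq n-j-1 }} a_S,
\]
where $I = \{i \in [n] \mid \psi_{j,0}(x)_i = y^-_\lambda\}$. Setting this quantity $\leq 0$ and transposing the two negative sums to the other side produces exactly the inequality appearing in the statement of the lemma. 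So the semi-stability of $x$ is equivalent to the asserted inequality holding for every subset $I$ that arises as $\{i \mid \psi_{j,0}(x)_i = y^-_\lambda\}$ for some one-parameter subgroup $\lambda$.

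The remaining content is the identification of which subsets $I \subseteq [n]$ are achievable from one-parameter subgroups at a given $x$. I would argue that these are precisely the subsets $I$ for which $\psi_{j,0}(x)_i = \psi_{j,0}(x)_{i'}$ for all $i,i' \in I$. One direction is clear from the definition of $I$. For the other, given such a common value $p \in \P^1$, any one-parameter subgroup $\lambda$ of $\op{PGL}_2$ with $y^-_\lambda = p$ and $y^+_\lambda$ distinct from the remaining $\psi_{j,0}(x)_{i'}$ for $i' \in I^c$ realizes $I$; such $\lambda$ clearly exist since we have a two-parameter family of one-parameter subgroups fixing $p$ (up to reparametrization) and only finitely many points to avoid. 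The inequality for $I = \emptyset$ is automatic from ampleness (and follows from taking $\lambda = $ trivial, or rather is the trivial inequality $0 \leq \sum d_i + 2 \sum a_S$), and the inequality for $I = [n]$ is the symmetric statement.

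The step requiring the most care is checking that we may restrict attention to these combinatorial subsets $I$, but this is essentially a repetition of the argument already used in the proof of Lemma \ref{lemma: ss locus P1n}, lifted to $F_j$ via the invariance of $\mu$ under the projection $\psi_{j,0}$ noted in Lemma \ref{lemma: P1[n] values of HM function}. Once this bookkeeping is in place, the lemma is immediate.
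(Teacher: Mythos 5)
Your proposal is correct and is exactly the paper's argument: the paper's proof consists of the single sentence that the lemma is an immediate combination of the Hilbert--Mumford numerical criterion (Theorem \ref{theorem: Hilbert-Mumford numerical stability}) and the weight computation of Lemma \ref{lemma: P1[n] values of HM function}. You merely spell out the bookkeeping (which subsets $I$ arise from one-parameter subgroups) that the paper leaves implicit, in the same way it is left implicit in Lemma \ref{lemma: ss locus P1n}.
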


\begin{proof}
 This is an immediate combination of the Hilbert-Mumford numerical criterion, Theorem \ref{theorem: Hilbert-Mumford numerical stability}, and Lemma \ref{lemma: P1[n] values of HM function}. 
\end{proof}

Define a hyperplane, 
\begin{displaymath}
 H_{j,I} := \{ \mathcal O(\bm{d}+\bm{a}) \mid \frac{1}{2}\sum_{i \in I} d_i + \sum_{\substack{S \subseteq I^c \\ |S| \geq n-j-1 }} a_S = \frac{1}{2}\sum_{i \in I^c} d_i + \sum_{\substack{S \subseteq I \\ |S| \geq n-j-1 }} a_S\} \subset \op{NS}^{\op{PGL}_2}(F_j)_{\R},
\end{displaymath}
and half-spaces
\begin{align*}
 H_{j,I}^{\geq} := \{ \mathcal O(\bm{d}+\bm{a}) \mid \frac{1}{2}\sum_{i \in I} d_i + \sum_{\substack{S \subseteq I^c \\ |S| \geq n-j-1 }} a_S \geq \frac{1}{2}\sum_{i \in I^c} d_i + \sum_{\substack{S \subseteq I \\ |S| \geq n-j-1 }} a_S\} \subset \op{NS}^{\op{PGL}_2}(F_j)_{\R} \\
 H_{j,I}^{>} := \{ \mathcal O(\bm{d}+\bm{a}) \mid \frac{1}{2}\sum_{i \in I} d_i + \sum_{\substack{S \subseteq I^c \\ |S| \geq n-j-1 }} a_S > \frac{1}{2}\sum_{i \in I^c} d_i + \sum_{\substack{S \subseteq I \\ |S| \geq n-j-1 }} a_S\} \subset \op{NS}^{\op{PGL}_2}(F_j)_{\R} \\
 H_{j,I}^{\leq} := \{ \mathcal O(\bm{d}+\bm{a}) \mid \frac{1}{2}\sum_{i \in I} d_i + \sum_{\substack{S \subseteq I^c \\ |S| \geq n-j-1 }} a_S \leq \frac{1}{2}\sum_{i \in I^c} d_i + \sum_{\substack{S \subseteq I \\ |S| \geq n-j-1 }} a_S\} \subset \op{NS}^{\op{PGL}_2}(F_j)_{\R} \\
 H_{j,I}^{<} := \{ \mathcal O(\bm{d}+\bm{a}) \mid \frac{1}{2}\sum_{i \in I} d_i + \sum_{\substack{S \subseteq I^c \\ |S| \geq n-j-1 }} a_S < \frac{1}{2}\sum_{i \in I^c} d_i + \sum_{\substack{S \subseteq I \\ |S| \geq n-j-1 }} a_S\} \subset \op{NS}^{\op{PGL}_2}(F_j)_{\R},
\end{align*}

\begin{corollary} \label{corollary: extended GIT fan for FM}
 The subsets of constant semi-stable locus within the ample cone, $\op{Amp}(F_j)_{\R}$, are the intersections,
 \begin{displaymath}
  H_{j,I_1}^{> 0} \cap \cdots \cap H_{j,I_{j_+}}^{> 0} \cap H_{j,I_{l_+ + 1}}^{< 0} \cap \cdots \cap H_{j,I_{l_+ + l_-}}^{< 0} \cap H_{j,I_{l_+ + l_- + 1}} \cap \cdots \cap H_{j,I_{l_+ + l_- + l_0}} \cap \op{Amp}(F_j)_{\R}.
 \end{displaymath}
 
 The closures of these subsets form a fan whose support is the ample cone. The chambers of this fan correspond to intersections with $l_0 = 0$ and the walls correspond to intersections with $l_0 = 1$.
\end{corollary}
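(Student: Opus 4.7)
The plan is to deduce this directly from Lemma \ref{lemma: ss locus FM P1}. That lemma characterizes $F_j^{\op{ss}}(\bm{d}+\bm{a})$ as the set of $x \in F_j$ satisfying one linear inequality indexed by each subset $I \subseteq [n]$, and the inequality for $I$ is precisely the defining condition of the half-space $H_{j,I}^{\leq}$. Consequently, for a fixed point $x$, whether $x$ is semi-stable for $\mathcal{O}(\bm{d}+\bm{a})$ depends only on which of the hyperplanes $H_{j,I}$ the line bundle lies on and, for those it does not lie on, which side.

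First I would show that two ample equivariant line bundles have the same semi-stable locus if and only if they lie in the same combination of open half-spaces and hyperplanes. The ``only if'' direction uses the criterion from Lemma \ref{lemma: ss locus FM P1}: for each $I \subseteq [n]$, one can find a point $x \in F_j$ whose image under $\psi_{j,0}$ has its coordinates in $I$ all equal (to $y_\lambda^-$ for a suitable $\lambda$) and whose coordinates in $I^c$ are all distinct (from each other and from $y_\lambda^-$). Such a point is semi-stable precisely when the $I$-inequality holds, and hence the $I$-inequality is detected by the semi-stable locus. The ``if'' direction is immediate from Lemma \ref{lemma: ss locus FM P1}. This proves the first assertion: the constancy classes are exactly the nonempty intersections listed.

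Next I would verify the fan structure. Each $H_{j,I}$ is a homogeneous linear hyperplane in $\op{NS}^{\op{PGL}_2}(F_j)_{\R}$, so the $\{H_{j,I}\}_{I \subseteq [n]}$ form a finite central hyperplane arrangement. The described sets are exactly the relatively open cells of this arrangement intersected with $\op{Amp}(F_j)_{\R}$. Taking closures gives closed rational polyhedral cones, and the standard fact that the closures of the cells of a central hyperplane arrangement (meeting a fixed polyhedral cone) form a fan finishes the proof. In particular, faces of a cell arise by flipping strict inequalities into equalities, which is exactly the operation of enlarging the index set $\{I_{l_++l_-+1}, \ldots\}$ of hyperplane conditions; this gives the interpretation of chambers ($l_0 = 0$) and walls ($l_0 = 1$).

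The main obstacle, if any, is not conceptual but notational: making sure the ``locally constant'' statement of Lemma \ref{lemma: ss locus FM P1} is robust enough to produce, for every $I$, a witness point $x$ showing that the $I$-inequality really is detected (and not redundantly implied by a coarser condition). Once one produces these witness points — using Corollary \ref{corollary: fixed locus of lambda FM} together with the explicit description of $\psi_{j,0}$ and a suitable one-parameter subgroup $\lambda$ of $\op{PGL}_2$ — the rest is formal polyhedral combinatorics. The result is of course also parallel to Corollary \ref{corollary: GIT fan for points in P1}, and indeed my proof here is structurally the same as the proof there.
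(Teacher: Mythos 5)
Your proposal is correct and follows the same route as the paper, which simply declares the first assertion an immediate consequence of Lemma \ref{lemma: ss locus FM P1} and the fan statements clear; you have merely filled in the witness-point argument and the standard hyperplane-arrangement combinatorics that the paper leaves implicit. No issues.
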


\begin{proof}
 The first statement is an immediate consequence of Lemma \ref{lemma: ss locus FM P1}. The final statements are clear.
\end{proof}

\begin{definition}
 We call the fan of Corollary \ref{corollary: extended GIT fan for FM} the \textbf{extended GIT fan} for the action of $\op{PGL}_2$ on $F_j$. 

 We say that $\mathcal O(\bm{d}+\bm{a}) \in \op{NS}^{\op{PGL}_2}(F_j)_{\R} \cap \op{Amp}(F_j)_{\R}$ \textbf{slides into the abyss} if there exists a continuous path, 
 \begin{displaymath}
  \gamma: [0,1] \to \op{NS}^{\op{PGL}_2}(F_j)_{\R} \cap \op{Amp}(F_j)_{\R},
 \end{displaymath}
 satisfying
 \begin{itemize}
  \item $\gamma(0) = \mathcal O(\bm{d}+\bm{a})$,
  \item $\gamma(1)$ has empty semi-stable locus,
  \item $\gamma$ passes through no cones of codimension $\leq 2$ in the extended GIT fan for $F_j$ and $\gamma(t)$ lies in a codimension one cone for only finitely many $t$,
  \item Assume that $C_-$ and $C_+$ are chambers in the extended GIT fan separated by a wall, $M$, with $\gamma([t_-,t_0]) \subset C_-, \gamma(t_0) \in M,$ and $\gamma([t_0,t_+]) \subset C_+$. Let $H_{j,I}$ be the hyperplane containing $M$. If $C_- \subset H_{j,I}^{\leq 0}$ and $C_+ \subset H^{\geq 0}_{j,I}$, then $|I| \leq |I^c|$.
 \end{itemize}
\end{definition}

We set
\begin{displaymath}
 F_j \modmod{\bm{d}+\bm{a}} \op{PGL}_2 := [F_j^{\op{ss}}(\bm{d}+\bm{a})/\op{PGL}_2].
\end{displaymath}

\begin{lemma} \label{lemma: elem wall crossing in ext GIT fan for FM} 
 Let $C_-$ and $C_+$ be adjacent chambers in the extended GIT fan of $F_j$ and let $M$ be the common face of $C_+$ and $C_-$. Choose $\mathcal O(\bm{d}_++\bm{a}_+) \in \op{Int} C_+, \mathcal O(\bm{d}_- + \bm{a}_-) \in \op{Int} C_-, \mathcal O(\bm{d}_0 + \bm{a}_0) \in M$. There is an elementary wall crossing,
 \begin{align*}
  F_j^{\op{ss}}(\bm{d}_0+\bm{a}_0) & = F_j^{\op{ss}}(\bm{d}_+ + \bm{a}_+) \sqcup S_{\lambda} \\
  F_j^{\op{ss}}(\bm{d}_0+\bm{a}_0) & = F_j^{\op{ss}}(\bm{d}_- + \bm{a}_-) \sqcup S_{-\lambda}.
 \end{align*}
\end{lemma}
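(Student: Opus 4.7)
The plan is to reduce this to Theorem \ref{theorem: VGIT give elementary wall crossing} by verifying the DHT condition for the variation $(\mathcal O(\bm{d}_-+\bm{a}_-),\mathcal O(\bm{d}_++\bm{a}_+))$. Corollary \ref{corollary: extended GIT fan for FM} already tells us that the semi-stable locus is constant on each open chamber, so the first bullet of the DHT condition is immediate; what remains is to check the stabilizer condition and connectedness of the ``wall locus''
\[
 W := F_j^{\op{ss}}(\bm{d}_0+\bm{a}_0) \setminus \bigl( F_j^{\op{ss}}(\bm{d}_++\bm{a}_+) \cup F_j^{\op{ss}}(\bm{d}_-+\bm{a}_-) \bigr).
\]

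First, I would identify the hyperplane $H_{j,I}$ containing the wall $M$ and let $\lambda \colon \mathbb{G}_m \to \op{PGL}_2$ be a one-parameter subgroup whose two fixed points on $\mathbb P^1$ correspond to the splitting $[n] = I \sqcup I^c$, normalized so that $\mu(\mathcal O(\bm{d}_++\bm{a}_+),\lambda,x^*) \geq 0$ in the notation of Lemma \ref{lemma: P1[n] values of HM function}. By that lemma combined with Lemma \ref{lemma: ss locus FM P1}, a point $x \in W$ is exactly one whose image $\bar x = \psi_{j,0}(x)$ satisfies $\bar x_i = y_\lambda^-$ for $i \in I$ and $\bar x_i = y_\lambda^+$ for $i \in I^c$ (or the $-\lambda$ version); that is, $\bar x$ lies in the corresponding piece of the wall locus on $(\mathbb P^1)^n$ already analysed in Lemma \ref{lemma: P1n elem wall cross}.

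Second, I would check the stabilizer condition. For $x \in W$, the stabilizer $(\op{PGL}_2)_x$ is contained in $(\op{PGL}_2)_{\bar x}$ because $\psi_{j,0}$ is $\op{PGL}_2$-equivariant and birational onto its image on orbits. Since $\bar x$ has exactly two distinct coordinate values $y_\lambda^\pm$, its stabilizer is the maximal torus fixing those two points, which is $\mathbb{G}_m$, realized precisely as the image of $\lambda$. Thus $(\op{PGL}_2)_x$ is a closed subgroup of $\mathbb{G}_m$, and one checks it is not finite by noting that $\lambda$ itself fixes $x$: this follows from Corollary \ref{corollary: fixed locus of lambda FM}, which says the fixed locus of $\lambda$ on $F_j$ is the full preimage of the fixed locus on $F_0$, and $\bar x$ is clearly $\lambda$-fixed. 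So $(\op{PGL}_2)_x = \mathbb{G}_m$.

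Third, for connectedness of $W$, observe that $\psi_{j,0}(W)$ is the wall locus for $(\mathbb P^1)^n$ described in the proof of Lemma \ref{lemma: P1n elem wall cross} and is therefore connected. Each fiber of $\psi_{j,0}|_W$ is an iterated blow up of a single point, hence connected, so $W$ itself is connected. Having verified all three bullets of the DHT condition, Theorem \ref{theorem: VGIT give elementary wall crossing} produces the one-parameter subgroup and the connected component $Z_\lambda^0$ giving the elementary wall crossing as stated; by Corollary \ref{corollary: fixed locus of lambda FM} and the construction above, this one-parameter subgroup is indeed our $\lambda$. The main technical point to watch is the stabilizer step, where one must rule out a discrete enhancement of the stabilizer coming from the exceptional structure of $F_j$; the crucial input is Corollary \ref{corollary: fixed locus of lambda FM}, which keeps the $\mathbb{G}_m$-action intact on the preimages of $\lambda$-fixed points in $F_0$.
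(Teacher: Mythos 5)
Your proposal is correct and follows essentially the same route as the paper: verify the three bullets of the DHT condition (constancy of the semi-stable loci from Corollary \ref{corollary: extended GIT fan for FM}, the stabilizer condition, and connectedness of the wall locus as the preimage under $\psi_{j,0}$ of the connected wall locus in $(\mathbb P^1)^n$ with connected fibers) and then invoke Theorem \ref{theorem: VGIT give elementary wall crossing}. Your treatment of the stabilizer step is in fact more careful than the paper's one-line assertion, since you actually pin down that the stabilizer of a wall point is exactly $\mathbb{G}_m$ (and not trivial) by using Corollary \ref{corollary: fixed locus of lambda FM} to see that $\lambda$ fixes the point.
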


\begin{proof}
 We verify that the wall variation given by $\mathcal O(\bm{d}_++\bm{a}_+) \in \op{Int} C_+, \mathcal O(\bm{d}_- + \bm{a}_-) \in \op{Int} C_-$ close to $M$ satisfies the DHT condition and apply Theorem \ref{theorem: VGIT give elementary wall crossing}. 
 
 The first condition is satisfied as the semi-stable locus is constant in $\op{Int} C_+,\op{Int}C_-$ by Corollary \ref{corollary: extended GIT fan for FM}. The stabilizer of a point in $F_j$ is either $\mathbb{G}_m$ or $1$ so the second condition is satisfied. 
 
 Choose $I$ such that $H_{j,I}$ contains $M_0$ and $H^{\geq 0}_{j,I}$ contains $C_+$. Then, by Lemma \ref{lemma: ss locus FM P1}, 
 \begin{gather*}
  F_j^{\op{ss}}(\bm{d}_0+\bm{a}_0) \setminus \left(  F_j^{\op{ss}}(\bm{d}_+ + \bm{a}_+) \cup F_j^{\op{ss}}(\bm{d}_- + \bm{a}_-) \right) = \\ \{ x \in F_j \mid \psi_{j,0}(x)_i = \psi_{j,0}(x)_{i'} \text{ if } i,i' \in I \text { or } i,i' \in I^c \text{ and } \psi_{j,0}(x)_i \not = \psi_{j,0}(x)_{i'} \text{ if } i \in I, i' \in I^c \}.
 \end{gather*}
 This is connected as it is the inverse of a connected set in $F_0$ and the fibers of $\phi_{j,0}$ are connected. So, we satisfy the final condition.
\end{proof}

\begin{lemma} \label{lemma: abyss gives SOD}
 Assume that $\mathcal O(\bm{d}+\bm{a})$ slides into the abyss. Then, the derived category, $\dbcoh{F_j \modmod{\bm{d}+\bm{a}} \op{PGL}_2}$, admits a full exceptional collection.
\end{lemma}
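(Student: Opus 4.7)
The plan is to iteratively apply Theorem \ref{theorem: VGIT and derived categories} along the sliding path $\gamma$, successively decomposing $\dbcoh{F_j \modmod{\bm{d}+\bm{a}} \op{PGL}_2}$ across each wall and exhausting the category into wall contributions once the terminal chamber---having empty semi-stable locus---is reached. Specifically, let $C_0 \ni \mathcal O(\bm{d}+\bm{a})$ and let $\gamma$ traverse chambers $C_0, C_1, \ldots, C_s$ across walls $M_i \subset H_{j, I_i}$ (with $|I_i| \leq |I_i^c|$ by hypothesis), where $C_s$ is the chamber of empty semi-stable locus.

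At each wall crossing, Lemma \ref{lemma: elem wall crossing in ext GIT fan for FM} furnishes an elementary wall crossing controlled by a one-parameter subgroup $\lambda_i$ of $\op{PGL}_2$. Combining Lemma \ref{lemma: formula for mu} with the weight formula of Lemma \ref{lemma: P1[n] values of HM function}, one computes that the invariant $\mu_i = \mu(\omega_{F_j}^{-1}, \lambda_i, x)$ can be arranged, via the sliding condition $|I_i| \leq |I_i^c|$ and the correct normalization of $\lambda_i$, to have the sign that places $\dbcoh{F_j \modmod{C_{i-1}} \op{PGL}_2}$ as the larger category to be decomposed. Theorem \ref{theorem: VGIT and derived categories} then gives a semi-orthogonal decomposition
$$\dbcoh{F_j \modmod{C_{i-1}} \op{PGL}_2} = \langle \dbcoh{[Z^0_{\lambda_i}/C(\lambda_i)]}_{l_1}, \ldots, \dbcoh{[Z^0_{\lambda_i}/C(\lambda_i)]}_{l_{|\mu_i|}}, \dbcoh{F_j \modmod{C_i} \op{PGL}_2} \rangle.$$
Iterating along the path and using $\dbcoh{F_j \modmod{C_s} \op{PGL}_2} = 0$ produces a semi-orthogonal decomposition of $\dbcoh{F_j \modmod{\bm{d}+\bm{a}} \op{PGL}_2}$ entirely in terms of wall contributions.

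Since $\op{PGL}_2$ has rank $1$, for any nontrivial one-parameter subgroup $\lambda_i$ the centralizer $C(\lambda_i)$ is the maximal torus $\lambda_i$ itself, so $G_{\lambda_i} = C(\lambda_i)/\lambda_i$ is trivial. As $\lambda_i$ acts trivially on the fixed locus $Z^0_{\lambda_i}$, Lemma \ref{lemma: lambda-splitting on Z lambda0} yields $\dbcoh{[Z^0_{\lambda_i}/C(\lambda_i)]}_l \cong \dbcoh{Z^0_{\lambda_i}}$ for each $l$. By Corollary \ref{corollary: fixed locus of lambda FM}, $Z^0_{\lambda_i}$ is a connected component of the preimage under $\psi_{j,0}: F_j \to F_0 = (\P^1)^n$ of a fixed point $p_{I_i} \in F_0$, hence a fiber of the iterated blow-up $\psi_{j,0}$ over an isolated fixed point.

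The main obstacle is verifying that each fiber $\psi_{j,0}^{-1}(p_{I_i})$ admits a full exceptional collection. These fibers arise as wonderful compactifications of subspace arrangements inside the tangent space $T_{p_{I_i}} F_0$---specifically the arrangement of normal cones to the strata $\Sigma_0^S$ containing $p_{I_i}$---and so are smooth projective varieties built by a sequence of blow-ups at smooth centers. Iterated application of Orlov's blow-up formula \cite{Orl92}, with base case the trivial exceptional collection on a point (or the empty blow-up when $p_{I_i}$ avoids all centers), shows each fiber possesses a full exceptional collection. Combining these with the semi-orthogonal decomposition of wall contributions yields the desired full exceptional collection for $\dbcoh{F_j \modmod{\bm{d}+\bm{a}} \op{PGL}_2}$.
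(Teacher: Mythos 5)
Your proposal is correct and follows essentially the same route as the paper: cross the walls along $\gamma$ one at a time via Lemma \ref{lemma: elem wall crossing in ext GIT fan for FM}, use Lemma \ref{lemma: formula for mu} together with the condition $|I_l|\leq|I_l^c|$ to orient each semi-orthogonal decomposition, terminate at the empty chamber, and identify each wall contribution (since $G_{\lambda}=1$) with the fiber of $\psi_{j,0}$ over a fixed point, which is an iterated blow-up handled by Orlov's projective/blow-up formula. The only cosmetic difference is that you describe these fibers as wonderful compactifications, while the paper phrases them as iterated blow-ups of projective space along strict transforms of linear subspaces; the argument is the same.
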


\begin{proof}
 As $\mathcal O(\bm{d}+\bm{a})$ slides into the abyss, there exists chambers, $C_0,\ldots,C_t$, and walls, $M_1,\ldots,M_t$, ordered by direction along $\gamma$. Lemma \ref{lemma: elem wall crossing in ext GIT fan for FM} guarantees that we have an elementary wall crossing so we may apply Theorem \ref{theorem: elementary wall crossing} as we pass from $C_l$ through $M_{l+1}$ to $C_{l+1}$. Let $I_l \subset [n]$ be such that $M_{l} \subset H_{j,I_l}$ and $C_l \subset H^{\geq 0}_{j,I_l}$. We may use Lemma \ref{lemma: formula for mu} to compute the direction of the semi-orthogonal decomposition. Let $\lambda$ be a one-parameter subgroup and let $x$ be a fixed point in the semi-stable locus of the wall. We have
 \begin{displaymath}
  \omega_{F_j}^{-1} = \mathcal O(\bm{2}) \otimes \mathcal O(\sum_{|S| \geq n-j+1} (-|S|+2)\Sigma_j^S)
 \end{displaymath}
 and
 \begin{displaymath}
  \mu_l := \mu(\omega_{F_j}^{-1},\lambda,x) = |I_l|-|I_l^c| + \sum_{\substack{S \subseteq I_l \\ |S| \geq n-j+1}} (|S|-2) - \sum_{\substack{S \subseteq I_l^c \\ |S| \geq n-j+1}} (|S|-2).
 \end{displaymath}
 The sign of this quantity is the same as the sign of $|I_l| - |I^c_l|$, which by assumption is $\leq 0$. Thus, we have a semi-orthogonal decomposition,
 \begin{displaymath}
  \dbcoh{F_j \modmod{\bm{d}_{l-1}+\bm{a}_{l-1}} \op{PGL}_2} = \langle \dbcoh{Y_{\lambda}},\ldots,\dbcoh{Y_{\lambda}},\dbcoh{F_j \modmod{\bm{d}_{l}+\bm{a}_{l}} \op{PGL}_2} \rangle.
 \end{displaymath}
 As $G_{\lambda} = 1$, $Y_{\lambda} = Z_{\lambda}^0$ which is the fiber over a fixed point in $F_0$. This an iterated blow up of projective space along strict transforms of linear subspaces so it possesses a full exceptional collection by \cite{Orl92}. Thus, $F_j \modmod{\bm{d}_{l-1}+\bm{a}_{l-1}} \op{PGL}_2$ possesses a full exceptional collection if $F_j \modmod{\bm{d}_{l}+\bm{a}_{l}} \op{PGL}_2$ possesses a full exceptional collection. As $C_t$ has empty semi-stable locus, we can proceed by downward induction and conclude that $F_j \modmod{\bm{d}_{0}+\bm{a}_{0}} \op{PGL}_2 = F_j \modmod{\bm{d}+\bm{a}} \op{PGL}_2$ possesses a full exceptional collection.
\end{proof}

\sidenote{{\color{red} Deligne-Grothendieck-Knudsen-Mumford??? No consensus}}
\begin{theorem} \label{thm: exceptional collection stable curves}
 The derived categories of the Deligne-Grothendieck-Knudsen-Mumford moduli spaces of stable marked rational curves, $\overline{M}_{0,n}$, and the Hassett moduli spaces of stable symmetrically weighted rational curves, $\overline{M}_{0,n \cdot \epsilon}$, all admit full exceptional collections.
\end{theorem}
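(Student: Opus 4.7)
The plan is to apply Lemma \ref{lemma: abyss gives SOD}. By Theorem \ref{theorem: moduli of stable curves as GIT quotient}, every $\overline{M}_{0,n}$ and every symmetrically-weighted Hassett space $\overline{M}_{0,n\cdot\epsilon}$ is isomorphic to a GIT quotient $F_j \modmod{\mathcal L_j} \op{PGL}_2$ for an appropriate $j$, where $F_j$ is an iterated $\op{PGL}_2$-equivariant blow-up of $(\mathbb{P}^1)^n$ and $\mathcal L_j$ is the associated linearization obtained from $\mathcal O(\bm{2})$ by small negative twists along the exceptional divisors $\Sigma^S_j$. It therefore suffices, for each such $\mathcal L_j$, to verify that $\mathcal L_j$ slides into the abyss in the sense of the definition preceding Lemma \ref{lemma: abyss gives SOD}. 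Concretely, I must produce a continuous path $\gamma$ in $\op{Amp}(F_j)_{\R} \cap \op{NS}^{\op{PGL}_2}(F_j)_{\R}$ from $\mathcal L_j$ into the empty chamber of the extended GIT fan, meeting the fan only in codimension one away from finitely many points, and crossing each wall $H_{j,I}$ in the direction $H^{\leq 0}_{j,I}\to H^{\geq 0}_{j,I}$ only when $|I| \leq |I^c|$.

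The key observation is that the orientation condition $|I| \leq |I^c|$ comes almost for free from the moduli-theoretic interpretation. Indeed, for the Hassett space $\overline{M}_{0,n\cdot\epsilon}$ to be nonempty one needs $n\epsilon > 2$, whence $\lfloor 1/\epsilon \rfloor < n/2$, i.e., $\lfloor 1/\epsilon \rfloor \leq \lfloor n/2\rfloor$. Moreover, any configuration in which more than $\lfloor 1/\epsilon \rfloor$ marked points coincide is unstable in $\overline{M}_{0,n\cdot\epsilon}$. Translating through Lemma \ref{lemma: ss locus FM P1}, at the linearization $\mathcal L_j$ the quantity
\[
h_I := \tfrac{1}{2}\left(\textstyle\sum_{i \in I} d_i - \sum_{i \in I^c} d_i\right) + \textstyle\sum_{S \subseteq I^c, |S| \geq n-j-1} a_S - \textstyle\sum_{S \subseteq I, |S| \geq n-j-1} a_S
\]
is strictly positive for every $I$ with $|I| > \lfloor n/2 \rfloor$. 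Hence $\mathcal L_j$ lies in the open half-space $H^{\geq 0}_{j,I}$ for each such $I$, so no path starting at $\mathcal L_j$ can cross the wall $H_{j,I}$ in the direction $H^{\leq 0} \to H^{\geq 0}$. Consequently every wall crossed in the forbidden direction along any $\gamma$ through $\mathcal L_j$ automatically satisfies $|I| \leq \lfloor n/2 \rfloor \leq |I^c|$, which is the orientation condition required by Lemma \ref{lemma: abyss gives SOD}.

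It remains to produce some $\gamma$ that actually reaches the empty chamber. I would first perturb $\mathcal L_j$ to a generic interior point $\mathcal L'_j$ of its chamber, breaking the $S_n$-symmetry of its weights, and then follow a straight-line segment toward a target linearization of the form $\mathcal O((D, d_2, \ldots, d_n)) \otimes \mathcal O(\sum_S a'_S \Sigma^S_j)$ with $D$ very large and all other $d_i$ and $a'_S$ positive but small. For $D > \sum_{i \neq 1} d_i + 2\sum_{1 \in S}|a'_S|$, applying Lemma \ref{lemma: ss locus FM P1} with $I = \{1\}$ forces the semi-stable locus to be empty, placing the target inside the empty chamber. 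Generic choices of perturbation and slope keep the segment inside the ample cone and ensure that it meets the extended GIT fan transversely in codimension one. Combined with the preceding paragraph, this exhibits $\mathcal L_j$ as sliding into the abyss, and Lemma \ref{lemma: abyss gives SOD} then produces the desired full exceptional collection on $\dbcoh{F_j \modmod{\mathcal L_j} \op{PGL}_2}$.

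The main obstacle is making the phrase ``comes almost for free'' rigorous: one must verify that the small $a_S$-contributions in $h_I$ really cannot swamp the $\bm{d}$-contribution and spoil the strict positivity of $h_I$ for $|I| > \lfloor n/2 \rfloor$ at $\mathcal L_j$. Because the defining rationals $\delta_l$ in the Kiem--Moon construction of $\mathcal L_j$ are chosen sufficiently small and the weights $d_i = 2$ are symmetric, the $\bm{d}$-part $\tfrac{1}{2}(\sum_{i\in I}d_i - \sum_{i\in I^c}d_i) = |I| - |I^c| \geq 1$ (for $|I| > \lfloor n/2\rfloor$) dominates the $\bm{a}$-contribution, so this reduces to a routine estimate; the relevant case analysis is a weighted refinement of the one already carried out in Section \ref{subsection: moduli of weighted points} for the $(\mathbb{P}^1)^n$ case.
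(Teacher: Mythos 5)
Your proposal is correct in substance and rests on the same two pillars as the paper's proof: the Kiem--Moon identification of $\overline{M}_{0,n\cdot\epsilon}$ with $F_j \modmod{\mathcal L_j} \op{PGL}_2$, and Lemma \ref{lemma: abyss gives SOD}. Where you differ is in how the abyss-path is built and how its orientation condition is checked. The paper projects to $\op{NS}^{\op{PGL}_2}(F_0)_{\R}$, chooses a straight line there emanating from (near) the anticanonical class $\mathcal O(\bm{2})$, avoids the \emph{projected} codimension-two locus $B_2^{\leq \bm{\delta}}$, and lifts the path by the rigid shift $\gamma(t) - \bm{\delta}$; the orientation condition is then inherited from the $F_0$-picture. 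You instead work directly in $\op{NS}^{\op{PGL}_2}(F_j)_{\R}$ and observe that $\mathcal L_j$ itself already lies in $H^{>0}_{j,I}$ for every $I$ with $|I| > |I^c|$ (since $d_i \equiv 2$ gives $|I|-|I^c| \geq 1$ and the $\delta$-corrections are small), so that a \emph{straight-line} segment from a generic point of its chamber can only ever cross such a wall from the $\geq 0$ side to the $\leq 0$ side --- which, after relabelling $I \leftrightarrow I^c$, is exactly the allowed direction. This is a cleaner justification of the orientation condition than the paper's, and dispensing with the projection/lifting step buys you a shorter argument; what you lose is the paper's explicit control of which codimension-two strata must be avoided, which you replace by a genericity appeal.

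Two slips to repair, neither fatal. First, your assertion that ``no path starting at $\mathcal L_j$ can cross $H_{j,I}$ in the direction $H^{\leq 0}\to H^{\geq 0}$'' is false for arbitrary continuous paths (a path may exit $H^{>0}_{j,I}$ and re-enter); the argument works only because a straight line meets each affine hyperplane at most once, so you should say that explicitly. Second, for the terminal linearization $\mathcal O((D,d_2,\ldots,d_n))\otimes\mathcal O(\sum_S a'_S\Sigma^S_j)$ the coefficients $a'_S$ must be \emph{negative} and small: a positive coefficient on an exceptional divisor gives negative degree on the fibers of $\Sigma^S_j\to\Sigma^S_{j-1}$ and exits $\op{Amp}(F_j)_{\R}$, violating the definition of sliding into the abyss (which requires the whole path to stay ample); convexity of the ample cone then keeps the entire segment ample. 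Relatedly, in the emptiness estimate via Lemma \ref{lemma: ss locus FM P1} with $I=\{1\}$, no $S$ with $|S|\geq n-j-1\geq 2$ is contained in $\{1\}$, so the relevant correction term is $\sum_{S\subseteq \{1\}^c} a'_S$, not $\sum_{1\in S}|a'_S|$; the conclusion that sufficiently large $D$ lands in the empty chamber is unaffected.
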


\begin{proof}
The basic idea is that the walls in the interior of the extended GIT fan for $F_j$ are in bijection with those for $F_0$ by Corollary~\ref{corollary: GIT fan for points in P1} and Corollary~\ref{corollary: extended GIT fan for FM}.  We argue that one may therefore chose a path in the extended GIT fan for  $F_0$ which slides into the abyss and lifts to one starting at the chamber corresponding to $\overline{M}_{0,n \cdot \epsilon}$ in the extended GIT fan for $F_j$.
 
Fix $j > \lceil n/2 \rceil$. Let $\bm{\delta} := \delta_n > \delta_{n-1} > \cdots > \delta_2 > 0$ be a decreasing sequence of positive real numbers and consider the subset, $\op{NS}^{\op{PGL_2}}(F_j)_{\R}^{\leq \bm{\delta}}$, formed by $\mathcal O(\bm{d}+\bm{a})$ with
 \begin{displaymath}
  -\delta_{|S|} \leq a_S \leq 0
 \end{displaymath}
 for all $S \subseteq [n]$ and $\mathcal O_{F_0}(\bm{d})$ ample. Let
 \begin{align*}
  \pi_j: \op{NS}^{\op{PGL}_2}(F_j)_{\R} & \to \op{NS}^{\op{PGL}_2}(F_0)_{\R} \\
  \bm{d} + \bm{a} \mapsto \bm{d}
 \end{align*}
 be the projection. Consider the images,
 \begin{align*}
  B_2^{\leq \bm{\delta}} & := \pi_j\left( \left(\bigcup_{I_1 \not = I_2 \subseteq [n]} H_{j,I_1} \cap H_{j,I_2} \right) \cap \op{Amp}(F_j)_{\R}^{\leq \bm{\delta}}\right).
 \end{align*}
 Take $\bm{\delta}$ small enough such that the GIT quotient with respect to $\bm{2} - \bm{\delta}$ is $\overline{M}_{0,n \cdot \epsilon_j}$. Such a $\bm{\delta}$ exists by Theorem \ref{theorem: moduli of stable curves as GIT quotient}. Shrink $\bm{\delta}$, if necessary, so that there is a $\bm{d_0} \in \op{Amp}(F_0)_{\R} $  and a straight line path 
 \begin{displaymath}
  \gamma(t): [0,1] \to \op{NS}^{\op{PGL}_2}(F_0)_{\R} \cap \op{Amp}(F_0)_{\R},
 \end{displaymath}
 such that
 \begin{itemize}
  \item $\bm{d}_0 - \bm{\delta}$ lies in the interior of the GIT chamber containing $\bm{2} - \bm{\delta}$,
  \item $\gamma(0) = \bm{d}_0$,
  \item $\gamma([0,1]) \cap B_2^{\leq \bm{\delta}} = \emptyset$,
  \item $\gamma(1)$ lies in the interior of an empty chamber.
  \item $\gamma(t) - \bm{\delta}$ is ample on $F_j$ for all $t \in [0,1]$
 \end{itemize}
 We claim that the new path,
 \begin{align*}
  \tilde{\gamma}: [0,1] & \to \op{NS}^{\op{PGL}_2}(F_j)  \cap \op{Amp}(F_j)_{\R} \\
  t & \mapsto \gamma(t) - \bm{\delta},
 \end{align*}
 allows $\bm{d}_0 - \bm{\delta}$ to slide into the abyss.
 
 The first condition for verifying that $\bm{d}_0 - \bm{\delta}$ slides into the abyss is clear. The second condition follows from the fact that the GIT quotient of $F_j$ at $\tilde{\gamma}(t)$ is the blow up of the GIT quotient $F_0$ by $\gamma(t)$ whenever $\tilde{\gamma}(t)$ and $\gamma(t)$ lie in interiors of chambers. The third is guaranteed by $\gamma([0,1]) \cap B_2^{\leq \bm{\delta}} = \emptyset$. For each wall that $\tilde{\gamma}$ crosses corresponding to $I$, $\gamma$ crosses the wall corresponding to $I$ in the same direction from negative to positive with respect to $H_{j,I}$ or $H_I$. As $\gamma$ is a straight line path from the anti-canonical line bundle, we satisfy the final condition by Lemma \ref{lemma: formula for mu}.
\end{proof}

\begin{corollary}
 The $\mathbb{Q}$-Chow motive of $\overline{M}_{0,n \cdot \epsilon}$ is a direct sum of tensor powers of the Lefschetz motive.
\end{corollary}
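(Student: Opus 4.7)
The plan is to combine Theorem \ref{thm: exceptional collection stable curves} with a general principle linking full exceptional collections to the structure of Chow motives: for a smooth projective variety $X$ admitting a full exceptional collection in $\dbcoh{X}$, the rational Chow motive $h(X)_{\mathbb{Q}}$ decomposes as a direct sum of Tate twists of the unit motive. This implication was first observed by Orlov for projective bundles and blow-ups with ``Tate'' centers, and in the generality needed here it is a consequence of Marcolli--Tabuada's additive invariants on noncommutative motives (earlier worked out by Gorchinsky--Orlov in many special cases). Applying this to $\overline{M}_{0,n\cdot\epsilon}$ immediately gives the corollary.

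For a more self-contained argument, one can track the $\mathbb{Q}$-Chow motive through the wall-crossing induction used in the proof of Theorem \ref{thm: exceptional collection stable curves}. By Thaddeus's theorem (Proposition 4.8 of \cite{Tha96}, as already used in our proof of Proposition \ref{proposition: D=K for elementary wall crossings}), each elementary wall crossing $X\modmod{} - \dashrightarrow X\modmod{} +$ along the chosen path is resolved by a common smooth blow-up $Z = X \modmod{} - \times_{\bar Z} X \modmod{} +$, with both projections $Z \to X\modmod{} \pm$ being blow-ups along smooth centers essentially given by $Y_{\lambda} = Z_{\lambda}^0/G_{\lambda}$. As observed in the proof of Lemma \ref{lemma: abyss gives SOD}, the stabilizer $G_{\lambda}$ is trivial on the walls encountered, so each $Y_{\lambda}$ is an iterated blow-up of a projective space along strict transforms of linear subspaces and thus has Tate Chow motive. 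Manin's classical blow-up formula
\[
 h(\op{Bl}_Y X) \cong h(X) \oplus \bigoplus_{i=1}^{c-1} h(Y)(i)
\]
then propagates ``being a direct sum of Tate twists'' through each elementary wall crossing: if $h(X\modmod{} +)$ and $h(Y_{\lambda})$ are sums of Lefschetz motives, so is $h(Z)$, and hence so is $h(X\modmod{} -)$ (since it is obtained from $h(Z)$ by removing Tate summands, and Krull--Schmidt holds in the subcategory of Tate motives with $\mathbb{Q}$-coefficients). Starting from an empty chamber at one end of the path, where the motive is zero, and walking back to $\overline{M}_{0,n\cdot\epsilon}$, one concludes by induction.

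The main obstacle will be the inductive cancellation step: rigorously extracting $h(X\modmod{} -)$ as a direct summand after identifying two blow-up expressions for $h(Z)$. This requires an explicit Krull--Schmidt argument in the $\mathbb{Q}$-linear pseudo-abelian category of Chow motives restricted to the Tate subcategory, together with checking that the relevant centers $S_{\pm\lambda}\modmod{} \mp$ are smooth (which follows from the free action assumption on the semi-stable loci in our situation). A secondary technical point is handling the terminal wall-crossing adjacent to the empty chamber, where one side of the variation has empty semi-stable locus and the Thaddeus blow-up picture degenerates; here one directly identifies $h(X\modmod{} +)$ with a sum of Tate twists of $h(Y_{\lambda})$, which is the base case of the induction.
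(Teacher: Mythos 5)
Your first paragraph is exactly the paper's proof: the corollary is deduced by combining Theorem \ref{thm: exceptional collection stable curves} with Marcolli--Tabuada's result (Theorem 1.3 of \cite{MT}, see also \cite{BB11} over $\C$) that a full exceptional collection forces the rational Chow motive to decompose as a direct sum of tensor powers of the Lefschetz motive. The alternative wall-crossing/blow-up sketch is unnecessary for the statement and, as you yourself note, would still require a separate Krull--Schmidt cancellation argument, so the cited route is the one to use.
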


\begin{proof}
 This is a direct consequence of Theorem~\ref{thm: exceptional collection stable curves} and Theorem 1.3 of \cite{MT}, see also \cite{BB11} for $k = \C$.
\end{proof}

\begin{remark}
 What chambers of the GIT fan of $F_0$ can slide into the abyss? For $n \geq 5$, not all can. 
\end{remark}

\section{A generalization of Orlov's sigma model-LG model correspondence} \label{section: Orlov}

\subsection{Background on Orlov's theorem}

Let us recall the main result of \cite{Orl09}. Let $R$ be a (possibly non-commutative) connected $\mathbb{Z}$-graded algebra, $R = \bigoplus_{i \geq 0} R_i$ with $R_0 = k$. Let $R_+ := \bigoplus_{i > 0} R_i$. Assume that $R$ is Noetherian. Let $\op{tors}(R,\Z)$ be the full subcategory of the category of finitely-generated $\Z$-graded $R$-modules, $\op{mod}(R,\Z)$, consisting of all $M$ such that there is a $n$ with $R_+^n M = 0$. 

We let $\op{qgr }R$ be the quotient category, $\op{mod}(R,\mathbb{Z})/\op{tors}(R,\mathbb{Z})$. In addition, let $\op{D}_{\op{sg}}(R,\mathbb{Z})$ be the Verdier quotient of the category of finitely generated graded left $R$-modules by the category of perfect $R$-modules, $\op{D}^{\op{b}}(\op{mod }R,\mathbb{Z})/\op{Perf }R$, where the category of perfect $R$-modules, $\op{Perf }R$, is defined as the full subcategory consisting of bounded complexes of finite-rank graded free $R$-modules. The category, $\op{D}_{\op{sg}}(R,\mathbb{Z})$, is called the \textbf{graded singularity category} of $R$.

\begin{theorem} \label{theorem: Orlov}
 Fix $d \in \mathbb{Z}$. Assume that $R$ is Gorenstein with parameter $a \in \Z$. 
\begin{enumerate}
 \item If $a > 0$, there is a fully-faithful functor,
\begin{displaymath}
 \Phi_d: \op{D}_{\op{sg}}(R,\mathbb{Z}) \to \op{D}^{\op{b}}(\op{qgr }R),
\end{displaymath}
 and a semi-orthogonal decomposition, with respect to $\Phi_d$,
\begin{displaymath}
 \op{D}^{\op{b}}(\op{qgr }R) = \langle R(-a-d+1),\ldots, R(-d), \op{D}_{\op{sg}}(R,\mathbb{Z}) \rangle.
\end{displaymath}
 \item If $a = 0$, there is an equivalence,
\begin{displaymath}
 \Phi_d: \op{D}_{\op{sg}}(R,\mathbb{Z}) \to \op{D}^{\op{b}}(\op{qgr }R).
\end{displaymath}
 \item If $a < 0$, there is a fully-faithful functor,
\begin{displaymath}
 \Psi_d:  \op{D}^{\op{b}}(\op{qgr }R) \to \op{D}_{\op{sg}}(R,\mathbb{Z}),
\end{displaymath}
 and a semi-orthogonal decomposition, with respect to $\Psi_d$,
\begin{displaymath}
 \op{D}_{\op{sg}}(R,\mathbb{Z}) = \langle k(-d),\ldots, k(a-d+1), \op{D}^{\op{b}}(\op{qgr }R) \rangle.
\end{displaymath}
\end{enumerate}
\end{theorem}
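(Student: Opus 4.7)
The strategy is to realize Theorem~\ref{theorem: intro Orlov} as a single instance of the VGIT result, Proposition~\ref{proposition: affine GIT and der cat}, applied to an LG model on affine space built from the Koszul data of the regular sequence.

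First, I would set up the ambient LG model. Take $X := \mathbb{A}^{n+c}$ with coordinates $(x_1,\ldots,x_n,y_1,\ldots,y_c)$, and let $G := \mathbb{G}_m$ act linearly with weight $+1$ on each $x_i$ and weight $-d_j$ on each $y_j$, so that $w := \sum_{j=1}^{c} y_j f_j(x)$ is strictly $G$-invariant. Consider the characters $\chi_\pm(\alpha) = \alpha^{\pm 1}$. The affine Hilbert--Mumford criterion (Proposition~\ref{proposition: affine HM numerical stability}) yields
\[
X^{\op{ss}}(\chi_+) = (\mathbb{A}^n \setminus \{0\}) \times \mathbb{A}^c, \qquad X^{\op{ss}}(\chi_-) = \mathbb{A}^n \times (\mathbb{A}^c \setminus \{0\}),
\]
with the origin as the unique $\chi_0$-semistable but $\chi_\pm$-unstable point, and with full $\mathbb{G}_m$ stabilizer there. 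So the variation is DHT, controlled by the one-parameter subgroup $\lambda(\alpha) = \alpha^{-1}$ with $Z_\lambda^0 = \{0\}$, $C(\lambda) = G$, and $G_\lambda = 1$.

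Second, I would identify the two chambers with the two sides of Orlov's theorem. The quotient $[X^{\op{ss}}(\chi_+)/G]$ is the total space of $\op{V}(\mathcal O(d_1) \oplus \cdots \oplus \mathcal O(d_c))$ over $\P^{n-1}$, with $w$ the function induced by the regular section $(f_1,\ldots,f_c)$ whose zero locus is $Y$; Theorem~\ref{theorem: Isik} therefore gives an equivalence $\dbcoh{Y} \cong \dcoh{[X^{\op{ss}}(\chi_+)/G], w}$. On the other chamber, a dual argument combining graded Kn\"orrer periodicity (the hypersurface $\{w=0\} \subset \mathbb{A}^{n+c}$ has singular locus $\op{Spec} S \times \{0\}$) with Buchweitz's identification of graded matrix factorizations with graded singularity categories yields the equivalence $\op{D}_{\op{sg}}(S,\Z) \cong \dcoh{[X^{\op{ss}}(\chi_-)/G], w}$.

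Third, I would compute $\mu$ and invoke Proposition~\ref{proposition: affine GIT and der cat}. By Lemma~\ref{lemma: formula for mu}, $\mu = \mu(\omega_X^{-1}, \lambda, 0)$; the top form $dx_1 \wedge \cdots \wedge dx_n \wedge dy_1 \wedge \cdots \wedge dy_c$ has $\lambda$-weight $n - \sum d_j = a$, so $\mu = a$, while the condition $\mu(\mathcal L,\lambda,0) = 0$ is automatic as $w$ is strictly $G$-invariant. Proposition~\ref{proposition: affine GIT and der cat} then produces, according to the sign of $a$, the predicted semi-orthogonal decomposition (or equivalence when $a = 0$), with exactly $|a|$ wall contributions, each a copy of $\dcoh{[\op{pt}/\mathbb{G}_m], 0}_j \cong \op{D}^b(\op{Vec}_k)$ generated by a single exceptional object. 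Unraveling $\Upsilon_j^{\pm} = i_*^\pm \circ T \circ \pi^* \circ \nu_j$ for our toric setup shows that the weight-$j$ line at the origin lifts to the $G$-equivariant line bundle $\mathcal O_X(j\chi)$ (for the character $\chi$ with $\chi \circ \lambda(\alpha) = \alpha$) restricted to the relevant semistable locus, which descends under the Isik--Shipman equivalence on the $(+)$ side to $\mathcal O_Y(j)$ and under its counterpart on the $(-)$ side to $k(j)$. After absorbing the offset into the free integer parameter $d$, the resulting decomposition matches the one stated in Theorem~\ref{theorem: intro Orlov}.

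The main obstacle will be the second step, in particular the identification $\op{D}_{\op{sg}}(S,\Z) \cong \dcoh{[X^{\op{ss}}(\chi_-)/G], w}$: because the paper aims to \emph{recover} Orlov's theorem rather than cite it, this identification cannot just invoke \cite{Orl09} and will require either a direct window-based argument comparing the negative chamber factorizations with graded matrix factorizations, or a careful graded Kn\"orrer reduction. Everything else — the set-up, the $\mu$-computation, and the final matching of wall contributions with $\mathcal O_Y(j)$ and $k(j)$ — is essentially bookkeeping against the VGIT framework already developed in Section~\ref{sec: VGIT}.
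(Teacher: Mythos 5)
The statement you are asked to prove is Orlov's theorem for an \emph{arbitrary} connected graded Noetherian Gorenstein algebra $R$ --- possibly non-commutative and, in the commutative case, not necessarily a complete intersection. Your argument only addresses $R = k[x_1,\ldots,x_n]/(f_1,\ldots,f_c)$ for a homogeneous regular sequence, because the entire construction (the ambient affine space $\mathbb{A}^{n+c}$, the potential $w=\sum y_jf_j$, the two chambers) starts from a Koszul/LG presentation that a general Gorenstein algebra does not have. This is precisely the gap between Theorem \ref{theorem: Orlov} and Corollary \ref{corollary: Orlov}: the paper proves the former by citing Orlov's original, purely algebraic argument (truncations of $\op{D}^{\op{b}}(\op{mod }R,\Z)$ and graded Gorenstein duality), and only the latter is recovered by VGIT; the remark closing Section \ref{section: Orlov} states explicitly that extending the VGIT argument to general Gorenstein algebras would require a not-yet-available noncommutative version of the machinery. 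So, as a proof of the stated theorem, the proposal does not go through.

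Even as a proof of the complete intersection case there is a flaw in your setup: taking $G=\mathbb{G}_m$ alone with $w$ strictly invariant makes the line bundle $\mathcal L$ in the factorization data trivial, so $\dcoh{[X^{\op{ss}}(\chi_+)/G],w}$ is a $2$-periodic category. Theorem \ref{theorem: Isik} does not identify it with $\dbcoh{Y}$; the equivalence there is with $\dcoh{[\op{V}(\mathcal E)/(G\times\mathbb{G}_m)],w}$, and the extra ``R-symmetry'' $\mathbb{G}_m$ scaling the fibers (under which $w$ has weight one) is essential. The paper accordingly works with $G=\mathbb{G}_m^2$. This also dissolves the obstacle you flag on the $(-)$ side: instead of graded Kn\"orrer periodicity plus Buchweitz, the paper applies Isik--Shipman once over $\op{Spec}(k[x_1,\ldots,x_n])$ to obtain $\op{D}^{\op{b}}(\op{mod }R,\Z)\cong\dbcoh{[X/\mathbb{G}_m^2],w}$, matches torsion modules with factorizations supported on $Z(x)$ and perfect complexes with factorizations supported on $Z(u)$ (Lemma \ref{lemma: Isik theorem for affine CI}), and then realizes both $\dbcoh{Y}$ and $\op{D}_{\op{sg}}(R,\Z)$ as the Verdier quotients given by restriction to the two semistable loci. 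With those corrections, your wall-crossing bookkeeping ($t(\mathfrak{K}^+)=-n$, $t(\mathfrak{K}^-)=-\sum d_i$, $\mu=a$, wall pieces equivalent to $\dbcoh{\op{pt}}$ mapping to $\mathcal O_Y(j)$ on one side and $k(j)$ on the other) does agree with the paper's proof of Corollary \ref{corollary: Orlov}, but that corollary is strictly weaker than the theorem under discussion.
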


\begin{proof}
 This is Theorem 2.5 of \cite{Orl09}.
\end{proof}

Orlov applies his theorem to the following special case. Let $f_1,\ldots,f_c$ be homogeneous regular sequence in $S= k[x_1,\ldots,x_n]$ of degrees, $d_1,\ldots, d_c$. Then, the graded algebra, $R := S/(f_1,\ldots,f_c)$, is Gorenstein with parameter $n - \sum d_i$. Let $Y$ be the associated complete intersection in $\mathbb{P}^{n-1}$. Serre's theorem, \cite{SerreFAC}, states that $\op{qgr }S$ is equivalent to $\op{coh }{Y}$.

\begin{corollary} \label{corollary: Orlov}
 With the notation as above, fix $d \in \mathbb{Z}$. Let $a = n - \sum d_j$.
\begin{enumerate}
 \item If $a > 0$, there is a fully-faithful functor,
\begin{displaymath}
 \Phi_d: \op{D}_{\op{sg}}(R,\mathbb{Z}) \to \dbcoh{Y},
\end{displaymath}
 and a semi-orthogonal decomposition, with respect to $\Phi_d$,
\begin{displaymath}
 \dbcoh{X} = \langle \mathcal O_{Y}(a-d+1),\ldots, \mathcal O_{Y}(-d), \op{D}_{\op{sg}}(R,\mathbb{Z}) \rangle.
\end{displaymath}
 \item If $a = 0$, there is an equivalence,
\begin{displaymath}
 \Phi_d: \op{D}_{\op{sg}}(R,\mathbb{Z}) \to \dbcoh{Y}.
\end{displaymath}
 \item If $a < 0$, there is a fully-faithful functor,
\begin{displaymath}
 \Psi_d:  \dbcoh{Y} \to \op{D}_{\op{sg}}(R,\mathbb{Z}),
\end{displaymath}
 and a semi-orthogonal decomposition, with respect to $\Psi_d$,
\begin{displaymath}
 \op{D}_{\op{sg}}(R,\mathbb{Z}) = \langle k(-d),\ldots, k(\mu-d+1), \dbcoh{Y} \rangle.
\end{displaymath}
\end{enumerate}
\end{corollary}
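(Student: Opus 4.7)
My plan is to realize Orlov's correspondence as an application of Theorem~\ref{theorem: elementary wall crossing} by constructing a suitable elementary HKKN wall crossing on an affine Landau--Ginzburg model. Take $X = \mathbb{A}^{n+c}$ with coordinates $x_1,\ldots,x_n,p_1,\ldots,p_c$, and let $G = \mathbb{G}_m^2$ act via weights $(1,0)$ on each $x_i$ and $(-d_j,1)$ on each $p_j$. The function $w = \sum_j p_j f_j(x)$ is then $G$-semi-invariant of weight $(0,1)$, i.e.\ a section of the equivariant line bundle $\mathcal L = \mathcal O_X(0,1)$, giving a gauged LG-model. Let $\lambda(\alpha) = (\alpha,1)$, so that the $\lambda$-weights on the coordinates are $(1,\ldots,1,-d_1,\ldots,-d_c)$. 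The fixed locus $X^{\lambda} = \{0\}$ gives $Z_{\lambda}^0 = \{0\}$, and $S_{\lambda} = \{p=0\}$, $S_{-\lambda} = \{x=0\}$ are $G$-invariant and closed in $X$, producing an elementary wall crossing $X = X_+ \sqcup S_\lambda = X_- \sqcup S_{-\lambda}$ with $X_+ = \{p \neq 0\}$ and $X_- = \{x \neq 0\}$. Since $\chi_{\mathcal L} \circ \lambda$ is trivial, $\mu(\mathcal L,\lambda,0) = 0$ and the hypotheses of Theorem~\ref{theorem: elementary wall crossing} are met; note that $S_\lambda^0 = \{0\}$ trivially admits a $G$-invariant affine cover.

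I would then identify the two sides. By Theorem~\ref{theorem: Isik} applied with base $\P^{n-1}$, bundle $\bigoplus_j \mathcal O(d_j)$, and section $(f_1,\ldots,f_c)$, the stack $[X_-/G]$---which is the total space of $\bigoplus_j \mathcal O_{\P^{n-1}}(-d_j)$ quotiented by the fiber-scaling $\mathbb{G}_m$---satisfies $\dcoh{[X_-/G],w|_{X_-}} \cong \dbcoh{Y}$. For the other side, a standard Koszul/matrix-factorization argument (or a direct appeal to \cite{Orl09}) identifies $\dcoh{[X_+/G],w|_{X_+}}$ with $\dsing{S,\Z}$. Next, by Lemma~\ref{lemma: formula for mu}, $\mu = \mu(\omega_X^{-1},\lambda,0)$; the generator of $\omega_X^{-1}|_0$ has $G$-weight $(a,c)$, which pairs with $\lambda = (1,0)$ to give $\mu = \pm a$ (the sign fixed by the weight-versus-section convention of Definition~\ref{definiton: weights of vector bundle and Mumford stability function}). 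This sign splits the three cases of the corollary.

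Finally, I would identify the wall contributions. Since $Z_\lambda^0 = \{0\}$ and $G_\lambda = C(\lambda)/\lambda \cong \mathbb{G}_m$, we have $Y_\lambda = [\{0\}/G_\lambda] \cong [\op{pt}/\mathbb{G}_m]$ with trivial potential; Corollary~\ref{corollary: Isik} and Lemma~\ref{lemma: wall compositions are same} yield $\dcoh{[Z_\lambda^0/C(\lambda)],w_\lambda}_j \cong \dbcoh{\op{pt}}$ for each $j$ (the required character $\chi$ of $C(\lambda) = G$ with $\chi \circ \lambda = \op{Id}$ exists because $G$ is abelian), so that each wall piece contributes a single exceptional object. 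Theorem~\ref{theorem: elementary wall crossing} then assembles these $|a|$ pieces with the embedding from the opposite side to yield the three cases of the corollary. The main difficulty will be the explicit identification of each wall-contribution exceptional object with the prescribed line bundle $\mathcal O_Y(j)$ (when $a>0$) or skyscraper $k(j)$ (when $a<0$): this amounts to tracing $\Upsilon_l^{\pm} = i_\pm^* \circ i_{S,*} \circ T \circ \pi^* \circ \nu_l$ through the Isik equivalences on the two sides and matching the weight index $l$ with the twist index $j$ via the induced $G$-characters, which is where the bulk of the bookkeeping will lie.
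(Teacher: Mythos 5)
Your proposal is, in substance, the paper's own proof: the same affine LG-model $(\mathbb{A}^{n+c},\mathbb{G}_m^2,w=\sum u_jf_j)$ with $\lambda$ the first factor, the same elementary wall crossing between the two coordinate strata, the same computation $\mu=\pm a$ via the weight of $\omega_X^{-1}$ at the origin, and the same identification of the two open quotients with $\dbcoh{Y}$ and $\op{D}_{\op{sg}}(R,\Z)$ via Theorem~\ref{theorem: Isik} (the paper packages both identifications at once by applying Isik--Shipman to the affine cone, Lemma~\ref{lemma: Isik theorem for affine CI}, and then passing to Verdier quotients, rather than applying it over $\P^{n-1}$ and invoking a separate Kn\"orrer-type argument for the singularity-category side). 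The ``bookkeeping'' you defer is shorter than you fear: since $Z_\lambda^0=\{0\}$, the functor $\Upsilon_l$ sends $\mathcal O_{\op{pt}}$ to the factorization with components $0$ and $\mathcal O_{Z(u)}(l)$ (resp.\ $\mathcal O_{Z(x)}(l)$ on the other side), and the affine Isik equivalence already matches these with $R(l)$ and $k(l)$, hence with $\mathcal O_Y(l)$ and $k(l)$ after localization.
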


\begin{proof}
 This is Theorem 2.13 of \cite{Orl09}.
\end{proof}

\subsection{VGIT and Orlov's theorem}

Corollary \ref{corollary: Orlov} is a very powerful result relating commutative algebra and projective geometry in a novel way. Applications of the result include constructing generators of derived categories of projective hypersurfaces \cite{BFK10} and providing a new proof and an extension \cite{CT,BFK11} of Griffiths' famous \cite{Gri} description of the primitive cohomology of a projective hypersurface in terms of the Jacobian ring of its defining function. As such, it is natural to seek a generalization of Corollary \ref{corollary: Orlov}. In this final section, we show how to recover the full statement of Corollary \ref{corollary: Orlov} using Theorem \ref{theorem: elementary wall crossing} in combination with Theorem \ref{theorem: Isik} of Isik and Shipman.

We use Theorem \ref{theorem: Isik} to replace the categories appearing in Corollary \ref{corollary: Orlov} by gauged LG models. Let $f_1,\ldots,f_c$ be homogeneous polynomials in $S= k[x_1,\ldots,x_n]$ of degrees, $d_1,\ldots, d_c$ such that $f_1,\ldots,f_c$ is a complete intersection. Let $R = S/(f_1,\ldots,f_c)$. We consider the ring,
\begin{displaymath}
 S[\bm{u}] := S[u_1,\ldots,u_c],
\end{displaymath}
with a $\mathbb{G}_m$ action where $u_i$ has degree $-d_i$. In addition, we allow another copy of $\mathbb{G}_m$ to act on $S[\bm{u}]$. The elements of $S$ have degree zero with respect to this additional action while the $u_i$ have degree $1$. Following the language of physics, we call this additional action, R-symmetry. We also include the potential,
\begin{displaymath}
 w := u_1f_1 + \cdots + u_cf_c \in S[\bm{u}].
\end{displaymath}
Let $X := \op{Spec} S[\bm{u}]$, $Z(u) := \op{Spec }S$, and $Z(x) := \op{Spec }k[u_1,\ldots,u_c]$.

\begin{lemma} \label{lemma: Isik theorem for affine CI}
 There is an exact equivalence of triangulated categories,
 \begin{displaymath}
  \mathfrak I: \op{D}^{\op{b}}(\op{mod }R,\mathbb{Z}) \to \dbcoh{[X/\mathbb{G}_m^2],w},
 \end{displaymath}
 that restricts to equivalences,
 \begin{align*}
  \op{D}^{\op{b}}(\op{tors }R,\mathbb{Z}) & \cong \op{D}^{\op{b}}_{Z(x)}(\op{coh }[X/\mathbb{G}_m^2],w) \\
  \op{Perf }R & \cong \op{D}^{\op{b}}_{Z(u)}(\op{coh }[X/\mathbb{G}_m^2],w).
 \end{align*}
\end{lemma}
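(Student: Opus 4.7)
The plan is to apply Theorem \ref{theorem: Isik} of Isik-Shipman directly, with the right setup, and then identify the two pairs of subcategories. Take $Y = \op{Spec} R$ viewed as a closed subscheme of $\mathbb{A}^n = \op{Spec} S$, where $S$ carries the standard $\mathbb{G}_m$-action (with $x_j$ of weight one). Let $\mathcal E = \bigoplus_{i=1}^c \mathcal O_{\mathbb{A}^n}(d_i)$ as a $\mathbb{G}_m$-equivariant locally-free sheaf. Its global sections are $\bigoplus_i S(d_i)$, and the tuple $(f_1,\ldots,f_c)$ determines a $\mathbb{G}_m$-invariant section $s$ whose zero locus is $Y$. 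The geometric vector bundle $\op{V}(\mathcal E) = \underline{\op{Spec}}(\op{Sym}\mathcal E)$ has coordinate ring $S[u_1,\ldots,u_c]$ in which, by Definitions \ref{definition: weight} and \ref{definition: degree}, $u_i$ has $\mathbb{G}_m$-weight $-d_i$. The section $s$ induces on $\op{V}(\mathcal E)$ the regular function $w = \sum u_i f_i$, and the fiber-scaling $\mathbb{G}_m$ of Theorem \ref{theorem: Isik} gives $u_i$ R-charge $1$, matching the second $\mathbb{G}_m$ in the lemma. Thus $\op{V}(\mathcal E) = X$ with the announced $\mathbb{G}_m^2$-action and potential.

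Applying Theorem \ref{theorem: Isik} yields the equivalence
\[
 \mathfrak I: \dbcoh{[Y/\mathbb{G}_m]} \xrightarrow{\sim} \dcoh{[X/\mathbb{G}_m^2],w}.
\]
Since $Y$ is affine, the category $\dbcoh{[Y/\mathbb{G}_m]}$ is canonically identified with $\op{D}^{\op{b}}(\op{mod}\, R, \Z)$, which establishes the main equivalence.

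It remains to identify the two pairs of subcategories. For perfect complexes, Theorem \ref{theorem: Isik} asserts that $\mathfrak I$ sends $\op{Perf} R$ into $\op{D}^{\op{b}}_{Z(u)}(\op{coh}[X/\mathbb{G}_m^2],w)$, since $Z(u) = \op{Spec} S$ is the zero section of $\op{V}(\mathcal E)$. For the reverse inclusion, observe that any coherent $\mathbb{G}_m^2$-equivariant factorization supported along the zero section is, via completion along $Z(u)$ and the Koszul resolution of $Z(u) \subset X$, generated under cones and summands by the essential image of $\mathfrak I$ applied to the structure sheaves of points in $Y$ shifted by $\mathbb{G}_m$-characters. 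Since $R$ is Noetherian and the Koszul complex on $(u_1,\ldots,u_c)$ tensored with $\mathfrak I(\mathcal F)$ is perfect, the pre-image in $\op{D}^{\op{b}}(\op{mod}\,R,\Z)$ of any $Z(u)$-torsion factorization is perfect. For the torsion subcategory, recall that $M \in \op{mod}(R,\Z)$ is torsion if and only if, as a coherent sheaf on $Y$, its set-theoretic support is contained in the origin $V(x_1,\ldots,x_n)$. The functor $\mathfrak I$ factors through $i_*\pi^*$ where $\pi: \op{V}(\mathcal E)|_Y \to Y$ and $i: \op{V}(\mathcal E)|_Y \to X$; these operations preserve (and reflect) support, and carry the origin of $Y$ precisely to $Z(x) = \op{Spec} k[u_1,\ldots,u_c]$. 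Hence $\mathfrak I$ restricts to an equivalence $\op{D}^{\op{b}}(\op{tors}\,R,\Z) \cong \op{D}^{\op{b}}_{Z(x)}(\op{coh}[X/\mathbb{G}_m^2],w)$.

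The main obstacle is the essential surjectivity in each restricted equivalence, particularly for the $Z(u)$-torsion side: one must verify that every coherent factorization set-theoretically supported on the zero section is quasi-isomorphic to $\mathfrak I$ of something perfect. The reduction sketched above—filtering by powers of $\mathcal I_{Z(u)}$ and using the Koszul resolution—is the only real content beyond direct citation of Theorem \ref{theorem: Isik}.
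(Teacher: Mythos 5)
Your overall strategy coincides with the paper's: both proofs obtain the main equivalence by directly invoking Theorem \ref{theorem: Isik}, with $Y=\op{Spec}R\subset\mathbb{A}^n$ cut out by the invariant section $(f_1,\ldots,f_c)$ of an equivariant bundle of rank $c$, so that the fiber coordinates $u_i$ acquire weight $-d_i$, the fiber-scaling $\mathbb{G}_m$ supplies the R-symmetry, and the induced potential is $w=\sum u_if_i$; and both then identify $\op{D}^{\op{b}}(\op{mod}R,\Z)$ with $\dbcoh{[\op{Spec}R/\mathbb{G}_m]}$. Where you diverge is in matching the subcategories. The paper simply records where the standard generators go: $k(i)$ is sent to the factorization with component $\mathcal O_{Z(x)}(i)$ and zero maps, and $R(i)$ to the factorization with component $\mathcal O_{Z(u)}(i)$; since the $k(i)$ generate $\op{D}^{\op{b}}(\op{tors}R,\Z)$, the $R(i)$ generate $\op{Perf}R$, and their images generate $\op{D}^{\op{b}}_{Z(x)}$ and $\op{D}^{\op{b}}_{Z(u)}$ respectively, both restricted equivalences follow at once. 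Your support-theoretic treatment of the torsion side is an acceptable alternative, with the caveat that ``$Z$-torsion'' for factorizations means acyclic on the complement of $Z$ in the sense of Proposition \ref{proposition: kernel of restriction for equivariant factorizations}, not literal support of the components; for objects of the form $\mathfrak I(M)=i_*\pi^*M$ the two notions agree because $\mathfrak I$ is local and reflects acyclicity over each open set, so your argument does close there.

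There is, however, a genuine slip in your argument for the reverse inclusion on the perfect side. You assert that factorizations torsion along the zero section $Z(u)$ are generated by $\mathfrak I$ applied to ``structure sheaves of points in $Y$'' twisted by characters. Those objects are the $k(i)$; they are not perfect when $R$ is singular, and their images $\mathcal O_{Z(x)}(i)$ generate the $Z(x)$-torsion subcategory, not the $Z(u)$-torsion one. The generators you need are the twists $R(i)$ of the structure sheaf of $Y$ itself, whose images are Koszul-quasi-isomorphic to the $\mathcal O_{Z(u)}(i)$; d\'evissage of $Z(u)$-supported components over the polynomial ring $\mathcal O_{Z(u)}=k[x_1,\ldots,x_n]$ then shows these generate $\op{D}^{\op{b}}_{Z(u)}$, whence every preimage is built from the $R(i)$ and is perfect. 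With that substitution (which is exactly the paper's generator-tracking computation) your proof is complete.
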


\begin{proof}
 The equivalence, $\mathfrak I$, comes from Theorem \ref{theorem: Isik}. Under this equivalence, the $R$-module, $k(i)$, corresponds to the factorization,
 \begin{center}
  \begin{tikzpicture}[description/.style={fill=white,inner sep=2pt}]
   \matrix (m) [matrix of math nodes, row sep=3em, column sep=3em, text height=1.5ex, text depth=0.25ex]
    {  0 & \mathcal O_{Z(x)}(i), \\ };
   \path[->,font=\scriptsize]
    (m-1-1) edge[out=30,in=150] node[above] {$0$} (m-1-2)
    (m-1-2) edge[out=210, in=330] node[below] {$0$} (m-1-1);
  \end{tikzpicture}
 \end{center}
 while $R(i)$ corresponds to 
 \begin{center}
 \begin{tikzpicture}[description/.style={fill=white,inner sep=2pt}]
  \matrix (m) [matrix of math nodes, row sep=3em, column sep=3em, text height=1.5ex, text depth=0.25ex]
    {  0 & \mathcal O_{Z(u)}(i). \\ };
  \path[->,font=\scriptsize]
    (m-1-1) edge[out=30,in=150] node[above] {$0$} (m-1-2)
    (m-1-2) edge[out=210, in=330] node[below] {$0$} (m-1-1);
  \end{tikzpicture}
 \end{center}
 Here, we twist the original $\mathbb{G}_m$ structure and not the R-symmetry. 
\end{proof}

Let $X_+ := X \setminus Z(x)$, $X_- := X \setminus Z(u)$, and denote the projective complete intersection determined by $f_1,\ldots,f_c$ by $Y$.

\begin{corollary} \label{corollary: Iski theorem for affine CI}
 There are exact equivalences,
 \begin{align*}
  \dbcoh{Y} & \cong \dbcoh{[X_+/\mathbb{G}_m^2],w|_{X_+}} \\ 
  \op{D}_{\op{sg}}(R,\mathbb{Z}) & \cong \dbcoh{[X_-/\mathbb{G}_m^2],w|_{X_-}}.
 \end{align*}
\end{corollary}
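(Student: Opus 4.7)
The plan is to obtain both equivalences by taking Verdier quotients of the equivalence $\mathfrak{I}$ provided by Lemma \ref{lemma: Isik theorem for affine CI} and matching them, on one side, with the well-known module-theoretic descriptions of $\dbcoh{Y}$ and $\op{D}_{\op{sg}}(R,\mathbb{Z})$, and on the other side, with the kernels of restriction to the open subsets $X_+$ and $X_-$ coming from Proposition \ref{proposition: kernel of restriction for equivariant factorizations}.

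First I would handle the singularity category. By definition, $\op{D}_{\op{sg}}(R,\mathbb{Z})$ is the Verdier quotient of $\op{D}^{\op{b}}(\op{mod }R,\mathbb{Z})$ by the thick subcategory $\op{Perf }R$. Under $\mathfrak{I}$, Lemma \ref{lemma: Isik theorem for affine CI} identifies $\op{Perf }R$ with $\op{D}^{\op{b}}_{Z(u)}(\op{coh }[X/\mathbb{G}_m^2],w)$. By Proposition \ref{proposition: kernel of restriction for equivariant factorizations} applied to the open immersion $j_-: X_- = X \setminus Z(u) \hookrightarrow X$, the kernel of
\[
 j_-^*: \dbcoh{[X/\mathbb{G}_m^2],w} \to \dbcoh{[X_-/\mathbb{G}_m^2],w|_{X_-}}
\]
is precisely $\op{D}^{\op{b}}_{Z(u)}(\op{coh }[X/\mathbb{G}_m^2],w)$, and $j_-^*$ is essentially surjective. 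Thus $\mathfrak{I}$ descends to the promised equivalence $\op{D}_{\op{sg}}(R,\mathbb{Z}) \cong \dbcoh{[X_-/\mathbb{G}_m^2],w|_{X_-}}$.

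Next I would do the analogous argument for $Y$. The category $\op{qgr }R := \op{mod}(R,\mathbb{Z})/\op{tors}(R,\mathbb{Z})$ is equivalent to $\op{coh }Y$ by Serre's theorem \cite{SerreFAC}, and the induced map $\op{D}^{\op{b}}(\op{mod }R,\mathbb{Z})/\op{D}^{\op{b}}(\op{tors }R,\mathbb{Z}) \to \op{D}^{\op{b}}(\op{qgr }R) \cong \dbcoh{Y}$ is an equivalence (this is the standard localization result for graded modules; see e.g.\ the discussion preceding Theorem 2.5 of \cite{Orl09}). Under $\mathfrak{I}$, the subcategory $\op{D}^{\op{b}}(\op{tors }R,\mathbb{Z})$ corresponds to $\op{D}^{\op{b}}_{Z(x)}(\op{coh }[X/\mathbb{G}_m^2],w)$, which by Proposition \ref{proposition: kernel of restriction for equivariant factorizations} applied to $j_+: X_+ = X \setminus Z(x) \hookrightarrow X$ is exactly the kernel of the (essentially surjective) restriction functor to $\dbcoh{[X_+/\mathbb{G}_m^2],w|_{X_+}}$. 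Passing to Verdier quotients yields $\dbcoh{Y} \cong \dbcoh{[X_+/\mathbb{G}_m^2],w|_{X_+}}$.

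The only potential obstacle is purely formal: one needs to know that $\mathfrak{I}$ actually identifies the thick subcategories generated by the indicated classes (not merely the classes themselves), and that Proposition \ref{proposition: kernel of restriction for equivariant factorizations} yields an essentially surjective $j^*$ after passing to the coherent/bounded setting. The first is immediate since $\mathfrak{I}$ is an equivalence and thus preserves thick subcategories. The second is standard: every coherent factorization on the open set extends to a coherent factorization on $X$ (by Thomason's extension of coherent equivariant sheaves), and the kernel description is literally what Proposition \ref{proposition: kernel of restriction for equivariant factorizations} gives. So the argument is essentially a bookkeeping exercise, with all the real content already packaged in Lemma \ref{lemma: Isik theorem for affine CI}, Proposition \ref{proposition: kernel of restriction for equivariant factorizations}, and Serre's theorem.
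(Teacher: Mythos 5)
Your argument is correct and is essentially the paper's own proof: both pass to Verdier quotients of the equivalence $\mathfrak I$ from Lemma \ref{lemma: Isik theorem for affine CI}, identify the kernels of restriction to $X_+$ and $X_-$ with the torsion and perfect subcategories respectively, and invoke Serre's theorem for the $\dbcoh{Y}$ side. The paper states the two quotient equivalences without spelling out the appeal to Proposition \ref{proposition: kernel of restriction for equivariant factorizations}, but the content is the same.
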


\begin{proof}
 There are equivalences,
 \begin{align*}
  \dbcoh{[X/\mathbb{G}_m^2],w}/\op{D}^{\op{b}}_{Z(x)}(\op{coh }[X/\mathbb{G}_m^2],w) & \cong \dbcoh{[X_+/\mathbb{G}_m^2],w|_{X_+}} \\
  \dbcoh{[X/\mathbb{G}_m^2],w}/\op{D}^{\op{b}}_{Z(u)}(\op{coh }[X/\mathbb{G}_m^2],w) & \cong \dbcoh{[X_-/\mathbb{G}_m^2],w|_{X_-}}.
 \end{align*}
 While, by \cite{SerreFAC}, 
 \begin{displaymath}
  \op{D}^{\op{b}}(\op{mod }R,\mathbb{Z})/\op{D}^{\op{b}}(\op{tors }R,\mathbb{Z}) \cong \dbcoh{Y}
 \end{displaymath}
 and
 \begin{displaymath}
  \op{D}_{\op{sg}}(R,\mathbb{Z}) := \op{D}^{\op{b}}(\op{mod }R,\mathbb{Z})/\op{Perf }R.
 \end{displaymath}
 Lemma \ref{lemma: Isik theorem for affine CI} gives the conclusion.
\end{proof}

Consider the one-parameter subgroup,
\begin{align*}
 \lambda: \mathbb{G}_m & \to \mathbb{G}_m^2 \\
 \alpha & \mapsto (\alpha,1).
\end{align*}
Then,
\begin{align*}
 Z_{\lambda}^0 & = \{0\} \in X \\
 S_{\lambda} & = Z_{\lambda} = Z(x) \\
 S_{-\lambda} & = Z_{-\lambda} = Z(u).
\end{align*}
Thus, we have an elementary wall crossing,
\begin{align*}
 X & = X_+ \sqcup S_{\lambda} \\
 X & = X_- \sqcup S_{-\lambda},
\end{align*}
with $t(\mathfrak{K}^+) = -n$ and $t(\mathfrak{K}^-) = -\sum d_i$. 

\begin{proof}[Another proof of Corollary \ref{corollary: Orlov}]
 Note that $a = -t(\mathfrak{K}^+) + t(\mathfrak{K}^-)$ so the three cases of Theorem \ref{theorem: elementary wall crossing} are exactly the cases $a > 0$, $a = 0$, and $a < 0$. Let us first assume that $a \geq 0$. Theorem \ref{theorem: elementary wall crossing} states that we have fully-faithful functors,
 \begin{displaymath}
  \Phi^+_d: \dcoh{[X_-/G],w|_{X_-}} \to \dcoh{[X_+/G],w|_{X_+}},
 \end{displaymath}
 and, for $-t(\mathfrak{K}^-) + d \leq j \leq -t(\mathfrak{K}^+) + d -1$,
 \begin{displaymath}
  \Upsilon_j^+: \dcoh{Y_{\lambda},w_{\lambda}} \to \dcoh{[X_+/G],w|_{X_+}},
 \end{displaymath}
 and a semi-orthogonal decomposition,
 \begin{displaymath}
  \dcoh{[X_+/G],w|_{X_+}} = \langle \Upsilon^+_{-t(\mathfrak{K}^-)+d}, \ldots, \Upsilon^+_{-t(\mathfrak{K}^+)+d-1}, \Phi^+_d \rangle,
 \end{displaymath}
 where $G = \mathbb{G}_m^2$, $Y_{\lambda} = [Z_{\lambda}^0/G_{\lambda}]$, and $G_{\lambda} \cong \mathbb{G}_m$. As noted above, $Z_{\lambda}^0 = \op{pt}$ and $w|_{Z^0_{\lambda}} = 0$ so 
 \begin{displaymath}
  \dcoh{Y_{\lambda},w_{\lambda}} \cong \dcoh{[\op{pt}/\mathbb{G}_m],0} \cong \dbcoh{\op{pt}}. 
 \end{displaymath}
 Under this equivalence and $\Upsilon_l^+$, the image of $\mathcal O_{\op{pt}}$ is the factorization, 
 \begin{center}
  \begin{tikzpicture}[description/.style={fill=white,inner sep=2pt}]
   \matrix (m) [matrix of math nodes, row sep=3em, column sep=3em, text height=1.5ex, text depth=0.25ex]
    {  0 & \mathcal O_{Z(u)}(l), \\ };
   \path[->,font=\scriptsize]
    (m-1-1) edge[out=30,in=150] node[above] {$0$} (m-1-2)
    (m-1-2) edge[out=210, in=330] node[below] {$0$} (m-1-1);
  \end{tikzpicture}
 \end{center}
 which, under the equivalences of Lemma \ref{lemma: Isik theorem for affine CI} and Corollary \ref{corollary: Iski theorem for affine CI}, is $\mathcal O_Y(l)$. Thus, up to reindexing, we recover the $a \geq 0$ cases of Corollary \ref{corollary: Orlov} in full. The $a < 0$ case is entirely analogous.
\end{proof}

\begin{remark}
 The VGIT approach to Orlov's Theorem extends to complete intersections in GIT quotients. A large class of GIT quotients to consider is the class of toric DM stacks. However, in general, within the secondary fan of an arbitrary toric DM stack, there is no affine point, i.e. a chamber such that the GIT quotient is an affine variety.  On the other hand, there is a simple algebraic criteria for existence of such a chamber. This was conjectured by Herbst and proven by P. Clarke and J. Guffin in \cite{CG}.
\end{remark}

\begin{remark}
 Orlov's full result, Theorem \ref{theorem: Orlov}, is, of course, much more general. The existence of a such a general result suggests that one can mimic some of the techniques for GIT in the case of a linear algebraic group acting on a smooth and proper dg-category. It would be interesting to fully realize this statement.
\end{remark}


\end{document}